\newtheorem*{theorem*}{Main Theorem}
\newtheorem{theorem}{Theorem}
\newtheorem{proposition}{Proposition}[section]
\newtheorem{corollary}{Corollary}[section]
\newtheorem{lemma}{Lemma}[section]
\newtheorem{remark}{Remark}[section]
\newtheorem{claim}{Claim}[section]
\newtheorem{question}{Question}
\newtheorem{conjecture}{Conjecture}
\theoremstyle{definition}
\newtheorem{definition}{Definition}
\newtheorem{example}{Example}[section]
\newcommand{\Z}{\mathbb{Z}}
\newcommand{\R}{\mathbb{R}}
\newcommand{\C}{\mathbb{C}}
\newcommand{\Sp}{\mathbb{S}}
\newcommand{\SpE}{\mathbb{S}_{\mathrm{E}}}
\newcommand{\Pfin}{\mathcal{P}_{\mathrm{fin}}}
\newcommand{\diam}{\mathrm{diam}}
\newcommand{\dis}{\mathrm{dis}}
\newcommand{\cx}{\mathbf{\mathfrak{i}}}
\newcommand{\codis}{\mathrm{codis}}
\newcommand{\dgh}{d_\mathrm{GH}}
\newcommand{\filrad}{\mathrm{FillRad}}
\newcommand{\h}{\mathrm{H}}
\newcommand{\gl}{\mathrm{GL}}
\newcommand{\supp}{\mathrm{supp}}
\newcommand{\toG}{\stackrel{G}{\longrightarrow}}
\newcommand{\hooktoG}{\stackrel{G}{\longhookrightarrow}}
\newcommand{\toZtwo}{\stackrel{\mathbb{Z}_2}{\longrightarrow}}
\newcommand{\Gsimeq}{\stackrel{G}{\simeq}}
\newcommand{\TS}{\mathbf{E}}
\newcommand{\Hom}{\mathrm{H}}
\newcommand{\vr}{\mathrm{VR}}
\newcommand{\vrm}{\mathrm{VR}^{\mathrm{m}}}
\newcommand{\dht}{d_\mathrm{HT}}
\newcommand{\di}{d_\mathrm{I}}
\begin{document}

\title{$G$-Gromov-Hausdorff Distances\\ and\\ Equivariant Topology}	
   
\author[1]{Sunhyuk Lim}
\author[2]{Facundo M\'emoli}

\affil[1]{Department of Mathematics\\ Sungkyunkwan University (SKKU)\\
 	\texttt{lsh3109@skku.edu}}
 \affil[2]{Department of Mathematics\\
 	Rutgers University\\ 	\texttt{facundo.memoli@rutgers.edu}}

\date{\today}
\maketitle

\begin{abstract}
For each arbitrary finite group $G$, we consider a suitable notion of Gromov–Hausdorff distance between compact $G$-metric spaces and derive lower bounds based on equivariant topology methods. As applications, we prove equivariant rigidity and finiteness theorems, and obtain sharp bounds on the Gromov–Hausdorff distance between spheres.
\end{abstract}
  
\newpage

\printnomenclature[2cm]
\addcontentsline{toc}{section}{Nomenclature}

\tableofcontents

\section{Introduction.}

In this paper, we prove the following equivariant stability results pertaining to invariants of \emph{$G$-metric spaces}: metric spaces equipped with actions of a group $G$ by isometries.

\label{page:main-theorem}

\begin{theorem*}[Stability results]
For any finite group $G$, any two compact $G$-metric spaces $X$ and $Y$, $p\in[1,\infty]$, and every integer $k\geq 0$, we have the following inequalities:
$$
2\,\dgh^G(X,Y)\geq\dht^G\big((\Pfin(X),\diam_p^X),(\Pfin(Y),\diam_p^Y)\big) \geq \left\{\begin{array}{ll} \di^G\big(\h_k(\vrm_p(X;\bullet)),\h_k(\vrm_p(Y;\bullet))\big)\\
    \\
    \sup_{\varepsilon\geq 0}c_p^G(X,Y;\varepsilon)
\end{array}
\right. 
$$
\noindent
where, for each $\varepsilon>0$,

$$c_p^G(X,Y;\varepsilon):=\inf\{\eta>0:\text{there is a }G\text{-map }\vrm_p(Y;\varepsilon)\toG \vrm_p(X;\varepsilon+\eta)\}$$
and, for $\epsilon=0$,
$$\hspace{-.89in}c_p^G(X,Y;0):=\inf\{\eta>0:\text{there is a }G\text{-map }Y\toG \vrm_p(X;\eta)\}.$$
\end{theorem*}

In the statement above, all $G$-actions are by isometries and the notation is as follows.
\begin{itemize}
\item $\dgh^G$ is a variant of the \emph{Gromov-Hausdorff} distance designed to interact well with given $G$-equivariant isometric actions on compact metric spaces; see \Cref{sec:quotient} for a discussion of how it relates with the standard Gromov-Hausdorff distance between quotient spaces and \Cref{rem:other-G-GH} for a comparison with other extant notions of Gromov-Hausdorff distance between $G$-metric spaces.
\item $\Pfin(X)$ is the collection of all finitely supported probability measures $\mu$ on $X$ with a $G$-action induced by push-forward via the action on $X$, 
\item $\diam_p^X(\mu):=\big(\iint d_X^p(x,x')\,d\mu(x)d\mu(x')\big)^{1/p}$ is the \emph{$p$-diameter} of $\mu$, and 
\item  $\vrm_p(X;\bullet)$ is the \emph{$p$-Vietoris-Rips metric thickening filtration of $X$}: for $\varepsilon>0$,  $\vrm_p(X;\varepsilon)$ is the collection of all  $\mu\in\Pfin(X)$  such that $\diam_p^X(\mu)<\varepsilon$. These spaces are nested under inclusion as $\varepsilon$ increases;
see \Cref{def:VRm}.
\item $\dht^G$ and $\di^G$ are suitable $G$-equivariant variants of the  \emph{homotopy type distance} from \cite{frosini2017persistent} and the \emph{interleaving distance} \cite{chazal2009proximity} between \emph{persistent modules}, respectively.

\end{itemize}
The theorem above is obtained from \Cref{Gdhtstability}, \Cref{coro:dHTG-dIG-Katetov}, and \Cref{prop:strongercGpXYlbdd}.  

\medskip
The quantity $c_p^G$ is an instance of the concept of \emph{$G$-persistent index} which we describe in \Cref{def:clowerbdd}. In general, the two rightmost lower bounds are incomparable; see~\Cref{ex:compatibility}. Furthermore, we consider the following variants of $c_p^G(X,Y;\varepsilon)$ (see \Cref{def:clowerbdd}):

\begin{itemize}
    \item $c_{\vr}^G(X,Y;\varepsilon):=\inf\{\eta>0:\text{there is a }G\text{-map }\vert\vr(Y;\varepsilon)\vert\toG \vert\vr(X;\varepsilon+\eta)\vert\}$, and

    \item $c_\TS^G(X,Y;\varepsilon):=\inf\{\eta>0:\text{there is a }G\text{-map }B_\varepsilon(Y,\TS(Y))\toG B_{\varepsilon+\eta}(X,\TS(X))\}$,
\end{itemize}
where $\vr(X;\bullet)$ denotes the \emph{Vietoris-Rips} filtration of $X$ (see \Cref{def:VR}) and $\TS(X)$ denotes its \emph{tight span} (see \Cref{def:TS}). Precise connections between the three quantities $c_\infty^G$, $c_\vr^G$, and $c_\TS^G$, as well as applications, can be found in \Cref{prop:strongercGpXYlbddalernatives} and in \Cref{sec:GdGHresults-sph}. In particular, under mild conditions, we have that $$c_\infty^G(X,Y;0)=c_{\vr}^G(X,Y;0)=2c_\TS^G(X,Y;0).$$ Finally, we have that $c_\infty^{\Z_2}(\Sp^m,\Sp^{m+1};0)=\zeta_m:=\arccos\left(\frac{-1}{m+1}\right)$; see \Cref{rmk:cinftyZ2SmSm+1}. The invariant $c_p^G$ generalizes\footnote{When $X$ and $Y$ are spheres and   $G=\Z_2$, the case $(\varepsilon=0, p=\infty)$ of the $c_p^G$  boils down to the invariant considered in \cite{adams2022gromov}; see \Cref{rmk:referpolymath} and \Cref{sec:rel-pis}.} and improves\footnote{See \Cref{rmk:referpolymath} and \Cref{sec:lb-dght-pi}.} upon an invariant introduced in \cite{adams2022gromov}. 

We employ the stability results in the Main Theorem to prove novel $G$-equivariant rigidity (\Cref{thm:rigidity} and \Cref{cor:rigidity})  and finiteness  (\Cref{cor:Riemfinite})  theorems. In \Cref{sec:GdGHresults}, we consider topological spheres with different metric realizations: the $\Z_2$-metric spaces $\Sp^n$, $\SpE^n$, $\Sp^n_\infty$, and $\square^n_\infty$ equipped with their canonical $\Z_2$-actions (see  \Cref{ex:spheres-Z2}) and, using the Main Theorem, we establish lower bounds—and in some cases, exact values—for the $\Z_2$-equivariant Gromov–Hausdorff distances $\dgh^{\Z_2}(\Sp^m,\Sp^n)$, $\dgh^{\Z_2}(\SpE^m,\SpE^n)$, $\dgh^{\Z_2}(\Sp^m_\infty,\Sp^n_\infty)$, and $\dgh^{\Z_2}(\square^m_\infty,\square^n_\infty)$. On a different vein, in \Cref{sec:qbu} we show how our techniques can be used to obtain a $G$-equivariant \emph{quantitative Borsuk-Ulam theorem} that generalizes \cite{dubins1981equidiscontinuity,lim2021gromov,adams2022gromov}. See also \cite{adams2025quantifying} for a detailed study of geometric applications of this circle of ideas (see  \Cref{sec:qbu} for additional context).

Our constructions and results offer a unified framework that both extends and generalizes recent uses of $\Z_2$-equivariant topology, which have been instrumental in obtaining sharp bounds on the Gromov–Hausdorff distance between spheres \cite{lim2021gromov,adams2022gromov}; see also \cite{harrison2023quantitative,martin2024some,memoli2024embedding}.

\paragraph{Acknowledgements}
This work was partially supported by NSF DMS \#2301359, NSF CCF \#2310412, NSF CCF \#2112665, NSF CCF \#2217058, and NSF RI \#1901360. The genesis of this project goes back to a polymath-type activity that was run between UF, CMU, OSU, FU-Berlin and led to \cite{adams2022gromov}.

\section{Preliminaries.}\label{sec:preliminary}

Here we collect all basic terminology, definitions and the notation we will frequently employ in this paper.

\begin{itemize}\label{notations}
    \item A \emph{function} $f:X\longrightarrow Y$ is any, possibly discontinuous, function.
    
    \item A \emph{map} $f:X\longrightarrow Y$ is a continuous function.    

    \item For a bounded metric space $(X,d_X)$, $\diam(X):=\sup\limits_{x,x'\in X}d_X(x,x')$ is the \emph{diameter} of $X$. \nomenclature[01]{$\diam(X)$}{Diameter of $X$}

    \item For a nonnegative integer $n\geq 0$, let
    $$\Sp^n:=\left\{(x_0,x_1,\cdots,x_n)\in\R^{n+1}:\sum_{i=0}^{n}x_i^2=1\right\}$$
    be the usual round unit $n$-sphere and let $d_{\Sp^n}$ denote the geodesic distance on $\Sp^n$. i.e., $d_{\Sp^n}(u,v):=\arccos(\langle u,v \rangle)$ for all $u,v\in\Sp^n$. Alternatively, one may consider the Euclidean metric $d_{\SpE^n}$ on $\Sp^n$ inherited from $\R^{n+1}$, i.e. $d_{\SpE^n}(u,v):=\Vert u-v \Vert$ for all $u,v\in\Sp^n$ where $\Vert\cdot\Vert$ denotes the Euclidean norm in $\R^{n+1}$. Finally, we will use the notation $\SpE^n$ instead of $\Sp^n$ whenever the underlying $n$-sphere is equipped with the Euclidean metric $d_{\SpE^n}$ instead of the geodesic metric $d_{\Sp^n}$. Finally, let $\zeta_n$ denote the geodesic distance between two different vertices of a regular $(n+1)$-simplex inscribed in $\Sp^n$; i.e. $\zeta_n:=\arccos\left(\frac{-1}{n+1}\right)$. Note that $\zeta_1=\tfrac{2\pi}{3}$, $\zeta_2\approx 1.9106$ and $\tfrac{\pi}{2}\leq \zeta_n\leq\pi$, in general.\nomenclature[05]{$\zeta_n$}{The edge length of a regular $(n+1)$-simplex inscribed in $\Sp^n$}

    \item For an nonnegative integer $n\geq 0$, let
    \begin{itemize}
    \item $\R^n_\infty=(\R^n,\ell^\infty)$.

    \item $\mathbb{D}^n_\infty:=(\mathbb{D}^n,\ell^\infty)$ where $\mathbb{D}^n$ is the usual closed unit ball in $\R^n$ with the Euclidean distance.

    \item $\Sp^n_\infty:=(\Sp^n,\ell^\infty)$ where $\Sp^n = \partial \mathbb{D}^{n+1}$ is the usual unit (round)  sphere.
    
    \item $\blacksquare^n_\infty:=(\blacksquare^n,\ell^\infty)$ where $$\blacksquare^n:=\{(x_0,\dots,x_{n-1})\in\R^n:x_i\in [-1,1]\textrm{ for all }i=1,\dots,n\}$$ is the closed unit ball in $\R_\infty^n$.
  
    \item $\square^n_\infty:=(\square^n,\ell^\infty)$ where 
    \begin{align*}\square^n &:= \partial \blacksquare^{n+1}\\
    &= \{(x_0,\dots,x_n)\in\R^{n+1}:(x_0,\dots,x_n)\in\blacksquare^{n+1}_\infty \textrm{ and }x_i=\pm 1\textrm{ for some }i=0,\dots,n\}
    \end{align*}
    is the unit sphere in $\R^{n+1}_\infty$.
\end{itemize}
\end{itemize}

\nomenclature[02]{$\Sp^n$}{$n$-dimensional unit sphere with the geodesic metric}
\nomenclature[03]{$\SpE^n$}{$n$-dimensional unit sphere with the Euclidean metric}
\nomenclature[04]{$\Sp^n_\infty$}{$n$-dimensional unit sphere with the $\ell^\infty$ metric}
\nomenclature[05]{$\square^n_\infty$}{$n$-dimensional cube with the $\ell^\infty$ metric}

\subsection{Filtrations.}

\begin{definition}[Filtrations] \label{def:pers-fams}
A \emph{filtration} of a topological space $Z$ is a collection $U_\bullet = \big(U_r,\iota_{r,s} \big)_{r\leq s \in \R}$ where:
\begin{itemize}

\item for each $r\in \R$, $U_r$ is a subspace of $Z$, and

\item for each $r\leq s$, $U_r\subseteq U_s$ and
 $\iota_{r,s}: U_r \hookrightarrow U_s$ is the canonical inclusion map. 
\end{itemize}
\end{definition}

When there is no risk of confusion, we will simply say that $U_\bullet$ is a filtration without mentioning the topological space $Z$, with the understanding that $Z$ can be recovered as the colimit of $U_\bullet$.

\medskip
The following two constructions of filtrations will be considered in the rest of the paper.
\begin{definition}[Sub-level set filtrations]\label{def:sub-level} 
Suppose the pair $(Z,\beta_Z)$ is given where $Z$ is a topological space and $\beta_Z:Z\rightarrow\R$ is a real-valued map. Then, the family $\big(\beta_Z^{-1}((-\infty,r))\big)_{r\in \R}$ together with the canonical inclusions is a filtration (of $Z$). Filtrations of this type are often called \emph{sub-level set filtrations}.
\end{definition}

\begin{definition}[Vietoris-Rips filtration]\label{def:VR}
Let $X$ be a metric space and $r>0$. The \emph{(open) Vietoris-Rips complex} $\vr(X;r)$ of $X$ at scale $r$ is the simplicial complex whose vertices are the points of $X$ and whose simplices are those finite subsets of $X$ with diameter strictly less then $r$. For $r\leq 0$ we define $\vr(X;r):=\emptyset$. Note that $\vr(X;r)\subseteq \vr(X;s)$ whenever $r\leq s$.

\medskip
Note that, at the level of the geometric realization, 
{\small$$|\vr(X;r)|=\left\{\sum_{i=1}^n u_i x_i: n\in \mathbb{N},x_i\in X \text{ s.t. $\diam(\{x_1,\ldots,x_n\})<r$ and $u_i\geq 0$ s.t. $\sum_{i=1}^n u_i = 1$}\right\}.$$}

The above implies that there exists a canonical inclusion $\iota_{r,s}:\vert\vr(X;r)\vert\longhookrightarrow\vert\vr(X;s)\vert$ for  $r\leq s$. Also, $\vert\vr(X;r)\vert$ is a subspace of $\vert\vr(X;\infty)\vert$ for any $r\in\R$ where $\vert\vr(X;\infty)\vert$ is the colimit of $\big(\vert\vr(X;r)\vert,\iota_{r,s} \big)_{r\leq s}$.
The family $\vert\vr(X;\bullet)\vert$ of nested geometric realizations is a filtration, called the \emph{(open) Vietoris-Rips filtration} of $X$.
\end{definition}

Note that the Vietoris-Rips filtration $\vert\vr(X;\bullet)\vert$ of $X$ is identical to the sub-level set filtration induced by the pair $\big(\vert\vr(X;\infty)\vert,\diam\big)$ where
\begin{align*}
    \diam:\vert\vr(X;\infty)\vert&\longrightarrow\R\\
    \sum_{i=1}^nu_ix_i&\longmapsto\diam(\{x_1,x_2,\dots,x_n\}).
\end{align*}

Under suitable assumptions on $X$, Hausmann proved in \cite{hausmann1994vietoris} that $|\vr(X;r)|$ is homotopy equivalent to $X$ for small enough $r>0$.  In \Cref{thm:Z2hausmannsphere} we provide an optimal $\Z_2$-equivariant Hausmann-type theorem for spheres.

\subsection{The Vietoris-Rips metric thickening of a metric space.}

Let $X$ be a metric space and $\mathcal{P}(X)$ be the space of all Borel probability measures on $X$ with the weak topology \cite{dudley2018real}. Given a map $\varphi:X\to Y$ where $Y$ is a metric space,  the \emph{pushforward} $\varphi_\sharp\alpha$ of $\alpha\in\mathcal{P}(X)$ via $\varphi$ is the probability measure on $Y$ defined by $\varphi_\sharp\alpha(U):=\alpha\big(\varphi^{-1}(U)\big)$ for all Borel subsets $U\subset Y$.\nomenclature[06]{$\mathcal{P}(X)$}{Space of all Borel probability measures on $X$ with the weak topology.}

For each $r> 0$ and $p\in[1,\infty]$, the \emph{$p$-Vietoris-Rips metric thickening} $\vrm_p(X;r)$ of $X$ at scale $r$ is the subspace of probability measures~$\mu$ on $X$ whose \emph{support} $\supp[\mu]$ is finite and has $p$-diameter smaller than $r$, equipped with the inherited weak topology. Precise definitions are as follows; see \cite{adamaszek2018metric,adams2024persistent} for other aspects.

\begin{definition}[$p$-diameter]\label{def:pdiam}
Let $(X,d_X)$ be a bounded metric space and $p\in[1,\infty]$. Then, the $p$-diameter map $\diam_p^X:\mathcal{P}(X)\rightarrow [0,\infty)$ is defined as follows: 
$$\diam_p^X(\mu):=\begin{cases}\left(\iint_{X\times X}d_X^p(x,x')\,d\mu(x)\,d\mu(x')\right)^{\frac{1}{p}} &\text{if }p<\infty,\\
\\\diam(\supp[\mu]) &\text{if }p=\infty.\end{cases}$$\nomenclature[08]{$\diam_p^X(\mu)$}{$p$-diameter of the probability measure $\mu \in \mathcal{P}(X)$}
\end{definition}
Note that if $1\leq p \leq q \leq \infty$, then we have $\diam_p(\mu)\leq\diam_q(\mu)$ for every $\mu\in\mathcal{P}(X)$.

From now on, the set of all probability measures on $X$ with finite support will be denoted by $\Pfin(X)$. That is
$$\Pfin(X):=\left\{\mu := \sum_{i=0}^n u_i\delta_{x_i}:u_i \geq 0, \sum_i u_i = 1, x_1,\ldots,x_n \in X, n\in\mathbb{N}\right\}$$
where $\delta_x$ denotes the Dirac delta measure supported on the point $x\in X.$ We view $\Pfin(X)\subset \mathcal{P}(X)$ as a topological space by endowing it with the inherited (weak) topology.\nomenclature[07]{$\Pfin(X)$}{Subspace of $\mathcal{P}(X)$ with the finite support}

\begin{definition}[$p$-Vietoris-Rips metric thickening filtrations]\label{def:VRm}
Let $(X,d_X)$ be a metric space, $r> 0$, and $p\in[1,\infty]$. The \emph{$p$-Vietoris-Rips metric thickening} $\vrm_p(X;r)\subseteq \Pfin(X)$ of $X$ at scale $r$ is defined in the following way: 
$$\vrm_p(X;r):=\left\{\mu \in\Pfin(X): \diam_p^X(\mu)<r\right\}.$$
Furthermore, we regard $\vrm_p(X;r)$ as a topological space by endowing it with the weak topology inherited from $\mathcal{P}(X)$. For $r\leq 0$ we define $\vrm_p(X;r):=\emptyset$. Note that if $r\leq s$, then $\vrm_p(X;r)$ is contained in $\vrm_p(X;s)$. Hence, the family $\vrm_p(X;\bullet)$ is a filtration, called the \emph{$p$-Vietoris-Rips metric thickening filtration} of $X$. \nomenclature[09]{$\vrm_p(X)$}{$p$-Vietoris-Rips metric thickening of $X$}
\end{definition}

Observe that, for each $r>0$ we have that 
$$\vrm_p(X;r):=\left(\diam_p^X\vert_{\Pfin(X)}\right)^{-1}\big((-\infty,r)\big).$$

In other words, the filtration $\vrm_p(X;\bullet)$ can be regarded as a sub-level set filtration. This representation will be used in subsequent sections to express the stability of the assignment $X\mapsto \vrm_p(X;\bullet).$

Note that one may metrize $\vrm_p(X;r)$ with the $q$-Wasserstein distance \cite[Theorem 7.3]{villani2021topics} for any $q\in[1,\infty]$. Furthermore, if $X$ is compact and $q<\infty$, the weak topology on $\vrm_p(X;r)$ and the metric topology induced by the $q$-Wasserstein distance coincide \cite{dudley2018real}. Hence, the superscript $\mathrm{m}$ denotes ``metric'', since the metric thickening $\vrm_p(X;r)$ can be seen as a metric space, whereas the simplicial complex $\vr(X;r)$ may not be metrizable if $X$ is not discrete \cite{adamaszek2018metric}.
By identifying each point $x \in X$ with the Dirac measure~$\delta_{x}$, there is a natural isometric (i.e. distance preserving) embedding from $X$ into~$\vrm_p(X;r)$, via the injective map $x\mapsto \delta_x$.
Furthermore, note that the underlying set of the metric thickening $\vrm_\infty(X;r)$ is equal to the underlying set of (the geometric realization of) the simplicial complex $\vr(X;r)$, although the topology of these two spaces may differ~\cite{adamaszek2018metric}.
Also note that for each $r>0$ there is a distinguished map $\mathrm{id}_r:|\vr(X;r)|\to \vr^m_\infty(X;r)$ given by  $$\sum_{i=1}^n u_i x_i \mapsto \sum_{i=1}^n u_i\delta_{x_i}.$$
Recently, in \cite{gillespie2024vietoris} Gillespie proved that $\mathrm{id}_r$ is for each $r>0$ a weak homotopy equivalence so that his results imply that the Vietoris-Rips metric thickening filtration is functorially \emph{weakly} homotopy equivalent to the Vietoris-Rips filtration of $X$; see \cite[Section 7]{gillespie2024vietoris}.

Under suitable assumptions on $X$, \cite{adamaszek2018metric} establishes a Hausmann-type result for the Vietoris-Rips metric thickening; $\vrm_\infty(X;r)$ is homotopy equivalent to $X$ for small enough $r>0$.  \Cref{thm:vrmHausmann-G},  extends this result to the $G$-equivariant setting. In \Cref{thm:Z2hausmannspherevrm} we specialize this to the case of spheres with $\Z_2$ actions with an optimal range for $r$.

\subsection{Tight span of a metric space.}\label{sec:katetov}

In his seminal paper \cite{gromov1983filling} Gromov introduced the notion of \emph{filling radius} $\filrad(M)$ of an orientable $m$-dimensional Riemannian manifold $M$ by considering its isometric embedding into $L^\infty(M)$ via the Kuratowski map  $\kappa:M\to L^\infty(M)$ given by $x\mapsto d_M(x,\cdot).$ Then, $\filrad(M)$ is defined as the infimal $\delta>0$ such that $\Hom_m(\iota_\delta;\Z)([M])=0$ where $\iota_\delta: M\to B_\delta(M; L^\infty(M))$ is the map induced by the inclusion $\kappa(M)\subseteq B_\delta(M,L^\infty(M))$ into the $\delta$-neighborhood of $\kappa(M)$ in $L^\infty(M)$.
In this paper, we will employ a closely related but ``smaller" space.

\begin{definition}[{Tight span \cite{dress1984trees,isbell1964six}}]\label{def:TS}
The \emph{tight span} $\TS(X)$ of a compact metric space $(X,d_X)$ is defined as follows:
$$\TS(X):=\{f\in\Delta(X):\text{if }g\in\Delta(X)\text{ and }g\leq f,\text{ then }g=f\,(\text{i.e., }f\text{ is minimal})\}$$

where $\Delta(X):=\{f\in L^\infty(X):f(x)+f(x')\geq d_X(x,x')\text{ for all }x,x'\in X\}$. We endow $\TS(X)$ with the metric induced by $\ell^\infty$-norm.
\end{definition}

It is known that both $\TS(X)$ and $L^\infty(X)$ are metric spaces that satisfy the property called \emph{injectivity} \cite{dress1984trees,isbell1964six}, or equivalently \emph{hyperconvexity} \cite{aronszajn1956extension}. In particular, $\TS(X)$ is known to be the smallest injective metric space into which $X$ can be embedded, and it is unique up to isometry. Furthermore, the tubular neighborhoods of a given compact metric space $(X,d_X)$ inside of its tight span  are functorially homotopy equivalent to the Vietoris-Rips complexes of $X$.  More precisely, for $r>0$ let 
$$B_r(X,\TS(X)):=\{f\in\TS(X):\Vert f-d_X(x,\cdot)\Vert<r\text{ for some }x\in X\}.$$
Let \emph{tight span neighborhood filtration} of the compact metric space $X$ be the filtration
$$B_\bullet(X,\TS(X)):=\big(B_r(X,\TS(X))\subseteq B_s(X,\TS(X))\big)_{r<s}$$
with the provision that $B_r(X,\TS(X)):=\emptyset$ for $r\leq 0$. 

\begin{theorem}[{\cite[Theorem 4.1]{lim2020vietoris}}]\label{thm:vr-neigh}
There exist homotopy equivalences $\varphi_{r}:\vr_{2r}(X)\rightarrow B_r(X,\TS(X))$ for each $r>0$ such that for each $t>s>0$ the following diagram commutes up to homotopy:
$$\begin{tikzcd}
\vr_{2r}(X) \arrow[r, hook] \arrow[d, "\varphi_{r}"' , rightarrow]
& \vr_{2s}(X) \arrow[d, "\varphi_{s}"]\\
B_r(X, \TS(X)) \arrow[r,hook] & B_s(X, \TS(X))
\end{tikzcd}$$
\end{theorem}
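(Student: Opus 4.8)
The plan is to exhibit $B_r(X,\TS(X))$ as the realization of the nerve of a canonical open cover whose nerve is exactly $\vr_{2r}(X)$, and then to deduce both the homotopy equivalences $\varphi_r$ and the homotopy‑commuting square from a functorial form of the Nerve Theorem. Let $\kappa\colon X\hookrightarrow\TS(X)$, $\kappa(x)=d_X(x,\cdot)$, be the Kuratowski embedding; it is isometric for the $\ell^\infty$ metric, and each $\kappa(x)$ is a minimal element of $\Delta(X)$, so $\kappa(X)\subseteq\TS(X)$. For $r>0$ and $x\in X$ put $U^r_x:=\{f\in\TS(X):\|f-\kappa(x)\|_\infty<r\}$, an open ball of $\TS(X)$. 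By definition $B_r(X,\TS(X))=\bigcup_{x\in X}U^r_x$, so $\mathcal U^r:=\{U^r_x\}_{x\in X}$ is an open cover of $B_r(X,\TS(X))$; and since $\TS(X)\subseteq L^\infty(X)$ is a metric space, $B_r(X,\TS(X))$ is metrizable, hence paracompact, even when $X$ is uncountable.

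The first step is to identify the nerve $N(\mathcal U^r)$ with the simplicial complex $\vr(X;2r)$ (so that $|N(\mathcal U^r)|=\vr_{2r}(X)$); as $\mathcal U^r$ is indexed by $X$ itself, the two complexes share the vertex set $X$ and one only needs to match simplices. If $f\in\bigcap_{i=0}^n U^r_{x_i}$ then $d_X(x_i,x_j)=\|\kappa(x_i)-\kappa(x_j)\|_\infty\le\|\kappa(x_i)-f\|_\infty+\|f-\kappa(x_j)\|_\infty<2r$, so $\{x_0,\dots,x_n\}$ spans a simplex of $\vr(X;2r)$. Conversely, if $\diam\{x_0,\dots,x_n\}<2r$, choose $r'$ with $\tfrac12\diam\{x_0,\dots,x_n\}\le r'<r$; then the closed balls $\bar B(\kappa(x_i),r')\subseteq\TS(X)$ pairwise intersect, since $d(\kappa(x_i),\kappa(x_j))=d_X(x_i,x_j)\le 2r'$ and $\TS(X)$ is hyperconvex, so by hyperconvexity of $\TS(X)$ they have a common point $f$, which lies in every $U^r_{x_i}$.

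The second step is that $\mathcal U^r$ is a \emph{good} cover: every nonempty finite intersection $\bigcap_i U^r_{x_i}$ is contractible. Indeed, open balls in the injective metric space $\TS(X)$ are convex for the canonical convex geodesic bicombing $\sigma$ on $\TS(X)$ (because $t\mapsto d(\sigma_{xy}(t),p)$ is convex), hence so is any finite intersection of them, and a nonempty set that is convex for such a bicombing contracts to any of its points along $\sigma$; alternatively, $\bigcap_i U^r_{x_i}$ is an increasing union of admissible ball‑intersections, each hyperconvex and therefore contractible. The Nerve Theorem then gives a homotopy equivalence $\vr_{2r}(X)=|N(\mathcal U^r)|\simeq B_r(X,\TS(X))$. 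For the naturality statement, note that for $0<r<s$ one has $U^r_x\subseteq U^s_x$ for every $x$, so $\mathcal U^r$ refines $\mathcal U^s$ through the identity on the common index set $X$; consequently the induced inclusion $N(\mathcal U^r)\hookrightarrow N(\mathcal U^s)$ is exactly the canonical inclusion $\vr_{2r}(X)\hookrightarrow\vr_{2s}(X)$, while on underlying spaces it is the inclusion $B_r(X,\TS(X))\hookrightarrow B_s(X,\TS(X))$. To promote this to a homotopy‑commuting square I would invoke a functorial model of the Nerve Theorem — for instance the Mayer--Vietoris blow‑up complex (the homotopy colimit of the cover), which is functorial in the cover and naturally homotopy equivalent both to the covered space and to the nerve — or, more concretely, build the $\varphi_r$ from a single locally finite partition of unity subordinate to a common refinement and check commutativity via a straight‑line homotopy inside the geometric realizations.

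The genuinely delicate point is this last one, the \emph{naturality} of the Nerve Theorem: the classical statement only produces an unnatural homotopy equivalence, so one must either route the argument through a functorial model of the homotopy type (blow‑up complex / homotopy colimit) or construct the $\varphi_r$ by hand and argue the square commutes up to homotopy. A minor secondary point is keeping track of paracompactness, which is needed because the index set $X$ of the cover may be uncountable, but this is automatic since $B_r(X,\TS(X))$ is metrizable.
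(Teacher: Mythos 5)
Your proposal is correct and matches the methodology of both the cited source and this paper: the paper cites the statement from \cite[Theorem~4.1]{lim2020vietoris} without reproving it, but its $G$-equivariant analog \Cref{prop:GvrTS} runs exactly your argument --- the cover $\{B_r(x,\TS(X))\}_{x\in X}$ of $B_r(X,\TS(X))$ has nerve $\vr(X;2r)$ with contractible intersections (citing \cite[Proposition~2.25 and Lemma~2.28]{lim2024vietoris}), followed by a nerve lemma. The naturality issue you correctly single out as the delicate point is handled in \Cref{prop:Gnerve} (proved in \Cref{sec:Gnerve}) precisely via the complex-of-spaces / Mayer--Vietoris blow-up realization $\Delta X_{\mathcal U}$ that you suggest, which is functorial in the cover and homotopy equivalent to both the space and the nerve.
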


%%%%%%%%%%%%%%%%%%%%%%%%%%%%%%%%%%
\section{$G$-equivariant distances.}\label{sec:Gdistances}

Throughout this paper, $G$ will  denote a group and $G_0$ will denote the trivial group. Given a group homomorphism $h:\widetilde{G}\to G$ and any $G$-group action $\alpha_X:G\times X \to X$ on a set $X$, by $h^\sharp\alpha_X:\widetilde{G}\times X\to X$ we will denote the \emph{pullback $\widetilde{G}$-action on X} defined through $$(h^\sharp\alpha_X)(\tilde{g},x):=\alpha_X(h(\tilde{g}),x)\,\,\text{for $\tilde{g}\in \widetilde{G}$ and $x\in X$.}$$

\begin{definition}[$G$-set, $G$-topological space, $G$-function, $G$-map, $G$-metric space, and $G$-isometry]
Suppose that a group $G$ is given. We define the following concepts:

\begin{itemize}

    \item A pair $(X,\alpha_X)$ is a \emph{$G$-set} if $X$ is a set and $\alpha_X:G\times X\to X$ is a (left) $G$-group action on $X$.

    \item A pair $(X,\alpha_X)$ is a \emph{$G$-topological space} if $X$ is a topological space and $\alpha_X:G\times X\rightarrow X$ is a (left) $G$-group action on $X$ such that $\alpha_X(g,\cdot):X\rightarrow X$ is a homeomorphism for all $g\in G$.

    \item A \emph{$G$-function} (resp. \emph{$G$-map}) between two $G$-sets (resp. $G$-spaces) $X$ and $Y$ is any function (resp. map) $\varphi:X\rightarrow Y$ such that $$\varphi(\alpha_X(g,x)) = \alpha_Y(g,\varphi(x))$$ for all $x\in X$ and all $g\in G$. We will use the notation $\varphi:X\toG Y$ to denote that $\varphi$ is a $G$-function or a $G$-map.

    \item A triple $(X,d_X,\alpha_X)$ is said to be a \emph{$G$-metric space} if $(X,d_X)$ is a metric space and $\alpha_X:G\times X\rightarrow X$ is a (left) $G$-group action on $X$ such that $\alpha_X(g,\cdot):X\rightarrow X$ is an isometry for all $g\in G$. 

    \item If a given $G$-map $\varphi:X\toG Y$ between two $G$-metric spaces $X$ and $Y$ is  distance preserving, then $\varphi$ is called a \emph{$G$-isometric embedding}. Finally, if a given $G$-map $\varphi:X\toG Y$ is an isometry, then $\varphi$ is called a \emph{$G$-isometry}.
\end{itemize}
\end{definition}

Note that all $G$-objects defined above boil down to their usual counterparts (sets, topological spaces, etc) whenever $G$ is the trivial group $G_0$. We will henceforth denote by $\mathcal{M}^G$ the collection of all compact $G$-metric spaces. Note that $G$-isometry is the natural notion of isomorphism on $\mathcal{M}^G$; see \Cref{thm:dghGmetric}.\nomenclature[10]{$\mathcal{M}^G$}{The collection of all compact $G$-metric spaces}

\begin{example}\label{ex:spheres-Z2}
Here are some simple examples of $G$-(sets, topological spaces, metric spaces):
\begin{enumerate}
    \item Let $G$ be an arbitrary group and $X$ be an arbitrary (set, topological space, metric space). If we consider the $G$-group action $\alpha_X$ on $X$ defined as follows:
$$\alpha_X(g,\cdot):=\mathrm{id}_X\,\,\text{for all}\,\,g\in G,$$
    then $X$ can be promoted to a  $G$-(set, topological space, metric space). This $\alpha_X$ is called the \emph{trivial} $G$-group action.

    \item Here are a few interesting choices of groups $G$ and group actions $\alpha_X$ on $n$-spheres $(X,d_X)=(\Sp^n,d_{\Sp^n})\text{ or }(\SpE^n,d_{\SpE^n})$ so that $(X,d_X,\alpha_X)$ becomes a $G$-metric space:
    \begin{enumerate}
        \item Let $G=\Z_2:=\{-1,1\}$ and $\alpha_X(-1,u):=-u$ for all $u\in X$ (the antipodal  \emph{involution}). 

        \item For odd $n=2k-1$, let $G=\Sp^1$ where we identify $\Sp^1$ with the real line modulo the equivalence relation $t\sim s$ iff $t-s = 2\pi m$ for some integer $m$. Also, consider the following $G$-group action by isometries,
        $$\alpha_X(\theta,\cdot):=T_\theta\,\,\text{for all}\,\,\theta\in\Sp^1$$
        where $T_\theta$ is a rotation given, in matrix form, by
        $$\begin{bmatrix}
        M(\theta) & 0 & 0 & 0 &\cdots & 0\\
        0 & M(\theta) & 0 & 0 &\cdots & 0\\
        0 & 0 & M(\theta) & 0 &\cdots & 0\\
        0 & 0 & 0 & M(\theta) &\cdots & 0\\
        \vdots & \vdots & \vdots & \vdots &\ddots & \vdots\\
        0 & 0 & 0 & 0 &\cdots & M(\theta)\\
\end{bmatrix} \in \R^{2k\times 2k}$$
 where
$$M(\theta):=\begin{bmatrix}
\cos\theta & -\sin\theta\\
\sin\theta & \cos\theta
\end{bmatrix}.$$
    \end{enumerate}

    \item Let $G=\Z_2$, $(X,d_X)=(\square^n_\infty,\ell^\infty)\text{ or }(\Sp^n_\infty,\ell^\infty)$, and $\alpha_X(\pm1,u):=\pm u$ for all $u\in X$. Then, $(X,d_X,\alpha_X)$ becomes a $\Z_2$-metric space.
\end{enumerate}
\end{example}

\subsection{The $G$-Gromov-Hausdorff distance.}\label{sec:sec:ggh}

Recall that a \emph{correspondence} between two sets $X$ and $Y$ is any surjective relation $R\subset X\times Y$; see \cite[Chapter 7]{burago2022course}. The set of all correspondences between $X$ and $Y$ is denoted by $\mathcal{R}(X,Y).$
\begin{definition}[$G$-correspondence]
For any two $G$-sets  $(X,\alpha_X)$ and $(Y,\alpha_Y)$, a correspondence $R$ between $X$ and $Y$ is said to be a \emph{$G$-correspondence} if
$$(\alpha_X(g,x),\alpha_Y(g,y))\in R$$ for all $(x,y)\in R$ and all $g\in G$. The set of all $G$-correspondences between $X$ and $Y$ will be denoted by $\mathcal{R}_G(X,Y)$.
\end{definition}

\begin{remark}\label{rmk:R-order} Note that
\begin{enumerate}

    \item if $G=G_0$ then $G$-correspondences are just (standard) correspondences.

    \item  for any $G$-sets $(X,\alpha_X)$, $(Y,\alpha_Y)$, a $G$-function $\varphi:X\toG Y$, and for any group homomorphism $h:\widetilde{G}\to G$ we have that $\varphi$ is also a $\widetilde{G}$-function between the $\widetilde{G}$-sets $(X,h^\sharp\alpha_X)$ and $(Y,h^\sharp\alpha_Y)$.

    \item for any $G$-sets $(X,\alpha_X)$ and $(Y,\alpha_Y)$, for any $G$-correspondence $R$ between them, and for any group homomorphism $h:\widetilde{G}\to G$ we have that $R$ is also a $\widetilde{G}$-correspondence between the $\widetilde{G}$-sets $(X,h^\sharp\alpha_X)$ and $(Y,h^\sharp\alpha_Y)$.    
    
    \item If $\varphi:X\toG Y$ and $\psi:Y\toG X$ are $G$-functions, then $R(\varphi,\psi):=\mathrm{graph}(\varphi)\cup \mathrm{graph}(\psi)$ is a $G$-correspondence.
\end{enumerate}
\end{remark}

\begin{definition}[Distortion and codistortion]
Suppose two bounded metric spaces $(X,d_X)$ and $(Y,d_Y)$ are given. Then, for any non-empty relation $R\subseteq X\times Y$, its distortion is defined by $$\dis(R):=\sup_{(x,y),(x',y')\in R}\big|d_X(x,x')-d_Y(y,y')\big|.$$
Also, for any (not necessarily continuous) functions $\varphi:X\rightarrow Y$ and $\psi:Y\rightarrow X$, their distortion and codistortion are defined by
\begin{align*}
    \dis(\varphi)&:=\sup_{x,x'\in X}\big|d_X(x,x')-d_Y(\varphi(x),\varphi(x'))\big|,\\
    \codis(\varphi,\psi)&:=\sup_{x\in X,y\in Y}\big|d_X(x,\psi(y))-d_Y(\varphi(x),y)\big|.
\end{align*}
\end{definition}
Note that, with the above definitions,
    $$\dis(R(\varphi,\psi))=\max\{\dis(\varphi),\dis(\psi),\codis(\varphi,\psi)\}.$$

\begin{definition}[The $G$-Gromov-Hausdorff distance]
For any two $G$-metric spaces $X$ and $Y$, the $G$-Gromov-Hausdorff distance is defined as:
$$\dgh^G(X,Y):=\frac{1}{2}\inf\{\dis(R):R\in\mathcal{R}_G(X,Y)\}.$$ \nomenclature[11]{$\dgh^G$}{$G$-Gromov-Hausdorff distance}
\end{definition}

\begin{remark}
    Note that one has the following equivalent expression $$\dgh^G(X,Y) = \frac{1}{2}\inf\left\{\sup_{g,g'\in G} \dis\big(R_{g,g'}\big)\,:\, R\in \mathcal{R}(X,Y) \right\}$$
    where, for $g,g'\in G$,  $R_{g,g'}:=(\alpha_X(g,\cdot),\alpha_Y(g',\cdot)(R).$ Notice that in this expression, the infimum is taken over the set of all (standard) correspondences between $X$ and $Y$.
\end{remark}

The following proposition is a generalization of \cite[Theorem 2.1]{kalton1999distances} to the $G$-equivariant setting. See \Cref{app:proofs-GHG} for its proof.

\begin{proposition}\label{prop:GGHeqmaps}
For any $G$-metric spaces $X$ and $Y$ we have
$$\dgh^G(X,Y)=\frac{1}{2}\inf\Big\{\dis(R(\varphi,\psi)):\varphi:X\to Y \text{and } \psi:Y\to X\text{ are }G\text{-functions}\Big\}.$$
\end{proposition}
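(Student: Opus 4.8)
The plan is to prove the asserted equality by a two–sided inequality, the easy half being a direct consequence of the machinery already set up. For the inequality $\dgh^G(X,Y)\le\tfrac12\inf\{\dis(R(\varphi,\psi))\}$, I would observe that for any pair of $G$-functions $\varphi:X\to Y$, $\psi:Y\to X$ the relation $R(\varphi,\psi)=\mathrm{graph}(\varphi)\cup\mathrm{graph}(\psi)$ is a $G$-correspondence by item~4 of \Cref{rmk:R-order}; hence $\dis(R(\varphi,\psi))\ge\inf\{\dis(R):R\in\mathcal{R}_G(X,Y)\}=2\,\dgh^G(X,Y)$, and taking the infimum over all such $(\varphi,\psi)$ gives this half.

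For the reverse inequality I would argue by approximation: fix $\varepsilon>0$ and pick $R\in\mathcal{R}_G(X,Y)$ with $\dis(R)<2\,\dgh^G(X,Y)+\varepsilon$. It then suffices to manufacture $G$-functions $\varphi:X\to Y$ and $\psi:Y\to X$ whose associated correspondence satisfies $R(\varphi,\psi)\subseteq R$, because the distortion of a (nonempty) sub-relation of $R$ is at most $\dis(R)$, so that $\inf\{\dis(R(\varphi',\psi'))\}\le\dis(R(\varphi,\psi))\le\dis(R)<2\,\dgh^G(X,Y)+\varepsilon$; letting $\varepsilon\downarrow 0$ yields $\tfrac12\inf\{\dis(R(\varphi,\psi))\}\le\dgh^G(X,Y)$. (Equivalently one can phrase the bound through the identity $\dis(R(\varphi,\psi))=\max\{\dis(\varphi),\dis(\psi),\codis(\varphi,\psi)\}$ recorded right after the definition of distortion, though it is not strictly needed here.)

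The heart of the matter is the construction of $\varphi$ and $\psi$ — an equivariant version of the usual ``selection from a correspondence'' trick. I would proceed orbit by orbit: for each $G$-orbit $\mathcal{O}\subseteq X$ choose a representative $x_{\mathcal{O}}$; the fiber $R(x_{\mathcal{O}}):=\{y\in Y:(x_{\mathcal{O}},y)\in R\}$ is nonempty because $R$ is surjective, and it is invariant under the stabilizer $G_{x_{\mathcal{O}}}$ because $R$ is a $G$-correspondence. Choosing $y_{\mathcal{O}}\in R(x_{\mathcal{O}})$ that is fixed by every element of $G_{x_{\mathcal{O}}}$, I would set $\varphi(g\cdot x_{\mathcal{O}}):=g\cdot y_{\mathcal{O}}$ for $g\in G$; this is well defined since $g\cdot x_{\mathcal{O}}=g'\cdot x_{\mathcal{O}}$ implies $g^{-1}g'\in G_{x_{\mathcal{O}}}$ and hence $g\cdot y_{\mathcal{O}}=g'\cdot y_{\mathcal{O}}$, it is a $G$-function by construction, and $\mathrm{graph}(\varphi)\subseteq R$ since $(x_{\mathcal{O}},y_{\mathcal{O}})\in R$ and $R$ is $G$-invariant. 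The symmetric construction on the orbits of $Y$ produces $\psi$ with $\mathrm{graph}(\psi)\subseteq R$, whence $R(\varphi,\psi)\subseteq R$.

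The step I expect to be the crux is the choice of $y_{\mathcal{O}}\in R(x_{\mathcal{O}})$ fixed by $G_{x_{\mathcal{O}}}$: a priori a subgroup-invariant subset need not contain a fixed point of that subgroup. When the $G$-actions are \emph{free} — which covers the main examples of interest, such as the antipodal action on spheres and the circle action on odd-dimensional spheres — every stabilizer is trivial and any point of $R(x_{\mathcal{O}})$ will do, so the argument goes through verbatim; the case of trivial actions likewise reduces to \cite[Theorem 2.1]{kalton1999distances}. In the general case this is where I would scrutinize the author's proof most carefully, expecting either an additional observation supplying the needed fixed points or a hypothesis on the $G$-metric spaces under consideration that makes such an equivariant selection possible.
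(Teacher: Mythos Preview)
Your approach mirrors the paper's proof essentially step for step: the paper also picks one representative per $G$-orbit of $X$, chooses an $R$-partner for it, and then asserts that this assignment ``can be extended to the whole of $X$ in a $G$-equivariant way'', with the symmetric construction for $\psi$. You have, however, been more careful than the paper: you explicitly isolate the well-definedness issue --- namely that one needs $G_{x_{\mathcal O}}\subseteq G_{y_{\mathcal O}}$, i.e., $y_{\mathcal O}$ must be fixed by the stabilizer of $x_{\mathcal O}$ --- whereas the paper simply asserts the extension exists.

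Your caution is warranted; in fact the issue is genuine in full generality. Take $G=\Z_2$, $X=\{\ast\}$ with the trivial action, and $Y=\{a,b\}$ with the swap action and $d_Y(a,b)=1$. The only correspondence $R=X\times Y$ is a $\Z_2$-correspondence with $\dis(R)=1$, so $\dgh^{\Z_2}(X,Y)=\tfrac12$; but there is no $\Z_2$-function $X\to Y$ at all (a fixed point of $X$ would have to map to a fixed point of $Y$, and $Y$ has none), so the infimum on the right-hand side is over the empty set. Thus the paper's proof has precisely the gap you identified, at exactly the same step. The argument is sound under the free-action hypothesis you single out --- and more generally whenever every fiber $R(x)$ meets the $G_x$-fixed locus of $Y$, and symmetrically --- which covers the applications actually made in the paper; but as stated for arbitrary $G$-metric spaces the proposition requires qualification.
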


The following proposition is a generalization of \cite[Theorem 7.3.25]{burago2022course} to the $G$-equivariant setting. See \Cref{app:proofs-GHG} for its proof.

\begin{proposition}\label{prop:GGHeqembeds}
For any $G$-metric spaces $X$ and $Y$ we have
$$\dgh^G(X,Y)=\inf_{Z,\iota_X,\iota_Y}d_{\mathrm{H}}^Z(\iota_X(X),\iota_Y(Y))$$
where the infimum is taken over all $G$-metric spaces $Z$ and $G$-isometric embeddings $\iota_X:X\hooktoG Z$ and $\iota_Y:Y\hooktoG Z$.
\end{proposition}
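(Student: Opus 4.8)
The plan is to adapt the classical proof that $\dgh$ equals an infimum over common isometric embeddings (\cite[Theorem 7.3.25]{burago2022course}), checking at each step that equivariance is preserved. Write $D^G(X,Y):=\inf_{Z,f,g}d_{\mathrm H}^Z(f(X),g(Y))$ for the right-hand side, and let $a$ denote a generic element of $G$.

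\emph{Step 1: $\dgh^G(X,Y)\le D^G(X,Y)$.} Fix a $G$-metric space $Z$ and $G$-isometric embeddings $f:X\hooktoG Z$, $g:Y\hooktoG Z$, and pick $r>d_{\mathrm H}^Z\big(f(X),g(Y)\big)$. I would set
$$R:=\{(x,y)\in X\times Y:\ d_Z(f(x),g(y))< r\}.$$
Since $r$ exceeds the Hausdorff distance, each $x\in X$ and each $y\in Y$ occurs in some pair of $R$, so $R$ is a correspondence; and since every $\alpha_Z(a,\cdot)$ is an isometry while $f,g$ are $G$-maps,
$$d_Z\big(f(\alpha_X(a,x)),\,g(\alpha_Y(a,y))\big)=d_Z\big(\alpha_Z(a,f(x)),\,\alpha_Z(a,g(y))\big)=d_Z\big(f(x),g(y)\big),$$
so $R\in\mathcal{R}_G(X,Y)$. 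A routine triangle-inequality estimate in $Z$ gives $\dis(R)\le 2r$, hence $\dgh^G(X,Y)\le r$; letting $r$ decrease to the Hausdorff distance and taking the infimum over $Z,f,g$ yields the claim.

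\emph{Step 2: $D^G(X,Y)\le \dgh^G(X,Y)$.} Given $R\in\mathcal{R}_G(X,Y)$, put $\rho:=\tfrac12\dis(R)$, fix $\delta>0$, set $r:=\rho+\delta$, and let $Z:=X\sqcup Y$ carry the function $d_Z$ that restricts to $d_X$ on $X$ and to $d_Y$ on $Y$ and, for $x\in X$ and $y\in Y$, is given by
$$d_Z(x,y):=\inf_{(x',y')\in R}\big(d_X(x,x')+r+d_Y(y',y)\big),$$
extended symmetrically. The standard computation shows that, because $r>\rho=\tfrac12\dis(R)$, $d_Z$ is a genuine metric whose restrictions to $X$ and to $Y$ are undistorted, so the two inclusions into $Z$ are isometric embeddings; and $d_{\mathrm H}^Z(X,Y)\le r$ since each $x\in X$ admits a partner $y$ with $d_Z(x,y)\le r$, and symmetrically. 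The new input is equivariance: give $Z$ the action that equals $\alpha_X$ on $X$ and $\alpha_Y$ on $Y$. Because $R$ is a $G$-correspondence and $d_X,d_Y$ are $G$-invariant, reindexing the infimum via $(x',y')\mapsto(\alpha_X(a,x'),\alpha_Y(a,y'))$ gives $d_Z(\alpha_X(a,x),\alpha_Y(a,y))=d_Z(x,y)$; hence $Z$ is a $G$-metric space (compact, since $X$ and $Y$ are) and the inclusions are $G$-isometric embeddings. Therefore $D^G(X,Y)\le r=\rho+\delta$, and letting $\delta\downarrow 0$ and infimizing over $R$ completes the proof.

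\emph{Main obstacle.} Essentially all the work is classical: the one nontrivial point, reused verbatim from the non-equivariant case, is that the glued $d_Z$ of Step 2 is a metric with undistorted restrictions exactly when $r>\tfrac12\dis(R)$, which is what forces the strict choice $r=\rho+\delta$ followed by a limit. The genuinely new parts — that the correspondence of Step 1 is a $G$-correspondence and that the glued metric of Step 2 is invariant under the diagonal $G$-action — are immediate from the definitions of $G$-correspondence and $G$-metric space, and $X\sqcup Y$ is automatically compact, so it lies in $\mathcal{M}^G$.
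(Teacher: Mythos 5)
Your proof is correct, and it follows the same overall strategy as the paper's: in one direction, glue $X$ and $Y$ along a $G$-correspondence $R$ and observe the diagonal action makes $X\sqcup Y$ a $G$-metric space; in the other, extract a $G$-correspondence from a pair of $G$-isometric embeddings into a common $Z$. The one place you diverge usefully is the latter direction: you take $R:=\{(x,y):d_Z(f(x),g(y))<r\}$, which is a $G$-correspondence automatically because $d_Z$ is $\alpha_Z$-invariant. The paper instead builds $G$-functions $\varphi:X\toG Y$ and $\psi:Y\toG X$ by choosing one point per $G$-orbit and extending equivariantly, then sets $R:=\grp(\varphi)\cup\grp(\psi)$. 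Your sublevel-set construction is cleaner: it sidesteps the orbit-wise choice and the verification that the extension is well-defined. On the gluing side, you insert a slack $\delta>0$ (taking $r=\tfrac12\dis(R)+\delta$) and then let $\delta\downarrow 0$, whereas the paper glues at $\varepsilon=\tfrac12\dis(R)$ exactly and reads off $d_{\mathrm H}^Z=\varepsilon$ on the nose. Both work; your slack guards against the degenerate case $\dis(R)=0$, where the glued $d_Z$ is only a pseudometric, at the cost of an extra limit. One tiny inaccuracy in your closing remark: the proposition is stated for arbitrary (not necessarily compact) $G$-metric spaces, and the infimum is over all $G$-metric spaces $Z$, so no appeal to $\mathcal{M}^G$ or compactness of $X\sqcup Y$ is needed.
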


The following theorem is a generalization of \cite[Theorem 7.3.30]{burago2022course} to the $G$-equivariant setting. See \Cref{app:proofs-GHG} for its proof.

\begin{theorem}\label{thm:dghGmetric}
Assume that $G$ is a finite or countably infinite group. Then, the $G$-Gromov-Hausdorff distance $\dgh^G$ is a metric on the collection of $G$-isometry classes of compact $G$-metric spaces.
\end{theorem}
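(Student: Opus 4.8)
The plan is to verify, for $G$ finite or countably infinite, that $\dgh^G$ is well-defined, finite, symmetric, satisfies the triangle inequality, and vanishes exactly on pairs of $G$-isometric spaces. Finiteness is immediate from item~1 of \Cref{rmk:G-dGH} (since compact spaces have finite diameter), and well-definedness on $G$-isometry classes follows by transporting $G$-correspondences along $G$-isometries. Symmetry holds because the transpose $R^{\mathsf T}:=\{(y,x):(x,y)\in R\}$ of a $G$-correspondence $R\in\mathcal R_G(X,Y)$ lies in $\mathcal R_G(Y,X)$ with $\dis(R^{\mathsf T})=\dis(R)$, and $\dgh^G(X,X)=0$ because the diagonal is a $G$-correspondence of distortion $0$.

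For the triangle inequality, given $R_1\in\mathcal R_G(X,Y)$ and $R_2\in\mathcal R_G(Y,Z)$ I would form the composition $R_1\circ R_2:=\{(x,z):\exists\,y\in Y,\ (x,y)\in R_1,\ (y,z)\in R_2\}$. One checks this is again a correspondence, that it is $G$-invariant (if $(x,z)$ is witnessed by $y$, then $(g\cdot x,g\cdot z)$ is witnessed by $g\cdot y$, using that $R_1,R_2$ are $G$-correspondences), and that $\dis(R_1\circ R_2)\le\dis(R_1)+\dis(R_2)$ by the triangle inequality in $X$ and in $Z$. Taking infima over $\mathcal R_G(X,Y)$ and $\mathcal R_G(Y,Z)$ gives $\dgh^G(X,Z)\le\dgh^G(X,Y)+\dgh^G(Y,Z)$.

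The substantive point is that $\dgh^G(X,Y)=0$ forces $X$ and $Y$ to be $G$-isometric. By \Cref{prop:GGHeqmaps} there are $G$-functions $\varphi_n:X\to Y$ and $\psi_n:Y\to X$ with $\dis(R(\varphi_n,\psi_n))\to0$; since $\dis(R(\varphi,\psi))=\max\{\dis(\varphi),\dis(\psi),\codis(\varphi,\psi)\}$, in particular $\dis(\varphi_n)\to0$ and $\dis(\psi_n)\to0$. As $G$ is finite or countably infinite and $X$ is compact, hence separable, $X$ has a countable dense subset $D$ whose orbit $G\cdot D=\bigcup_{g\in G}g\cdot D$ is still countable, dense, and $G$-invariant; fix such a set $S_X$, and similarly a countable dense $G$-invariant $S_Y\subseteq Y$. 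Using sequential compactness of $Y$ and a diagonal argument, pass to a subsequence along which $\varphi_n(s)$ converges in $Y$ for every $s\in S_X$, and then to a further subsequence along which $\psi_n(t)$ converges in $X$ for every $t\in S_Y$; define $\varphi:S_X\to Y$ and $\psi:S_Y\to X$ by these limits. Because $\dis(\varphi_n)\to0$, the map $\varphi$ is distance-preserving on $S_X$, hence uniformly continuous, so it extends uniquely to a distance-preserving map $\varphi:X\to Y$ (as $Y$ is complete), and likewise $\psi:Y\to X$. Equivariance survives the limit: for $s\in S_X$ and $g\in G$, $\varphi(g\cdot s)=\lim_n\varphi_n(g\cdot s)=\lim_n g\cdot\varphi_n(s)=g\cdot\varphi(s)$ by continuity of the $G$-action, and $\varphi$ is then a $G$-map on all of $X$ by density and continuity; similarly for $\psi$.

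It remains to see $\varphi$ is bijective. Distance-preserving maps are injective, and $\psi\circ\varphi:X\to X$ and $\varphi\circ\psi:Y\to Y$ are distance-preserving self-maps of compact metric spaces; any such map $f$ is surjective, for if $a\notin\overline{f(X)}=f(X)$ then $\delta:=d(a,f(X))>0$, yet some subsequence of the iterates $f^k(a)$ is Cauchy, giving $m<n$ with $d(a,f^{\,n-m}(a))=d(f^m(a),f^n(a))<\delta$, contradicting $f^{\,n-m}(a)\in f(X)$. Hence $\psi\circ\varphi$ and $\varphi\circ\psi$ are onto, so $\varphi$ is onto, and therefore $\varphi$ is a bijective distance-preserving $G$-map, i.e.\ a $G$-isometry. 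The main obstacle is exactly this last construction — producing a subsequential limit of the $\varphi_n$ that is simultaneously distance-preserving and $G$-equivariant — and it is precisely here that the hypothesis that $G$ be finite or countably infinite is needed, to guarantee a countable dense $G$-invariant subset over which the diagonal argument can be run.
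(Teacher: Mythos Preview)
Your proposal is correct and follows essentially the same route as the paper: the triangle inequality via composition of $G$-correspondences, and the identity-of-indiscernibles via \Cref{prop:GGHeqmaps}, a Cantor diagonal argument over a countable dense $G$-invariant set, extension of the limiting distance-preserving $G$-map, and surjectivity from the compactness argument (the paper cites \cite[Theorem~1.6.14]{burago2022course} for the last step, which you spell out directly). One small wording slip: the bound $\dis(R_1\circ R_2)\le\dis(R_1)+\dis(R_2)$ comes from the triangle inequality in $\R$ applied to $|d_X-d_Z|\le|d_X-d_Y|+|d_Y-d_Z|$, not from the triangle inequalities in $X$ and $Z$.
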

 
\begin{remark}\label{rmk:G-dGH}
We make several remarks:
\begin{enumerate}
\item Note that $\mathcal{R}_G(X,Y)$ is nonempty since $X\times Y$ always belongs to $\mathcal{R}_G(X,Y)$. Moreover, this implies that
$$\dgh^G(X,Y)\leq\frac{1}{2}\max\{\diam(X),\diam(Y)\}$$
for any two $G$-metric spaces $(X,d_X,\alpha_X)$ and $(Y,d_Y,\alpha_Y)$.

\item By item 1 of \Cref{rmk:R-order}, if $G$ is the trivial group $G_0$, then $\dgh^G$ reduces to the standard Gromov-Hausdorff distance.

\item Assume a group homomorphism $h:\widetilde{G}\to G$ is given.  Then, any two $G$-metric spaces, $(X,d_X,\alpha_X)$ and $(Y,d_Y,\alpha_Y)$ induce the $\widetilde{G}$-metric spaces $(X,d_X,h^\sharp\alpha_X)$ and $(Y,d_Y,h^\sharp\alpha_Y)$, respectively. Then, via  item 3 of \Cref{rmk:R-order} we see that 
$$\dgh^G\big((X,d_X,\alpha_X),(Y,d_Y,\alpha_Y)\big)\geq \dgh^{\widetilde{G}}\big((X,d_X,h^\sharp\alpha_X),(Y,d_Y,h^\sharp\alpha_Y)\big).$$
In particular, $\dgh^G$ is always an upper bound to the standard Gromov-Hausdorff distance.

\item  Let $(X,d_X)$ be a metric space and let $\alpha_1$, $\alpha_2$ be two $G$-actions on $X$ and $R$ be a $G$-correspondence $R$ between $(X,d_X,\alpha_1)$ and $(X,d_X,\alpha_2)$. Then, it is easy to verify that 
$$\dis(R)\geq\inf_{(x_1,x_2)\in X\times X}\sup_{g\in G}\big\vert d_X(x_1,\alpha_1(g,x_1))-d_X(x_2,\alpha_2(g,x_2))\big\vert.$$
Hence, we have that
\begin{align*}\dgh^G\big((X,d_X,\alpha_1),(X,d_X,\alpha_2)\big)&\geq\frac{1}{2}\inf_{(x_1,x_2)\in X\times X}\sup_{g\in G}\big\vert d_X(x_1,\alpha_1(g,x_1))-d_X(x_2,\alpha_2(g,x_2))\big\vert\\
&\geq \frac{1}{2}\inf_{(x_1,x_2)\in X\times X}\big|e_1(x_1)-e_2(x_2)\big|,
\end{align*}
where $e_i:X\to \R_+$ is given by $x\mapsto\sup_{g\in G}d_X(x,\alpha_i(g,x))$, for $i=1,2$. For example, if $X=\Sp^n$, $G=\Z_2$, $\alpha_1$ is the canonical antipodal action and $\alpha_2$ is trivial, then $e_1 \equiv \pi$ whereas $e_2\equiv 0.$ This implies that

$$\dgh\big((\Sp^n,d_{\Sp^n},\alpha_1),(\Sp^n,d_{\Sp^n},\alpha_2)\big)=\tfrac{\pi}{2}.$$
\end{enumerate}
\end{remark}

\begin{remark}[Other equivariant versions of the Gromov-Hausdorff distance]\label{rem:other-G-GH} We make several historical remarks.

\begin{enumerate}
\item There are several  existing definitions of equivariant Gromov–Hausdorff topology that are related to, but distinct from, the metric topology induced by our $G$-Gromov–Hausdorff distance. 
\begin{itemize}
    \item In \cite[Definitions 1–2]{fukaya1986theory}, Fukaya defines an equivariant version of the Gromov–Hausdorff distance between ``pointed group metric space triples", which can be seen as a pointed version of $G$-metric spaces. This definition is well-suited for non-compact spaces and, in contrast to our  version, allows the groups acting on the two spaces to be different. Using this distance, the author proves rigidity and finiteness-type results with respect to equivariant diffeomorphisms for Riemannian orbifolds. Moreover, the same distance is employed to show that the fundamental groups of almost nonnegatively curved manifolds are almost nilpotent \cite[Main Theorem 0.1]{fukaya1992fundamental}.

    \item By  altering the distance in \cite{fukaya1986theory}, Harvey \cite[Definition 2.3]{harvey2016equivariant} introduces an  equivariant Gromov–Hausdorff topology for compact ``unpointed" $G$-metric spaces, where the acting groups $G$ can still be different. The author employs this topology in order to prove a stability theorem for $G$-Alexandrov spaces (see \cite[Theorem A]{harvey2016equivariant}). On $\mathcal{M}^G$, the collection of all compact $G$-metric spaces, one can verify that the metric topology induced by our $G$-Gromov–Hausdorff distance is strictly finer than that induced by Harvey’s definition.
    
    \item In \cite[Definition 3]{paulin1988topologie}, Paulin introduces an alternative notion of  equivariant Gromov–Hausdorff topology for the space of (not necessarily compact) $G$-metric spaces. Similarly to the case of Harvey's equivariant topology discussed above, one can verify that the  topology induced by our $G$-Gromov–Hausdorff distance is strictly finer than that induced by Paulin’s definition.
\end{itemize}
Note that the definitions of Fukaya and Harvey allow comparison between spaces with actions by \emph{different} groups. By contrast, Paulin’s definition, like ours, assumes a single fixed group $G$.

    \item The case when $G=\Z_2$ of $\dgh^G$  was previously considered in \cite[Section 4.4.2]{polterovich2020topological}. In that work, the authors computed a nontrivial lower bound for $\dgh^{\Z_2}(X,Y)$ for specific $\Z_2$-metric spaces $X$ and $Y$, where the $\Z_2$-action on $Y$ arises from a $\Z_4$-action. They achieved this by applying a stability inequality (stated without proof) between the $\Z_2$-Gromov-Hausdorff distance and the $\Z_2$-interleaving distance (see Definition \ref{def:gint-dist}), in conjunction with \cite[Theorem 4.4.11]{polterovich2020topological}; see also Example \ref{ex:dIGdIstrictineq}.
\end{enumerate}
\end{remark}

\subsubsection{$\dgh^G(X,Y)$ versus $\dgh(X/G,Y/G)$}\label{sec:quotient}

It is natural to try to relate the $G$-Gromov-Hausdorff distance between two $G$-metric spaces to the standard Gromov-Hausdorff distance between their quotients.
Let $X\in\mathcal{M}^G$. Recall that the quotient space $X/G$ can be endowed with the canonical quotient metric $d_{X/G}(\rho,\upsilon):=d_X(\rho,\upsilon)$ for any $\rho,\upsilon\in X/G$. Note that, if $\rho=[x]$ and $\upsilon=[x']$ for some $x,x'\in X$, then we have $d_{X/G}(\rho,\upsilon)=\inf_{g\in G}d_X(x,\alpha_X(g,x'))$. In general, $d_{X/G}$ defines only a pseudometric, but it becomes a genuine metric when all $G$-orbits are closed in $X$, for instance when $G$ is finite. 

\begin{proposition}\label{prop:dghGvsquotientdgh}
Let $X,Y\in\mathcal{M}^G$. Then we have that $\dgh^G(X,Y)\geq\dgh(X/G,Y/G)$.

\end{proposition}
\begin{proof} 
Let $\pi_X:X\rightarrow X/G$ and $\pi_Y:Y\rightarrow Y/G$ be the canonical projection maps to the quotient spaces. Fix an arbitrary $G$-correspondence $R\in\mathcal{R}_G(X,Y)$. Now, let $[R]:=(\pi_X\times\pi_Y)(R)\subseteq X/G \times Y/G$. We claim that $\dis([R])\leq\dis(R)$, which completes the proof.

Fix arbitrary $(\rho,\upsilon),(\rho',\upsilon')\in [R]$ and $\varepsilon>0$. Then, one can choose $(x,y),(x',y')\in R$ such that $\rho=[x]$, $\upsilon=[y]$, $\rho'=[x']$, and $\upsilon'=[y']$.  By the definition of the quotient metric, there exists $g_\varepsilon\in G$ such that $d_Y(y,\alpha_Y(g_\varepsilon,y'))<d_{Y/G}(\upsilon,\upsilon')+\varepsilon$. Let $x'':=\alpha_X(g_\varepsilon,x')$ and $y'':=\alpha_Y(g_\varepsilon,y')$. Note that $(x'',y'')\in R$ since $R$ is a $G$-correspondence. Hence, we have that
$$d_{X/G}(\rho,\rho')-d_{Y/G}(\upsilon,\upsilon') < d_X(x,x'')-d_Y(y,y'')+\varepsilon\leq \dis(R)+\varepsilon.$$
Since $(\rho,\upsilon)$, $(\rho',\upsilon')$, and $\varepsilon$ were arbitrary, it follows that $\dis([R])\leq\dis(R)$ as desired.
\end{proof}

\begin{example}
By Proposition \ref{prop:dghGvsquotientdgh}, we have $\dgh^{\Z_2}(\Sp^m,\Sp^n)\geq\dgh(\R\mathbb{P}^m,\R\mathbb{P}^n)$. This inequality can also be established in an alternative way. Indeed, by item (1) of Remark \ref{rmk:G-dGH} we have $\dgh(\R\mathbb{P}^m,\R\mathbb{P}^n)\leq\frac{1}{2}\max\{\diam(\R\mathbb{P}^m),\diam(\R\mathbb{P}^n)\}=\frac{\pi}{4}$. Moreover, by Theorem \ref{Gdhtstability}, Proposition \ref{prop:Z2dGHSmSn}, and Corollary \ref{cor:SmSnlbddvr}, we have $\dgh^{\Z_2}(\Sp^m,\Sp^n)\geq\frac{\pi}{4}$ for $m\neq n$. This, in fact, gives the (improved) bound:
$$\dgh^{\Z_2}(\Sp^m,\Sp^n)\geq \frac{\pi}{4}\geq \dgh(\R\mathbb{P}^m,\R\mathbb{P}^n)\,\,\mbox{for all $m\neq n$}.$$
\end{example}

Furthermore, the following example demonstrates that the inequality in Proposition \ref{prop:dghGvsquotientdgh} can be strict. In fact, there are non $G$-isometric $G$-spaces whose quotients are, however, isometric.

\begin{example}\label{ex:ghGvsquotientdgh}
Consider the following $\Z_3$-metric spaces. Let $X:=\{x_0,x_1,x_2\}$ and $Y:=\{y_0,y_1,y_2\}$ be three-point spaces with the obvious $\Z_3$-actions and metrics defined by $d_X(x_i,x_j):=1-\delta_{i,j}$ and $d_Y(y_i,y_j):=2(1-\delta_{i,j})$. Then, it is straightforward to verify that $\dgh^{\Z_3}(X,Y)=\frac{1}{2}$, while $\dgh(X/\Z_3,Y/\Z_3)=0$ since both of $X/\Z_3$ and $Y/\Z_3$ are singletons.
\end{example}

Example \ref{ex:ghGvsquotientdgh} shows that a large ``gap" can exist between the LHS and the RHS of the  inequality in \Cref{prop:dghGvsquotientdgh}.
The following proposition and example explain the nature of this gap. Note that it is easy to verify that $$d_{\mathrm{H}}^{Z}(\iota_X(X),\iota_Y(Y))=d_{\mathrm{H}}^{Z/G}\big([\iota_X](X/G),[\iota_Y](Y/G)\big)$$ for arbitrary $G$-metric space $Z$ and $G$-isometric embeddings $\iota_X:X\hooktoG Z$ and $\iota_Y:Y\hooktoG Z$. Then, by invoking Proposition \ref{prop:GGHeqembeds}, we have the following result.

\begin{proposition}\label{prop:GGHeqembedsquotient}
For any $G$-metric spaces $X$ and $Y$ we have
$$\dgh^G(X,Y)=\inf_{Z,\iota_X,\iota_Y}d_{\mathrm{H}}^{Z/G}\big([\iota_X](X/G),[\iota_Y](Y/G)\big),$$
where the infimum is taken over all $G$-metric spaces $Z$ and $G$-isometric embeddings $\iota_X:X\hooktoG Z$ and $\iota_Y:Y\hooktoG Z$, and $[\iota_X]:X/G\longhookrightarrow Z/G$, $[\iota_Y]:Y/G\longhookrightarrow Z/G$ are the induced isometric embeddings.
\end{proposition}

Observe that Proposition \ref{prop:GGHeqembedsquotient} provides an alternative proof of Proposition \ref{prop:dghGvsquotientdgh}. This follows from the fact that the infimum of $d_{\mathrm{H}}^{Z/G}([\iota_X](X/G),[\iota_Y](Y/G))$ over all $G$-metric spaces $Z$ and $G$-isometric embeddings $\iota_X:X\hooktoG Z$ and $\iota_Y:Y\hooktoG Z$ is greater than or equal to the infimum of $d_{\mathrm{H}}^{Z/G}(\iota_{X/G}(X/G),\iota_{Y/G}(Y/G))$ over all $G$-metric spaces $Z$ and isometric embeddings $\iota_{X/G}:X/G\longhookrightarrow Z/G$ and $\iota_{Y/G}:Y/G\longhookrightarrow Z/G$. Furthermore, Proposition \ref{prop:GGHeqembedsquotient} sheds new light on Example \ref{ex:ghGvsquotientdgh} as follows.

\begin{figure}
\begin{center}
    \includegraphics[width = 0.8\linewidth]{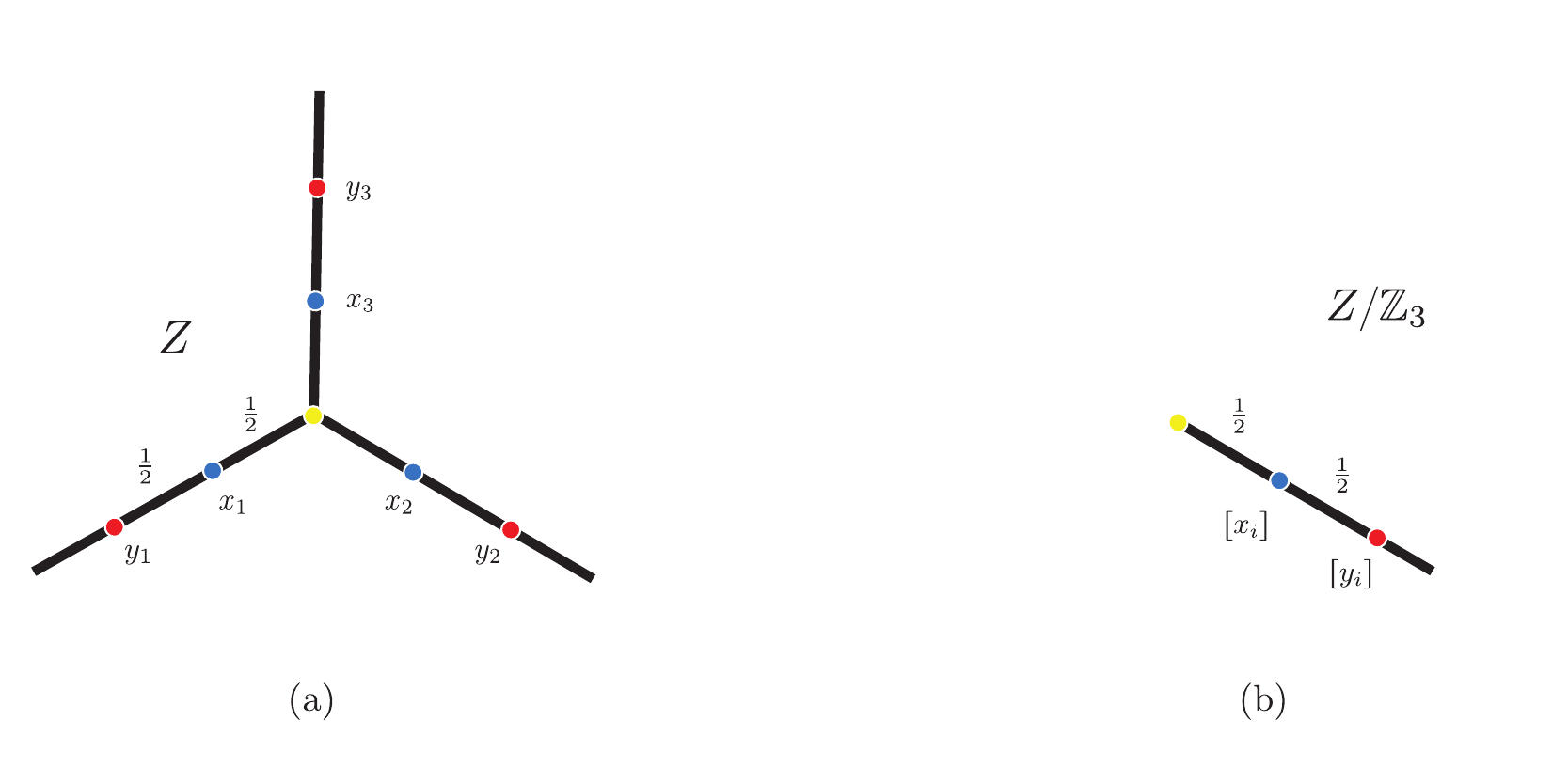}
\end{center}
    \caption{Description of $Z$ and $Z/\Z_3$ from \Cref{ex:ghGvsquotientdghHD}.}
    \label{fig:ghGvsquotientdghHD}
\end{figure}

\begin{example}\label{ex:ghGvsquotientdghHD}
Let $X$ and $Y$ be the metric spaces from Example \ref{ex:ghGvsquotientdgh}. One can easily verify that the tripod metric space $Z$ and the $\Z_3$ isometric embeddings $\iota_X:X\stackrel{\Z_3}{\longhookrightarrow}Z$, $\iota_Y:Y\stackrel{\Z_3}{\longhookrightarrow}Z$ illustrated in Figure \ref{fig:ghGvsquotientdghHD}(a), realize the optimal distance, i.e.  $d_{\mathrm{H}}^{Z}(\iota_X(X),\iota_Y(Y))=d_{\mathrm{H}}^{Z/G}([\iota_X](X/G),[\iota_Y](Y/G))=\tfrac{1}{2}$. Moreover, the associated quotient space $Z/\Z_3$ and the isometric embeddings $[\iota_X],[\iota_Y]$ are illustrated in Figure \ref{fig:ghGvsquotientdghHD}(b). This figure therefore illustrates why $\dgh^{\Z_3}(X,Y)=\frac{1}{2}$ whereas $\dgh(X/\Z_3,Y/\Z_3)=0$.
\end{example}

\subsection{The $G$-homotopy type distance.}\label{sec:dHT-G}

We extend the notion of \emph{(persistent) homotopy type distance} from \cite{frosini2017persistent} to the setting of $G$-spaces. First we need the following definitions on the equivariant topology. See \cite[Section I.1]{tom1987transformation} and \cite[Section I.1]{may1996equivariant}.

\begin{definition}[$G$-homotopy equivalence]
Suppose that two $G$-spaces $X$ and $Y$ are given. We say that two $G$-maps $\varphi:X\toG Y$ and $\varphi':X\toG Y$ are \emph{$G$-homotopic} if there is a map $H:X\times [0,1]\longrightarrow Y$ such that
\begin{enumerate}
    \item $H(\cdot,0)=\varphi$
    \item $H(\cdot,1)=\varphi'$
    \item $H(\cdot,t):X\toG Y$ is a $G$-map for each $t\in [0,1]$.
\end{enumerate}
Any such map will be called a  \emph{$G$-homotopy} between $\varphi$ and $\varphi'$. We write $\varphi\Gsimeq\varphi'$ whenever $\varphi$ and $\varphi'$ are $G$-homotopic. Also, we say that $X$ and $Y$ are \emph{$G$-homotopy equivalent} if there are $G$-maps $\varphi:X\toG Y$ and $\psi:Y\toG X$ such that $\psi\circ\varphi$ and $\mathrm{id}_X$ are $G$-homotopic and $\varphi\circ\psi$ and $\mathrm{id}_Y$ are $G$-homotopic. Finally, if $X$ and the one-point space $\{\ast\}$ are $G$-homotopy equivalent, then $X$ is said to be \emph{$G$-contractible}.
\end{definition}

We now define the concept of filtered $G$-topological spaces.

\begin{definition}[Filtered $G$-topological spaces]
Given a $G$-topological space $(X,\alpha_X)$, a map $\beta_X:X\to \R$
    is said to be \emph{$G$-invariant} if $\beta_X(\alpha_X(g,x))=\beta_X(x)$ for all $x\in X$ and $g\in G$.
    
    Any triple $(X,\alpha_X,\beta_X)$ where $(X,\alpha_X)$ is a $G$-topological space and $\beta_X:X\to \R$ is a $G$-invariant map is said to be a \emph{filtered $G$-topological space}.
\end{definition}

The following proposition shows that a $G$-group action on a metric space $X$ can be transferred to a $G$-group action on $\Pfin(X)$, the space of all probability measures on $X$ with finite support. Furthermore, with this action, $\Pfin(X)$ becomes a filtered $G$-topological space.

\begin{proposition}\label{prop:PfinGaction}
Let $(X,d_X,\alpha_X)$ be a $G$-metric space. Then the function  
\begin{align*}
\alpha_{\Pfin(X)}:G\times \Pfin(X)&\longrightarrow \Pfin(X)\\
\left(g,\mu\right)&\longmapsto  \alpha_X(g,\cdot)_\sharp \mu
\end{align*}
is a well defined $G$-group action\footnote{More explicitly, for $\mu=\sum_{i=1}^m u_i \delta_{x_i}$, $\alpha_{\Pfin(X)}(g,\mu) = \sum_{i=1}^mu_i\delta_{\alpha_X(g,x_i)}.$} on the topological space $\Pfin(X)$. Furthermore, for each $p\in [1,\infty]$, the map $\diam_p^X:\Pfin(X)\rightarrow\R$ is $G$-invariant so that $$\big(\Pfin(X),\alpha_{\Pfin(X)},\diam_p^X\big)$$ becomes a filtered $G$-topological space.
\end{proposition}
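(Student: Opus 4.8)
The plan is to verify, in turn, the three things the statement asserts: that $\alpha_{\Pfin(X)}$ actually takes values in $\Pfin(X)$, that it is a $G$-action by homeomorphisms of the weak topology, and that $\diam_p^X$ is invariant under it. Every step reduces either to an elementary property of pushforward measures or to the hypothesis that each $\alpha_X(g,\cdot)$ is an isometry of $X$, so I do not anticipate any genuine obstacle; the single point deserving care is the weak continuity of pushforward.

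First, well-definedness. I would write a generic element of $\Pfin(X)$ as $\mu=\sum_{i=1}^m u_i\delta_{x_i}$ and use the two elementary facts that pushforward is linear in the measure and that $\varphi_\sharp\delta_x=\delta_{\varphi(x)}$ for any (measurable) function $\varphi$. Applying these with $\varphi=\alpha_X(g,\cdot)$—which is continuous, hence Borel, since it is an isometry—gives $\alpha_X(g,\cdot)_\sharp\mu=\sum_{i=1}^m u_i\delta_{\alpha_X(g,x_i)}$, which is again finitely supported; thus $\alpha_{\Pfin(X)}(g,\mu)\in\Pfin(X)$. This simultaneously establishes the explicit formula in the footnote.

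Second, the action axioms and continuity. Since $\alpha_X$ is a $G$-action, $\alpha_X(e,\cdot)=\mathrm{id}_X$, so $\alpha_{\Pfin(X)}(e,\cdot)=(\mathrm{id}_X)_\sharp=\mathrm{id}_{\Pfin(X)}$. For the composition law I would invoke functoriality of pushforward, $(\varphi\circ\psi)_\sharp=\varphi_\sharp\circ\psi_\sharp$, together with $\alpha_X(g,\cdot)\circ\alpha_X(h,\cdot)=\alpha_X(gh,\cdot)$, to get $\alpha_{\Pfin(X)}(g,\cdot)\circ\alpha_{\Pfin(X)}(h,\cdot)=\alpha_{\Pfin(X)}(gh,\cdot)$. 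To see each $\alpha_{\Pfin(X)}(g,\cdot)$ is a homeomorphism, it suffices to show $\varphi_\sharp$ is weakly continuous whenever $\varphi$ is continuous: then $\alpha_{\Pfin(X)}(g,\cdot)$ is continuous with continuous inverse $\alpha_{\Pfin(X)}(g^{-1},\cdot)$. This is classical: for bounded continuous $f$ one has $\int f\,d(\varphi_\sharp\mu)=\int (f\circ\varphi)\,d\mu$, and $f\circ\varphi$ is again bounded continuous, so $\varphi_\sharp$ preserves weak convergence (one argues with nets in the general metric case; see \cite{dudley2018real,villani2021topics}). Restricting to $\Pfin(X)$ with its inherited topology, $(\Pfin(X),\alpha_{\Pfin(X)})$ is a $G$-topological space.

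Third, $G$-invariance of $\diam_p^X$. For $p<\infty$ I would apply the change-of-variables identity $\int h\,d(\varphi_\sharp\mu)=\int (h\circ\varphi)\,d\mu$ (and its product-measure version) to the double integral defining $\diam_p^X\big(\alpha_X(g,\cdot)_\sharp\mu\big)$, rewriting it as $\iint d_X^p\big(\alpha_X(g,x),\alpha_X(g,x')\big)\,d\mu(x)\,d\mu(x')$, and then use that $\alpha_X(g,\cdot)$ is an isometry to replace $d_X\big(\alpha_X(g,x),\alpha_X(g,x')\big)$ by $d_X(x,x')$; taking $p$-th roots yields $\diam_p^X(\mu)$. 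For $p=\infty$ I would observe that $\supp\big[\alpha_X(g,\cdot)_\sharp\mu\big]=\alpha_X(g,\cdot)\big(\supp[\mu]\big)$ and that an isometry preserves the diameter of any subset. Hence $\diam_p^X\circ\alpha_{\Pfin(X)}(g,\cdot)=\diam_p^X$ for every $g\in G$, i.e. $\diam_p^X$ is $G$-invariant, and therefore $\big(\Pfin(X),\alpha_{\Pfin(X)},\diam_p^X\big)$ is a filtered $G$-topological space, as claimed.
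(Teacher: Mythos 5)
Your proposal is correct and follows essentially the same route as the paper's proof: verify the action axioms via functoriality of pushforward, establish weak continuity of $\alpha_{\Pfin(X)}(g,\cdot)$ by the change-of-variables identity $\int f\,d(\varphi_\sharp\mu)=\int (f\circ\varphi)\,d\mu$, and deduce $G$-invariance of $\diam_p^X$ (for $p<\infty$ and $p=\infty$ separately) from the fact that $\alpha_X(g,\cdot)$ is an isometry. The only difference worth noting is that you explicitly flag the net-vs.-sequence issue for weak continuity in a general metric space, whereas the paper argues with sequences; this is a slightly more careful formulation but does not change the substance of the argument.
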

\begin{proof}
First, let's verify that $(\Pfin(X),\alpha_{\Pfin(X)})$ is indeed a $G$-topological space. It is clear that $\alpha_{\Pfin(X)}(e_G, \mu)=\mu$ where $e_G$ is the identity element of $G$ and $\alpha_{\Pfin(X)}(g_2,\alpha_{\Pfin(X)}(g_1, \mu))=\alpha_{\Pfin(X)}(g_2g_1, \mu)$ for any $g_1,g_2\in G$ and $\mu\in\Pfin(X)$. So, we only need to show that $\alpha_{\Pfin(X)}(g, \cdot):X\rightarrow X$ is a homeomorphism for any $g\in G$. Note that $\alpha_{\Pfin(X)}(g, \cdot)$ is obviously a bijection with  inverse $\alpha_{\Pfin(X)}(g^{-1}, \cdot)$. Hence, we only need to show the continuity with respect to the weak topology. Assume that a sequence $\{\mu_k\}_{k\geq 1}\subseteq\Pfin(X)$ weakly converges to $\mu\in\Pfin(X)$. i.e., for any continuous and bounded function $f:X\rightarrow\R$, we have $\int_X f\,d\mu_k\rightarrow\int_X f\,d\mu$. Then, for any $g\in G$, we have that
$$\int_X f\,d\alpha_{\Pfin(X)}(g,\mu_k)=\int_X f\circ\alpha_X(g,\cdot)\,d\mu_k\rightarrow\int_X f\circ\alpha_X(g,\cdot)\,d\mu=\int_X f\,d\alpha_{\Pfin(X)}(g,\mu).$$
Hence, $\alpha_{\Pfin(X)}(g,\mu_k)$ weakly converges to $\alpha_{\Pfin(X)}(g,\mu)$ so that $\alpha_{\Pfin(X)}(g, \cdot)$ is continuous as we required.

Second, let's prove that $\diam_p^X$ is $G$-invariant. For $p<\infty$, observe that
\begin{align*}
&\iint_{X\times X}d_X^p(x,x')\,d\alpha_{\Pfin(X)}(g,\mu)(x)\,d\alpha_{\Pfin(X)}(g,\mu)(x')&=\\
&\iint_{X\times X}d_X^p(\alpha_X(g,x),\alpha_X(g,x'))\,d\mu(x)\,d\mu(x')
&=\\
&\iint_{X\times X}d_X^p(x,x')\,d\mu(x)\,d\mu(x') 
\end{align*}
for any $g\in G$ and $\mu\in\Pfin(X)$ since $\alpha_X(g,\cdot)$ is an isometry. Therefore, $\diam_p^X(\alpha_{\Pfin(X)}(g,\mu))=\diam_p^X(\mu)$. For $p=\infty$, $\diam_\infty^X(\alpha_{\Pfin(X)}(g,\mu))=\diam_\infty^X(\mu)$ since $\supp[\alpha_{\Pfin(X)}(g,\mu)]=\alpha_X(g,\supp[\mu])$ and $\alpha_X(g,\cdot)$ is an isometry.
\end{proof}

\begin{example}\label{ex:GVRfilteredspace}
For a  metric space $X$, recall that $\vert\vr(X;\infty)\vert$ is the colimit of $\big(\vert\vr(X;r)\vert,\iota_{r,s} \big)_{r\leq s}$ (see \Cref{def:VR}). Furthermore, if $(X,d_X,\alpha_X)$ is a $G$-metric space, then there exists a canonical $G$-group action $\alpha_{\vert\vr(X;\infty)\vert}$ on $\vert\vr(X;\infty)\vert$ defined as follows:
$$\alpha_{\vert\vr(X;\infty)\vert}\left(g,\sum_{i=1}^nu_ix_i\right):=\sum_{i=1}^nu_i\alpha_X(g,x_i)\,\forall\, g\in G\text{ and }\sum_{i=1}^nu_ix_i\in\vert\vr(X;\infty)\vert.$$

Finally, it is easy to verify that the map $\diam:\vert\vr(X;\infty)\vert\longrightarrow\R$ is $G$-invariant. Hence, the triple $$\big(\vert\vr(X;\infty)\vert,\alpha_{\vert\vr(X;\infty)\vert},\diam\big)$$ is a filtered $G$-topological space.
\end{example}

\begin{example}\label{ex:spheres-Z2-vrvrm} Recall  items (2) and (3) of \Cref{ex:spheres-Z2}. By \Cref{prop:PfinGaction} and \Cref{ex:GVRfilteredspace}, we have that $(\Pfin(X),\alpha_{\Pfin(X)},\diam_p^X)$ and $(\vert\vr(X;\infty)\vert,\alpha_{\vert\vr(X;\infty)\vert},\diam)$ are filtered $\Z_2$-topological spaces for $X=\Sp^n,\SpE^n,\Sp^n_\infty,$ and $\square^n_\infty$ with the canonical $\Z_2$-actions.
\end{example}
    
\begin{definition}[$(G,\delta)$-maps]
Consider two filtered $G$-topological spaces $(X,\alpha_X,\beta_X)$ and $(Y,\alpha_Y,\beta_Y)$. For a given $\delta\geq 0$, we say that a $G$-map $\Phi:X\toG Y$ is a  \emph{$(G,\delta)$-map} if
$$\beta_Y\circ\Phi(x)\leq \beta_X(x)+\delta\text{ for all } x\in X.$$
\end{definition}

\begin{remark}\label{rmk:G-delta-map}
It is easy to verify that for any $(G,\delta)$-map $\Phi:X\rightarrow Y$ and any group homomorphism $h:\widetilde{G}\to G$, we have that $\Phi$ is also a $(\widetilde{G},\delta)$-map; cf. the item (2) of \Cref{rmk:R-order}.
\end{remark}

\begin{definition}[$(G,\delta)$-homotopies]
Suppose two filtered $G$-topological spaces $(X,\alpha_X,\beta_X)$ and $(Y,\alpha_Y,\beta_Y)$ together with $(G,\delta)$-maps $\Phi_0,\Phi_1:X\rightarrow Y$ are given. Then, we say that $\Phi_0$ and $\Phi_1$ are \emph{$(G,\delta)$-homotopic}  if there exists a map $H:X\times [0,1]\rightarrow Y$ such that:
\begin{enumerate}
    \item $H(\cdot,0)=\Phi_0$,
    
    \item $H(\cdot,1)=\Phi_1$, and
    
    \item $H(\cdot,t)$ is $(G,\delta)$-map for all $t\in [0,1]$.
\end{enumerate}
Any such $H$ map will be referred to as a \emph{$(G,\delta)$-homotopy}.
\end{definition}

\begin{definition}[The $G$-homotopy type distance]
We say that two filtered $G$-topological spaces $(X,\alpha_X,\beta_X)$ and $(Y,\alpha_Y,\beta_Y)$ are \emph{$(G,\delta)$-homotopy equivalent} if there are $(G,\delta)$-maps $\Phi:X\rightarrow Y$ and $\Psi:Y\rightarrow X$ such that
\begin{enumerate}
    \item $\Psi\circ\Phi$ is $(G,2\delta)$-homotopic  to $\mathrm{id}_X$, and
    
    \item $\Phi\circ\Psi$ is $(G,2\delta)$-homotopic  to $\mathrm{id}_Y$.
\end{enumerate}

Finally, we  define the \emph{$G$-homotopy type distance} between $(X,\alpha_X,\beta_X)$ and $(Y,\alpha_Y,\beta_Y)$ as follows:
$$\dht^G(X,Y):=\inf\{\delta\geq 0:(X,\alpha_X,\beta_X)\text{ and }(Y,\alpha_Y,\beta_Y)\text{ are }(G,\delta)\text{-homotopy equivalent}\}.$$
We set $\dht^G(X,Y)=\infty$ whenever $X$ and $Y$ fail to be $(G,\delta)$-homotopy equivalent. \nomenclature[12]{$\dht^G$}{$G$-homotopy type distance}
\end{definition}

The following proposition is a generalization of \cite[Proposition 2.10]{frosini2017persistent} to the $G$-equivariant setting.

\begin{proposition}\label{prop:dHTG}
$\dht^G$ is an extended pseudometric on the collection of all filtered $G$-topological spaces.
\end{proposition}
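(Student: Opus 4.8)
The plan is to verify the three defining properties of an extended pseudometric: vanishing on the diagonal, symmetry, and the triangle inequality; nonnegativity and the admissibility of the value $\infty$ are built into the definition. Symmetry is immediate, since the conditions defining $(G,\delta)$-homotopy equivalence of $(X,\alpha_X,\beta_X)$ and $(Y,\alpha_Y,\beta_Y)$ are symmetric in the two spaces (interchange the roles of $\Phi$ and $\Psi$). For vanishing on the diagonal, I would take $\Phi=\Psi=\mathrm{id}_X$, which is a $(G,0)$-map, and use the constant homotopy to witness that $\mathrm{id}_X\circ\mathrm{id}_X=\mathrm{id}_X$ is $(G,0)$-homotopic to $\mathrm{id}_X$; hence $X$ is $(G,0)$-homotopy equivalent to itself and $\dht^G(X,X)=0$.

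The substance is the triangle inequality, and here I would first isolate two elementary bookkeeping lemmas. First: if $\Phi:X\toG Y$ is a $(G,\delta)$-map and $\Psi:Y\toG Z$ is a $(G,\delta')$-map between filtered $G$-topological spaces, then $\Psi\circ\Phi:X\toG Z$ is a $(G,\delta+\delta')$-map (chase $\beta_Z\circ\Psi\circ\Phi\le\beta_Y\circ\Phi+\delta'\le\beta_X+\delta+\delta'$, and recall that a composite of $G$-maps is a $G$-map). Second: given a $(G,\varepsilon)$-homotopy $H:Y\times[0,1]\to Y$, a $(G,\delta)$-map $a:X\toG Y$, and a $(G,\delta)$-map $b:Y\toG X$, the map $(x,t)\mapsto b\big(H(a(x),t)\big)$ is a $(G,\varepsilon+2\delta)$-homotopy from $b\circ H(\cdot,0)\circ a$ to $b\circ H(\cdot,1)\circ a$, which follows by applying the first lemma slicewise. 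I would also record that concatenating, with the usual reparametrization and the pasting lemma, a $(G,\varepsilon_1)$-homotopy and a $(G,\varepsilon_2)$-homotopy sharing a common middle map yields a $(G,\max(\varepsilon_1,\varepsilon_2))$-homotopy.

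With these in hand, suppose $(X,\alpha_X,\beta_X)$ and $(Y,\alpha_Y,\beta_Y)$ are $(G,\delta_1)$-homotopy equivalent via $\Phi_1,\Psi_1$, and $(Y,\alpha_Y,\beta_Y)$ and $(Z,\alpha_Z,\beta_Z)$ are $(G,\delta_2)$-homotopy equivalent via $\Phi_2,\Psi_2$. Set $\Phi:=\Phi_2\circ\Phi_1$ and $\Psi:=\Psi_1\circ\Psi_2$; by the first lemma both are $(G,\delta_1+\delta_2)$-maps. To see that $\Psi\circ\Phi=\Psi_1\circ(\Psi_2\circ\Phi_2)\circ\Phi_1$ is $(G,2(\delta_1+\delta_2))$-homotopic to $\mathrm{id}_X$, I would apply the second lemma to the $(G,2\delta_2)$-homotopy from $\Psi_2\circ\Phi_2$ to $\mathrm{id}_Y$, conjugated by $\Phi_1$ and $\Psi_1$, obtaining a $(G,2\delta_1+2\delta_2)$-homotopy from $\Psi_1\circ\Psi_2\circ\Phi_2\circ\Phi_1$ to $\Psi_1\circ\Phi_1$, and then concatenate it with the $(G,2\delta_1)$-homotopy from $\Psi_1\circ\Phi_1$ to $\mathrm{id}_X$; the result is a $(G,2(\delta_1+\delta_2))$-homotopy. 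The same argument handles $\Phi\circ\Psi$ and $\mathrm{id}_Z$. Hence $X$ and $Z$ are $(G,\delta_1+\delta_2)$-homotopy equivalent, so $\dht^G(X,Z)\le\delta_1+\delta_2$; letting $\delta_1\downarrow\dht^G(X,Y)$ and $\delta_2\downarrow\dht^G(Y,Z)$ gives the triangle inequality, the bound being vacuous when either term is $\infty$ (and, by definition, $\dht^G(X,Z)=\infty$ unless $X$, $Y$, $Z$ are pairwise $G$-homotopy equivalent, which is transitive).

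The main obstacle, such as it is, is purely organizational: keeping the various $\delta$- and $2\delta$-shifts consistent through the compositions and the concatenation, and checking continuity of the concatenated homotopy through the pasting lemma. No genuinely new equivariant phenomenon arises, since ``a composite of $G$-maps is a $G$-map'' and ``a slicewise $G$-map homotopy is a $G$-homotopy'' reduce each step to its non-equivariant analogue in \cite{frosini2017persistent}.
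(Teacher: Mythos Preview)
Your proof is correct and follows essentially the same approach as the paper's: both isolate the composition lemma for $(G,\delta)$-maps and a compatibility lemma for $(G,\delta)$-homotopies under composition (the paper's Lemmas~\ref{lemma:Gdeltacomp} and~\ref{lemma:Gdeltahtpycomp}), then chain $\Psi_1\circ\Psi_2\circ\Phi_2\circ\Phi_1\Gsimeq\Psi_1\circ\Phi_1\Gsimeq\mathrm{id}_X$ and take infima. You are slightly more explicit about the concatenation step and package the two-sided composition into a single ``conjugation'' lemma, but these are cosmetic differences.
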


\begin{remark}\label{rmk:dhtGmonotone}
Note that:
\begin{enumerate}
    \item When $G=G_0$, $\dht^G$ boils down to the original homotopy type distance from \cite{frosini2017persistent}.

    \item If $h:\widetilde{G}\longrightarrow G$   is a group homomorphism, then any two filtered $G$-topological spaces $(X,\alpha_X,\beta_X)$ and $(Y,\alpha_Y,\beta_Y)$ induce the filtered $\widetilde{G}$-topological space $(X,h^\sharp\alpha_X,\beta_X)$ and $(Y,h^\sharp\alpha_Y,\beta_Y)$, respectively. Furthermore, by \Cref{rmk:G-delta-map}, for any  $(G,\delta)$-map $\Phi:X\rightarrow Y$ we have that $\Phi$ is also a $(\widetilde{G},\delta)$-map. Hence, if $(X,\alpha_X,\beta_X)$ and $(Y,\alpha_Y,\beta_Y)$ are $(G,\delta)$-interleaved, then they are also $(\widetilde{G},\delta)$-interleaved. Finally, we obtain that
    $$\dht^G\big((X,\alpha_X,\beta_X),(Y,\alpha_Y,\beta_Y)\big)\geq\dht^{\widetilde{G}}\big((X,h^\sharp\alpha_X,\beta_X),(Y,h^\sharp\alpha_Y,\beta_Y)\big).$$
\end{enumerate}
\end{remark}

\subsubsection{The proof of \Cref{prop:dHTG}}
\begin{lemma}\label{lemma:Gdeltacomp}
If $\Phi:X\toG Y$ is a $(G,\delta)$-map and $\Phi':Y\toG Z$ is a $(G,\delta')$-map, then $\Phi'\circ\Phi$ is a $(G,\delta+\delta')$-map.
\end{lemma}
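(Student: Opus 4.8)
The plan is to unwind the definition of a $(G,\delta)$-map and chain the two hypotheses. The statement has two parts: that $\Phi'\circ\Phi$ is a $G$-map between the underlying $G$-topological spaces, and that it respects the filtration functions up to the additive shift $\delta+\delta'$.

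First I would check that $\Phi'\circ\Phi:X\to Z$ is a $G$-map. Continuity is immediate since a composition of continuous maps is continuous. For $G$-equivariance, fix $x\in X$ and $g\in G$; using that $\Phi$ and $\Phi'$ are each $G$-maps we compute
$$
(\Phi'\circ\Phi)(\alpha_X(g,x))=\Phi'\big(\alpha_Y(g,\Phi(x))\big)=\alpha_Z\big(g,\Phi'(\Phi(x))\big)=\alpha_Z\big(g,(\Phi'\circ\Phi)(x)\big),
$$
which is exactly the $G$-map condition.

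Next I would verify the shift inequality. For every $x\in X$, applying the $(G,\delta')$-map hypothesis for $\Phi'$ at the point $\Phi(x)\in Y$ and then the $(G,\delta)$-map hypothesis for $\Phi$ at $x$ gives
$$
\beta_Z\big((\Phi'\circ\Phi)(x)\big)=\beta_Z\big(\Phi'(\Phi(x))\big)\leq \beta_Y(\Phi(x))+\delta'\leq\big(\beta_X(x)+\delta\big)+\delta'=\beta_X(x)+(\delta+\delta').
$$
Since this holds for all $x\in X$, the composition $\Phi'\circ\Phi$ is a $(G,\delta+\delta')$-map, as claimed.

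This argument is entirely routine; there is no genuine obstacle. The only point worth a word of care is keeping straight at which point each hypothesis is invoked (the inequality for $\Phi'$ is used at $\Phi(x)$, not at $x$), but this is automatic from the definitions.
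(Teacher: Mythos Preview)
Your proof is correct and matches the paper's own argument exactly: the paper dismisses the $G$-map property as ``obvious'' and then chains the two inequalities $\beta_Z(\Phi'(\Phi(x)))\leq\beta_Y(\Phi(x))+\delta'\leq\beta_X(x)+\delta+\delta'$, just as you do. There is nothing to add.
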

\begin{proof}
$\Phi'\circ\Phi$ is obviously a $G$-map. Also,
\begin{align*}
    \beta_Z(\Phi'(\Phi(x)))\leq\beta_Y(\Phi(x))+\delta'\leq\beta_X(x)+\delta+\delta'
\end{align*}
for all $x\in X$.
\end{proof}

\begin{lemma}\label{lemma:Gdeltahtpycomp}
If $\Phi_0,\Phi_1:X\rightarrow Y$ are $(G,\delta)$-homotopic and $\Psi:Y\rightarrow Z$ (resp. $\Psi:Z\rightarrow X$) is a $(G,\delta')$-map, then $\Psi\circ\Phi_0$ (resp. $\Phi_0\circ\Psi$) and $\Psi\circ\Phi_1$ (resp. $\Phi_1\circ\Psi$) are $(G,\delta+\delta')$-homotopic.
\end{lemma}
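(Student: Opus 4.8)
The plan is to simply post-compose (resp.\ pre-compose) the given $(G,\delta)$-homotopy with the $(G,\delta')$-map $\Psi$ and then check the three defining conditions of a $(G,\delta+\delta')$-homotopy, using \Cref{lemma:Gdeltacomp} for the scale bookkeeping.

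For the first assertion, let $H:X\times[0,1]\to Y$ be a $(G,\delta)$-homotopy between $\Phi_0$ and $\Phi_1$, and set $K:=\Psi\circ H:X\times[0,1]\to Z$. Since $\Psi$ and $H$ are continuous, $K$ is a map; clearly $K(\cdot,0)=\Psi\circ\Phi_0$ and $K(\cdot,1)=\Psi\circ\Phi_1$. For each $t\in[0,1]$, the slice $H(\cdot,t):X\toG Y$ is a $(G,\delta)$-map (by the definition of $(G,\delta)$-homotopy) and $\Psi:Y\toG Z$ is a $(G,\delta')$-map, so \Cref{lemma:Gdeltacomp} yields that $K(\cdot,t)=\Psi\circ H(\cdot,t):X\toG Z$ is a $(G,\delta+\delta')$-map. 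Hence $K$ is a $(G,\delta+\delta')$-homotopy between $\Psi\circ\Phi_0$ and $\Psi\circ\Phi_1$.

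For the parenthetical assertion, with $\Psi:Z\toG X$ a $(G,\delta')$-map, set $K:=H\circ(\Psi\times\mathrm{id}_{[0,1]}):Z\times[0,1]\to Y$, so that $K(z,t)=H(\Psi(z),t)$. Again $K$ is continuous, $K(\cdot,0)=\Phi_0\circ\Psi$, $K(\cdot,1)=\Phi_1\circ\Psi$, and for each $t$ we have $K(\cdot,t)=H(\cdot,t)\circ\Psi$, which by \Cref{lemma:Gdeltacomp} (applied with $\Psi$ first and $H(\cdot,t)$ second) is a $(G,\delta'+\delta)$-map. Thus $K$ is the desired $(G,\delta+\delta')$-homotopy.

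There is no genuine obstacle here: the argument is a routine composition check. The only point worth noting is that the definition of $(G,\delta)$-homotopy already guarantees that each slice $H(\cdot,t)$ is a $(G,\delta)$-map — not merely a $G$-map — which is exactly the hypothesis needed to invoke \Cref{lemma:Gdeltacomp}; continuity of the composite homotopies is immediate from continuity of $\Psi$ and $H$.
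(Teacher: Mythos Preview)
Your proof is correct and follows essentially the same approach as the paper: post-compose (resp.\ pre-compose) the given $(G,\delta)$-homotopy $H$ with $\Psi$ and apply \Cref{lemma:Gdeltacomp} slice-wise. You are in fact slightly more thorough, since the paper only writes out the post-composition case and leaves the parenthetical one implicit.
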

\begin{proof}
First, note that $\Psi\circ\Phi_0$ and $\Psi\circ\Phi_1$ are $(G,\delta+\delta')$-maps by Lemma \ref{lemma:Gdeltacomp}. By  assumption, there is a map $H:X\times [0,1]\rightarrow Y$ such that:
\begin{enumerate}
    \item $H(\cdot,0)=\Phi_0$,
    
    \item $H(\cdot,1)=\Phi_1$, and
    
    \item $H(\cdot,t)$ is $(G,\delta)$-map for all $t\in [0,1]$.
\end{enumerate}

Then, it is easy to verify that $\Psi\circ H:X\times [0,1]\rightarrow Z$ is a $(G,\delta+\delta')$ homotopy between $\Psi\circ\Phi_0$ and $\Psi\circ\Phi_1$.
\end{proof}

\begin{proof}[\textbf{Proof of \Cref{prop:dHTG}}]
It is easy to verify that $\dht^G(X,X)=0$ and $\dht^G(X,Y)=\dht^G(Y,X)$ from the definition. Hence, we only need to show the triangle inequality. Consider three triples $(X,\alpha_X,\beta_X)$, $(Y,\alpha_Y,\beta_Y)$, and $(Z,\alpha_Z,\beta_Z)$. We need to show the following:
$$\dht^G(X,Z)\leq\dht^G(X,Y)+\dht^G(Y,Z).$$
Fix arbitrary $\varepsilon>\dht^G(X,Y)$ and $\varepsilon'>\dht^G(Y,Z)$. Then, there are $(G,\varepsilon)$-maps $\Phi:X\rightarrow Y$, $\Psi:Y\rightarrow X$ and $(G,\varepsilon')$-maps $\Phi':Y\rightarrow Z$, $\Psi':Z\rightarrow Y$. By Lemma \ref{lemma:Gdeltacomp}, $\Phi'\circ\Phi:X\rightarrow Z$ and $\Psi\circ\Psi':Z\rightarrow X$ are $(G,\varepsilon+\varepsilon')$-maps. Also, since $\Psi'\circ\Phi'$ is $(G,2\varepsilon')$-homotopic to $\mathrm{id}_Y$ and $\Psi\circ\Phi$ is $(G,2\varepsilon)$-homotopic to $\mathrm{id}_X$, one can deduce that
$$\Psi\circ\Psi'\circ\Phi'\circ\Phi\stackrel{(G,2(\varepsilon+\varepsilon'))}{\simeq}\Psi\circ\Phi\stackrel{(G,2\varepsilon)}{\simeq}\mathrm{id}_X$$
where the first equivalence is by Lemma \ref{lemma:Gdeltahtpycomp}. Similarly,$\Phi'\circ\Phi\circ\Psi\circ\Psi'$ is also $(G,2(\varepsilon+\varepsilon'))$-homotopic to $\mathrm{id}_Z$. Therefore. $X$ and $Z$ are $(G,\varepsilon+\varepsilon')$-homotopy equivalent so that $\dht^G(X,Z)\leq\varepsilon+\varepsilon'$. Since the choice of $\varepsilon$ and $\varepsilon'$ are arbitrary, one can establish that
$$\dht^G(X,Z)\leq\dht^G(X,Y)+\dht^G(Y,Z)$$
as required.
\end{proof}

\subsection{The $G$-interleaving distance.}\label{sec:sec:gdi}

We first recall the concept of persistence module and associated notions; see \cite{carlsson2010zigzag,bubenik2014categorification}.

\begin{definition}[Persistence module]
A \emph{persistence module} $V_\bullet=(V_r,v_{r,s})_{r\leq s\in \R}$  is a family of $\mathbb{F}$-vector spaces $V_r$ for some given field $\mathbb{F}$ with linear maps $v_{r,s}:V_r \to V_s$ for each $r \leq s$ such that 
\begin{itemize}
\item $v_{r,r}=\mathrm{id}_{V_r}$,
\item $v_{s,t}\circ v_{r,s}=v_{r,t}$ for each $r\leq s \leq t$.
\end{itemize}
In other words, a persistence module is a functor from the poset $(\R,\leq)$ to the category of vector spaces. The maps $v_{\bullet,\bullet}$ are referred to as the \emph{structure maps} of $V_\bullet$.
\end{definition}

By $0_\bullet$ we will denote the zero persistence module.

\begin{definition}[Persistence morphism, isomorphism, and automorphism]
Suppose two persistence modules $V_\bullet=(V_r,v_{r,s})_{r\leq s}$ and $W_\bullet=(W_r,w_{r,s})_{r\leq s}$ are given. Then, 
\begin{itemize}
    \item A family of linear transformations $T_\bullet=(T_r:V_r\rightarrow W_r)_{r}$ is said to be a \emph{persistence morphism} from $V_\bullet$ to $W_\bullet$ if they satisfy $w_{r,s}\circ T_r=T_s\circ v_{r,s}$ for all $r\leq s$. 
    
    \item We define composition of persistence morphisms in the obvious way. 
    
    \item We say that a persistence morphism $T_\bullet:V_\bullet\rightarrow W_\bullet$ as a \emph{persistence isomorphism} if there exists an inverse persistence morphism $T'_\bullet:W_\bullet\rightarrow V_\bullet$ such that the composition $T'_\bullet\circ T_\bullet$ is equal to the identity persistence morphism on $V_\bullet$ and the composition $T_\bullet\circ T'_\bullet$ is equal to the identity persistence morphism on $W_\bullet$. 
    
    \item If there is a persistence isomorphism between $V_\bullet$ and $W_\bullet$, then we say that the two persistence modules $V_\bullet$ and $W_\bullet$ are \emph{isomorphic}.
    
    \item A persistence isomorphism between $V_\bullet$ and $V_\bullet$ itself is called a \emph{persistence automorphism} on $V_\bullet$. The \emph{group} of persistence automorphisms on $V_\bullet$ is denoted by $\gl(V_\bullet)$.
    
    \item Given a persistence module $V_\bullet=(V_r,v_{r,s})_{r\leq s}$ and $\delta\in \R$, by $V_{\bullet+\delta}$ we will denote the \emph{$\delta$-shifted} persistence module $(V_{r+\delta},v_{r+\delta,s+\delta})_{r\leq s}$.\footnote{In other words, if $V'_\bullet = V_{\bullet+\delta}$ then, $V'_r = V_{r+\delta}$ for all $r$ and $v'_{r,s}:=v_{r+\delta,s+\delta}$ for all $r\leq s$.}
    \end{itemize}
\end{definition}

For an integer $k\geq 0$, applying the degree $k$ homology functor (with coefficients in a given field $\mathbb{F}$) to a filtration $U_\bullet = \big(U_r,\iota_{r,s} \big)_{r\leq s}$ produces the persistence module $\mathrm{H}_k(U_\bullet;\mathbb{F})$ where the structure maps are those induced by the inclusions $\big(\iota_{r,s}\big)_{r\leq s}$. Following the existing literature, we will use the term \emph{persistent homology} of a  filtration to refer to the persistence module obtained  upon applying the  homology functor to the given filtration. 
\begin{center}\textbf{From now on, we will fix a field $\mathbb{F}$ and omit it from our notation.}
\end{center}

\begin{example}\label{ex:PMod-sub-level} 
 The sub-level set filtration from \Cref{def:sub-level} gives rise to the persistence module $\h_k(\beta_X\big((-\infty,\bullet)\big)$.
 \end{example}

\begin{example}\label{ex:PMod-VRm}
For any $p\in[1,\infty]$, the $p$-Vietoris-Rips metric thickening filtration $\vrm_p(X;\bullet)$ from \Cref{def:VRm} gives rise to the persistence module $\h_k(\vrm_p(X;\bullet))$.
\end{example}

\begin{example}\label{ex:PMod-VR}
The Vietoris-Rips filtration from 
\Cref{def:VR} gives rise to the persistence module $\h_k(|\vr(X;\bullet)|)$.
\end{example}

\paragraph{Introducing $G$-group actions.} $G$-equivariant formulations of persistence modules and the interleaving distance have been already defined and studied in \cite[Section 4.4]{polterovich2020topological}. Recall that a representation of a group $G$ is a pair $(V,\nu)$ where $V$ is a vector space and $\nu$ is a homomorphism from $G$ to $\gl(V)$. 

\begin{definition}[{\cite[Definition 4.4.1]{polterovich2020topological}}]
A \emph{$G$-persistence module} is a pair $(V_\bullet,\nu_\bullet)$ where $V_\bullet=(V_r,v_{r,s})_{r\leq s}$ is a persistence module and  $\nu_\bullet$  is  a homomorphism from $G$ to $\gl(V_\bullet)$. We will refer to $\nu_\bullet$ as a \emph{persistent representation of $G$ (on $V_\bullet$).}\nomenclature[13]{$(V_\bullet,\nu_\bullet)$}{$G$-persistence module}
\end{definition}

\begin{remark}
Note that:
\begin{enumerate}
    \item The defining condition of $\nu_\bullet$ means that for all $r\leq s$ and all $g\in G$ the following diagram commutes:
$$\begin{tikzcd}
V_r\ar[r,"v_{r,s}"]\ar[loop, ->, in=170,out=100,looseness=5,"\nu_r(g)"']&[10pt] V_s \ar[loop, ->, in=10,out=80,looseness=5,"\nu_s(g)"] 
\end{tikzcd}$$

\item When $G=G_0$, $G$-persistence modules boil down to  (standard) persistence modules.
\end{enumerate}
\end{remark}

\begin{example}
Suppose a filtered $G$-topological space $(Z,\alpha_Z,\beta_Z)$ is given.  Then, the persistence module from \Cref{ex:PMod-sub-level} can be promoted to a $G$-persistence module through the following persistent representation of $G$: $$\nu_\bullet:G\to \gl\left(\h_k\big(\beta_Z^{-1}(-\infty,\bullet)\big) \right)$$ such that for each $g\in G$ and $r\in \R$, 

$$\nu_r(g):=\h_k\Big({\beta_Z^{-1}\big((-\infty,r)\big)}\stackrel{\alpha_Z(g,\cdot)}{\xrightarrow{\hspace{1cm}}} {\beta_Z^{-1}\big((-\infty,r)\big)}\Big).$$

Note that this is well-defined since $\beta_Z$ is $G$-invariant. 

\begin{quote}
\textbf{For conciseness, whenever clear from context, we will omit specifying the associated persistence representation of the module $\h_k\big(\beta_Z^{-1}(-\infty,\bullet)\big)$.}
\end{quote}
\end{example}

In particular, when $X$ is a $G$-metric space, the filtered $G$-topological spaces $$\text{$(\Pfin(X),\alpha_{\Pfin(X)},\diam_p^X)$ and $(\vert\vr(X;\infty)\vert,\alpha_{\vert\vr(X;\infty)\vert},\diam)$}$$ (see \Cref{prop:PfinGaction} and \Cref{ex:GVRfilteredspace})  induce $G$-persistence modules. 

\begin{example}\label{ex:GvrmPH}
Assume that a $G$-metric space $(X,d_X,\alpha_X)$ is given. Then, for any $p\in [1,\infty]$, the persistence module from \Cref{ex:PMod-VRm} can be promoted to a $G$-persistence module via the following persistence representation of $G$:
$$\nu_\bullet:G\to \gl\big(\h_k(\vrm_p(X;\bullet))\big)$$
such that for each $g\in G$ and $r\in \R$, 

$$\nu_r(g):=\h_k\Big(\vrm_p(X;r) \stackrel{\alpha_{\Pfin(X)}(g,\cdot)}{\xrightarrow{\hspace{1cm}}} \vrm_p(X;r)\Big).$$

See \Cref{prop:PfinGaction} for the precise definition of $\alpha_{\Pfin(X)}$.
\end{example}

\begin{example}\label{ex:GvrPH}
Assume that a $G$-metric space $(X,d_X,\alpha_X)$ is given. Then, the persistence module from \Cref{ex:PMod-VR} can be promoted to a $G$-persistence module via the following persistence representation of $G$:
$$\nu_\bullet:G\to \gl\left(\h_k\big(|\vr(X;\bullet)|\big) \right)$$
such that for each $g\in G$ and $r\in \R$, 

$$\nu_r(g):=\h_k\Big(|\vr(X;r)| \stackrel{\alpha_{\vert\vr(X;\infty)\vert}(g,\cdot)}{\xrightarrow{\hspace{1cm}}} |\vr(X;r)|\Big).$$

See \Cref{ex:GVRfilteredspace} for the precise definition of $\alpha_{\vert\vr(X;\infty)\vert}$.
\end{example}

\begin{example}\label{ex:spheres-Z2-vrvrmPH}
Recall  items (2) and (3) of \Cref{ex:spheres-Z2}. By \Cref{ex:GvrmPH,ex:GvrPH}, we have that $\h_k(\vrm_p(X;\bullet))$ and $\h_k(\vert\vr(X;\bullet)\vert)$ are $\Z_2$-persistence modules for $X=\Sp^n,\SpE^n,\Sp^n_\infty,$ and $\square^n_\infty$ with the canonical $\Z_2$-actions.
\end{example}

\begin{definition}[$G$-persistence morphism]
Let $(V_\bullet,\nu_\bullet)$ and $(W_\bullet,\omega_\bullet)$ be two $G$-persistence modules. We say that a persistence morphism $T_\bullet:V_\bullet\rightarrow W_\bullet$ is a \emph{$G$-persistence morphism} if the following diagram commutes for all $r\leq s\in \R$ and all $g\in G$:

$$\begin{tikzcd}
V_r\ar[r,"v_{r,s}"]\ar[loop, ->, in=170,out=100,looseness=5,"\nu_r(g)"']\ar[dd,"T_r"']  &[10pt] V_s \ar[loop, ->, in=10,out=80,looseness=5,"\nu_s(g)"]\ar[dd,"T_s"] \\
\\
W_r\ar[r,"w_{r,s}"]\ar[loop, ->, in=-170,out=-100,looseness=5,"\omega_r(g)"] &[10pt] W_s \ar[loop, ->, in=-10,out=-80,looseness=5,"\omega_s(g)"'] 
\end{tikzcd}$$
\end{definition}

\begin{definition}[{$(G,\delta)$-interleavings and  $G$-interleaving distance \cite[Definition 4.4.7]{polterovich2020topological}}]\label{def:gint-dist}
Two $G$-persistence modules $(V_\bullet,\nu_\bullet)$ and $(W_\bullet,\omega_\bullet)$ are said to be \emph{$(G,\delta)$-interleaved} for some $\delta\geq 0$ if there are $G$-persistence morphisms  $T_\bullet:V_\bullet \to W_{\bullet+\delta}$ and $T'_\bullet: W_\bullet \to V_{\bullet+\delta}$ such that the following two diagrams commute for every $r\in \R$:

$$\begin{tikzcd}
V_r\ar[rr,"v_{r,r+2\delta}"]\ar[ddr,"T_r"']  &&  V_{r+2\delta}  \\
\\
&W_{r+\delta}\ar[uur,"T_{r+\delta}'"'] &  
\end{tikzcd}
\begin{tikzcd}
&V_{r+\delta} \ar[ddr,"T_{r+\delta}"]&
\\ 
\\
W_r\ar[rr,"w_{r,r+2\delta}"]\ar[uur,"T_r'"]  &&  W_{r+2\delta}
\end{tikzcd}
$$

The pair $(T_\bullet,T_\bullet')$ is said to be a \emph{$(G,\delta)$-interleaving} between $(V_\bullet,\nu_\bullet)$ and  $(W_\bullet,\nu_\omega)$. 
The \emph{$G$-interleaving distance} between $(V_\bullet,\nu_\bullet)$ and $(W_\bullet,\omega_\bullet)$ is defined as $$\di^G\big((V_\bullet,\nu_\bullet),(W_\bullet,\omega_\bullet)\big):=\inf\{\delta\geq 0:\,\mbox{$(V_\bullet,\nu_\bullet)$ and $(W_\bullet,\omega_\bullet)$ are $(G,\delta)$-interleaved}\}.$$
\nomenclature[14]{$\di^G$}{$G$-interleaving distance}
\end{definition}

The following proposition is a generalization of \cite[Proposition 5.3]{chazal2016structure} to the $G$-equivariant setting.\footnote{Since \cite{polterovich2020topological} does not state or give a proof of this fact, we provide one in \Cref{app:proofs-GHG}.}

\begin{proposition}\label{prop:diGpsdmtr}
$\di^G$ is an extended pseudometric on the collection of $G$-persistence modules.
\end{proposition}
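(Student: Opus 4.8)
The plan is to mimic the classical argument that the interleaving distance is an (extended) pseudometric, carefully checking at each step that the $G$-equivariance is preserved, so that all the constructions stay inside the category of $G$-persistence modules. Reflexivity and symmetry are immediate: $(V_\bullet,\nu_\bullet)$ is $(G,0)$-interleaved with itself via the identity morphisms (which are trivially $G$-persistence morphisms), and a $(G,\delta)$-interleaving $(T_\bullet,T'_\bullet)$ between $(V_\bullet,\nu_\bullet)$ and $(W_\bullet,\omega_\bullet)$ is, after swapping the roles, a $(G,\delta)$-interleaving in the other direction. Hence $\di^G(V_\bullet,V_\bullet)=0$ and $\di^G$ is symmetric; nonnegativity and the allowance of the value $+\infty$ (when no interleaving exists at any $\delta$) are built into the definition, so it only remains to establish the triangle inequality.

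For the triangle inequality, fix three $G$-persistence modules $(U_\bullet,\mu_\bullet)$, $(V_\bullet,\nu_\bullet)$, $(W_\bullet,\omega_\bullet)$, and suppose $(U_\bullet,\mu_\bullet)$ and $(V_\bullet,\nu_\bullet)$ are $(G,\delta)$-interleaved via $(S_\bullet,S'_\bullet)$ and $(V_\bullet,\nu_\bullet)$ and $(W_\bullet,\omega_\bullet)$ are $(G,\delta')$-interleaved via $(T_\bullet,T'_\bullet)$. The standard move is to compose: set $R_\bullet:=T_{\bullet+\delta}\circ S_\bullet:U_\bullet\to W_{\bullet+\delta+\delta'}$ and $R'_\bullet:=S'_{\bullet+\delta'}\circ T'_\bullet:W_\bullet\to U_{\bullet+\delta+\delta'}$. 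First I would check that each of $R_\bullet$, $R'_\bullet$ is a $G$-persistence morphism: it is a composition of (shifts of) $G$-persistence morphisms, and one checks that shifting a $G$-persistence morphism $T_\bullet$ by $\delta$ yields a $G$-persistence morphism $T_{\bullet+\delta}$ between the shifted $G$-persistence modules (here one uses that the shifted persistent representation is $g\mapsto\nu_{\bullet+\delta}(g)$, which commutes with the shifted structure maps by the original commuting squares). Composition of $G$-persistence morphisms is again a $G$-persistence morphism, so $R_\bullet$ and $R'_\bullet$ qualify. Then I would verify the two interleaving triangles for $(R_\bullet,R'_\bullet)$ at level $\delta+\delta'$ by pasting together the given interleaving triangles for $(S_\bullet,S'_\bullet)$ and $(T_\bullet,T'_\bullet)$ together with the naturality (commuting) squares of $S'_\bullet$ and $T_\bullet$ — this is exactly the classical diagram chase of \cite[Proposition 5.3]{chazal2016structure}, and the equivariance plays no role in it. This gives a $(G,\delta+\delta')$-interleaving of $(U_\bullet,\mu_\bullet)$ with $(W_\bullet,\omega_\bullet)$, hence $\di^G(U_\bullet,W_\bullet)\leq\delta+\delta'$; taking infima over $\delta>\di^G(U_\bullet,V_\bullet)$ and $\delta'>\di^G(V_\bullet,W_\bullet)$ yields the triangle inequality (with the usual convention that if either term is $+\infty$ there is nothing to prove).

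The only genuinely new bookkeeping compared to the non-equivariant case is the lemma that the class of $G$-persistence morphisms is closed under shifting and composition, and that the identity is one; none of this is hard, but it is what must be spelled out. I expect the main (minor) obstacle to be purely notational: keeping the three shift parameters $\delta$, $\delta'$, $\delta+\delta'$ and the corresponding shifted representations straight in the pasted diagrams, and making sure the commuting squares invoked (naturality of $S'_\bullet$ at the shift $\delta'$, of $T_\bullet$ at the shift $\delta$) are the right ones. Since the paper has already set up $G$-persistence morphisms, shifted persistence modules, and $(G,\delta)$-interleavings in \Cref{def:gint-dist}, I would present the argument compactly, referring to \cite[Proposition 5.3]{chazal2016structure} for the underlying diagram chase and only writing out the verification that each composite is $G$-equivariant.
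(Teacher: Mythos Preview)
Your proposal is correct and follows essentially the same approach as the paper's proof: reflexivity and symmetry are dispatched immediately, and the triangle inequality is obtained by composing a $(G,\delta)$-interleaving with a (shifted) $(G,\delta')$-interleaving to produce a $(G,\delta+\delta')$-interleaving, then taking infima. The paper writes out the verification of one of the interleaving triangles explicitly (using the naturality of the intermediate morphisms with respect to the structure maps of $V_\bullet$), whereas you defer to the classical diagram chase; both are fine, and your extra emphasis on checking that $G$-persistence morphisms are closed under shift and composition is exactly the right point to flag.
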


\begin{remark}\label{rmk:diGmonotone}
Note that:
\begin{enumerate}
    \item when $G=G_0$, $\di^G$ boils down to the standard interleaving distance from \cite[Definition 4.2]{chazal2009proximity}.

    \item  Assume that a group homomorphism $h:\widetilde{G}\to G$ is given. Then, any $G$-persistence module $(V_\bullet,\nu_\bullet)$ induces the $\widetilde{G}$-persistence module $(V_\bullet,h^\sharp\nu_\bullet)$ where $h^\sharp\nu_\bullet:=\nu_\bullet\circ h$. Also, any $G$-persistence morphism $T_\bullet:(V_\bullet,\nu_\bullet)\rightarrow (W_\bullet,\omega_\bullet)$ induces the $\widetilde{G}$-persistence morphism $T_\bullet:(V_\bullet,h^\sharp\nu_\bullet)\rightarrow (W_\bullet,h^\sharp\omega_\bullet)$. Hence, if $(V_\bullet,\nu_\bullet)$ and $(W_\bullet,\omega_\bullet)$ are $(G,\delta)$-interleaved, then they are also $(\widetilde{G},\delta)$-interleaved and in particular 
    $$\di^G\big((V_\bullet,\nu_\bullet),(W_\bullet,\omega_\bullet)\big)\geq\di^{\widetilde{G}}\big((V_\bullet,h^\sharp\nu_\bullet),(W_\bullet,h^\sharp\omega_\bullet)\big).$$
    
\end{enumerate}
\end{remark}

%%%%%%%%%%%%%%%%%%%%%%%%%%%%%%%%%%%%%%%%%%%%%%%%

\section{Hierarchy of  $G$-equivariant distances for $\vrm_p$.}

In this section we study the hierarchy of  $G$-equivariant distances for $p$-Vietoris-Rips metric thickening filtrations. 

\subsection{From metric spaces to Vietoris-Rips thickenings.}

The following theorem is a generalization of the rightmost inequality in \cite[Theorem A]{adams2024persistent} to the $G$-equivariant setting.

\begin{theorem}\label{Gdhtstability}
Let $G$ be a finite group. For any two compact $G$-metric spaces $X,Y$ and $p\in[1,\infty]$, we have that
$$2\,\dgh^G(X,Y)\geq\dht^G\big((\Pfin(X),\diam_p^X),(\Pfin(Y),\diam_p^Y)\big).$$
\end{theorem}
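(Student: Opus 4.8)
The plan is to run the argument through the \emph{embedding} characterization of the $G$-Gromov--Hausdorff distance in \Cref{prop:GGHeqembeds}. Fix $\varepsilon>0$ and choose a compact $G$-metric space $Z$ together with $G$-isometric embeddings $f:X\hooktoG Z$ and $g:Y\hooktoG Z$ with $d_{\mathrm H}^Z(f(X),g(Y))<\dgh^G(X,Y)+\varepsilon$. Identifying $X$ and $Y$ with their images, regard $X,Y\subseteq Z$ as $G$-invariant subspaces, and put $\eta:=\dgh^G(X,Y)+\varepsilon$, so every point of $X$ lies within $\eta$ of $Y$ in $Z$ and vice versa. The goal is then to exhibit $(\Pfin(X),\alpha_{\Pfin(X)},\diam_p^X)$ and $(\Pfin(Y),\alpha_{\Pfin(Y)},\diam_p^Y)$ as $(G,2\eta)$-homotopy equivalent; this gives $\dht^G\le 2\eta$, and letting $\varepsilon\downarrow 0$ proves the theorem.

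The crux — and the main obstacle — is that the obvious comparison map, the pushforward $\mu\mapsto\varphi_\sharp\mu$ along a distortion-controlled $G$-function $\varphi:X\to Y$ (as furnished by \Cref{prop:GGHeqmaps}), is in general \emph{not} continuous for the weak topology on $\Pfin$, since $\varphi$ need not be continuous. The remedy is to ``smear'' the pushforward using a partition of unity. Cover $X$ by the open balls $B_Z(y,\eta)$, $y\in Y$; extract a finite subcover by compactness, and enlarge it to a \emph{$G$-invariant} finite cover $\{B_Z(y_k,\eta)\}_{k\in K}$ by translating the finite subcover by the (finite) group $G$, using that $G$ acts by isometries and preserves $Y$, keeping track of the centers $y_k\in Y$ $G$-equivariantly. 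Take a subordinate partition of unity and average it over $G$ to obtain a $G$-equivariant partition of unity $\{\rho_k\}_{k\in K}$ with $\supp\rho_k\subseteq B_Z(y_k,\eta)$. Define
$$\Phi:\Pfin(X)\longrightarrow\Pfin(Y),\qquad \Phi(\mu):=\sum_{k\in K}\Big(\int_X\rho_k\,d\mu\Big)\,\delta_{y_k}.$$
Finiteness of $K$ (hence of $G$) keeps $\Phi(\mu)$ finitely supported; $\mu\mapsto\int_X\rho_k\,d\mu$ is weakly continuous; and the $G$-equivariance of $\{\rho_k\}$ and of $k\mapsto y_k$ makes $\Phi$ a $G$-map. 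To see $\Phi$ is a $(G,2\eta)$-map, note that for $\mu=\sum_i u_i\delta_{x_i}$ the measure $\pi:=\sum_{i,k}u_i\rho_k(x_i)\,\bigl(\delta_{x_i}\otimes\delta_{y_k}\bigr)$ is a coupling of $\mu$ and $\Phi(\mu)$ concentrated on pairs at $Z$-distance $<\eta$; hence for $p<\infty$ one has $d_Z(y,y')<d_X(x,x')+2\eta$ $\pi\otimes\pi$-a.s., and Minkowski's inequality in $L^p(\mu\otimes\mu)$ yields $\diam_p^Y(\Phi(\mu))\le\diam_p^X(\mu)+2\eta$, while for $p=\infty$ the support of $\Phi(\mu)$ lies within $\eta$ of $\supp\mu$, giving the same bound. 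Symmetrically, covering $Y$ by $\eta$-balls centered in $X$ produces a $(G,2\eta)$-map $\Psi:\Pfin(Y)\to\Pfin(X)$.

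It remains to check that $\Psi\circ\Phi$ is $(G,4\eta)$-homotopic to $\mathrm{id}_{\Pfin(X)}$, and symmetrically for $\Phi\circ\Psi$. Since $\Pfin(X)$ is convex in the space of finitely supported measures and both endpoints transform equivariantly, the straight-line homotopy $H(\mu,t):=(1-t)\,\mu+t\,(\Psi\circ\Phi)(\mu)$ is a well-defined, weakly continuous $G$-homotopy from $\mathrm{id}_{\Pfin(X)}$ to $\Psi\circ\Phi$. The point to verify is that each slice $H(\cdot,t)$ is a $(G,4\eta)$-map. Composing the two ``distance $<\eta$'' couplings produced above gives a coupling of $\mu$ with $(\Psi\circ\Phi)(\mu)$ concentrated on pairs at distance $<2\eta$; expanding $\diam_p^X\bigl((1-t)\mu+t(\Psi\Phi)(\mu)\bigr)^p$ over its four marginal blocks and bounding each block via the triangle inequality together with Minkowski, every block is $\le\bigl(\diam_p^X(\mu)+4\eta\bigr)^p$ (and for $p=\infty$ one checks directly that $\diam\bigl(\supp\mu\cup\supp(\Psi\Phi\mu)\bigr)\le\diam_\infty^X(\mu)+4\eta$), so since the block coefficients sum to $1$ we get $\diam_p^X(H(\mu,t))\le\diam_p^X(\mu)+4\eta$ for all $t$. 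By \Cref{lemma:Gdeltacomp} the compositions $\Psi\circ\Phi$ and $\Phi\circ\Psi$ are $(G,4\eta)$-maps, so $\Phi$, $\Psi$ together with these homotopies witness $(\Pfin(X),\diam_p^X)$ and $(\Pfin(Y),\diam_p^Y)$ as $(G,2\eta)$-homotopy equivalent. Hence $\dht^G\le 2\eta$, and taking the infimum over $Z,f,g$ and $\varepsilon\downarrow 0$ gives $\dht^G\le 2\,\dgh^G(X,Y)$. The main difficulty throughout is the one already flagged — producing genuinely continuous $G$-equivariant maps and homotopies that respect the $\diam_p$-filtration up to the prescribed additive error — and, as a secondary matter, the bookkeeping needed to carry out the partition-of-unity and averaging constructions $G$-equivariantly (which is where finiteness of $G$ is used) and to treat the $p<\infty$ and $p=\infty$ cases uniformly.
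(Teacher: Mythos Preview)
Your argument is correct and in fact a bit more direct than the paper's. Both proofs hinge on the same partition-of-unity mechanism (the paper's \Cref{lemma:G2deltamapexst} / \Cref{lemma:Gptofuty}), but they deploy it differently. The paper first passes to finite $G$-invariant $\delta$-nets $U\subset X$, $V\subset Y$, then uses the $G$-functions $\varphi:U\to V$, $\psi:V\to U$ coming from \Cref{prop:GGHeqmaps}; the point of the finite-net detour is that pushforward along a possibly discontinuous $G$-function is continuous on $\Pfin$ once the domain is finite. This produces $(G,\eta+6\delta)$-maps with an extra $\delta$ slack that is sent to zero at the end. You instead use the embedding characterization of \Cref{prop:GGHeqembeds} to place $X,Y$ isometrically inside a common $Z$, cover $X$ by $\eta$-balls centered in $Y$, and build $\Phi$ directly from a $G$-equivariant partition of unity subordinate to that cover; this avoids the finite-net layer and lands immediately on $(G,2\eta)$-maps. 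Your coupling/Minkowski verification of the $\diam_p$-shift and your explicit treatment of the straight-line homotopy (which the paper's written proof omits, deferring implicitly to \cite{adams2024persistent}) are both fine. The paper's route through finite nets is, however, reused later (e.g.\ in \Cref{prop:vrvrmdIGsame} and \Cref{lemma:vrmtovr}), so it has some economy across the paper even if it is slightly less direct for this particular theorem. One cosmetic point: you do not actually need $Z$ compact---only compactness of $X$ (and of $Y$) is used to extract the finite subcover---so the qualifier ``compact $G$-metric space $Z$'' can be dropped.
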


Example \ref{ex:dghdhtstrict} below demonstrates that the inequality in Theorem \ref{Gdhtstability} can be strict. 
\begin{definition}Let $X$ be a $G$-metric space and $A \subseteq X$ be a $G$-invariant subset. Motivated by \cite[Section 2]{hausmann1994vietoris}, a \emph{$G$-crushing} from $X$ to $A$ is defined as a $G$-equivariant, distance-nonincreasing, strong deformation retraction from $X$ to $A$. That is, a continuous map $H:X\times [0,1]\rightarrow X$ satisfying:
\begin{enumerate}[label=(\roman*)]
    \item $H(x,0)=x$, $H(x,1)\in A$, and $H(a,t)=a$ for all $x\in X$, $a\in A$, and $t\in[0,1]$,
    \item $d_X(H(x,t),H(x',t))\geq d_X(H(x,t'),H(x',t'))$ for all $x,x'\in X$ and $t\leq t'$,
    \item $H(\cdot,t):X\toG X$ is a $G$-map for all $t\in [0,1]$.
\end{enumerate}
\end{definition}
The following lemma is a generalization of \cite[Lemma B.1]{adamaszek2018metric} and can be verified with a similar proof.

\begin{lemma}\label{lemma:Gcrushinghmtp}
Let $X$ be a $G$-metric space and $A \subseteq X$ be a $G$-invariant subset. If there is a $G$-crushing from $X$ to $A$, then the canonical inclusion $\iota_r:\vrm_\infty(A;r)\hooktoG\vrm_\infty(X;r)$ is a $G$-homotopy equivalence for every $r>0$.
\end{lemma}

As a corollary of Lemma \ref{lemma:Gcrushinghmtp}, we have the following result. Note that in the trivial group case $G=G_0$, this result can also be obtained as a special case of \cite[Theorem G]{adams2024persistent}.

\begin{corollary}\label{cor:Gcrushingdht}
Let $X$ be a $G$-metric space and $A \subseteq X$ be a $G$-invariant subset. If there is a $G$-crushing from $X$ to $A$, then $\dht^G\big((\Pfin(X),\diam_\infty^X),(\Pfin(Y),\diam_\infty^Y)\big)=0$.
\end{corollary}

\begin{example}\label{ex:dghdhtstrict}
Let $X$ be a $G$-metric space and $A\subset X$ be a $G$-invariant subset satisfying $\dgh(A,G)>0$. Also, assume that there is a $G$-crushing from $X$ to $A$. Then, by Corollary \ref{cor:Gcrushingdht}, one can conclude that
$$2\,\dgh^G(X,Y)\geq 2\,\dgh(X,Y) > 0=\dht^G\big((\Pfin(X),\diam_\infty^X),(\Pfin(Y),\diam_\infty^Y)\big).$$
For example, one may choose the geodesic $\Z_3$-metric space $X$ shown in Figure \ref{fig:compatibility} and let $A:=\{x_0\}$ be its center point.
\end{example}

The following lemma is a generalization of \cite[Lemmas 4.6 and 5.15]{adams2024persistent} to the $G$-equivariant setting.

\begin{lemma}\label{lemma:G2deltamapexst}
Let $G$ be a finite group, $X$ be a $G$-metric space, $\delta>0$, $U$ be a finite $G$-invariant $\delta$-net of $X$, and $p\in [1,\infty]$. Then, there exists a $(G,2\delta)$-map $\Phi:(\Pfin(X),\diam_p^X)\toG (\Pfin(U),\diam_p^U)$.
\end{lemma}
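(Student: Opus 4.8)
The plan is to construct $\Phi$ by ``snapping'' each point of the support of a measure $\mu\in\Pfin(X)$ to a nearby point of the net $U$, but doing so in a $G$-equivariant and continuous fashion. Since $U$ is a finite $G$-invariant $\delta$-net, the open balls $\{B_\delta(u)\}_{u\in U}$ cover $X$, and $G$ permutes these balls compatibly with its action on $U$. First I would fix a partition of unity $\{\rho_u\}_{u\in U}$ subordinate to this cover; by averaging over the finite group $G$ (replacing $\rho_u$ by $\tfrac{1}{|G|}\sum_{g\in G}\rho_{g^{-1}u}\circ\alpha_X(g,\cdot)$, suitably renormalized) one may assume the family is $G$-equivariant in the sense that $\rho_{gu}(\alpha_X(g,x))=\rho_u(x)$ for all $g\in G$, $u\in U$, $x\in X$. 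This is where finiteness of $G$ is essential. Then define, for $\mu\in\Pfin(X)$,
$$\Phi(\mu):=\sum_{u\in U}\left(\int_X \rho_u\,d\mu\right)\delta_u\in\Pfin(U).$$
Equivalently, $\Phi$ is the pushforward $\Phi=(\mathrm{snap})_\sharp$ where on the support one distributes the mass $u_i$ at $x_i$ among the nets points $u$ according to $\rho_u(x_i)$.

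Next I would check the three required properties. \emph{$G$-equivariance}: for $g\in G$, $\int_X\rho_u\,d\,\alpha_{\Pfin(X)}(g,\mu)=\int_X\rho_u\circ\alpha_X(g,\cdot)\,d\mu=\int_X\rho_{g^{-1}u}\,d\mu$, so $\Phi(\alpha_{\Pfin(X)}(g,\mu))=\sum_u(\int\rho_{g^{-1}u}d\mu)\delta_u=\sum_v(\int\rho_v d\mu)\delta_{gv}=\alpha_{\Pfin(U)}(g,\Phi(\mu))$, using $G$-invariance of the $\rho$'s and reindexing $v=g^{-1}u$. \emph{Continuity}: $\Phi$ is affine in $\mu$ with coefficients $\mu\mapsto\int\rho_u\,d\mu$ that are weakly continuous (each $\rho_u$ is continuous and bounded), and $U$ is finite, so $\Phi$ is continuous into $\Pfin(U)$ with the weak topology. \emph{The $(G,2\delta)$ bound}: I must show $\diam_p^U(\Phi(\mu))\le\diam_p^X(\mu)+2\delta$. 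For $p=\infty$ this is straightforward: $\supp[\Phi(\mu)]\subseteq\{u\in U:\rho_u(x_i)>0\text{ for some }i\}\subseteq\bigcup_i B_\delta(x_i)\cap U$ (since $\rho_u$ supported in $B_\delta(u)$ means $u\in B_\delta(x_i)$), so any two such $u,u'$ satisfy $d_U(u,u')\le d_X(x_i,x_{i'})+2\delta\le\diam_\infty(\mu)+2\delta$. For $p<\infty$, I would write $\Phi(\mu)$ as the pushforward of a coupling: let $\pi$ on $X\times U$ have ``marginal'' $\mu$ on $X$ and assign to $(x_i,u)$ the mass $u_i\rho_u(x_i)$; then $\diam_p^U(\Phi(\mu))^p=\iint d_U^p(u,u')\,d\pi(x,u)d\pi(x',u')$ and on the support of $\pi\times\pi$ one has $d_U(u,u')\le d_X(x,x')+2\delta$ since $d_X(x,u)<\delta$, $d_X(x',u')<\delta$ whenever $\rho_u(x),\rho_{u'}(x')>0$; now apply the triangle inequality in $L^p$ of the product measure, i.e. $\|d_U\|_{L^p(\pi\times\pi)}\le\|d_X\circ(\mathrm{proj})\|_{L^p(\pi\times\pi)}+\|2\delta\|_{L^p}=\diam_p^X(\mu)+2\delta$.

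The main obstacle I expect is the simultaneous handling of $G$-equivariance and continuity: a naive nearest-point snap map is equivariant only after breaking ties, but tie-breaking destroys continuity, while a generic continuous partition of unity need not be $G$-equivariant. The resolution — averaging the partition of unity over the finite group — is exactly the place where the hypothesis that $G$ is finite cannot be dropped, and I would make sure the averaging/renormalization step genuinely preserves the support condition $\supp\rho_u\subseteq B_\delta(u)$ (it does, because $\rho_{g^{-1}u}\circ\alpha_X(g,\cdot)$ is supported in $\alpha_X(g,\cdot)^{-1}B_\delta(g^{-1}u)=B_\delta(u)$). A secondary, purely bookkeeping point is the $p<\infty$ diameter estimate, for which the cleanest route is the coupling/triangle-inequality argument above rather than manipulating the double sum directly; this mirrors \cite[Lemmas 4.6 and 5.15]{adams2024persistent} in the non-equivariant case.
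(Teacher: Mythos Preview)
Your proof is correct and follows essentially the same approach as the paper: construct a $G$-equivariant partition of unity subordinate to $\{B_\delta(u)\}_{u\in U}$ by averaging over $G$ (this is exactly the paper's \Cref{lemma:Gptofuty}), define $\Phi(\mu)=\sum_{u\in U}(\int_X\rho_u\,d\mu)\,\delta_u$, and verify $G$-equivariance, weak continuity, and the $2\delta$ diameter bound. The paper simply cites \cite[Lemmas 4.6 and 5.15]{adams2024persistent} for the last step, whereas you sketch the coupling/$L^p$-triangle argument directly, which is the same content.
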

\begin{proof}
Note that $\mathcal{U}:=\{B_\delta(u)\}_{u\in U}$ is a finite $G$-invariant open covering of $X$. Then, by \Cref{lemma:Gptofuty}, there is a $G$-partition of unity $\{\varphi_u:X\rightarrow [0,1]\}_{u\in U}$ subordinate to $\mathcal{U}$. In particular, $\varphi_{\alpha_X(g,u)}(\alpha_X(g,x))=\varphi_{u}(x)$ for all $g\in G$, $u\in U$, and $x\in X$. Now, consider the following map
\begin{align*}
    \Phi:\Pfin(X)&\rightarrow\Pfin(U)\\
    \mu&\longmapsto\sum_{u\in U}\left(\int_X\varphi_{u}(x)\,d\mu(x)\right)\delta_u.
\end{align*}

Then, we have that

\begin{align*}
\Phi(\alpha_{\Pfin(X)}(g,\mu))&=\Phi(\alpha_X(g,\cdot)_\sharp\mu)=\sum_{u\in U}\left(\int_X\varphi_{u}(x)\,d\alpha_X(g,\cdot)_\sharp\mu(x)\right)\delta_u\\&=\sum_{u\in U}\left(\int_X\varphi_{u}(\alpha_X(g,x))\,d\mu(x)\right)\delta_u
=\sum_{u\in U}\left(\int_X\varphi_{\alpha_X(g^{-1},u)}(x)\,d\mu(x)\right)\delta_u\\
&=\sum_{u\in U}\left(\int_X\varphi_{u}(x)\,d\mu(x)\right)\delta_{\alpha_X(g,u)}=\alpha_{\Pfin(U)}(g,\Phi(\mu))
\end{align*}
for any $g\in G$ and $\mu\in\Pfin(X)$. Also, $\Phi$ is continuous since $U$ is finite.  Hence, $\Phi$ is a $G$-map. Finally, we have that $\diam_p^U(\Phi(\mu))\leq\diam_p^X(\mu)+2\delta$ for any $\mu\in\Pfin(X)$ by \cite[Definition 4.3, Lemma 4.6, and Lemma 5.15]{adams2024persistent}. This completes the proof.
\end{proof}

\begin{lemma}[{\cite[Propositions 6.1 and 6.2]{adamaszek2018metric}}] \label{lemma:homeo}
    The canonical identity map \begin{align*}\mathrm{id}:\vert\vr(X;\eta)\vert&\to\vrm_\infty(X;\eta)  \\\sum_{i=0}^n u_ix_i&\mapsto\sum_{i=0}^n u_i\delta_{x_i}
    \end{align*}
    is continuous. Furthermore, if $X$ is a finite metric space, then $\mathrm{id}$ is a homeomorphism.
\end{lemma}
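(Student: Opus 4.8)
I would establish, in order, that the displayed assignment is a well-defined bijection onto $\vrm_\infty(X;\eta)$, that it is continuous, and that it upgrades to a homeomorphism when $X$ is finite. For the first point, recall that every point of the geometric realization $|\vr(X;\eta)|$ has a \emph{unique} barycentric expression $\sum_{i=0}^n u_i x_i$ in which the vertices $x_0,\dots,x_n$ are distinct, span a simplex of $\vr(X;\eta)$ (so that $\diam\{x_0,\dots,x_n\}<\eta$), and carry weights $u_i>0$ with $\sum_i u_i=1$. Its image $\sum_{i=0}^n u_i\delta_{x_i}$ is then the finitely supported probability measure whose support is exactly $\{x_0,\dots,x_n\}$ and whose mass at $x_i$ is $u_i$; as $\diam(\supp)<\eta$, this measure lies in $\vrm_\infty(X;\eta)$, so the map is well defined. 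It is injective because a finitely supported measure determines its support and its weights uniquely, which in turn recovers the barycentric coordinates; and it is surjective because every $\mu\in\vrm_\infty(X;\eta)$ can be written $\mu=\sum_i u_i\delta_{x_i}$ with $\supp[\mu]=\{x_0,\dots,x_n\}$ of diameter $<\eta$, hence a simplex, so $\mu$ is the image of $\sum_i u_i x_i$.

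For continuity, I would use the fact that $|\vr(X;\eta)|$ carries the standard (weak, or coherent) topology determined by its closed simplices, so that a map out of it is continuous as soon as its restriction to each closed simplex is. Fix a closed simplex $\sigma=[x_0,\dots,x_n]$; under the standard identification $\sigma\cong\Delta^n$ the restriction of the map is $(u_0,\dots,u_n)\mapsto\sum_i u_i\delta_{x_i}$. Since the weak topology on $\mathcal{P}(X)$ is the initial topology induced by the evaluation functionals $\mu\mapsto\int_X f\,d\mu$ over bounded continuous $f:X\to\R$, it is enough to note that $(u_0,\dots,u_n)\mapsto\int_X f\,d\big(\sum_i u_i\delta_{x_i}\big)=\sum_i u_i f(x_i)$ is continuous for each such $f$, which is clear. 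Hence the restriction to every closed simplex is continuous, and therefore so is $\mathrm{id}$ on all of $|\vr(X;\eta)|$.

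For the homeomorphism assertion, suppose $X$ is finite. Then $\vr(X;\eta)$ is a finite simplicial complex, so $|\vr(X;\eta)|$ is compact. On the other hand $\vrm_\infty(X;\eta)$ is a subspace of $\mathcal{P}(X)$, and for finite (hence compact) $X$ the space $\mathcal{P}(X)$ is metrizable --- indeed it is just the standard simplex on the vertex set $X$ with its usual topology --- and in particular Hausdorff. A continuous bijection from a compact space onto a Hausdorff space is automatically a homeomorphism, which gives the claim.

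I do not expect a substantive obstacle here: once the two standard topological facts are invoked --- that a map out of a CW complex is continuous iff each of its restrictions to a closed cell is, and that weak convergence of probability measures is tested by integration against bounded continuous functions --- the argument is entirely formal. The only point requiring mild care is to keep the two topologies distinct (the coherent topology on the geometric realization versus the weak topology on measures), since they genuinely differ when $X$ is infinite, and only the continuity direction of the identity survives in that generality.
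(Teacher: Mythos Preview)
Your argument is correct and is the standard one. The paper itself does not prove this lemma but simply cites \cite[Propositions 6.1 and 6.2]{adamaszek2018metric}; your proof is essentially the one given there, so there is nothing to compare.
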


\begin{lemma}\label{lemma:Ginvdeltanetexst}
Let $G$ be a finite group and $X$ be a totally bounded $G$-metric space. Then, for any $\delta>0$, there exists a finite $G$-invariant $\delta$-net $U$ of $X$.\footnote{I.e., $\alpha_X(g,x)\in U$ for every $g\in G$ and $u\in U$.} 
\end{lemma}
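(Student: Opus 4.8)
The plan is to mimic the standard construction of a finite net, then symmetrize it using the group action. First I would use total boundedness of $X$ to fix, for the given $\delta>0$, a finite subset $S=\{s_1,\dots,s_m\}\subseteq X$ such that the balls $\{B_\delta(s_i)\}_{i=1}^m$ cover $X$. This $S$ need not be $G$-invariant, so the next step is to enlarge it to its $G$-orbit: set
$$U:=\bigcup_{i=1}^m G\cdot s_i=\{\alpha_X(g,s_i):g\in G,\ 1\le i\le m\}.$$
Since $G$ is finite and $S$ is finite, $U$ is finite, and by construction $\alpha_X(g,u)\in U$ for every $g\in G$ and $u\in U$, so $U$ is $G$-invariant.

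It remains to check that $U$ is still a $\delta$-net, which is immediate since $U\supseteq S$: for any $x\in X$ there is some $s_i$ with $d_X(x,s_i)<\delta$, and $s_i\in U$. (If one prefers the convention that a $\delta$-net means every point is within $\delta$ of the net, this is exactly what we have; the slightly stronger statement that each $G$-orbit in $U$ still $\delta$-covers is not needed.)

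This argument is essentially routine; there is no serious obstacle. The only point requiring the finiteness hypothesis on $G$ is that enlarging a finite set to its $G$-orbit keeps it finite, and the only point requiring total boundedness is the existence of the initial finite $\delta$-net $S$. Isometry of the action is not even used here (it will be used elsewhere, e.g. in \Cref{lemma:G2deltamapexst}); all we need is that each $\alpha_X(g,\cdot)$ maps $X$ to $X$. So the proof is a two-line construction followed by two one-line verifications (finiteness of $U$, and that $U$ remains a $\delta$-net).
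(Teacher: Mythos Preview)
Your proof is correct and follows exactly the same approach as the paper: take a finite $\delta$-net guaranteed by total boundedness and replace it by its $G$-orbit. The paper's proof is just the one-line version of what you wrote.
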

\begin{proof}
Since $X$ is totally bounded, there exists a finite $\delta$-net $\widetilde{U}\subseteq X$. Then, one can choose $U:=\{\alpha_X(g,x):g\in G\text{ and }x\in U\}$.
\end{proof}

\begin{proof}[\textbf{Proof of  \Cref{Gdhtstability}}]
Our proof follows that of \cite[Theorem A]{adams2024persistent} with the necessary adjustments to inject $G$-equivariance. Fix an arbitrary $\delta>0$. Then, by \Cref{lemma:Ginvdeltanetexst}, one can choose $G$-invariant $\delta$-nets $U$ of $X$ and $V$ of $Y$. It is easy to verify that $\dgh^G(X,U)\leq\delta$ and $\dgh^G(Y,V)\leq\delta$. Also, by \Cref{lemma:G2deltamapexst}, there are $(G,2\delta)$-maps $\Phi:(\Pfin(X),\diam_p^X)\toG (\Pfin(U),\diam_p^U)$ and $\Psi:(\Pfin(Y),\diam_p^Y)\toG (\Pfin(V),\diam_p^V)$.

Now, fix an arbitrary $\eta>0$ such that $2\,\dgh^G(X,Y) < \eta$. Then, by the triangle inequality, we have that $\dgh^G(U,V)<\frac{\eta}{2}+2\delta$.

Then, there exists $G$-functions $\varphi:U\toG V$ and $\psi:V\toG U$ such that
$$\max\{\dis(\varphi),\dis(\psi),\codis(\varphi,\psi)\}<\eta+4\delta.$$
It is easy to verify that $$\varphi_\sharp:(\Pfin(U),\diam_p^U)\toG(\Pfin(V),\diam_p^V) \,\,\mbox{and} \,\,\psi_\sharp:(\Pfin(V),\diam_p^U)\toG(\Pfin(U),\diam_p^V)$$ are $(G,\eta+4\delta)$-maps. Finally, we define
\begin{align*}
    \widehat{\Phi}&:=\iota_V\circ\varphi_\sharp\circ\Phi,\text{ and}\\
    \widehat{\Psi}&:=\iota_U\circ\psi_\sharp\circ\Psi
\end{align*}
where $\iota_U:\Pfin(U)\hooktoG\Pfin(X)$ and $\iota_V:\Pfin(V)\hooktoG\Pfin(Y)$ are the canonical inclusions. Note that $\widehat{\Phi}$ and $\widehat{\Psi}$ are obviously $G$-maps. Finally, for any $\mu\in\Pfin(X)$, we have that
$$\diam_p^Y\big(\widehat{\Phi}(\mu)\big)=\diam_p^V\big(\varphi_\sharp\circ\Phi(\mu)\big)\leq (\eta+4\delta)+\diam_p^U\big(\Phi(\mu)\big)\leq(\eta+6\delta)+\diam_p^X(\mu).$$
Hence, $\widehat{\Phi}:\Pfin(X)\toG\Pfin(Y)$ is a $(G,\eta+6\delta)$-map. Similarly, $\widehat{\Psi}:\Pfin(Y)\toG\Pfin(X)$ is also a $(G,\eta+6\delta)$-map. Hence, $\dht^G\big((\Pfin(X),\diam_p^X),(\Pfin(Y),\diam_p^Y)\big)\leq\eta+6\delta$. Since $\eta$ and $\delta$ are arbitrary, finally one can conclude that 
$$2\,\dgh^G(X,Y)\geq\dht^G\big((\Pfin(X),\diam_p^X),(\Pfin(Y),\diam_p^Y)\big).$$
\end{proof}

\subsection{From Vietoris-Rips thickenings to persistence modules.}

The following proposition is a generalization of the first inequality in \cite[Theorem A]{adams2024persistent} to the $G$-equivariant setting.

\begin{proposition}\label{prop:GdHTGdI}
For any two filtered $G$-topological spaces $(X,\alpha_X,\beta_X)$ and $(Y,\alpha_Y,\beta_Y)$ and for all integers $k\geq 0$, we have that
$$\dht^G(X,Y)\geq\di^G\big(\h_k(\beta_X^{-1}(-\infty,\bullet)),\h_k(\beta_Y^{-1}(-\infty,\bullet))\big).$$
\end{proposition}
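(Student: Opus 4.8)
The plan is to show that any $(G,\delta)$-homotopy equivalence between the filtered spaces descends, upon applying the degree-$k$ homology functor levelwise, to a $(G,\delta)$-interleaving of the associated $G$-persistence modules; the inequality then follows by taking the infimum over all admissible $\delta$. If $X$ and $Y$ fail to be $G$-homotopy equivalent, then $\dht^G(X,Y)=\infty$ and there is nothing to prove, so assume they are.

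First I would record the basic observation that a $(G,\delta)$-map $\Phi:(X,\alpha_X,\beta_X)\toG(Y,\alpha_Y,\beta_Y)$ induces, for each $r\in\R$, a $G$-map $\Phi_r:\beta_X^{-1}((-\infty,r))\toG \beta_Y^{-1}((-\infty,r+\delta))$ by restriction, since $\beta_Y(\Phi(x))\le\beta_X(x)+\delta<r+\delta$ whenever $\beta_X(x)<r$; moreover these restrictions commute with the sublevel-set inclusions. Applying $\h_k$ levelwise therefore yields a persistence morphism $T_\bullet$ from $\h_k(\beta_X^{-1}((-\infty,\bullet)))$ to the $\delta$-shift of $\h_k(\beta_Y^{-1}((-\infty,\bullet)))$. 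Because $\Phi$, and hence each $\Phi_r$, is a $G$-map, $\h_k(\Phi_r)$ intertwines $\h_k(\alpha_X(g,\cdot))$ with $\h_k(\alpha_Y(g,\cdot))$ for every $g\in G$, so $T_\bullet$ is in fact a $G$-persistence morphism.

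Next, fix $\delta>\dht^G(X,Y)$ and choose $(G,\delta)$-maps $\Phi:X\rightarrow Y$ and $\Psi:Y\rightarrow X$ witnessing $(G,\delta)$-homotopy equivalence; let $T_\bullet$ and $T'_\bullet$ be the $G$-persistence morphisms they induce as above, with $T'_\bullet$ going from $\h_k(\beta_Y^{-1}((-\infty,\bullet)))$ to the $\delta$-shift of $\h_k(\beta_X^{-1}((-\infty,\bullet)))$. To verify the two interleaving triangles I would use that $\Psi\circ\Phi$ is $(G,2\delta)$-homotopic to $\mathrm{id}_X$: the slice condition on a $(G,2\delta)$-homotopy $H$ means that for each $r$ it restricts to a homotopy $\beta_X^{-1}((-\infty,r))\times[0,1]\rightarrow\beta_X^{-1}((-\infty,r+2\delta))$, so the restriction of $\Psi\circ\Phi$ and the sublevel-set inclusion $\beta_X^{-1}((-\infty,r))\hookrightarrow\beta_X^{-1}((-\infty,r+2\delta))$ are homotopic and hence induce the same map on $\h_k$. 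By functoriality of $\h_k$ the former induces $T'_{r+\delta}\circ T_r$ and the latter induces the structure map $v_{r,r+2\delta}$, which is precisely the left interleaving triangle; the right one follows symmetrically from $\Phi\circ\Psi$ being $(G,2\delta)$-homotopic to $\mathrm{id}_Y$. Thus $(T_\bullet,T'_\bullet)$ is a $(G,\delta)$-interleaving, so $\di^G\big(\h_k(\beta_X^{-1}((-\infty,\bullet))),\h_k(\beta_Y^{-1}((-\infty,\bullet)))\big)\le\delta$; letting $\delta\downarrow\dht^G(X,Y)$ finishes the proof.

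The only mildly delicate point is the bookkeeping around the shift indices and the slice condition on $(G,\delta)$-homotopies — one must check that "each $H(\cdot,t)$ being a $(G,\delta)$-map" is exactly what makes the homotopy restrict to the correct pair of nested sublevel sets — but once that is in place, everything else is functoriality of $\h_k$ together with the $G$-equivariance carried along from the maps $\Phi$ and $\Psi$. No argument genuinely beyond the non-equivariant statement in \cite{adams2024persistent} is needed; the group action is simply transported through each step.
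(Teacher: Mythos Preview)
Your proposal is correct and follows essentially the same approach as the paper's proof: restrict the $(G,\delta)$-maps to sublevel sets, apply $\h_k$ to obtain $G$-persistence morphisms, and use the $(G,2\delta)$-homotopies to verify the interleaving triangles. If anything, you spell out more carefully the slice condition on the homotopy and the infinite case, whereas the paper handles these with a brief ``it is easy to verify.''
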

\begin{proof}
Choose an arbitrary $\varepsilon>\dht^G(X,Y)$. Then, there are $(G,\varepsilon)$-maps $\Phi:X\rightarrow Y$ and $\Psi:Y\rightarrow X$ such that $\Psi\circ\Phi$ is $(G,2\varepsilon)$-homotopic to $\mathrm{id}_X$ and $\Phi\circ\Psi$ is $(G,2\varepsilon)$-homotopic to $\mathrm{id}_Y$. Note that $\Phi_r:=\Phi\vert_{{\beta_X^{-1}(-\infty,r)}}$ sends any point in $\beta_X^{-1}(-\infty,r)$ to $\beta_Y^{-1}(-\infty,r+\varepsilon)$ for all $r\in\R$. Hence, this induces a morphism $(\Phi_r)_\ast:\Hom_k(\beta_X^{-1}(-\infty,r))\rightarrow\Hom_k(\beta_Y^{-1}(-\infty,r+\varepsilon))$ and similarly $(\Psi_r)_\ast:\Hom_k(\beta_Y^{-1}(-\infty,r))\rightarrow\Hom_k(\beta_X^{-1}(-\infty,r+\varepsilon))$ for all $r\in\R$. Furthermore, for every $r\in\R$, it is easy to verify that $(\Phi_r)_\ast$ and $(\Psi_r)_\ast$ are $G$-equivariant and $(\Psi_{r+\varepsilon})_\ast\circ(\Phi_r)_\ast$ and $(\Phi_{r+\varepsilon})_\ast\circ(\Psi_r)_\ast$ are equal to the structure maps respectively because of the choice of $\Phi$ and $\Psi$.

Therefore, $\varepsilon\geq\di^G(\h_k(\beta_X^{-1}(-\infty,\bullet)),\h_k(\beta_Y^{-1}(-\infty,\bullet)))$. Since the choice of $\varepsilon$ is arbitrary, we have
$$\dht^G(X,Y)\geq\di^G(\h_k(\beta_X^{-1}(-\infty,\bullet)),\h_k(\beta_Y^{-1}(-\infty,\bullet)))$$
as required.
\end{proof}

\begin{remark} \Cref{prop:GdHTGdI} generalizes \cite[Lemma 3.1]{frosini2017persistent}. The latter can be obtained from the former with the choice $G=G_0$.
\end{remark}

We now consider $\h_k(\vrm_p(X;\bullet))$, the $G$-persistence module corresponding to a $G$-metric space $X$ via the $p$-Vietoris-Rips metric thickening filtration; see \Cref{def:VRm}. From \Cref{prop:GdHTGdI} we obtain the following generalization of \cite[Theorem B]{adams2024persistent} to the $G$-equivariant setting:

\begin{corollary}\label{coro:dHTG-dIG-Katetov}
    For any two $G$-metric spaces $X,Y$, $p\in[1,\infty]$, and integer $k\geq 0 $ we have that
    $$\dht^G\big((\Pfin(X),\diam_p^X),(\Pfin(Y),\diam_p^Y)\big)\geq \di^G\big(\h_k(\vrm_p(X;\bullet)),\h_k(\vrm_p(Y;\bullet))\big).$$
\end{corollary}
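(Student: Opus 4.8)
The plan is to obtain this as an immediate instance of \Cref{prop:GdHTGdI}. The key point, already recorded after \Cref{def:VRm}, is that the $p$-Vietoris–Rips metric thickening filtration is precisely the sub-level set filtration of the pair $(\Pfin(X),\diam_p^X)$; that is, for every $r>0$,
$$\vrm_p(X;r)=\left(\diam_p^X\big\vert_{\Pfin(X)}\right)^{-1}\big((-\infty,r)\big),$$
and for $r\le 0$ both sides are empty. By \Cref{prop:PfinGaction}, the triple $(\Pfin(X),\alpha_{\Pfin(X)},\diam_p^X)$ is a genuine filtered $G$-topological space (and likewise for $Y$), so \Cref{prop:GdHTGdI} applies to it.

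First I would set $(X',\alpha_{X'},\beta_{X'}):=(\Pfin(X),\alpha_{\Pfin(X)},\diam_p^X)$ and $(Y',\alpha_{Y'},\beta_{Y'}):=(\Pfin(Y),\alpha_{\Pfin(Y)},\diam_p^Y)$, so that the left-hand side of the claimed inequality is exactly $\dht^G(X',Y')$. Applying \Cref{prop:GdHTGdI} to $X'$ and $Y'$ yields, for every integer $k\ge 0$,
$$\dht^G(X',Y')\ \ge\ \di^G\big(\h_k(\beta_{X'}^{-1}(-\infty,\bullet)),\,\h_k(\beta_{Y'}^{-1}(-\infty,\bullet))\big).$$
It then remains to identify the two $G$-persistence modules $\h_k(\beta_{X'}^{-1}(-\infty,\bullet))$ and $\h_k(\vrm_p(X;\bullet))$ (and similarly for $Y$), both as persistence modules and as $G$-representations.

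For the identification: by the displayed equality above, the underlying filtrations $\big(\beta_{X'}^{-1}(-\infty,r)\big)_{r\in\R}$ and $\big(\vrm_p(X;r)\big)_{r\in\R}$ agree verbatim, with the same inclusion maps, hence so do the persistence modules obtained by applying $\h_k$. For the $G$-action, the representation attached to the sub-level set module (the Example preceding \Cref{ex:GvrmPH}) sends $g\in G$ to $\h_k\big(\alpha_{X'}(g,\cdot)\big)$ restricted to each $\beta_{X'}^{-1}(-\infty,r)=\vrm_p(X;r)$, which is verbatim the representation $\nu_\bullet$ of \Cref{ex:GvrmPH} since $\alpha_{X'}=\alpha_{\Pfin(X)}$. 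Thus the two $G$-persistence modules coincide, and substituting into the inequality above gives exactly the statement of the corollary. Since everything reduces to unwinding definitions, there is no real obstacle here; the only point requiring care is checking that the two a priori different descriptions of the $G$-representation on $\h_k(\vrm_p(X;\bullet))$ agree, which follows directly from $\alpha_{X'}=\alpha_{\Pfin(X)}$ and functoriality of $\h_k$.
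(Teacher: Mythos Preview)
Your proof is correct and follows exactly the approach the paper intends: the corollary is stated in the paper as an immediate consequence of \Cref{prop:GdHTGdI}, obtained by specializing to the filtered $G$-topological spaces $(\Pfin(X),\alpha_{\Pfin(X)},\diam_p^X)$ and $(\Pfin(Y),\alpha_{\Pfin(Y)},\diam_p^Y)$ and using the identification $\vrm_p(X;r)=(\diam_p^X)^{-1}((-\infty,r))$. You have simply written out the details that the paper leaves implicit.
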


In \cite[Corollary 5.10]{adams2024persistent}, the authors showed that $\di\big(\h_k(\vrm_\infty(X;\bullet)),\h_k(\vert\vr(X;\bullet)\vert)\big)=0$ for any totally bounded metric space $X$ and $k\geq 0$. The following proposition is a generalization of that result to the $G$-equivariant setting.

\begin{proposition}\label{prop:vrvrmdIGsame}
Let $G$ be a finite group. Then, for any totally bounded $G$-metric space $X$ and  $k\geq 0$, we have that
$$\di^G\big(\h_k(\vrm_\infty(X;\bullet)),\h_k(\vert\vr(X;\bullet)\vert)\big)=0.$$
\end{proposition}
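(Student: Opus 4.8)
The plan is to produce an explicit $(G,0)$-interleaving between the two $G$-persistence modules, i.e.\ to exhibit, for every $\varepsilon>0$, $G$-persistence morphisms in both directions that shift the scale parameter by an arbitrarily small amount (or by $0$ outright) and compose to the structure maps. The natural candidate for one direction is the family of maps $\mathrm{id}_r\colon |\vr(X;r)|\to\vrm_\infty(X;r)$ sending $\sum u_i x_i\mapsto\sum u_i\delta_{x_i}$ from \Cref{lemma:homeo}; these are continuous, clearly commute with inclusions, and — because they are defined by the same formula as $\alpha_{|\vr(X;\infty)|}$ and $\alpha_{\Pfin(X)}$ — are $G$-equivariant. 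So $\h_k(\mathrm{id}_\bullet)$ is a $G$-persistence morphism $\h_k(|\vr(X;\bullet)|)\to\h_k(\vrm_\infty(X;\bullet))$ with zero shift. The difficulty is the reverse direction: there is no obvious continuous $G$-map $\vrm_\infty(X;r)\to|\vr(X;r')|$ because the simplicial-complex topology is too fine when $X$ is infinite.

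To get around this I would follow the strategy of \cite[Corollary 5.10]{adams2024persistent}, factoring through finite $G$-invariant nets. Fix $\varepsilon>0$ and $\delta>0$ small. By \Cref{lemma:Ginvdeltanetexst} choose a finite $G$-invariant $\delta$-net $U\subseteq X$. On the finite space $U$, \Cref{lemma:homeo} gives a $G$-equivariant \emph{homeomorphism} $|\vr(U;\eta)|\cong\vrm_\infty(U;\eta)$ for every $\eta$, so there is no loss passing between the two models over $U$. Next, \Cref{lemma:G2deltamapexst} supplies a $(G,2\delta)$-map $\Phi\colon(\Pfin(X),\diam_\infty^X)\toG(\Pfin(U),\diam_\infty^U)$, which restricts to $G$-maps $\vrm_\infty(X;r)\toG\vrm_\infty(U;r+2\delta)$; composing with the inverse homeomorphism over $U$ and the inclusion $|\vr(U;\bullet)|\hookrightarrow|\vr(X;\bullet)|$ yields a $G$-map $\vrm_\infty(X;r)\toG|\vr(X;r+2\delta)|$. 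One then checks, exactly as in the non-equivariant case, that $\h_k$ applied to these maps (in both directions) gives a $(G,2\delta)$-interleaving: the round-trip composites $\vrm_\infty(X;r)\to|\vr(X;r+2\delta)|\to\vrm_\infty(X;r+2\delta)$ and $|\vr(X;r)|\to\vrm_\infty(X;\dots)\to|\vr(X;r+2\delta)|$ are $G$-homotopic to the respective inclusions — here the homotopies are the straight-line homotopies pushing $\delta_x$ toward the net point, which are $G$-equivariant because $U$, $\Phi$, and the partition of unity used in \Cref{lemma:G2deltamapexst} are all $G$-equivariant (in the $\ell^\infty$ metric thickening these linear interpolations stay within controlled diameter).

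Since $\di^G$ is an (extended pseudo)metric by \Cref{prop:diGpsdmtr}, the inequalities $\di^G\big(\h_k(\vrm_\infty(X;\bullet)),\h_k(|\vr(X;\bullet)|)\big)\leq 2\delta$ for every $\delta>0$ force the distance to be $0$. The main obstacle I anticipate is verifying that the two round-trip homotopies can be chosen \emph{$G$-equivariantly and simultaneously continuous in the metric-thickening topology} — i.e.\ that the interpolation $t\mapsto(1-t)\mu + t\,\Phi(\mu)$ (or its simplicial analogue) stays inside $\vrm_\infty(X;r+2\delta)$ and is $G$-equivariant; this is where the finiteness of $U$, the $\ell^\infty$ choice of diameter, and the $G$-equivariance of the partition of unity in \Cref{lemma:G2deltamapexst} are all essential, and it is the only place the argument is not a formal consequence of earlier lemmas. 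Everything else — equivariance of $\mathrm{id}_r$, commutativity with structure maps, and passing to homology — is routine.
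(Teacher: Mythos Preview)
Your proposal is correct and rests on the same core idea as the paper: approximate $X$ by a finite $G$-invariant $\delta$-net $U$ (via \Cref{lemma:Ginvdeltanetexst}) and exploit the fact that over $U$ the two models are $G$-homeomorphic (\Cref{lemma:homeo}). The difference is in the packaging. You aim to build an explicit $(G,2\delta)$-interleaving between $\h_k(\vrm_\infty(X;\bullet))$ and $\h_k(|\vr(X;\bullet)|)$ and then verify the two round-trip homotopies by hand; you correctly flag this verification as the only nontrivial step. The paper bypasses that step entirely by working at the level of the pseudometric $\di^G$ and invoking the triangle inequality: from $\dgh^G(X,U)\leq\delta$ and the already-established stability chain \Cref{Gdhtstability} $+$ \Cref{coro:dHTG-dIG-Katetov} one gets $\di^G\big(\h_k(\vrm_\infty(X;\bullet)),\h_k(\vrm_\infty(U;\bullet))\big)\leq 2\delta$, hence (via the $G$-homeomorphism over $U$) $\di^G\big(\h_k(\vrm_\infty(X;\bullet)),\h_k(|\vr(U;\bullet)|)\big)\leq 2\delta$; combining this with the standard simplicial bound $\di^G\big(\h_k(|\vr(X;\bullet)|),\h_k(|\vr(U;\bullet)|)\big)\leq 2\delta$ and the triangle inequality yields $\leq 4\delta$. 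So the paper trades your direct construction for a softer argument that reuses earlier theorems as black boxes, which is shorter and sidesteps exactly the obstacle you anticipate; your route would also work but requires more bookkeeping.
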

\begin{proof}
Fix an arbitrary $\delta>0$. Then, by \Cref{lemma:Ginvdeltanetexst}, there exists a $G$-invariant $\delta$-net $U$ of $X$. Then, $\dgh^G(X,U)\leq\delta$. Hence, by \Cref{Gdhtstability} and \Cref{coro:dHTG-dIG-Katetov}, we have that $$\di^G\big(\h_k(\vrm_\infty(X;\bullet)),\h_k(\vrm_\infty(U;\bullet))\big)\leq 2\delta.$$ Furthermore, by \cite[Lemmas 4.11 and 4.12]{adams2024persistent}, we have that $\vrm_\infty(X;r)$ and $\vert\vr(X;r)\vert$ are $G$-homeomorphic for any $r>0$ via the canonical bijection $\sum_{i=0}^n u_i\delta_{x_i}\mapsto\sum_{i=0}^n u_ix_i$. Therefore, $\di^G\big(\h_k(\vrm_\infty(X;\bullet)),\h_k(\vert\vr(U;\bullet)\vert)\big)\leq 2\delta$. Hence, by the triangle inequality for $\di^G$, we have that
\begin{align*}
&\di^G\big(\h_k(\vrm_\infty(X;\bullet)),\h_k(\vert\vr(X;\bullet)\vert)\big)\\
&\leq\di^G\big(\h_k(\vrm_\infty(X;\bullet)),\h_k(\vert\vr(U;\bullet)\vert)\big)+\di^G\big(\h_k(\vr(X;\bullet)),\h_k(\vert\vr(U;\bullet)\vert)\big)\leq 4\delta.
\end{align*}
Since the choice of $\delta$ is arbitrary, one can conclude that
$$\di^G\big(\h_k(\vrm_\infty(X;\bullet)),\h_k(\vert\vr(X;\bullet)\vert)\big)=0.$$
\end{proof}

\begin{corollary}\label{cor:vrvrmdIGsame}
Let $G$ be a finite group. Then, for any two totally bounded $G$-metric spaces $X$ and $Y$, we have that
$$\di^G\big(\h_k(\vrm_\infty(X;\bullet)),\h_k(\vrm_\infty(Y;\bullet))\big)=\di^G\big(\h_k(\vert\vr(X;\bullet)\vert),\h_k(\vert\vr(Y;\bullet)\vert)\big).$$
\end{corollary}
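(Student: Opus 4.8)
The plan is to derive the equality directly from \Cref{prop:vrvrmdIGsame} together with the fact, recorded in \Cref{prop:diGpsdmtr}, that $\di^G$ is an extended pseudometric on the collection of $G$-persistence modules, so that it is symmetric and satisfies the triangle inequality. Throughout, all four persistence modules involved are understood to carry their canonical persistent $G$-representations from \Cref{ex:GvrmPH} and \Cref{ex:GvrPH}.

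First I would insert $\h_k(|\vr(X;\bullet)|)$ and $\h_k(|\vr(Y;\bullet)|)$ as intermediate terms and apply the triangle inequality for $\di^G$ twice:
\begin{align*}
\di^G\big(\h_k(\vrm_\infty(X;\bullet)),\h_k(\vrm_\infty(Y;\bullet))\big) &\leq \di^G\big(\h_k(\vrm_\infty(X;\bullet)),\h_k(|\vr(X;\bullet)|)\big)\\
&\quad+\di^G\big(\h_k(|\vr(X;\bullet)|),\h_k(|\vr(Y;\bullet)|)\big)\\
&\quad+\di^G\big(\h_k(|\vr(Y;\bullet)|),\h_k(\vrm_\infty(Y;\bullet))\big).
\end{align*}
By \Cref{prop:vrvrmdIGsame}, applied to the totally bounded $G$-metric spaces $X$ and $Y$, the first and last summands on the right-hand side equal $0$, so the bound collapses to $\di^G\big(\h_k(|\vr(X;\bullet)|),\h_k(|\vr(Y;\bullet)|)\big)$, which is one of the two desired inequalities. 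The reverse inequality is obtained by running the symmetric chain, this time inserting $\h_k(\vrm_\infty(X;\bullet))$ and $\h_k(\vrm_\infty(Y;\bullet))$ between $\h_k(|\vr(X;\bullet)|)$ and $\h_k(|\vr(Y;\bullet)|)$ and again invoking \Cref{prop:vrvrmdIGsame} to discard the two outer terms. Combining the two inequalities yields the claimed equality.

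There is essentially no obstacle in this argument: all of the content is carried by \Cref{prop:vrvrmdIGsame}, whose proof already packages the nontrivial input (the $G$-homeomorphism $\vrm_\infty(U;r)\cong|\vr(U;r)|$ for finite $G$-invariant nets $U$, combined with \Cref{Gdhtstability} and \Cref{coro:dHTG-dIG-Katetov}). The only point requiring any care is the bookkeeping one of ensuring that both chains above are built from the same four $G$-persistence modules with a consistent choice of persistent $G$-representations; since we use the canonical representations of \Cref{ex:GvrmPH} and \Cref{ex:GvrPH} throughout, \Cref{prop:vrvrmdIGsame} and the triangle inequality of \Cref{prop:diGpsdmtr} apply verbatim.
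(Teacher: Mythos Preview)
Your proof is correct and follows exactly the approach of the paper, which simply says to apply the triangle inequality of $\di^G$ (from \Cref{prop:diGpsdmtr}) together with \Cref{prop:vrvrmdIGsame}. You have merely written out the two triangle-inequality chains explicitly.
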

\begin{proof}
Apply the triangle inequality of $\di^G$ and \Cref{prop:vrvrmdIGsame}.
\end{proof}

\subsubsection{An exact calculation for the case of spheres.} 

Fix arbitrary positive integers $n> m>0$. By \cite[Theorem 4.1 and Corollary 9.39]{lim2024vietoris}, we know that $$\di\big(\h_m(\vert\vr(\Sp^m;\bullet)\vert),\h_m(\vert\vr(\Sp^n;\bullet)\vert)\big)=\frac{\zeta_m}{2}.$$
Given the item (2) of \Cref{rmk:diGmonotone}, one  wonders whether the $\Z_2$-interleaving distance might be \emph{strictly better} than the standard one in the case at hand, i.e. whether
$$\di^{\Z_2}\big(\h_m(\vert\vr(\Sp^m;\bullet)\vert),\h_m(\vert\vr(\Sp^n;\bullet)\vert)\big)>\di\big(\h_m(\vert\vr(\Sp^m;\bullet)\vert),\h_m(\vert\vr(\Sp^n;\bullet)\vert)\big)=\frac{\zeta_m}{2}$$ holds when the persistence modules appearing in the LHS are endowed with the canonical $\Z_2$-persistent representation induced by the canonical $\Z_2$-action (i.e. the antipodal involution) on spheres with the geodesic metric; see the item (2) of \Cref{ex:spheres-Z2}. This is \emph{not} the case. Furthermore, the following proposition shows that  \emph{no group $G$ acting by isometries on the spheres has an impact on  the value of the resulting $G$-interleaving distance} $\di^{G}\big(\h_m(\vert\vr(\Sp^m;\bullet)\vert),\h_m(\vert\vr(\Sp^n;\bullet)\vert)\big)$.

\begin{proposition}\label{prop:Gdnmatter}
Let $n>m>0$ be positive integers and $G$ be a group. Consider any $G$-group actions on $\Sp^n$ and $\Sp^m$ by isometries. Then, we have that
$$\di^{G}\big(\h_m(\vert\vr(\Sp^m;\bullet)\vert),\h_m(\vert\vr(\Sp^n;\bullet)\vert)\big)=\di\big(\h_m(\vert\vr(\Sp^m;\bullet)\vert),\h_m(\vert\vr(\Sp^n;\bullet)\vert)\big)=\frac{\zeta_m}{2}.$$
\end{proposition}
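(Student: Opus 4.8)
The plan is to sandwich the $G$-interleaving distance between two quantities that are already known to coincide with $\zeta_m/2$. The upper bound is the easy direction: by item (2) of \Cref{rmk:diGmonotone} applied to the unique homomorphism $h\colon G_0\to G$ from the trivial group, we always have
$$\di^{G}\big(\h_m(\vert\vr(\Sp^m;\bullet)\vert),\h_m(\vert\vr(\Sp^n;\bullet)\vert)\big)\geq \di\big(\h_m(\vert\vr(\Sp^m;\bullet)\vert),\h_m(\vert\vr(\Sp^n;\bullet)\vert)\big)=\tfrac{\zeta_m}{2},$$
wait---that inequality goes the \emph{wrong} way, so in fact the content is entirely in the \emph{lower bound} on $\di^G$; the reverse inequality $\di^G\leq \tfrac{\zeta_m}{2}$ is what must be proved, since forgetting the $G$-structure can only decrease the distance. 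So the real task is: exhibit a $(G,\delta)$-interleaving for every $\delta>\zeta_m/2$ whose interleaving morphisms are genuinely $G$-equivariant.

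First I would recall the structure of the known (non-equivariant) computation from \cite[Theorem 4.1 and Corollary 9.39]{lim2024vietoris}: in degree $m$ the persistence module $\h_m(\vert\vr(\Sp^m;\bullet)\vert)$ has a single interval summand starting at $\zeta_m$ (the ``fundamental class'' bar), and likewise $\h_m(\vert\vr(\Sp^n;\bullet)\vert)$ (for $n>m$) has its degree-$m$ homology supported on an interval that is, up to a shift by $\zeta_m/2$, interleaved with the former. The key observation is that these interval summands are \emph{one-dimensional} over $\mathbb{F}$ in the relevant range. Now here is the crucial representation-theoretic point: any persistent representation $\nu_\bullet$ of $G$ on a persistence module which is an interval module $\mathbb{F}_{[a,b)}$ acts on each one-dimensional $V_r$ by scalars, i.e. through a character $G\to \mathbb{F}^\times$. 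Two interval modules with the \emph{same} associated character are $G$-isomorphic, and more importantly, the standard (non-equivariant) interleaving morphisms between $\h_m(\vert\vr(\Sp^m;\bullet)\vert)$ and $\h_m(\vert\vr(\Sp^n;\bullet)\vert)$, being maps between one-dimensional spaces, automatically intertwine the $G$-actions \emph{provided} the two characters agree. So the plan reduces to two steps: (i) show the characters agree, and (ii) invoke (i) to upgrade the non-equivariant interleaving to a $(G,\delta)$-interleaving.

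For step (i), I would argue that the fundamental class of $\Sp^m$ (respectively, the corresponding generator in $\h_m$ of $\vert\vr(\Sp^n;\bullet)\vert$) is acted on by $G$ through the same character in both cases. The cleanest route: the Dirac embedding $\Sp^m\hookrightarrow \vert\vr(\Sp^m;\bullet)\vert$ is $G$-equivariant, and for $r$ just above $\zeta_m$ there is a $G$-equivariant map $\vert\vr(\Sp^m;r)\vert\to \Sp^m$ (a $G$-equivariant Hausmann-type retraction, which follows from the optimal equivariant Hausmann theorem \Cref{thm:Z2hausmannsphere} in the $\Z_2$ case, or more simply from the fact that $\Sp^m$ with \emph{any} isometric $G$-action still generates the same $\h_m$ with the induced character), so that the character by which $G$ acts on the degree-$m$ homology of $\vert\vr(\Sp^m;r)\vert$ equals the character by which $G$ acts on $\h_m(\Sp^m;\mathbb{F})$ via the action on the fundamental class. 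The same reasoning applies verbatim to $\Sp^n$, using that the relevant degree-$m$ class in $\h_m(\vert\vr(\Sp^n;\bullet)\vert)$ is pulled back from the fundamental class of an $m$-dimensional ``cross-polytope-like'' subcomplex and $G$ permutes these in a way that induces the same character. Since both characters are identified with how $G$ acts on an orientation class of an $m$-sphere, they coincide (both are the orientation character, or both trivial, depending on whether the action is orientation-preserving---but they agree because $n>m$ forces compatibility through the comparison maps).

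The main obstacle I anticipate is making precise the claim in step (i) that the two characters \emph{agree}, particularly for $\Sp^n$ where the degree-$m$ class (with $m<n$) does not come from the fundamental class of $\Sp^n$ itself but from lower-dimensional structure in the Vietoris-Rips filtration; one must track the $G$-action through the explicit description of $\h_m(\vert\vr(\Sp^n;\bullet)\vert)$ from \cite{lim2024vietoris} and verify equivariance of the generators there. A secondary subtlety is the edge case $\mathrm{char}\,\mathbb{F}=2$ or $G$ of even order interacting with orientation characters, but since interval modules over a fixed field force scalar actions regardless, and the non-equivariant interleaving maps are canonical (they are the ones realizing $\di=\zeta_m/2$), the characters being equal is exactly the condition needed---and it holds because both modules are, $G$-equivariantly, shifts of the \emph{same} underlying character-twisted interval module. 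Once step (i) is in hand, step (ii) is immediate: the non-equivariant interleaving morphisms $T_\bullet, T'_\bullet$ realizing $\di=\zeta_m/2$ consist of maps between one-dimensional spaces carrying equal characters, hence are automatically $G$-persistence morphisms, so they witness a $(G,\delta)$-interleaving for every $\delta>\zeta_m/2$, giving $\di^G\leq \zeta_m/2$ and completing the proof.
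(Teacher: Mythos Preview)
Your reduction to proving $\di^G\leq \zeta_m/2$ is correct, but your proposed argument for this inequality has a genuine gap. The proposition allows \emph{arbitrary and unrelated} isometric $G$-actions on $\Sp^m$ and $\Sp^n$; nothing ties the two together. Your step (i) therefore fails: take $G=\Z_2$ acting antipodally on $\Sp^m$ and trivially on $\Sp^n$. The induced character on the degree-$m$ bar of $\h_m(\vert\vr(\Sp^m;\bullet)\vert)$ is the sign $(-1)^{m+1}$ (the degree of the antipodal map), while the character on whatever degree-$m$ class arises for $\Sp^n$ is trivial. For even $m$ these disagree, and then a nonzero linear map between the one-dimensional pieces is \emph{not} $G$-equivariant. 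Your appeal to ``compatibility through comparison maps'' cannot rescue this, because there are no comparison maps between $\Sp^m$ and $\Sp^n$ built into the hypotheses.

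The paper sidesteps the character question entirely with a much simpler observation: by \cite[Remark 9.3 and Proof of Proposition 9.7]{lim2024vietoris}, every structure map $\h_m(\vert\vr(\Sp^k;r)\vert)\to \h_m(\vert\vr(\Sp^k;r+\delta)\vert)$ is the \emph{zero} map once $\delta>\zeta_k$. Since $\zeta_m=\max\{\zeta_m,\zeta_n\}$, for any $\delta>\zeta_m$ the shift-by-$\delta$ structure maps in both modules vanish, so the \emph{zero} morphisms $T_\bullet=0$, $T'_\bullet=0$ constitute a $(G,\delta/2)$-interleaving---the interleaving triangles commute because both legs are zero, and zero maps are $G$-equivariant for \emph{any} $G$-actions whatsoever. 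This gives $\di^G\leq \zeta_m/2$ with no representation-theoretic analysis at all. You should also note that you never verified that $\h_m(\vert\vr(\Sp^n;\bullet)\vert)$ is a single interval module; the paper's argument needs only the bar-length bound, not the multiplicity.
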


The following example shows that there are $G$-spaces $X$ and $Y$ such that 
$$\di^{G}\big(\h_k(\vert\vr(X;\bullet)\vert),\h_k(\vert\vr(Y;\bullet)\vert)\big) > \di\big(\h_k(\vert\vr(X;\bullet)\vert),\h_k(\vert\vr(Y;\bullet)\vert)\big).$$

\begin{figure}
\begin{center}
    \includegraphics[width = 0.8\linewidth]{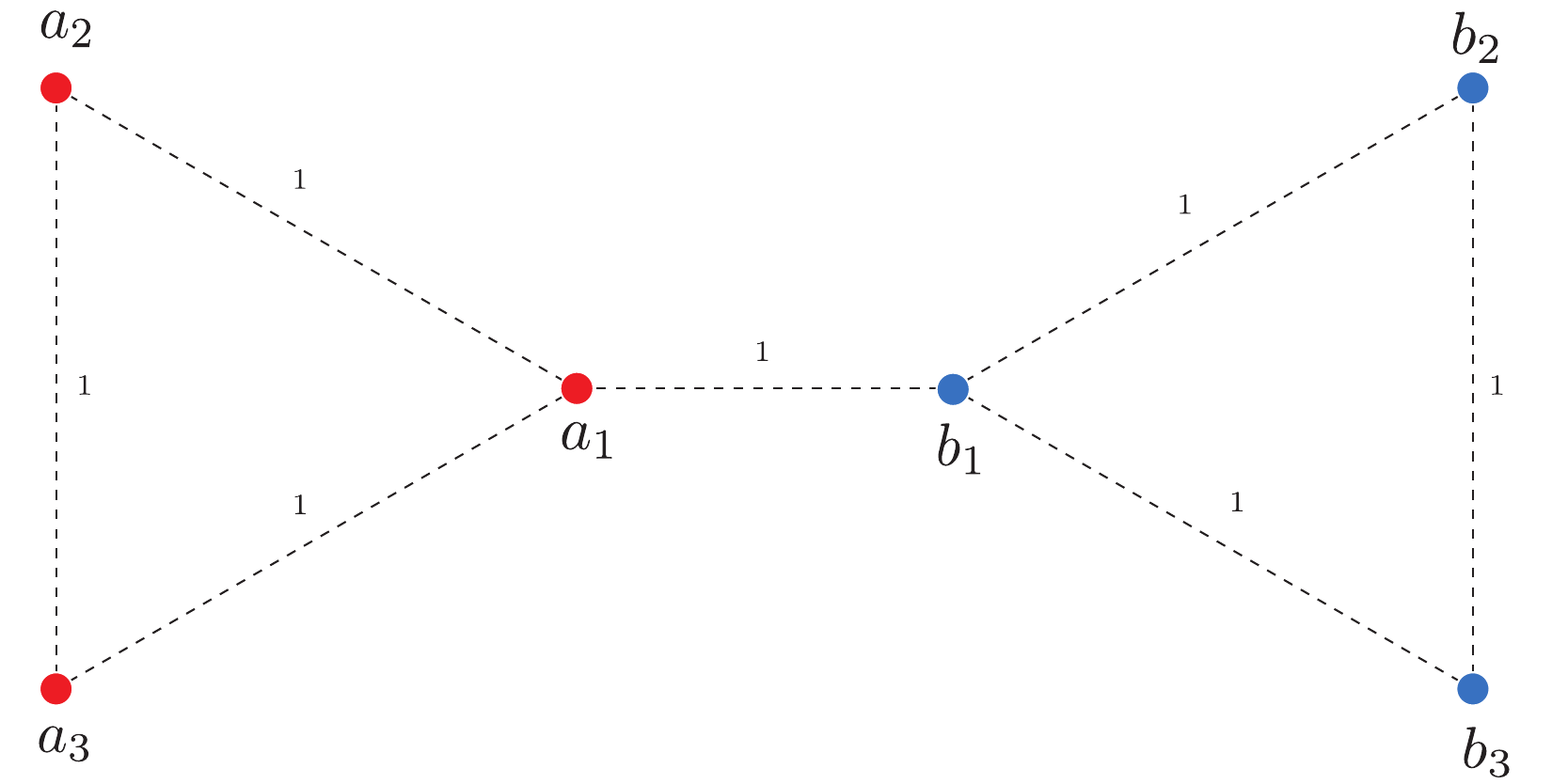}
\end{center}
    \caption{Description of $(X,d_X)$ from \Cref{ex:dIGdIstrictineq}.}
    \label{fig:dIGdIstrictexampleX}
\end{figure}

\begin{example}\label{ex:dIGdIstrictineq}
Let $(X,d_X)$ and $(Y,d_Y)$ be the following two six-points metric spaces: \begin{itemize}
\item $X:=\{a_1,a_2,a_3,b_1,b_2,b_3\}$ where $d_X$ is the shortest path distance on the graph having the points in $X$ as vertices and $\{(a_1,a_2),(a_2,a_3),(a_3,a_1),(b_1,b_2),(b_2,b_3),(b_3,b_1),(a_1,b_1)\}$ as the set of edges; see \Cref{fig:dIGdIstrictexampleX}. 
\item $Y:=\{u_1,u_2,u_3,u_4,v_1,v_2\}$ and $d_Y$ is the discrete metric: $d_Y(y,y'):=\begin{cases}1&\text{if }y\neq y'\\ 0&\text{if }y= y'.\end{cases}$
\end{itemize}

Let $V_\bullet:=\h_0(\vert\vr(X;\bullet)\vert)$ and $W_\bullet:=\h_0(\vert\vr(Y;\bullet)\vert)$ be the degree-$0$ Vietoris-Rips persistent homology of $X$ and $Y$ (see \Cref{ex:PMod-VR}). Then, it is easy to verify that, as standard persistence modules, $V_\bullet$  and $W_\bullet$ are isomorphic so that $\di(V_\bullet,W_\bullet)=0$.

Next, we will consider $\Z_2$-persistence representations for $V_\bullet$ and $W_\bullet$. We will view $\Z_2$ as $\{-1,1\}$ with  multiplication as the binary operation. Let $\alpha_X:\Z_2\times X\rightarrow X$ be s.t. $\alpha_X(-1,a_i):=b_i$ (equivalently, $\alpha_X(-1,b_i):=a_i$) for $i=1,2,3$. Then, $(X,d_X,\alpha_X)$ is a $\Z_2$-metric space. Furthermore, $\alpha_X$ naturally induces a persistent representation $\nu_\bullet:\Z_2\rightarrow\gl(V_\bullet)$ (see \Cref{ex:GvrPH}) so that $(V_\bullet,\nu_\bullet)$ is a $\Z_2$-persistence module. As for $W_\bullet$, we will first equip  $Y$ with a $\Z_4$-action. We will view $\Z_4$ as $\{\cx,-1,-\cx,1\}$, where $\cx=\sqrt{-1}$, with  multiplication as the binary group operation. Let $\alpha_Y:\Z_4\times Y\rightarrow Y$ be s.t. $\alpha_Y(\cx,u_1):=u_2$, $\alpha_Y(\cx,u_2):=u_3$, $\alpha_Y(\cx,u_3):=u_4$, $\alpha_Y(\cx,u_4):=u_1$, $\alpha_Y(\cx,v_1):=v_2$, and $\alpha_Y(\cx,v_2):=v_1$. Then, $(Y,d_Y,\alpha_Y)$ is a $\Z_4$-metric space. Also, $\alpha_Y$ naturally induces a  persistent representation $\omega_\bullet:\Z_4\rightarrow\gl(W_\bullet)$ so that $(W_\bullet,\omega_\bullet)$ is a $\Z_4$-persistence module. Furthermore, we define a $\Z_2$-persistence representation $\omega_\bullet^{\otimes 2}:\Z_2\rightarrow\gl(W_\bullet)$ s.t. $\omega_\bullet^{\otimes 2}(-1):=\omega_\bullet(-1)$. Then, $(W_\bullet,\omega_\bullet^{\otimes 2})$ is a $\Z_2$-persistence module.

By \cite[Theorem 4.4.11]{polterovich2020topological}, we have that
$$\di^{\Z_2}\big((V_\bullet,\nu_\bullet),(W_\bullet,\omega_\bullet^{\otimes 2})\big)\geq m_\mathrm{odd}(L^{V_\bullet})$$
where
\begin{itemize}
    \item $L^{V_\bullet}$ is the persistence module s.t. $(L^{V_\bullet})_r:=\mathrm{ker}\big(\nu_r(-1)-(-1)\cdot\mathrm{id}_{V_r}\big)$ (see \cite[Example 4.4.5 and pg.43]{polterovich2020topological}) for every $r\in\R$. i.e., $(L^{V_\bullet})_r$ is the eigenspace of $\nu_r(-1)$ associated to the eigenvalue $-1$;

    \item $m_\mathrm{odd}(L^{V_\bullet})$ is the maximum over ``multiplicity functions" $m_j(L^{V_\bullet})$ for all odd $j$ (see \cite[Definition 4.3.2]{polterovich2020topological}).
\end{itemize}
Then, it is easy to verify that
$$(L^{V_\bullet})_r=\begin{cases}\mathrm{span}(\{a_1-b_1,a_2-b_2,a_3-b_3\})&\text{if }r\in (0,1] \\ 0 &\text{otherwise}\end{cases}$$
so that $m_\mathrm{odd}(L^{V_\bullet})=m_3(L^{V_\bullet})=\frac{1}{4}$. In conclusion, we have that
$$\di^{\Z_2}\big((V_\bullet,\nu_\bullet),(W_\bullet,\omega_\bullet^{\otimes 2})\big)\geq\frac{1}{4}>0=\di(V_\bullet,W_\bullet).$$
\end{example}

\begin{proof}[\textbf{Proof of \Cref{prop:Gdnmatter}}]
First of all, observe that the map $$\h_k\big(\vert\vr(\Sp^m;r)\vert\big)\rightarrow\h_k\big(\vert\vr(\Sp^m;r+\delta)\vert\big)$$ induced by the inclusion is the zero map for any $\delta>\zeta_m$, $r\in\R$, and $k\in\Z_{\geq 0}$ by \cite[Remark 9.3 and Proof of Proposition 9.7]{lim2024vietoris}. Similarly, the map $\h_k\big(\vert\vr(\Sp^n;r)\vert\big)\rightarrow\h_k\big(\vert\vr(\Sp^n;r+\delta)\vert\big)$ induced by the inclusion is the zero map for any $\delta>\zeta_n$, $r\in\R$, and $k\in\Z_{\geq 0}$. Therefore, for an arbitrary $\delta>\max\{\zeta_m,\zeta_n\}=\zeta_m$, if we consider the following $G$-persistence morphisms

\begin{align*}
&T_\bullet = \Bigg(T_r: \h_m\big(\vert\vr(\Sp^m;r)\vert\big) \to \h_m\left(\left\vert\vr\left(\Sp^n;r+\frac{\delta}{2}\right)\right\vert\right)\Bigg)_{r\in\R}
\text{ and}\\
&T_\bullet' = \Bigg(T_r': \h_m\big(\vert\vr(\Sp^n;r)\vert\big) \to \h_m\left(\left\vert\vr\left(\Sp^m;r+\frac{\delta}{2}\right)\right\vert\right)\Bigg)_{r\in\R}
\end{align*}

where every $T_r$ and $T'_r$ is the zero map, one can easily verify that $\h_m(\vert\vr(\Sp^m;\bullet)\vert)$ and $\h_m(\vert\vr(\Sp^n;\bullet)\vert)$ are $(G,\delta/2)$-interleaved through $T_\bullet$ and $T_\bullet'$; see \Cref{def:gint-dist}. Hence, $$\frac{\delta}{2}\geq\di^{G}\big(\h_m(\vert\vr(\Sp^m;\bullet)\vert),\h_m(\vert\vr(\Sp^n;\bullet)\vert)\big).$$ Since $\delta>\zeta_m$ is arbitrary, we have that
$$\frac{\zeta_m}{2}\geq\di^{G}\big(\h_m(\vert\vr(\Sp^m;\bullet)\vert),\h_m(\vert\vr(\Sp^n;\bullet)\vert)\big).$$

Furthermore, by \cite[Theorem 4.1 and Corollary 9.39]{lim2024vietoris}, we know that
$$\di\big(\h_m(\vert\vr(\Sp^m;\bullet)\vert),\h_m(\vert\vr(\Sp^n;\bullet)\vert)\big)=\frac{\zeta_m}{2}.$$
Finally, by  item (2) of \Cref{rmk:diGmonotone} and choosing $\widetilde{G}=G_0$, one can conclude that 
$$\di^G\big(\h_m(\vert\vr(\Sp^m;\bullet)\vert),\h_m(\vert\vr(\Sp^n;\bullet)\vert)\big)=\di\big(\h_m(\vert\vr(\Sp^m;\bullet)\vert),\h_m(\vert\vr(\Sp^n;\bullet)\vert)\big)=\frac{\zeta_m}{2}.$$
\end{proof}

\section{$G$-equivariant precompactness, rigidity and finiteness.}\label{sec:finiteness}

In a series of works including \cite{weinstein1967homotopy,cheeger1970finiteness,grove1988bounding,yamaguchi1988homotopy,peter1990finiteness,grove1990geometric} it was shown that suitably constrained families of metric spaces admit only finitely many equivalence classes—whether up to homeomorphism, diffeomorphism, or homotopy equivalence. A number of these finiteness results typically combine precompactness in the Gromov–Hausdorff topology with appropriate rigidity theorems. In this section, building on results from earlier sections, we establish  finiteness results for the $G$-homotopy equivalence.

\medskip
Let $G$ be an arbitrary group. Recall that $\mathcal{M}^G$  denotes the collection of all compact $G$-metric spaces. 

\begin{definition}[The class $\mathcal{H}^G$]\label{def:HG}
Let $\mathcal{H}^G\subseteq\mathcal{M}^G$ be the subcollection of all compact $G$-metric spaces $(X,d_X,\alpha_X)$ that satisfy the following condition: there exists $r^G(X)>0$ such that: 

\begin{enumerate}
    \item For each $\varepsilon\in(0,r^G(X)]$, there exists a $G$-homotopy equivalence $p_\varepsilon^X:\vrm_\infty(X;\varepsilon)\toG X$ that admits as a $G$-homotopy inverse the canonical isometric embedding \begin{align*}j_\varepsilon^X:X&\hooktoG \vrm_\infty(X;\varepsilon)\\
    x&\longmapsto \delta_x.\end{align*}

    \item The following diagram commutes up to $G$-homotopy, for any $0<\varepsilon\leq\varepsilon'\leq r^G(X)$:
    $$\begin{tikzcd}
\vrm_\infty(X;\varepsilon) \arrow[r, hook, "\iota_{\varepsilon,\varepsilon'}^X"] \arrow[dr, "p_\varepsilon^X"]
& \vrm_\infty(X;\varepsilon') \arrow[d, "p_{\varepsilon'}^X"]\\ & X
\end{tikzcd}$$
where $\iota_{\varepsilon,\varepsilon'}^X:\vrm_\infty(X;\varepsilon)\hooktoG \vrm_\infty(X;\varepsilon')$ denotes the canonical inclusion.\footnote{I.e. the condition requires that $p_{\varepsilon'}^X\circ\iota_{\varepsilon,\varepsilon'}^X$ and $p_{\varepsilon}^X$ be $G$-homotopic.}
\end{enumerate}
\nomenclature[15]{$\mathcal{H}^G$}{The class $\mathcal{H}^G$ of compact $G$-metric spaces}
\end{definition}

\subsection{Complete $G$-Riemannian manifolds are in $\mathcal{H}^G$.}
In \cite[Theorem 4.2]{adamaszek2018metric}, the authors proved a Hausmann-type of theorem for Vietoris-Rips metric thickenings via the notion of \emph{Karcher mean} or \emph{Riemannian center of mass} \cite{karcher1977riemannian}. Using the fact that Karcher means are actually well behaved under isometries (which introduces equivariance)---as pointed out in \cite[Section 1.4.1]{karcher1977riemannian}---in \Cref{thm:vrmHausmann-G} below we obtain a generalization of  \cite[Theorem 4.2]{adamaszek2018metric} to the $G$-equivariant setting.

More precisely, the Riemannian center of mass map $p_\varepsilon^M$ is defined as follows: fix an arbitrary $\mu\in\vrm_\infty(M;\varepsilon)$. Then, $\supp[\mu]$ is a finite subset of $M$ contained in a ball of radius $\varepsilon\leq r(M)$ (see \Cref{thm:vrmHausmann-G} for the definition of $r(M)$). Hence, by \cite[Theorem 1.2 and Definition 1.3]{karcher1977riemannian}, the map $x\longmapsto\frac{1}{2}\int_{M}d_M(x,y)^2\,d\mu(y)$ has a unique minimizer, where $d_M$ is the geodesic metric. Then,  $p_\varepsilon^M(\mu)$ is defined as this unique minimizer.

Assume that a group $G$ and a complete Riemannian manifold $M$ are given. We say that $(M,\alpha_M)$ is a \emph{complete $G$-Riemannian manifold} if $\alpha_M:G\times M\longrightarrow M$ is a $G$-group action such that $(M,d_M,\alpha_M)$ becomes a $G$-metric space.\footnote{That is, $\alpha_M$ acts by isometries on $(M,d_M)$.}

\begin{theorem}[$G$-equivariant Hausmann theorem for $\vrm_\infty$]\label{thm:vrmHausmann-G} Let $G$ be a group and $M$ be a complete $G$-Riemannian manifold. Let  $$r(M):=\min\Big\{\mathrm{Conv}(M),\tfrac{1}{4}\pi\Delta_M^{-1/2}\Big\},$$ where $\mathrm{Conv}(M)$ is the convexity radius of $M$ and $\Delta_M>0$ is a uniform upper bound on the sectional curvature of $M$. Then, for each $\varepsilon\in(0,r(M)]$, the Riemannian center of mass (or Karcher mean) map $p_\varepsilon^M:\vrm_\infty(M;\varepsilon)\toG M$ is a $G$-homotopy equivalence that has the canonical isometric embedding $j_\varepsilon^M:M\hooktoG \vrm_\infty(M;\varepsilon)$ as a $G$-homotopy inverse.
\end{theorem}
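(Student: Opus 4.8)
The plan is to upgrade the non-equivariant argument of \cite[Theorem 4.2]{adamaszek2018metric} by checking that each of its ingredients is compatible with the $G$-action. That argument rests on three pieces: (i) the Karcher mean map $p_\varepsilon^M$ is well defined and continuous for the weak topology on $\vrm_\infty(M;\varepsilon)$; (ii) the composition $p_\varepsilon^M\circ j_\varepsilon^M$ equals $\mathrm{id}_M$ on the nose; and (iii) the ``straight-line'' homotopy in the space of measures, $H(\mu,t):=(1-t)\,\mu+t\,\delta_{p_\varepsilon^M(\mu)}$, contracts $j_\varepsilon^M\circ p_\varepsilon^M$ to $\mathrm{id}_{\vrm_\infty(M;\varepsilon)}$ while remaining inside $\vrm_\infty(M;\varepsilon)$ --- the last point being precisely where the value $r(M)=\min\{\mathrm{Conv}(M),\tfrac14\pi\Delta_M^{-1/2}\}$ is used. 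The genuinely new content is to verify that $p_\varepsilon^M$ and $j_\varepsilon^M$ are $G$-maps and that $H$ can be taken to be a $G$-homotopy; everything else is quoted.

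First I would establish the $G$-equivariance of the Karcher mean. For $\mu\in\vrm_\infty(M;\varepsilon)$ let $E_\mu(x):=\tfrac12\int_M d_M(x,y)^2\,d\mu(y)$ be the Karcher energy; since $\supp[\mu]$ lies in a ball of radius $\varepsilon\leq r(M)$, \cite[Theorem 1.2 and Definition 1.3]{karcher1977riemannian} gives that $E_\mu$ has a unique minimizer, namely $p_\varepsilon^M(\mu)$. Writing $\phi_g:=\alpha_M(g,\cdot)$, which is an isometry, and recalling $\alpha_{\Pfin(M)}(g,\mu)=(\phi_g)_\sharp\mu$ (see \Cref{prop:PfinGaction}), the change-of-variables formula yields
$$E_{(\phi_g)_\sharp\mu}\big(\phi_g(x)\big)=\tfrac12\int_M d_M\big(\phi_g(x),\phi_g(y)\big)^2\,d\mu(y)=E_\mu(x)\qquad(x\in M),$$
so $x_0$ minimizes $E_\mu$ if and only if $\phi_g(x_0)$ minimizes $E_{(\phi_g)_\sharp\mu}$, and uniqueness forces $p_\varepsilon^M\big(\alpha_{\Pfin(M)}(g,\mu)\big)=\phi_g\big(p_\varepsilon^M(\mu)\big)$; that is, $p_\varepsilon^M:\vrm_\infty(M;\varepsilon)\toG M$ is a $G$-map (this is the isometry-equivariance of Karcher means remarked upon in \cite[Section 1.4.1]{karcher1977riemannian}). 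Its weak continuity is exactly as in \cite[Theorem 4.2]{adamaszek2018metric} and is untouched by the group action. The map $j_\varepsilon^M$ is a $G$-isometric embedding since $\delta_{\phi_g(x)}=(\phi_g)_\sharp\delta_x$, and because $E_{\delta_x}$ is uniquely minimized at $x$ we get $p_\varepsilon^M\circ j_\varepsilon^M=\mathrm{id}_M$, hence in particular $p_\varepsilon^M\circ j_\varepsilon^M\Gsimeq\mathrm{id}_M$ via the constant homotopy.

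Next I would handle the homotopy $H(\mu,t):=(1-t)\,\mu+t\,\delta_{p_\varepsilon^M(\mu)}$, which satisfies $H(\cdot,0)=\mathrm{id}$ and $H(\cdot,1)=j_\varepsilon^M\circ p_\varepsilon^M$. Its continuity, together with the fact that $\supp[H(\mu,t)]\subseteq\supp[\mu]\cup\{p_\varepsilon^M(\mu)\}$ still has diameter $<\varepsilon$ (because $p_\varepsilon^M(\mu)$ lies in the closed convex hull of $\supp[\mu]$, whose diameter does not exceed that of $\supp[\mu]$ once $\varepsilon\leq\mathrm{Conv}(M)$ and the distance functions are convex on the relevant ball, using $\varepsilon\leq\tfrac14\pi\Delta_M^{-1/2}$), are exactly the estimates carried out in \cite[Theorem 4.2]{adamaszek2018metric}; so $H$ maps into $\vrm_\infty(M;\varepsilon)$. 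It remains to check that each slice $H(\cdot,t)$ is a $G$-map: the $G$-action on $\Pfin(M)$ is affine, $\alpha_{\Pfin(M)}\big(g,(1-t)\mu+t\nu\big)=(1-t)\alpha_{\Pfin(M)}(g,\mu)+t\,\alpha_{\Pfin(M)}(g,\nu)$, and combining this with the $G$-equivariance of $p_\varepsilon^M$ proved above gives
$$H\big(\alpha_{\Pfin(M)}(g,\mu),t\big)=(1-t)\,\alpha_{\Pfin(M)}(g,\mu)+t\,\delta_{\phi_g(p_\varepsilon^M(\mu))}=\alpha_{\Pfin(M)}\big(g,H(\mu,t)\big).$$
Hence $H$ is a $G$-homotopy, so $j_\varepsilon^M\circ p_\varepsilon^M\Gsimeq\mathrm{id}_{\vrm_\infty(M;\varepsilon)}$, and together with the previous step this exhibits $p_\varepsilon^M$ and $j_\varepsilon^M$ as mutually inverse $G$-homotopy equivalences.

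I do not expect a deep obstacle: the genuinely geometric inputs --- existence and uniqueness of the Karcher mean, its membership in the convex hull of the support, and weak-topology continuity of both $\mu\mapsto p_\varepsilon^M(\mu)$ and of $H$ --- are not new and are quoted from \cite{karcher1977riemannian,adamaszek2018metric}, where the specific value of $r(M)$ is precisely what makes them work. The only real work is bookkeeping, namely threading $G$-equivariance through those statements, and the single decisive observation enabling it is the isometry-invariance of the Karcher energy in the second paragraph above. A secondary point to double-check carefully is the \emph{strict} inequality $\diam\big(\supp[\mu]\cup\{p_\varepsilon^M(\mu)\}\big)<\varepsilon$ at the endpoint $\varepsilon=r(M)$, so that the homotopy does not escape $\vrm_\infty(M;\varepsilon)$ at the boundary of the allowed range.
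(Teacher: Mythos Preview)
Your proposal is correct and follows essentially the same approach as the paper: quote \cite[Theorem 4.2]{adamaszek2018metric} for the non-equivariant homotopy equivalence via the straight-line homotopy $H$, then verify (1) that $p_\varepsilon^M$ is $G$-equivariant via the change-of-variables identity $E_{(\phi_g)_\sharp\mu}(\phi_g(x))=E_\mu(x)$ and uniqueness of the Karcher mean, and (2) that each slice $H(\cdot,t)$ is a $G$-map using the affinity of the pushforward action together with (1). The paper's proof is organized identically, with the same two verifications as the only new content.
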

Note that, thanks to \Cref{thm:vrmHausmann-G}, every complete $G$-Riemannian manifold $M$ satisfies the first condition in \Cref{def:HG} with $r^G(M) = r(M)$.  Additionally, the second condition in \Cref{def:HG} is also  satisfied since, obviously, $p_{\varepsilon'}^M\circ\iota_{\varepsilon,\varepsilon'}^M=p_{\varepsilon}^M$.  We immediately obtain:
\begin{corollary}
\label{prop:RMnice}
If $M$ is a complete $G$-Riemannian manifold, then $M$ belongs to $\mathcal{H}^G$.
\end{corollary}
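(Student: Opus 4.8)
The plan is to simply unwind \Cref{def:HG} and verify its two defining conditions, letting \Cref{thm:vrmHausmann-G} do all the substantive work. So, given a complete $G$-Riemannian manifold $M$, I would set $r^G(M):=r(M)=\min\{\mathrm{Conv}(M),\tfrac{1}{4}\pi\Delta_M^{-1/2}\}$, the radius appearing in \Cref{thm:vrmHausmann-G}, and then check conditions (1) and (2) of \Cref{def:HG} with this choice.

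For condition (1) I would invoke \Cref{thm:vrmHausmann-G} verbatim: for every $\varepsilon\in(0,r(M)]$ the Karcher mean map $p_\varepsilon^M:\vrm_\infty(M;\varepsilon)\toG M$ is a $G$-homotopy equivalence whose $G$-homotopy inverse is precisely the canonical isometric embedding $j_\varepsilon^M:x\mapsto\delta_x$. This is exactly what item (1) of \Cref{def:HG} asks for.

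For condition (2) I would observe that the comparison triangle in \Cref{def:HG} in fact commutes on the nose, not merely up to $G$-homotopy. Indeed, for $0<\varepsilon\leq\varepsilon'\leq r(M)$ the map $\iota_{\varepsilon,\varepsilon'}^M:\vrm_\infty(M;\varepsilon)\hookrightarrow\vrm_\infty(M;\varepsilon')$ is literally the inclusion of one subspace of $\Pfin(M)$ into another, and for $\mu\in\vrm_\infty(M;\varepsilon)$ (whose support lies in a ball of radius $\leq\varepsilon\leq r(M)$) the value $p_{\varepsilon'}^M(\mu)$ is, by construction, the unique minimizer of $x\mapsto\tfrac{1}{2}\int_M d_M(x,y)^2\,d\mu(y)$ — a quantity that makes no reference to the scale parameter. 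Hence $p_{\varepsilon'}^M\circ\iota_{\varepsilon,\varepsilon'}^M=p_{\varepsilon}^M$ as maps, which in particular yields $p_{\varepsilon'}^M\circ\iota_{\varepsilon,\varepsilon'}^M\Gsimeq p_{\varepsilon}^M$, as required.

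Since both conditions of \Cref{def:HG} hold with $r^G(M)=r(M)$, we conclude $M\in\mathcal{H}^G$. There is no real obstacle here: equivariance of the Karcher mean, the homotopy equivalence, and the choice of convexity/curvature radius are all already packaged into \Cref{thm:vrmHausmann-G}, so the corollary is purely a matter of matching that statement to the bookkeeping of \Cref{def:HG}, the only (trivial) extra point being the strict commutativity of the triangle noted above.
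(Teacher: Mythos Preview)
Your proposal is correct and matches the paper's own argument essentially verbatim: the paper also sets $r^G(M)=r(M)$, invokes \Cref{thm:vrmHausmann-G} for condition (1), and notes that condition (2) holds because $p_{\varepsilon'}^M\circ\iota_{\varepsilon,\varepsilon'}^M=p_{\varepsilon}^M$ strictly (not just up to $G$-homotopy).
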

\begin{proof}[\textbf{Proof of \Cref{thm:vrmHausmann-G}}]
As \cite[Theorem 4.2]{adamaszek2018metric} explains, for each $\varepsilon\in(0,r(M)]$, the Riemannian center of mass map $p_\varepsilon^M:\vrm_\infty(M;\varepsilon)\longrightarrow M$ is a (standard) homotopy equivalence that has the canonical isometric embedding $j_\varepsilon^M:M\longhookrightarrow \vrm_\infty(M;\varepsilon)$ as a homotopy inverse (which is certainly a $G$-map). Since $p_\varepsilon^M\circ j^M_\varepsilon = \mathrm{id}_M$ one only needs to verify that the identity on $\vrm_\infty(M;\varepsilon)$ and $j_\varepsilon^M\circ p^M_\varepsilon$ are homotopic. For this, the authors consider the homotopy $H:\vrm_\infty(M;\varepsilon)\times[0,1]\to \vrm_\infty(M;\varepsilon)$ given, for $\mu\in\vrm_\infty(M;\varepsilon)$,  by the \emph{straight-line} homotopy $$H(\mu,t) := (1-t)\mu + t\,j^M_\varepsilon\circ p^M_\varepsilon(\mu).$$
That $H$ is well defined and continuous is verified in \cite{adamaszek2018metric}. Given these facts, in order to obtain our claim that $p_\varepsilon^M:\vrm_\infty(M;\varepsilon)\to M$ is a $G$-homotopy equivalence with  $j^M_\varepsilon$ as a $G$-homotopy inverse,  it suffices to prove that:
\begin{itemize}
\item[(1)] $p^M_\varepsilon$ is $G$-equivariant.
\item[(2)] $t\mapsto H(\cdot,t)$ is a $G$-map for each $t\in[0,1].$
\end{itemize}

We first prove (1) which is equivalent to the condition that $\alpha_M(g,p_M^\varepsilon(\mu)) = p_\varepsilon^M(\alpha_M(g,\cdot)_\sharp\mu)$ for all $\mu\in \vrm_\infty(M;\varepsilon)$ and $g\in G$.  Observe that, for each $g\in G$ and $\mu\in\vrm_\infty(M;\varepsilon)$ we have that 
\begin{align*}\int_{M}d_M^2(x,y)\,d\big(\alpha_M(g,\cdot)_\sharp\mu\big)(y)=\int_{M}d_M^2(x,\alpha_M(g,y))\,d\mu(y)
= \int_M d_M^2(\alpha_M(g^{-1},x),y)\,d\mu(y).\end{align*}
Since both the support of $\mu$ and (therefore) that of $\alpha_{M}(g,\cdot)_\sharp \mu$ are contained in a ball of radius $\varepsilon\leq r(M)$, both measures admit a unique Riemannian center of mass. This, together with the equalities above then imply that 
$p_\varepsilon^M(\alpha_{M}(g,\cdot)_\sharp \mu)=\alpha_M(g,p_\varepsilon^M(\mu))$, as we wanted.

Now,  (2) it equivalent to proving that for all $\mu\in \vrm_\infty(M;\varepsilon)$ and $g\in G$ we have that 
\begin{equation}\label{eq:H}
\alpha_M(g,\cdot)_\sharp H(\mu,t) = H\big(\alpha_M(g,\cdot)_\sharp\mu,t\big)\,\,\mbox{for all $t\in[0,1]$}.
\end{equation}
Starting from the LHS, we have
\begin{align*}
    \alpha_M(g,\cdot)_\sharp H(\mu,t) &= (1-t)\alpha_M(g,\cdot)_\sharp \mu + t \,\alpha_M(g,\cdot)_\sharp j^M_\varepsilon \circ p_\varepsilon^M(\mu) \nonumber\\
    &= (1-t)\alpha_M(g,\cdot)_\sharp \mu + t\, \alpha_M(g,\cdot)_\sharp \delta_{p^M_\varepsilon(\mu)}\nonumber\\
    &= (1-t)\alpha_M(g,\cdot)_\sharp \mu + t\, \delta_{\alpha_M(g,p^M_\varepsilon(\mu))}\\
    &\stackrel{(*)}{=}(1-t)\alpha_M(g,\cdot)_\sharp \mu + t\, \delta_{p^M_\varepsilon(\alpha_M(g,\cdot)_\sharp \mu))}\nonumber\\
     &=(1-t)\alpha_M(g,\cdot)_\sharp \mu + t\, j_\varepsilon^M\circ p^M_\varepsilon(\alpha_M(g,\cdot)_\sharp\mu))\nonumber
     =H\big(\alpha_M(g,\cdot)_\sharp\mu,t\big)\nonumber
\end{align*}
where equality $(*)$ follows from item (1) above.
\end{proof}

\subsection{A precompactness theorem for $\dgh^G$.}
For each $X\in\mathcal{M}^G$ and $\varepsilon>0$, we define the \emph{$(G,\varepsilon)$-covering number of $X$} as follows: 

\begin{equation}\label{eq:def:NG}
N_X^G(\varepsilon):=\inf\{\vert U \vert: U\text{ is a }G\text{-invariant }\varepsilon\text{-net of }X\}. 
\end{equation}
\nomenclature[16]{$N_X^G$}{$(G,\varepsilon)$-covering number of $X$}
\nomenclature[16]{$N_{\mathcal{F}}^G$}{$(G,\varepsilon)$-covering number of  family $\mathcal{F}$}

Similarly, for a 
family $\mathcal{F}\subseteq\mathcal{M}^G$, we define \begin{equation}\label{eq:def"N-cov}N_{\mathcal{F}}^G(\varepsilon):=\sup_{X\in\mathcal{F}}N_X^G(\varepsilon)\end{equation} for $\varepsilon>0.$ 

\begin{remark}
Assume that $(X,d_X,\alpha_X)\in\mathcal{M}^G$, $\varepsilon>0$, and a group homomorphism $h:\widetilde{G}\to G$ are given. Then, it is easy to verify that any $G$-invariant $\varepsilon$-net $U$ of $(X,d_X,\alpha_X)$ is also $\widetilde{G}$-invariant $\varepsilon$-net of $(X,d_X,h^\sharp\alpha_X)$. Therefore, we have that $N_{(X,d_X,\alpha_X)}^G(\varepsilon)\geq N_{(X,d_X,h^\sharp\alpha_X)}^{\widetilde{G}}(\varepsilon)$ for each $\epsilon>0$.
\end{remark}

\begin{definition}[Uniformly $G$-totally bounded family]
We say that a family $\mathcal{F}\subseteq\mathcal{M}^G$ is \emph{uniformly $G$-totally bounded} if
\begin{enumerate}
    \item There exists $D>0$ such that $\diam(X)\leq D$ for all $X\in\mathcal{F}$, and

    \item For each $\varepsilon>0$, the number $N_{\mathcal{F}}^G(\varepsilon)$ is finite.
\end{enumerate}
\end{definition}

The following theorem is a generalization of Gromov's precompactness theorem (see \cite[Theorem 7.4.15]{burago2022course}) to the $G$-equivariant setting.

\begin{theorem}[$G$-equivariant  Gromov's precompactness theorem]\label{thm:Gequivprcpt}
Assume that $G$ is a finite group. If $\mathcal{F}\subseteq\mathcal{M}^G$ is uniformly $G$-totally bounded, then $\mathcal{F}$ is precompact with respect to $\dgh^G$.\footnote{I.e., any sequence in $\mathcal{F}$ contains a converging subsequence.}
\end{theorem}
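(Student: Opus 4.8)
The plan is to adapt the classical proof of Gromov's precompactness theorem (cf.\ \cite[Theorem 7.4.15]{burago2022course}), adding the bookkeeping needed to keep track of the $G$-actions. Fix a sequence $(X_i)_{i\in\mathbb{N}}$ in $\mathcal{F}$ and let $D>0$ bound all diameters. For each $n\geq 1$, uniform $G$-total boundedness provides, for every $i$, a $G$-invariant $(1/n)$-net of $X_i$ of cardinality at most $N_{\mathcal F}^G(1/n)<\infty$; taking the union of such nets over the first $n$ levels yields nested $G$-invariant nets $S_1^i\subseteq S_2^i\subseteq\cdots$ in $X_i$ with $S_n^i$ a $(1/n)$-net and $\vert S_n^i\vert\leq M_n$ for a constant $M_n$ independent of $i$.

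First I would stabilize all relevant discrete data along a subsequence, by a diagonal argument. For each fixed $n$ there are only finitely many possibilities for the cardinality $\vert S_n^i\vert\in\{0,1,\dots,M_n\}$ and, once a labeling $S_n^i=\{p^{i,n}_1,\dots,p^{i,n}_{m_n}\}$ compatible with the nesting is chosen, for the permutation homomorphism $\sigma_{i,n}\colon G\to \mathrm{Sym}(m_n)$ recording how each $\alpha_{X_i}(g,\cdot)$ permutes the labeled net points; here finiteness of $G$ is used, since $\mathrm{Hom}(G,\mathrm{Sym}(m_n))$ is a finite set. By the pigeonhole principle and a diagonal extraction over $n$, I pass to a subsequence along which, for every $n$, both the number $m_n$ of net points and the homomorphism $\sigma_{i,n}=:\sigma_n$ are independent of $i$ and compatible across $n$. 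A further diagonal extraction over the countable set of triples $(n,j,k)$ ensures $d_{X_i}(p^{i,n}_j,p^{i,n}_k)\to \hat d_n(j,k)$ as $i\to\infty$ for all $j,k\leq m_n$, with the $\hat d_n$ compatible across $n$.

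Next I construct the limit. On the countable set $\hat S:=\bigcup_n\{1,\dots,m_n\}$, nested via the labelings, the functions $\hat d_n$ glue to a pseudometric $\hat d\leq D$ (the triangle inequality passes to limits), and the permutations $\sigma_n$ glue to a $G$-action on $\hat S$ that preserves $\hat d$ (each $d_{X_i}$ is $G$-invariant). Let $X_\infty$ be the completion of the metric quotient $\hat S/\{\hat d=0\}$; the $G$-action descends to isometries of the quotient and extends, by uniform continuity, to $X_\infty$. Since $S_n^i$ is a $(1/n)$-net of $X_i$ for every $i$, the image of $\{1,\dots,m_n\}$ is a $(1/n)$-net of $X_\infty$, so $X_\infty$ is totally bounded and complete, hence compact; thus $X_\infty\in\mathcal{M}^G$.

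Finally I would show $\dgh^G(X_i,X_\infty)\to 0$. Fix $n$; for each large $i$ consider
\[
R_i:=\bigl\{(x,y)\in X_i\times X_\infty : \exists\, j\leq m_n \text{ with } d_{X_i}(x,p^{i,n}_j)\leq \tfrac1n \text{ and } d_{X_\infty}(y,[j])\leq \tfrac1n\bigr\}.
\]
As $\{p^{i,n}_j\}_j$ and $\{[j]\}_j$ are $(1/n)$-nets, $R_i$ is a correspondence, and since the two $G$-actions act through the \emph{same} permutation $\sigma_n$ on labels, $R_i$ is a $G$-correspondence. A routine three-term triangle estimate (inserting $d_{X_i}(p^{i,n}_j,p^{i,n}_k)$ and $\hat d_n(j,k)=d_{X_\infty}([j],[k])$) gives $\dis(R_i)\leq \tfrac4n+\varepsilon_i$ with $\varepsilon_i:=\max_{j,k\leq m_n}\vert d_{X_i}(p^{i,n}_j,p^{i,n}_k)-\hat d_n(j,k)\vert\to 0$ as $i\to\infty$, so $\limsup_i \dgh^G(X_i,X_\infty)\leq 2/n$; letting $n\to\infty$ yields $\dgh^G(X_i,X_\infty)\to 0$. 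Hence the original sequence has a subsequence $\dgh^G$-converging to $X_\infty\in\mathcal{M}^G$, which is precompactness. I expect the only genuinely new difficulty beyond the classical argument to be the simultaneous stabilization of the combinatorial $G$-action data $\sigma_{i,n}$, with its compatibility across scales $n$, and the ensuing verification that the relations $R_i$ are $G$-equivariant; finiteness of $G$ is exactly what powers the pigeonhole step.
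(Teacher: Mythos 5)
Your proof is correct and follows essentially the same strategy as the paper's: stabilize net cardinalities and the (finitely many, since $G$ is finite) $G$-actions on labeled net points by pigeonhole plus diagonal extraction, then pass to a limit of all pairwise net distances by a further diagonal argument, assemble the limit $G$-metric space, and conclude via explicit $G$-correspondences or the triangle inequality for $\dgh^G$. The one spot where you are somewhat more careful is in explicitly passing to the metric quotient and completion of the countable limit set so that the limit genuinely lies in $\mathcal{M}^G$; the paper's proof leaves this step implicit.
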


The proof of the theorem is relegated to \Cref{app:proofs-finiteness}.

\subsection{An application: rigidity and finiteness results.}

In \Cref{sec:rigidity}, we work out rigidity results for both the homotopy type and Gromov-Hausdorff distances. In \Cref{sec:finite}, we combine these results with \Cref{thm:Gequivprcpt} to obtain a finiteness result for the number of $G$-homotopy class of suitably constrained Riemannian manifolds. 

\subsubsection{Rigidity}\label{sec:rigidity}
The following theorem provides a rigidity result with respect to the $G$-homotopy type distance and \Cref{cor:rigidity} below translates it into a rigidity claim with respect to the $G$-Gromov-Hausdorff distance.

\begin{theorem}\label{thm:rigidity}
Suppose that $X,Y\in\mathcal{H}^G$. If
$$\dht^G\big((\Pfin(X),\diam_\infty^X),(\Pfin(Y),\diam_\infty^Y)\big)<\tfrac{1}{2}\min\{r^G(X),r^G(Y)\},$$
then $X$ and $Y$ are $G$-homotopy equivalent.
\end{theorem}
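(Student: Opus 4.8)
The plan is to transport a $(G,\delta)$-homotopy equivalence between $(\Pfin(X),\diam_\infty^X)$ and $(\Pfin(Y),\diam_\infty^Y)$ down to the metric thickenings at a small scale, and then argue ``up to $G$-homotopy'', exploiting that membership in $\mathcal{H}^G$ forces all the structural inclusions between thickenings at scales in $(0,r^G(X)]$ to be $G$-homotopy equivalences. To begin, I would fix $\delta$ with
$$\dht^G\big((\Pfin(X),\diam_\infty^X),(\Pfin(Y),\diam_\infty^Y)\big)<\delta<\tfrac12\min\{r^G(X),r^G(Y)\},$$
which is possible by hypothesis, and note that a $(G,\delta')$-homotopy equivalence is automatically a $(G,\delta)$-homotopy equivalence for $\delta\geq\delta'$; hence there are $(G,\delta)$-maps $\Phi:\Pfin(X)\toG\Pfin(Y)$ and $\Psi:\Pfin(Y)\toG\Pfin(X)$ together with $(G,2\delta)$-homotopies $\Psi\circ\Phi\Gsimeq\mathrm{id}_{\Pfin(X)}$ and $\Phi\circ\Psi\Gsimeq\mathrm{id}_{\Pfin(Y)}$. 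Since $2\delta<\min\{r^G(X),r^G(Y)\}$, choose $a>0$ with $a+2\delta\leq\min\{r^G(X),r^G(Y)\}$ and put $b:=a+\delta$, $c:=a+2\delta$, so that $0<a<b<c\leq\min\{r^G(X),r^G(Y)\}$.

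The first real step is the observation that, for $X\in\mathcal{H}^G$ and any $0<\varepsilon\leq\varepsilon'\leq r^G(X)$, the canonical inclusion $\iota_{\varepsilon,\varepsilon'}^X:\vrm_\infty(X;\varepsilon)\hooktoG\vrm_\infty(X;\varepsilon')$ is a $G$-homotopy equivalence with $G$-homotopy inverse $j_\varepsilon^X\circ p_{\varepsilon'}^X$. Indeed, using $\iota_{\varepsilon,\varepsilon'}^X\circ j_\varepsilon^X=j_{\varepsilon'}^X$, one side is $\iota_{\varepsilon,\varepsilon'}^X\circ(j_\varepsilon^X\circ p_{\varepsilon'}^X)=j_{\varepsilon'}^X\circ p_{\varepsilon'}^X\Gsimeq\mathrm{id}$ by condition (1) of \Cref{def:HG}, and the other side is $(j_\varepsilon^X\circ p_{\varepsilon'}^X)\circ\iota_{\varepsilon,\varepsilon'}^X=j_\varepsilon^X\circ(p_{\varepsilon'}^X\circ\iota_{\varepsilon,\varepsilon'}^X)\Gsimeq j_\varepsilon^X\circ p_\varepsilon^X\Gsimeq\mathrm{id}$ by conditions (2) and (1). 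Consequently $\iota_{a,b}^X,\iota_{b,c}^X,\iota_{a,c}^X$ and their $Y$-analogues are all $G$-homotopy equivalences, and (via the maps $p_\bullet$ of condition (1)) we have $X\Gsimeq\vrm_\infty(X;a)\Gsimeq\vrm_\infty(X;b)\Gsimeq\vrm_\infty(X;c)$ and likewise for $Y$.

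Next I would restrict the transported data to these scales. Since $\Phi,\Psi$ are $(G,\delta)$-maps for $\diam_\infty$ and each $\vrm_\infty(\cdot\,;t)$ is the $G$-invariant sublevel set $(\diam_\infty)^{-1}\big((-\infty,t)\big)$, they restrict to $G$-maps $\Phi_t:\vrm_\infty(X;t)\toG\vrm_\infty(Y;t+\delta)$ and $\Psi_t:\vrm_\infty(Y;t)\toG\vrm_\infty(X;t+\delta)$; and each time slice of a $(G,2\delta)$-homotopy $\Psi\circ\Phi\Gsimeq\mathrm{id}_{\Pfin(X)}$ carries $\vrm_\infty(X;a)$ into $\vrm_\infty(X;a+2\delta)=\vrm_\infty(X;c)$, so it restricts to a $G$-homotopy $\Psi_b\circ\Phi_a\Gsimeq\iota_{a,c}^X$; symmetrically $\Phi_b\circ\Psi_a\Gsimeq\iota_{a,c}^Y$. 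One also has the strict identity $\Phi_b\circ\iota_{a,b}^X=\iota_{b,c}^Y\circ\Phi_a$, both sides being $\Phi$ with its codomain enlarged. Now argue up to $G$-homotopy: from $\Psi_b\circ\Phi_a\Gsimeq\iota_{a,c}^X$, a $G$-homotopy equivalence, $\Phi_a$ admits a left $G$-homotopy inverse; from $\Phi_b\circ\Psi_a\Gsimeq\iota_{a,c}^Y$, $\Phi_b$ admits a right $G$-homotopy inverse, hence so does $\Phi_b\circ\iota_{a,b}^X=\iota_{b,c}^Y\circ\Phi_a$, and since $\iota_{b,c}^Y$ is a $G$-homotopy equivalence, $\Phi_a$ admits a right $G$-homotopy inverse too. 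A $G$-map possessing both a left and a right $G$-homotopy inverse is a $G$-homotopy equivalence, so $\Phi_a:\vrm_\infty(X;a)\toG\vrm_\infty(Y;b)$ is one, and composing with $p_a^X$ and $p_b^Y$ yields $X\Gsimeq\vrm_\infty(X;a)\Gsimeq\vrm_\infty(Y;b)\Gsimeq Y$, as desired.

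I expect the crux to be the second step — that on $(0,r^G(X)]$ all the thickenings are interchangeable through the inclusions — together with the routine but careful checking that the $(G,2\delta)$-homotopies genuinely restrict to the prescribed sub-thickenings. The constant $\tfrac12$ is forced: $\Psi\circ\Phi$ together with its homotopy to the identity shifts the $\diam_\infty$ filtration by $2\delta$, so one needs $2\delta$ (plus a little room for $a>0$) to fit below $\min\{r^G(X),r^G(Y)\}$; beyond this scale accounting, no essential obstacle arises.
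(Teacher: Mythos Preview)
Your proof is correct and rests on the same ingredients as the paper's: restricting the $(G,\delta)$-maps and $(G,2\delta)$-homotopies to the sublevel thickenings $\vrm_\infty(\cdot\,;t)$ for small $t$, and exploiting both conditions in \Cref{def:HG}. The organization differs slightly. The paper builds explicit candidate equivalences $\widetilde{\Phi}:=p_{\varepsilon+\delta}^Y\circ\Phi_\varepsilon\circ j_\varepsilon^X$ and $\widetilde{\Psi}:=p_{\varepsilon+\delta}^X\circ\Psi_\varepsilon\circ j_\varepsilon^Y$ between $X$ and $Y$ directly, and verifies $\widetilde{\Psi}\circ\widetilde{\Phi}\Gsimeq\mathrm{id}_X$ by a chain of $G$-homotopies, after first proving a small claim that the ``shifted'' maps $\widehat{\Phi},\widehat{\Psi}$ (at level $\varepsilon+\delta$ instead of $\varepsilon$) are $G$-homotopic to $\widetilde{\Phi},\widetilde{\Psi}$. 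You instead isolate up front the lemma that every inclusion $\iota_{\varepsilon,\varepsilon'}^X$ with $0<\varepsilon\le\varepsilon'\le r^G(X)$ is a $G$-homotopy equivalence, and then run a left-inverse/right-inverse argument on $\Phi_a$ using the interleaving square $\Phi_b\circ\iota_{a,b}^X=\iota_{b,c}^Y\circ\Phi_a$. Your route is a bit more categorical and avoids the auxiliary $\widehat{\Phi},\widehat{\Psi}$; the paper's route has the virtue of producing the $G$-homotopy equivalence $X\toG Y$ explicitly as $p^Y\circ\Phi\circ j^X$. Either way the essential content is the same.
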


From \Cref{Gdhtstability} and \Cref{thm:rigidity} we obtain:

\begin{corollary}\label{cor:rigidity}
Suppose that $X,Y\in\mathcal{H}^G$. If $\dgh^G(X,Y)<\tfrac{1}{4}\min\{r^G(X),r^G(Y)\}$, then $X$ and $Y$ are $G$-homotopy equivalent.
\end{corollary}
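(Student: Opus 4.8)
The plan is to simply chain together the two results cited right before the statement, namely \Cref{Gdhtstability} (with the choice $p=\infty$) and \Cref{thm:rigidity}. No new construction is needed; the work is entirely bookkeeping with the constants.

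First I would fix $X,Y\in\mathcal{H}^G$ satisfying the hypothesis $\dgh^G(X,Y)<\tfrac14\min\{r^G(X),r^G(Y)\}$. Multiplying through by $2$, this is equivalent to $2\,\dgh^G(X,Y)<\tfrac12\min\{r^G(X),r^G(Y)\}$. Now I would invoke \Cref{Gdhtstability} in the case $p=\infty$, which (since $X,Y$ are in particular compact $G$-metric spaces) gives
$$\dht^G\big((\Pfin(X),\diam_\infty^X),(\Pfin(Y),\diam_\infty^Y)\big)\leq 2\,\dgh^G(X,Y).$$
Combining the two displayed relations yields
$$\dht^G\big((\Pfin(X),\diam_\infty^X),(\Pfin(Y),\diam_\infty^Y)\big)<\tfrac12\min\{r^G(X),r^G(Y)\},$$
which is precisely the hypothesis of \Cref{thm:rigidity}. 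Applying that theorem concludes that $X$ and $Y$ are $G$-homotopy equivalent, as desired.

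There is essentially no obstacle here: the corollary is a formal consequence of the stability inequality bounding $\dht^G$ of the thickenings by twice $\dgh^G$, together with the homotopy-type rigidity theorem. The only point requiring (minor) care is making sure the constant $\tfrac14$ in the Gromov–Hausdorff hypothesis correctly propagates to the constant $\tfrac12$ needed to trigger \Cref{thm:rigidity}, which is exactly the factor of $2$ supplied by \Cref{Gdhtstability}.
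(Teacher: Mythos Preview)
Your proposal is correct and matches the paper's own derivation exactly: the paper simply says ``From \Cref{Gdhtstability} and \Cref{thm:rigidity} we obtain'' the corollary, which is precisely the chaining you carry out with $p=\infty$ and the factor-of-$2$ bookkeeping.
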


See \cite[Theorem A]{peter1990finiteness} for results of similar flavor. Indeed, on page 392 of \cite{peter1990finiteness}, Petersen concludes that any two $n$-dimensional\footnote{In \cite{peter1990finiteness}, Petersen uses the Lebesgue covering dimension.} compact metric spaces $X$ and $Y$ such that there exists $R>0$ such that balls of radius of at most $R$ are contractible within themselves, then $X$ and $Y$ are homotopy equivalent as soon as $\dgh(X,Y)<\tfrac{R}{32 n^2}$; see also \cite[Theorem 2]{yamaguchi1988homotopy}.

\paragraph{Using rigidity ideas for obtaining lower bounds.} \Cref{thm:rigidity} and \Cref{cor:rigidity} yield interesting consequences even when $G$ is the trivial group $G_0$. 

\begin{remark}\label{rem:strat}
If $X,Y\in\mathcal{H}^{G_0}$ are \emph{not} homotopy equivalent, then $$\dgh(X,Y)\geq \tfrac{1}{4}\min\{r^{G_0}(X),r^{G_0}(Y)\}.$$
\end{remark}

\begin{example}\label{rmk:trivialcase} Here we discuss three different applications of \Cref{rem:strat}.
\begin{itemize}
\item For $n\neq m$, via \Cref{thm:Z2hausmannspherevrm}, we find that $\dgh(\Sp^n,\Sp^m)\geq \tfrac{1}{4}\min(\zeta_n,\zeta_m)\geq \tfrac{\pi}{8}.$ 
\item Similarly, if $\mathbb{T}^d$ denotes the $d$-dimensional flat torus,\footnote{Since $r(\mathbb{T}^d)\geq \tfrac{\pi}{2}$.} then $\dgh(\Sp^n,\mathbb{T}^d)\geq \tfrac{\pi}{8}$ for all positive integers $n,d$ not simultaneously equal to $1$.  
\item For $d,d'$ different positive integers, we also have $\dgh(\mathbb{T}^d,\mathbb{T}^{d'})\geq \tfrac{\pi}{8}$.
\end{itemize}
\end{example}

Another family of interesting examples arise by considering \emph{Lens spaces} \cite{tietze1908topologischen}.  Let $p\geq 2$ and $q$ be coprime positive integers. Consider $\Sp^3$ as the unit sphere in $\C^2$. Now, consider the $\Z_p$-action $\alpha_{p,q}$ on $\Sp^3$ induced by the following map:
\begin{align*}
    \Sp^3&\longrightarrow\Sp^3\\
    (z_1,z_2)&\longmapsto \left(e^{\frac{2\pi i}{p}}z_1,e^{\frac{2\pi qi}{p}}z_2\right).
\end{align*}
The \emph{$(p,q)$-Lens space} $L(p;q)$ is defined as $L(p;q):=\Sp^3\slash\Z_p$, the quotient space induced by the aforementioned $\Z_p$-action $\alpha_{p,q}$. As an application of  \Cref{rem:strat}, we obtain  the following result.

\begin{proposition}\label{prop:Lens}
Let $p\geq 2$ and $q_1,q_2$ be positive integers that are coprime with $p$. Then, we have that
$$\dgh(L(p;q_1),L(p;q_2))\geq \tfrac{1}{4}\min\{r(L(p;q_1)),r(L(p;q_2))\}=\frac{\pi}{8p}$$
whenever $q_1q_2\not\equiv\pm n^2$ (mod $p$) for any positive integer $n$.
\end{proposition}
\begin{proof}
Let $q$ be an arbitrary positive integer that is coprime with $p$. Note that $L(p;q)$ is a compact Riemannian manifold with  constant sectional curvature $\Delta_{L(p;q)}\equiv 1$; see \cite[Proposition 2.32]{lee2018introduction}.

Next, we estimate $\mathrm{Conv}(L(p;q))$, the convexity radius of $L(p;q)$. By \cite[Section 3.2]{viana2019isoperimetric} we have $\mathrm{Inj}(L(p;q))\geq\frac{\pi}{p}$. By \cite[pg.177]{petersen2006riemannian}, we conclude that $$\mathrm{Conv}(L(p;q))\geq\min\left\{\frac{\pi}{2\sqrt{\Delta_{L(p;q)}}},\frac{\mathrm{Inj}(L(p;q))}{2}\right\} \geq \frac{\pi}{2p}.$$
Finally, one can conclude that $r(L(p;q))\geq\frac{\pi}{2p}$ (see Theorem \ref{thm:vrmHausmann-G}).

The classification of Lens spaces up to homotopy equivalence is well-known: by \cite[Theorem VI]{olum1953mappings}, two Lens spaces $L(p;q_1)$ and $L(p;q_2)$ are homotopy equivalent if and only if $q_1q_2\equiv\pm n^2$ (mod $p$) for some positive integer $n$. Therefore, by \Cref{rmk:trivialcase}, one can conclude that
$$\dgh(L(p;q_1),L(p;q_2))\geq \tfrac{1}{4}\min\{r(L(p;q_1)),r(L(p;q_2))\}=\frac{\pi}{8p}$$
whenever $q_1q_2\not\equiv\pm n^2$ (mod $p$) for every positive integer $n$.
\end{proof}

By Propositions \ref{prop:dghGvsquotientdgh} and \ref{prop:Lens}, we obtain the following corollary.

\begin{corollary}\label{cor:Lens}
Let $p\geq 2$ and $q_1,q_2$ be positive integers that are coprime with $p$ such that $q_1q_2\not\equiv\pm n^2$ (mod $p$) for any positive integer $n$. Then, we have that
$$\dgh^{\Z_p}\big((\Sp^3,\alpha_{p,q_1}),(\Sp^3,\alpha_{p,q_2})\big)\geq\frac{\pi}{8p}.$$
\end{corollary}

\begin{proof}[\textbf{Proof of \Cref{thm:rigidity}}]
Since $2\,\dht^G\big((\Pfin(X),\diam_\infty^X),(\Pfin(Y),\diam_\infty^Y)\big)<\min\{r^G(X),r^G(Y)\}$, one can chose $\delta\in\bigg(\dht^G\big((\Pfin(X),\diam_\infty^X),(\Pfin(Y),\diam_\infty^Y)\big),\frac{1}{2}\min\{r^G(X),r^G(Y)\}\bigg)$ . Hence, there are $(G,\delta)$-maps $\Phi:(\Pfin(X),\diam_\infty^X)\toG(\Pfin(Y),\diam_\infty^Y)$ and $\Psi:(\Pfin(Y),\diam_\infty^Y)\toG(\Pfin(X),\diam_\infty^X)$ such that $\Psi\circ\Phi$ is $(G,2\delta)$-homotopic to $\mathrm{id}_{\Pfin(X)}$ and $\Phi\circ\Psi$ is $(G,2\delta)$-homotopic to $\mathrm{id}_{\Pfin(Y)}$. Now, choose an arbitrary $\varepsilon>0$ such that $\varepsilon+2\delta<\min\{r^G(X),r^G(Y)\}$.  
Define the following maps:
\begin{itemize}
\item $\Phi_\varepsilon := \Phi|_{\vrm_\infty(X;\varepsilon)}$. Note that its image is contained in $\vrm_\infty(Y;\varepsilon+\delta)$. 
\item $\Psi_\varepsilon := \Psi|_{\vrm_\infty(Y;\varepsilon)}$. Note that its image is contained in $\vrm_\infty(X;\varepsilon+\delta)$. 
\item $\Phi_{\epsilon+\delta}:=\Phi|_{\vrm_\infty(X;\varepsilon+\delta)}$. Note that its image is contained in $\vrm_\infty(Y;\varepsilon+2\delta)$. 
\item $\Psi_{\varepsilon+\delta}:=\Psi|_{\vrm_\infty(Y;\varepsilon+\delta)}$. Note that its image is contained in $\vrm_\infty(X;\varepsilon+2\delta)$. 
\end{itemize}

Consider the following two diagrams (which commute up to $G$-homotopy):

\begin{center}
\begin{tikzcd}
X \arrow[r, hook, "j_\varepsilon^X", red] & \vrm_\infty(X;\varepsilon) \arrow[rr, hook, "\iota_{\varepsilon,\varepsilon+2\delta}^X"] \arrow[rd, "\Phi_\varepsilon",red] &                                                              & \vrm_\infty(X;\varepsilon+2\delta) \arrow[r, "p_{\varepsilon+2\delta}^X",blue,dotted] & X \\
                  &                                              & \vrm_\infty(Y;\varepsilon+\delta) \arrow[ru, "\Psi_{\varepsilon+\delta}",blue,dotted] \arrow[d, shift left=2, "p_{\varepsilon+\delta}^Y",red] &                                      &   \\
                  &                                              & Y \arrow[u, hook', shift left=2, "j_{\varepsilon+\delta}^Y",blue,dotted]                              &                                      &  
\end{tikzcd}
\end{center}

\begin{center}
\begin{tikzcd}
Y \arrow[r, hook, "j_\varepsilon^Y",blue] & \vrm_\infty(Y;\varepsilon) \arrow[rr, hook, "\iota_{\varepsilon,\varepsilon+2\delta}^Y"] \arrow[rd, "\Psi_{\varepsilon}",blue] &                                                              & \vrm_\infty(Y;\varepsilon+2\delta) \arrow[r, "p_{\varepsilon+2\delta}^Y",red,dotted] & Y \\
                  &                                              & \vrm_\infty(X;\varepsilon+\delta) \arrow[ru, "\Phi_{\varepsilon+\delta}",red,dotted] \arrow[d, shift left=2, "p_{\varepsilon+\delta}^X",blue] &                                      &   \\
                  &                                              & X \arrow[u, hook', shift left=2, "j_{\varepsilon+\delta}^X",red,dotted]                              &                                      &  
\end{tikzcd}
\end{center}

Based on these two diagrams, we define the following four $G$-maps: 

\begin{itemize}
\item $\widetilde{\Phi}:=p_{\varepsilon+\delta}^Y\circ\Phi_\varepsilon\circ j_{\varepsilon}^X:X\toG Y$ arising as the composition of the red arrows in the first diagram.

\item $\widetilde{\Psi}:=p_{\varepsilon+\delta}^X\circ\Psi_\varepsilon\circ j_{\varepsilon}^Y:Y\toG X$ arising as the composition of the blue arrows in the second diagram.

\item $\widehat\Phi:=p_{\varepsilon+2\delta}^Y\circ\Phi_{\varepsilon+\delta}\circ j_{\varepsilon+\delta}^X:X\toG Y$ arising as the composition of the  red dotted arrows in the second diagram above.  
\item $\widehat\Psi:=p_{\varepsilon+2\delta}^X\circ\Psi_{\varepsilon+\delta}\circ j_{\varepsilon+\delta}^Y:Y\toG X$ arising as the composition of the  blue dotted arrows in the first diagram above. 
\end{itemize}

\begin{claim}\label{claim:homotopy}
 $\widetilde{\Phi}$ is $G$-homotopic to $\widehat{\Phi}$ and  $\widetilde{\Psi}$ is $G$-homotopic to $\widehat{\Psi}$. 
\end{claim}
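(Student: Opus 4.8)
The plan is to deduce the claim from property~(2) in the definition of $\mathcal{H}^G$ (see \Cref{def:HG}) together with the elementary observation that $\Phi_\varepsilon,\Phi_{\varepsilon+\delta}$ (resp.\ $\Psi_\varepsilon,\Psi_{\varepsilon+\delta}$) are restrictions of the single $G$-map $\Phi$ (resp.\ $\Psi$) on $\Pfin(X)$ (resp.\ $\Pfin(Y)$), hence are compatible with the canonical inclusions of the thickenings. Throughout I use the standing assumption $\varepsilon+2\delta<\min\{r^G(X),r^G(Y)\}$, so that every thickening scale occurring below lies in the range where the maps $p_\bullet^X,p_\bullet^Y$ of \Cref{def:HG} are defined and where property~(2) applies.

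First I would record two on-the-nose commutativities. Since $j_\varepsilon^X(x)=\delta_x$ and the inclusions $\iota_{\varepsilon,\varepsilon'}^X$ are the identity on underlying sets, one has $\iota_{\varepsilon,\varepsilon+\delta}^X\circ j_\varepsilon^X=j_{\varepsilon+\delta}^X$, and likewise for $Y$. Moreover, because $\Phi$ is a $(G,\delta)$-map, it restricts to $\Phi_\varepsilon:\vrm_\infty(X;\varepsilon)\toG\vrm_\infty(Y;\varepsilon+\delta)$ and to $\Phi_{\varepsilon+\delta}:\vrm_\infty(X;\varepsilon+\delta)\toG\vrm_\infty(Y;\varepsilon+2\delta)$, and for every $\mu\in\vrm_\infty(X;\varepsilon)$ both ways around the evident square send $\mu$ to the common value $\Phi(\mu)$ regarded inside $\vrm_\infty(Y;\varepsilon+2\delta)$; hence $\Phi_{\varepsilon+\delta}\circ\iota_{\varepsilon,\varepsilon+\delta}^X=\iota_{\varepsilon+\delta,\varepsilon+2\delta}^Y\circ\Phi_\varepsilon$ as $G$-maps $\vrm_\infty(X;\varepsilon)\toG\vrm_\infty(Y;\varepsilon+2\delta)$. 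The analogous identities hold with $\Phi,X,Y$ replaced by $\Psi,Y,X$.

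Then I would simply chain these identities with the up-to-$G$-homotopy commutative triangle of \Cref{def:HG}(2). Concretely,
\[
\widehat{\Phi} = p_{\varepsilon+2\delta}^Y\circ\Phi_{\varepsilon+\delta}\circ j_{\varepsilon+\delta}^X
= p_{\varepsilon+2\delta}^Y\circ\Phi_{\varepsilon+\delta}\circ\iota_{\varepsilon,\varepsilon+\delta}^X\circ j_\varepsilon^X
= p_{\varepsilon+2\delta}^Y\circ\iota_{\varepsilon+\delta,\varepsilon+2\delta}^Y\circ\Phi_\varepsilon\circ j_\varepsilon^X ,
\]
and applying property~(2) for $Y$ with $0<\varepsilon+\delta\leq\varepsilon+2\delta\leq r^G(Y)$ gives $p_{\varepsilon+2\delta}^Y\circ\iota_{\varepsilon+\delta,\varepsilon+2\delta}^Y\Gsimeq p_{\varepsilon+\delta}^Y$; precomposing this $G$-homotopy with the $G$-map $\Phi_\varepsilon\circ j_\varepsilon^X$ yields $\widehat{\Phi}\Gsimeq p_{\varepsilon+\delta}^Y\circ\Phi_\varepsilon\circ j_\varepsilon^X=\widetilde{\Phi}$. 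The statement $\widetilde{\Psi}\Gsimeq\widehat{\Psi}$ follows by the same computation with $X$ and $Y$ interchanged and $\Phi$ replaced by $\Psi$.

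The only genuinely fiddly point — and hence the main obstacle — is the bookkeeping of scales and codomains: one must check at each step that the image of the relevant restriction really lands in the thickening that the next map expects (this is exactly where $\Phi$ being a $(G,\delta)$-map and the bound $\varepsilon+2\delta<\min\{r^G(X),r^G(Y)\}$ enter), so that all the displayed compositions, and the homotopy furnished by \Cref{def:HG}(2), are legitimately defined. Once this is settled there is nothing further to verify, since pre- and post-composition of a $G$-homotopy with $G$-maps is again a $G$-homotopy.
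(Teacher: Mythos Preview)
Your proof is correct and follows essentially the same approach as the paper: both arguments use the on-the-nose identities $j_{\varepsilon+\delta}=\iota_{\varepsilon,\varepsilon+\delta}\circ j_\varepsilon$ and $\Phi_{\varepsilon+\delta}\circ\iota_{\varepsilon,\varepsilon+\delta}^X=\iota_{\varepsilon+\delta,\varepsilon+2\delta}^Y\circ\Phi_\varepsilon$ (resp.\ for $\Psi$), then invoke property~(2) of \Cref{def:HG} to replace $p_{\varepsilon+2\delta}\circ\iota_{\varepsilon+\delta,\varepsilon+2\delta}$ by $p_{\varepsilon+\delta}$ up to $G$-homotopy. The only cosmetic difference is that the paper spells out the $\widetilde{\Psi}\Gsimeq\widehat{\Psi}$ case while you spell out the $\widetilde{\Phi}\Gsimeq\widehat{\Phi}$ case.
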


Using the claim, we have that 
\begin{align*}
    \widetilde{\Psi}\circ \widetilde{\Phi}&\Gsimeq \widehat\Psi\circ \widetilde{\Phi}&\mbox{(by \Cref{claim:homotopy})}\\
    &=p_{\varepsilon+2\delta}^X\circ\Psi_{\varepsilon+\delta}\circ j_{\varepsilon+\delta}^Y\circ p_{\varepsilon+\delta}^Y\circ\Phi_\varepsilon\circ j_{\varepsilon}^X&\mbox{(by definitions of $\widetilde{\Phi}$ and $\widehat{\Psi}$)}\\
    &\Gsimeq p_{\varepsilon+2\delta}^X\circ\Psi_{\varepsilon+\delta}\circ \mathrm{id}_{\vrm_\infty(Y;\varepsilon+\delta)}\circ \Phi_\varepsilon\circ j_{\varepsilon}^X&\mbox{(by item 1 of \Cref{def:HG})}\\
    &= p_{\varepsilon+2\delta}^X\circ\Psi_{\varepsilon+\delta}\circ\Phi_\varepsilon\circ j_{\varepsilon}^X\\
    &=p_{\varepsilon+2\delta}^X\circ\big(\Psi\circ\Phi\big)|_{\vrm_\infty(X;\varepsilon)}\circ j_{\varepsilon}^X\\ 
    &\Gsimeq p_{\varepsilon+2\delta}^X\circ \iota_{\varepsilon,\varepsilon+2\delta}^X\circ j_{\varepsilon}^X&\mbox{(since $\Psi\circ\Phi$ is $(G,2\delta)$-homotopic to $\mathrm{id}_{\Pfin(X)}$)}\\
    &=p_{\varepsilon+2\delta}^X\circ j_{\varepsilon+2\delta}^X\\&\Gsimeq\mathrm{id}_X &\mbox{(by item 1 of \Cref{def:HG})}.
\end{align*}

Similarly, we also have that $\widetilde{\Phi}\circ\widetilde{\Psi}\Gsimeq\mathrm{id}_Y$. Hence, $X$ and $Y$ are $G$-homotopy equivalent.

\begin{proof}[Proof of \Cref{claim:homotopy}]
We only verify that $\widetilde{\Psi}$ is $G$-homotopic to $\widehat{\Psi}$ as the other claim is analogous.

\medskip

Consider the following diagram: 

\begin{center}
\begin{tikzcd}
Y \arrow[r, hook, "j_{\varepsilon}^Y"] \arrow[rrr, bend left, "j_{\varepsilon+\delta}^Y"] & \vrm_\infty(Y;\varepsilon) \arrow[rr, hook, "\iota_{\varepsilon,\varepsilon+\delta}^X"] \arrow[rd, "\Psi_\varepsilon"] &                                     & \vrm_\infty(Y;\varepsilon+\delta) \arrow[rd, "\Psi_{\varepsilon+\delta}"] &                  \\
                  &                                    & \vrm_\infty(X;\varepsilon+\delta) \arrow[rr, hook, "\iota_{\varepsilon+\delta,\varepsilon+2\delta}^X"] \arrow[rrd, "p_{\varepsilon+\delta}^X"] &                   & \vrm_\infty(X;\varepsilon+2\delta) \arrow[d, "p_{\varepsilon+2\delta}^X"] \\
                  &                                    &                                     &                   & X               
\end{tikzcd}
\end{center}

In the diagram above, the parallelogram in the center commutes as does the triangle with a curved edge on the top left. The triangle in the bottom right commutes up to $G$-homotopy. Then, it is easy to check that
$$\widehat{\Psi}=p_{\varepsilon+2\delta}^X\circ\Psi_{\varepsilon+\delta}\circ j_{\varepsilon+\delta}^Y=p_{\varepsilon+2\delta}^X\circ\Psi_{\varepsilon+\delta}\circ\iota_{\varepsilon,\varepsilon+\delta}^Y\circ j_{\varepsilon}^Y=p_{\varepsilon+2\delta}^X\circ\iota_{\varepsilon+\delta,\varepsilon+2\delta}^X\circ\Psi_\varepsilon\circ j_{\varepsilon}^Y\Gsimeq p_{\varepsilon+\delta}^X\circ\Psi_\varepsilon\circ j_{\varepsilon}^Y=\widetilde{\Psi}.$$
\end{proof}
\end{proof}

\subsubsection{Finiteness for a class of Riemannian manifolds.}\label{sec:finite}

For each $n\in\Z_{>0}$, $C,D,K,t\in\R_{>0}$, and $\kappa\in\R$, let $\mathcal{F}_{n,C,D,\kappa,K,t}^G$ be the collection of all complete $n$-dimensional $G$-Riemannian manifolds $(M,g_M)$ with $\diam(M)\leq D$, $\mathrm{Conv}(M)\geq C$,  Ricci curvature  bounded below $\kappa (n-1) g_M$,  sectional curvatures upper bounded by $K$, and $\mathrm{Sep}^G(M)\geq t$. Then, for each $M\in\mathcal{F}_{n,C,D,\kappa,K,t}^G$, we have that $r^G(M)\geq\min\{C,\frac{1}{4}\pi K^{-1/2}\}=:r_{C,K}$.  Also let $\nu(n,\kappa,r)$ denote the volume of a ball of radius $r$ in the $n$-dimensional model space with  constant sectional curvature $\kappa$. We then have the following.

\begin{theorem}\label{cor:Riemfinite}
For each $n\in\Z_{>0}$ $C,D,K\in\R_{>0}$, and $\kappa\in\R$, the collection $\mathcal{F}_{n,C,D,\kappa,K,t}^G$ has at most
$$\sum_{j=1}^{M_{n,C,D,K,\kappa}}j^{\frac{j^2-j}{2}}\cdot\vert\hom(G,\mathcal{S}_j)\vert$$
many $G$-homotopy types, where $M_{n,C,D,K,\kappa}:=\Big\lceil\frac{\nu(n,\kappa,D)}{\nu\left(n,\kappa,\min\{\frac{r_{C,K}}{24},\frac{t}{6}\}\right)}\Big\rceil$ and $\hom(G,\mathcal{S}_j)$ denotes the set of all homomorphisms from $G$ to the symmetric group $\mathcal{S}_j$ on $[j]:=\{1,2,\dots,j\}$.
\end{theorem}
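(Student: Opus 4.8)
The plan is to combine the $G$-equivariant precompactness theorem (\Cref{thm:Gequivprcpt}) with the rigidity result (\Cref{cor:rigidity}) via a covering/packing argument, keeping careful track of the finitely many possible $G$-actions on a finite net. First I would observe that every $M\in\mathcal{F}_{n,C,D,\kappa,K,t}^G$ lies in $\mathcal{H}^G$ with $r^G(M)\geq r_{C,K}$ by \Cref{prop:RMnice} and the definition of $r(M)$. Hence, by \Cref{cor:rigidity}, if two manifolds $M,M'$ in this family satisfy $\dgh^G(M,M')<\tfrac14 r_{C,K}$, they are $G$-homotopy equivalent. So it suffices to bound the number of elements of an $\tfrac14 r_{C,K}$-net (more precisely, to cover $\mathcal{F}_{n,C,D,\kappa,K,t}^G$ by finitely many $\dgh^G$-balls of radius $<\tfrac14 r_{C,K}$, with each ball contributing a single $G$-homotopy type), but it is cleaner to quotient by the finer relation ``$\dgh^G(M,M')$ small'' and count equivalence classes of a combinatorial model. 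The key geometric input is a uniform bound, via Bishop--Gromov, on the size of a minimal $\rho$-net of any $M$ in the family: since $\mathrm{Ric}\geq\kappa(n-1)g_M$ and $\diam(M)\leq D$, a maximal $\rho$-separated set has cardinality at most $\nu(n,\kappa,D)/\nu(n,\kappa,\rho/2)$. Using the separation hypothesis $\mathrm{Sep}^G(M)\geq t$, one can moreover promote such a net to a $G$-invariant one at controlled cost, so that with $\rho=\min\{\tfrac{r_{C,K}}{24},\tfrac{t}{6}\}$ every $M$ admits a $G$-invariant $\rho$-net of size at most $M_{n,C,D,K,\kappa}$.

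Next I would set up the combinatorial model. For a $G$-metric space that is a finite $G$-invariant $\rho$-net $U$ with $|U|=j\leq M_{n,C,D,K,\kappa}$, the data consists of: (i) a metric on $j$ points, which up to $G$-isometry we may discretize or, better, argue is controlled up to $\dgh^G$-error $O(\rho)$ by finitely many ``combinatorial types'' — but the sharper route is to not discretize the metric and instead argue that any two manifolds sharing the \emph{same} abstract finite $G$-metric net up to small perturbation are $\dgh^G$-close; and (ii) the $G$-action, which is a homomorphism $G\to\mathcal{S}_j$ together with its compatibility with the metric. The factor $j^{(j^2-j)/2}$ should come from a counting of the ``graph/ordering'' structure needed to pin down which of the $\binom{j}{2}$ pairwise distances is which once one fixes a scale of resolution $\sim\rho$ (each of the $\binom{j}{2}=\tfrac{j^2-j}{2}$ unordered pairs gets assigned one of at most $j$ buckets, say of width $\asymp D/j$, giving $j^{\binom j2}$ possibilities), and the factor $|\hom(G,\mathcal{S}_j)|$ counts the possible permutation actions of $G$ on the $j$-point net. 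Summing over $j$ from $1$ to $M_{n,C,D,K,\kappa}$ yields the claimed bound. The precise bookkeeping — choosing the bucket width so that any two manifolds landing in the same bucket-profile-plus-$G$-action are within $\dgh^G$-distance $<\tfrac14 r_{C,K}$, hence $G$-homotopy equivalent — is the technical heart, and I would model it on the classical proof of the Grove--Petersen--Wu / Cheeger-type finiteness theorems as adapted in \cite[Theorem 7.4.15]{burago2022course}, inserting the $G$-equivariant refinements of \Cref{thm:Gequivprcpt}.

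The main obstacle I anticipate is the interplay between \emph{metric} discretization and the \emph{equivariance} constraint: one must choose the $G$-invariant net, the bucketing of distances, and the resolution $\rho$ simultaneously so that (a) the net size stays $\leq M_{n,C,D,K,\kappa}$, (b) two manifolds with the same discretized $G$-net are genuinely within $\dgh^G<\tfrac14 r_{C,K}$ using a $G$-correspondence built from the shared net (here one needs the bound $\dgh^G(M,U)\leq\rho$ for a $G$-invariant $\rho$-net $U$, which follows as in the proof of \Cref{Gdhtstability}), and (c) the constants $\tfrac{r_{C,K}}{24}$ and $\tfrac{t}{6}$ in $M_{n,C,D,K,\kappa}$ are exactly what the triangle-inequality chasing produces — the factor $6$ absorbing two net-approximation steps plus the correspondence distortion, and $24$ further halving to stay strictly below the rigidity threshold $\tfrac14 r_{C,K}$ after doubling. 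A secondary subtlety is that $\mathrm{Sep}^G(M)\geq t$ must be used to ensure the $G$-orbits in the net do not collapse under discretization, i.e.\ that distinct orbit points remain distinguishable at resolution $\rho\leq t/6$; this is where the $t/6$ term enters. Once these scales are reconciled, the count is a routine finite sum.
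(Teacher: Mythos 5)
Your overall strategy matches the paper's: every $M$ in the family lies in $\mathcal{H}^G$ with $r^G(M)\geq r_{C,K}$ by \Cref{prop:RMnice}, so \Cref{cor:rigidity} gives a uniform rigidity radius $\tfrac14 r_{C,K}$; one then covers $\mathcal{F}_{n,C,D,\kappa,K,t}^G$ by finitely many $\dgh^G$-balls of that radius and bounds the count using a uniform cap on the size of a $G$-invariant net obtained from a Bishop--Gromov packing bound. The paper carries this out by directly invoking two lemmas it has already established: \Cref{lemma:Gcoveringpacking} converts the packing bound $\nu(n,\kappa,D)/\nu(n,\kappa,\varepsilon/2)$ into a bound on $N_M^G(\varepsilon)$ for $\varepsilon\in(0,t]$, and \Cref{lemma:prcptcoverbdd} then produces the covering by $\sum_j j^{(j^2-j)/2}\vert\hom(G,\mathcal{S}_j)\vert$ many $\varepsilon$-balls. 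Your proposal effectively tries to rebuild \Cref{lemma:prcptcoverbdd} from scratch, and that is where the gap sits.

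Specifically, your explanation of the $j^{(j^2-j)/2}$ factor — assigning each of the $\binom{j}{2}$ pairwise distances to one of $j$ buckets ``of width $\asymp D/j$'' — does not deliver the resolution needed. Two nets whose distance vectors agree to within $D/j$ are only $O(D/j)$-close in $\dgh^G$, which is not controlled by the rigidity threshold $\tfrac14 r_{C,K}$ when $j$ is small. The ingredient you are missing is the connectedness of $M$: in the proof of \Cref{lemma:prcptcoverbdd}, one chains through overlapping $\varepsilon'$-balls (with $\varepsilon'=\varepsilon/3$) to show that any two points of the $\varepsilon'$-net are within $2\varepsilon' j$ of each other, so the distance vector lives in $[0,2\varepsilon' j]^{(j^2-j)/2}$. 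Partitioning \emph{that} cube into $j^{(j^2-j)/2}$ equal cells gives cells of side $2\varepsilon'$, which is exactly small enough to conclude $\dgh^G<\varepsilon$ for manifolds sharing a cell and a $G$-action. With $\varepsilon=\min\{r_{C,K}/4,\,t\}$ this also explains the precise constants $r_{C,K}/24$ and $t/6$ in $M_{n,C,D,K,\kappa}$ (they are $\varepsilon/6$, with one factor of $3$ from the $\varepsilon/3$-net in \Cref{lemma:prcptcoverbdd} and one factor of $2$ from the covering-vs.-packing comparison in \Cref{lemma:Gcoveringpacking}) and why the hypothesis $\mathrm{Sep}^G(M)\geq t$ is needed — it is the range condition in \Cref{lemma:Gcoveringpacking}, not a separate anti-collapse argument. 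Once you invoke those two lemmas directly, the theorem follows in two lines, as in the paper.
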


The following lemma is a generalization to the $G$-equivariant setting of a result that has been implicitly used in several publications such as \cite{peter1990finiteness,yamaguchi1988homotopy} and \cite[page 274]{gromov1999metric}. Its proof is deferred to \Cref{app:proofs-finiteness}.

\begin{lemma}\label{lemma:prcptcoverbdd}
Let $\mathcal{F}\subseteq\mathcal{M}^G$ be a precompact family of connected compact $G$-metric spaces. Then, for any $\varepsilon>0$, $(\mathcal{F},\dgh^G)$ can be covered by
$$\sum_{j=1}^{N_{\mathcal{F}}^G(\varepsilon/3)}j^{\frac{j^2-j}{2}}\cdot\vert\hom(G,\mathcal{S}_j)\vert$$
many $\varepsilon$-balls (in the sense of the $G$-Gromov-Hausdorff distance).
\end{lemma}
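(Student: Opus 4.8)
The plan is to adapt to the $G$-equivariant setting the classical ``counting of combinatorial types'' argument (used in \cite{peter1990finiteness,yamaguchi1988homotopy,gromov1999metric}): I would encode each $X\in\mathcal{F}$ by a triple recording (i) the size of a $G$-invariant $\varepsilon/3$-net of $X$, (ii) a discretized distance matrix on that net, and (iii) the $G$-action on the net viewed as a homomorphism into a symmetric group, then bound the number of triples and show that spaces with equal triples are $\dgh^G$-close. We may assume $N:=N_{\mathcal{F}}^G(\varepsilon/3)<\infty$, since otherwise the asserted bound is vacuous. For each $X\in\mathcal{F}$ fix a $G$-invariant $\varepsilon/3$-net $U_X$ with $j_X:=\vert U_X\vert\le N$, together with an enumeration $U_X=\{u_1^X,\dots,u_{j_X}^X\}$. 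Since $X$ is connected, the nerve of the finite closed cover $\{\bar B_{\varepsilon/3}(u):u\in U_X\}$ of $X$ is a connected graph; as this nerve sits inside the graph on $\{1,\dots,j_X\}$ with $i\sim i'$ whenever $d_X(u_i^X,u_{i'}^X)\le\tfrac{2\varepsilon}{3}$, the latter is connected too, and walking along it gives $d_X(u_i^X,u_{i'}^X)\le\tfrac{2}{3}(j_X-1)\varepsilon$ for all $i,i'$. This is exactly where connectedness enters, taking the role usually played by an a priori diameter bound.

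Next I would attach to $X$ the triple $\tau(X):=(j_X,M_X,\rho_X)$, where $M_X:=\big(\lfloor\tfrac{3}{2\varepsilon}d_X(u_i^X,u_{i'}^X)\rfloor\big)_{1\le i<i'\le j_X}$ and $\rho_X\colon G\to\mathcal{S}_{j_X}$ is the homomorphism recording how $G$ permutes the enumerated net (well defined since $U_X$ is $G$-invariant). By the inequality of the previous paragraph every entry of $M_X$ lies in $\{0,1,\dots,j_X-1\}$, so for each fixed value of $j_X$ there are at most $j_X^{\binom{j_X}{2}}=j_X^{(j_X^2-j_X)/2}$ choices of $M_X$; hence at most $\sum_{j=1}^{N}\big(j^{(j^2-j)/2}\cdot\vert\hom(G,\mathcal{S}_j)\vert\big)$ triples can occur as $X$ ranges over $\mathcal{F}$.

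The main step is to show that $\tau(X)=\tau(Y)$ implies $\dgh^G(X,Y)<\varepsilon$. Matching the enumerations of $U_X$ and $U_Y$ produces a bijection $\phi\colon U_X\to U_Y$, which is $G$-equivariant precisely because both actions are coded by the same $\rho$, and equality of the discretized matrices gives $\vert d_X(u,u')-d_Y(\phi u,\phi u')\vert<\tfrac{2\varepsilon}{3}$ for all $u,u'\in U_X$ (the supremum over the finite set $U_X\times U_X$ is attained, hence still $<\tfrac{2\varepsilon}{3}$). I would then thicken $\phi$ to $R:=\{(x,y)\in X\times Y:\ \exists u\in U_X,\ d_X(x,u)\le\tfrac{\varepsilon}{3}\ \text{and}\ d_Y(y,\phi u)\le\tfrac{\varepsilon}{3}\}$; this is a correspondence since $U_X$ and $\phi(U_X)=U_Y$ are $\varepsilon/3$-nets, and a $G$-correspondence since $\alpha_X,\alpha_Y$ are isometric and $\phi$ is $G$-equivariant (a witness $u$ for $(x,y)\in R$ yields the witness $\alpha_X(g,u)$ for $(\alpha_X(g,x),\alpha_Y(g,y))\in R$). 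A triangle-inequality estimate then gives, for $(x,y),(x',y')\in R$ with witnesses $u,u'$,
$$\big\vert d_X(x,x')-d_Y(y,y')\big\vert\ \le\ \tfrac{4\varepsilon}{3}+\big\vert d_X(u,u')-d_Y(\phi u,\phi u')\big\vert\ <\ 2\varepsilon,$$
so $\dis(R)<2\varepsilon$ and $\dgh^G(X,Y)\le\tfrac12\dis(R)<\varepsilon$. Consequently all members of $\mathcal{F}$ realizing a given triple lie in a single $\dgh^G$-ball of radius $\varepsilon$ (centred at any one of them), and the count of triples yields the covering bound.

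I expect the main obstacle to be calibrating the discretization: it must be fine enough that equal triples force $\dgh^G<\varepsilon$ rather than some larger multiple of $\varepsilon$, yet coarse enough that the number of discretized matrices stays exactly $j^{(j^2-j)/2}$ — and this is precisely why the connectedness hypothesis, which caps the pairwise net distances by $\tfrac{2}{3}(j-1)\varepsilon$ and hence each matrix entry to $j$ possible values, is indispensable. The remaining $G$-equivariant bookkeeping — choosing the net $G$-invariant, recording the action as a genuine homomorphism $G\to\mathcal{S}_j$ so that equal data yields an honestly $G$-equivariant $\phi$, and verifying that $R$ lies in $\mathcal{R}_G(X,Y)$ — should be routine.
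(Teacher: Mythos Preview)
Your proposal is correct and follows essentially the same strategy as the paper: encode each $X$ by the size $j$ of a $G$-invariant $\varepsilon/3$-net, a discretized distance matrix on that net, and the induced homomorphism $G\to\mathcal{S}_j$, use connectedness to bound net diameters so that the matrix takes at most $j^{(j^2-j)/2}$ values, and show that equal data forces $\dgh^G<\varepsilon$. The only cosmetic differences are that the paper partitions the cube $[0,2\varepsilon' j]^{(j^2-j)/2}$ into $j^{(j^2-j)/2}$ equal subcubes rather than using your floor-function discretization, and it concludes via the triangle inequality $\dgh^G(X,Y)\le\dgh^G(X,U_X)+\dgh^G(U_X,U_Y)+\dgh^G(U_Y,Y)$ rather than building your thickened $G$-correspondence $R$ directly.
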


\begin{remark}
If $G=G_0$ then $\vert\hom(G_0,\mathcal{S}_n)\vert=1$ for every positive integer $n$, and in that case \Cref{lemma:prcptcoverbdd} recovers the statement of \cite[Lemma 6.4]{memoli2024persistent}. It is natural to ask how $\vert\hom(G,\mathcal{S}_n)\vert$ changes as the complexity of the group $G$ increases. However, this question is subtle even when $G$ is a  cyclic group.  Let $a_{m,n}:=\vert\hom(\Z_m,\mathcal{S}_n)\vert$. The asymptotic growth rate of $a_{m,n}$ has been studied in \cite{chowla1951recursions,moser1955solutions,moser1956asymptotic,wilf1986asymptotics,muller1997finite}. For example:
\begin{itemize}
    \item when $G=\Z_2$, by \cite[Lemma 1 and Theorem 8]{chowla1951recursions}, $a_{2,n}$ satisfies $a_{2,n}=a_{2,n-1}+(n-1)a_{2,n-2}$ and
    $$a_{2,n}\sim 2^{-1/2}n^{n/2}\exp\left(-\frac{n}{2}+n^{1/2}-\frac{1}{4}\right)\text{ as }n\rightarrow\infty.$$

    \item when $G=\Z_p$ for any prime $p>2$, by \cite[Section 3]{moser1955solutions} and \cite[Example 2]{moser1956asymptotic},
    $$a_{p,n}\sim p^{-1/2}n^{(1-1/p)n}\exp\left(-\frac{p-1}{p}n+n^{1/p}\right)\text{ as }n\rightarrow\infty.$$
\end{itemize}

Hence in these cases, $a_{m,n}$ grows rapidly, far exceeding $1=\vert\hom(G_0,\mathcal{S}_n)\vert$. More generally, assume a surjective group homomorphism $h:\widetilde{G}\twoheadrightarrow G$ is given. Then, one can easily verify that $\vert\hom(\widetilde{G},\mathcal{S}_n)\vert\geq\vert\hom(G,\mathcal{S}_n)\vert$. Thus, in general, the quantity in \Cref{lemma:prcptcoverbdd} is expected to increase rapidly as the size of of the group $G$ increases.
\end{remark}

The following  lemma is a generalization of a classical result relating packing numbers and covering numbers; see e.g. \cite[pg.299]{petersen2006riemannian}.

\begin{lemma}\label{lemma:Gcoveringpacking}
Let $X$ be a compact $G$-metric space with $\mathrm{Sep}^G(X)>0$. Then, for any $\varepsilon\in(0,\mathrm{Sep}^G(X)]$, we have that
$$N_M^G(\varepsilon)\leq\sup\{\vert U \vert: U\subseteq X\text{ is a }G\text{-invariant set s.t. }\{B_{\varepsilon/2}(u)\}_{u\in U}\text{ is a packing in }X\}.$$
\end{lemma}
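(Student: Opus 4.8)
The plan is to adapt the classical fact that a maximal packing is a covering, keeping everything $G$-invariant throughout. We will use the defining property of $\mathrm{Sep}^G(X)$, namely that $d_X(\alpha_X(g,x),x)\geq\mathrm{Sep}^G(X)$ for every $x\in X$ and $g\in G$ with $\alpha_X(g,x)\neq x$; equivalently, whenever $\varepsilon\leq\mathrm{Sep}^G(X)$, any two distinct points lying in a common $G$-orbit are at distance at least $\varepsilon$. Consider the collection $\mathcal{P}$ of all $G$-invariant subsets $U\subseteq X$ for which the open balls $\{B_{\varepsilon/2}(u)\}_{u\in U}$ are pairwise disjoint (i.e. form a packing). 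Since $X$ is compact it is totally bounded, which gives a uniform upper bound on $|U|$ over $U\in\mathcal{P}$; hence $\mathcal{P}$ has an element $U^\ast$ of maximum cardinality. (Alternatively, Zorn's lemma produces a $U^\ast\in\mathcal{P}$ that is maximal with respect to inclusion, which is all that the argument below actually needs.)

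Next I would show that such a maximal $U^\ast$ is automatically a $G$-invariant $\varepsilon$-net of $X$. It is $G$-invariant by construction, so it remains to check $d_X(x,U^\ast)<\varepsilon$ for every $x\in X$. Suppose instead $d_X(x,U^\ast)\geq\varepsilon$ for some $x$, and put $U':=U^\ast\cup(G\cdot x)$ with $G\cdot x:=\{\alpha_X(g,x):g\in G\}$. Then $U'$ is $G$-invariant, and it is strictly larger than $U^\ast$ since $d_X(x,U^\ast)\geq\varepsilon>0$ forces $(G\cdot x)\cap U^\ast=\emptyset$ (if $\alpha_X(g,x)\in U^\ast$ then $x=\alpha_X(g^{-1},\alpha_X(g,x))\in U^\ast$ by $G$-invariance, a contradiction). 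I then claim $\{B_{\varepsilon/2}(u)\}_{u\in U'}$ is still a packing, contradicting maximality: (i) balls centered in $U^\ast$ are pairwise disjoint because $U^\ast\in\mathcal{P}$; (ii) for $u\in U^\ast$ and $g\in G$, $G$-invariance of $U^\ast$ and the isometry $\alpha_X(g^{-1},\cdot)$ give $d_X(u,\alpha_X(g,x))=d_X(\alpha_X(g^{-1},u),x)\geq d_X(x,U^\ast)\geq\varepsilon$; (iii) for $g_1,g_2\in G$ with $\alpha_X(g_1,x)\neq\alpha_X(g_2,x)$, the isometry $\alpha_X(g_1^{-1},\cdot)$ yields $d_X(\alpha_X(g_1,x),\alpha_X(g_2,x))=d_X(x,\alpha_X(g_1^{-1}g_2,x))\geq\mathrm{Sep}^G(X)\geq\varepsilon$, using that $\alpha_X(g_1^{-1}g_2,x)\neq x$.

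Finally, since $U^\ast$ is a $G$-invariant $\varepsilon$-net we get $N_X^G(\varepsilon)\leq|U^\ast|$, and since $U^\ast\in\mathcal{P}$ we also have $|U^\ast|\leq\sup\{|U|:U\subseteq X\text{ is }G\text{-invariant and }\{B_{\varepsilon/2}(u)\}_{u\in U}\text{ is a packing in }X\}$; chaining these two inequalities proves the lemma. The one genuinely delicate point — the step I would be most careful about — is (iii): it is exactly here that the hypothesis $\varepsilon\leq\mathrm{Sep}^G(X)$ is indispensable, since it is what allows entire orbits (and not just single points) to be adjoined to a $G$-invariant packing while keeping it a packing; the non-equivariant proof simply adds one point at a time and has nothing to check here. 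Securing the existence of a maximal $U^\ast$ (via compactness, or Zorn) is routine.
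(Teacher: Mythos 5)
Your proof is correct and follows essentially the same strategy as the paper: take a $G$-invariant packing of maximal size, show it must be an $\varepsilon$-net by observing that a point too far from it could otherwise be adjoined together with its entire $G$-orbit, and use $\varepsilon\le\mathrm{Sep}^G(X)$ to guarantee the orbit points are $\varepsilon$-separated from each other. Your write-up just spells out in detail the three disjointness checks that the paper leaves as ``one can verify.''
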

\begin{proof}
Let $l:=\sup\{\vert U \vert: U\subseteq X\text{ is a }G\text{-invariant set s.t. }\{B_{\varepsilon/2}(u)\}_{u\in U}\text{ is a packing in }X\}$. Then, there exists a $G$-invariant subset $U:=\{u_1,\dots,u_l\}\subseteq X$ such that $B_{\varepsilon/2}(u_i)$ and $B_{\varepsilon/2}(u_j)$ are disjoint whenever $i\neq j$. Now, we claim that $U$ is a $G$-invariant $\varepsilon$-net. Suppose not, then one can choose $x\in X\backslash\bigcup_{i=1}^l B_\varepsilon(u_i)$. Then, by the choice of $\varepsilon$, one can verify that $U\cup\{\alpha_X(g,x):g\in G\}$ is a $G$-invariant subset of $X$ where the $\varepsilon/2$-balls with centers in the set forms a packing in $X$. This contradicts  the maximality of $l$ and completes the  proof.
\end{proof}

\begin{proof}[\textbf{Proof of \Cref{cor:Riemfinite}}]
By \Cref{lemma:Gcoveringpacking} we have that $N_M^G(\varepsilon)\leq\frac{\nu(n,\kappa,D)}{\nu(n,\kappa,\varepsilon/2)}$ for any $\varepsilon\in (0,t]$ (see \cite[Lemma 36 and the proof of Corollary 31]{petersen2006riemannian}). The proof follows from \Cref{cor:rigidity} and \Cref{lemma:prcptcoverbdd}.
\end{proof}

%%%%%%%%%%%%%%%%%%%%%%%%%%%%%%%%%%%%%%%%%%%%%%%
\section{$G$-index and lower bounds for $\dht^G$.}\label{sec:Gindices}

We first introduce the notion of $G$-persistence index and then use this concept to establish a lower bound for the $G$-homotopy type distance.

\subsection{$G$-persistent indices}

\begin{definition}[$G$-index]
Let $Y$ be a $G$-space and $\{X_\xi\}_{\xi\in\Xi}$  be a collection of $G$-spaces indexed by a nonempty subset $\Xi\subseteq\R_{> 0}$. Then, we define the $G$-index $\mathrm{ind}_G(Y,\{X_\xi\}_{\xi\in\Xi})$ from $Y$ to $\{X_\xi\}_{\xi\in\Xi}$ in the following way:
$$\mathrm{ind}_G(Y,\{X_\xi\}_{\xi\in\Xi}):=\inf\{\xi\in\Xi:\text{there is a }G\text{-map }Y\toG X_\xi\}.$$
\end{definition}

\begin{remark}
When $G=\Z_2$, the $\Z_2$-index $\mathrm{ind}_{\Z_2}(Y,\{\Sp^n\}_{n\in\Z_{\geq 0}})$ from $Y$ to $\{\Sp^n\}_{n\in\Z_{\geq 0}}$  boils down to the following classical notion $\mathrm{ind}_{\Z_2}(Y)$, the $\Z_2$-index of $Y$ \cite{yang1955theorems}:
$$\mathrm{ind}_{\Z_2}(Y):=\min\{n\in\Z_{\geq 0}:\text{there is a }\Z_2\text{-map }Y\toZtwo \Sp^n\}.$$

This notion of $\Z_2$-index has been employed to obtain a certain generalization of the Borsuk-Ulam theorem \cite[Chapter 5]{matouvsek2003using}.
\end{remark}

We are now ready to introduce three (a priori distinct) quantities, defined via the notion of $G$-index, which will ultimately provide lower bounds for the $G$-homotopy type distance between filtered $G$-topological spaces, and, via \Cref{Gdhtstability}, for the $G$-Gromov–Hausdorff distance between $G$-metric spaces. Below, note that $\vrm_p(X;\bullet)$, $\vert\vr(X;\bullet)\vert$, and $B_\bullet(X,\TS(X))$ are all filtrations, in particular they are collections of $G$-spaces indexed by the whole real line $\R$ (see \Cref{def:pers-fams,def:VR,def:VRm} and \Cref{sec:katetov}).

\begin{definition}[$G$-persistent indices]\label{def:clowerbdd}
For any two $G$-metric spaces $X,Y$, $p\in[1,\infty]$, and $\varepsilon\in\R_{\geq 0}$, we define the following quantities which we call \emph{$G$-persistence indices}: 

\begin{enumerate}
    \item \emph{$G$-persistent index induced by the $p$-Vietoris-Rips thickening filtration:}\\ $c_p^G(X,Y;\varepsilon):=\begin{cases}\mathrm{ind}_G(Y,\vrm_p(X;\bullet))&\text{if }\varepsilon=0\\ \mathrm{ind}_G(\vrm_p(Y;\varepsilon),\vrm_p(X;\varepsilon+\bullet))&\text{if }\varepsilon>0,\end{cases}$

    \item \emph{$G$-persistent index induced by the Vietoris-Rips filtration: }\\$c_{\vr}^G(X,Y;\varepsilon):=\begin{cases}\mathrm{ind}_G(Y,\vert\vr(X;\bullet)\vert)&\text{if }\varepsilon=0\\ \mathrm{ind}_G(\vert\vr(Y;\varepsilon)\vert,\vert\vr(X;\varepsilon+\bullet)\vert)&\text{if }\varepsilon>0,\text{ and}\end{cases}$

    \item \emph{$G$-persistent index induced by the tight span neighborhood filtration:}\\$c_\TS^G(X,Y;\varepsilon):=\begin{cases}\mathrm{ind}_G\big(Y,B_\bullet(X,\TS(X))\big)&\text{if }\varepsilon=0\\ \mathrm{ind}_G\big(B_\varepsilon(X,\TS(X)),B_{\varepsilon+\bullet}(X,\TS(X))\big)&\text{if }\varepsilon>0.\end{cases}$
\end{enumerate}
\nomenclature[16]{$c_p^G(X,Y;\varepsilon)$}{The $G$-persistent index associated to $\vrm_p(X;\bullet)$}
\nomenclature[17]{$c_{\vr}^G(X,Y;\varepsilon)$}{The $G$-persistent index associated to $\vert\vr(X;\bullet)\vert$}
\nomenclature[18]{$c_\TS^G(X,Y;\varepsilon)$}{The $G$-persistent index associated to $B_\bullet(X,\TS(X))$}
\end{definition}

\begin{remark}\label{rmk:referpolymath}
The case when $G = \mathbb{Z}_2$ and $\varepsilon = 0$ of $c_{\vr}^G(X, Y; \varepsilon)$ agrees with an invariant that was considered in \cite[Definition 1.1]{adams2022gromov}, specifically when $X$ and $Y$ are spheres equipped with the standard antipodal action. Introducing the parameter $\varepsilon$, as we do in this paper,  allows, in general, for sharper lower bounds on the Gromov–Hausdorff distance compared to the $\varepsilon = 0$ case; see \Cref{ex:eps-good-1,ex:eps-good-2}. Note that neither $c_\vr(X,Y;0)$ nor the case $\varepsilon=0$ of the other two $G$-persistent indices ``see" the metric structure of $Y$: only its topology is taken into account.
\end{remark}

In \Cref{sec:lb-dght-pi} below we establish the lower bound for $\dht^G$ (and also for $\dgh^G$) via $c_p^G$ stated in the Main Theorem on page \pageref{page:main-theorem}. In \Cref{sec:rel-pis} we will establish several inequalities between  three different persistence indices $c_p^G$, $c_\vr^G$ and $c_\TS^G$. Regarding the former point, persistence indices, however,  provide useful lower bound for the $G$-Gromov-Hausdorff distance only in situations when the group $G$ acts freely on one of the spaces $X$ and $Y$.

\begin{remark}[Persistent indices and fixed points]\label{rem:fixed-points}
Assume that $x_0\in X$ is a fixed point for the $G$-action: $\alpha_X(g,x_0)=x_0$ for all $g\in G$. Then, the constant map $\psi:Y\to X$ defined by $y\mapsto x_0$ is a $G$-map. Also, for each $\varepsilon>0$, $\psi$ induces $G$-maps $\psi_\varepsilon:\vrm(Y;\varepsilon)\toG \vrm(X;\varepsilon)$ by sending every $\mu$ in the domain to $\delta_{x_0}$. As a consequence, $c_p(X,Y;\varepsilon)=0$ for all $\varepsilon>0$.  A similar argument also shows that $c_\vr^G(X,Y;\varepsilon)= c_\TS^G(X,Y;\varepsilon)=0$ in this case.
\end{remark}

\begin{remark}\label{rmk:GGtildepindex}
Assume a group homomorphism $h:\widetilde{G}\to G$ is given.  Recall that for any $G$-map $\varphi:X\toG Y$ between two $G$-spaces $X$ and $Y$ one can induce a $\widetilde{G}$-map $\varphi:X \stackrel{\widetilde{G}}{\longrightarrow} Y$ in a canonical manner (see  item (2) of Remark \ref{rmk:R-order}). With this observation, one  sees that
$$c_p^G(X,Y;\varepsilon)\geq c_p^{\widetilde{G}}(X,Y;\varepsilon),\,c_\vr^G(X,Y;\varepsilon)\geq c_\vr^{\widetilde{G}}(X,Y;\varepsilon),\text{ and }c_\TS^G(X,Y;\varepsilon)\geq c_\TS^{\widetilde{G}}(X,Y;\varepsilon).$$
\end{remark}

\subsection{A lower bound for $\dht^G$ via $G$-persistent indices.}\label{sec:lb-dght-pi}
The following proposition exhibits a lower bound for the $\dht^G$ distance between two filtrations of the form given by \Cref{prop:PfinGaction} via the $G$-persistent index $c_p^G$.

\begin{proposition}\label{prop:strongercGpXYlbdd}
For any two $G$-metric spaces $X,Y$ and $p\in[1,\infty]$, we have that
$$\dht^G\big((\Pfin(X),\diam_p^X),(\Pfin(Y),\diam_p^Y)\big)\geq \sup_{\varepsilon\geq 0}c_p^G(X,Y;\varepsilon).$$
\end{proposition}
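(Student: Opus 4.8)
Write $D:=\dht^G\big((\Pfin(X),\diam_p^X),(\Pfin(Y),\diam_p^Y)\big)$. The plan is to prove the pointwise bound $D\geq c_p^G(X,Y;\varepsilon)$ for each fixed $\varepsilon\geq 0$ and then take the supremum. If $D=\infty$ there is nothing to prove, so assume $D<\infty$ and fix an arbitrary $\delta>D$. Since being $(G,\delta)$-homotopy equivalent is monotone in $\delta$ (a $(G,\delta')$-map is a $(G,\delta)$-map and a $(G,2\delta')$-homotopy is a $(G,2\delta)$-homotopy for $\delta'\leq\delta$), the filtered $G$-spaces $(\Pfin(X),\diam_p^X)$ and $(\Pfin(Y),\diam_p^Y)$ are $(G,\delta)$-homotopy equivalent; in particular there exists a $(G,\delta)$-map
$$\Psi:(\Pfin(Y),\diam_p^Y)\toG(\Pfin(X),\diam_p^X),$$
that is, a continuous $G$-equivariant map with $\diam_p^X(\Psi(\mu))\leq\diam_p^Y(\mu)+\delta$ for every $\mu\in\Pfin(Y)$. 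Only this one map (not the full homotopy equivalence) will be used, so the argument is essentially that of the non-equivariant statement with the extra equivariance bookkeeping.

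The next step is to split according to the two cases of \Cref{def:clowerbdd}. For $\varepsilon>0$, recall that $\vrm_p(Y;\varepsilon)$ is the sublevel set $\{\mu\in\Pfin(Y):\diam_p^Y(\mu)<\varepsilon\}$, which is $G$-invariant because $\diam_p^Y$ is $G$-invariant (\Cref{prop:PfinGaction}), and likewise for $\vrm_p(X;\varepsilon+\delta)$. For $\mu\in\vrm_p(Y;\varepsilon)$ we get $\diam_p^X(\Psi(\mu))\leq\diam_p^Y(\mu)+\delta<\varepsilon+\delta$, so $\Psi$ carries $\vrm_p(Y;\varepsilon)$ into $\vrm_p(X;\varepsilon+\delta)$, and the induced corestriction $\vrm_p(Y;\varepsilon)\toG\vrm_p(X;\varepsilon+\delta)$ is again continuous and $G$-equivariant (restriction and corestriction of a continuous $G$-map between $G$-invariant subspaces with their subspace topologies). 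By the definition of the $G$-index, $\delta$ is an admissible value of $\eta$, so $c_p^G(X,Y;\varepsilon)\leq\delta$. For $\varepsilon=0$, compose $\Psi$ with the canonical $G$-equivariant isometric embedding $Y\hooktoG\Pfin(Y)$, $y\mapsto\delta_y$; since $\diam_p^Y(\delta_y)=0$, the image of this composite lies in $\{\mu\in\Pfin(X):\diam_p^X(\mu)\leq\delta\}\subseteq\vrm_p(X;\eta)$ for every $\eta>\delta$, hence $c_p^G(X,Y;0)\leq\eta$ for all such $\eta$, and therefore $c_p^G(X,Y;0)\leq\delta$.

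In all cases $c_p^G(X,Y;\varepsilon)\leq\delta$; since $\delta>D$ was arbitrary, $c_p^G(X,Y;\varepsilon)\leq D$, and taking the supremum over $\varepsilon\geq 0$ yields the claim. I do not anticipate a genuine obstacle: the whole content is that a $(G,\delta)$-map automatically sends sublevel sets of $\diam_p^Y$ into the $\delta$-enlarged sublevel sets of $\diam_p^X$. The only mild subtlety is the strict-versus-nonstrict inequality in the $\varepsilon=0$ case — a point mass is only guaranteed to be pushed into the closed sublevel set $\{\diam_p^X\leq\delta\}$ rather than into an open one — which is precisely why the infimum over $\eta>\delta$ appears there and why the bound still comes out as $\leq\delta$ after letting $\eta\downarrow\delta$.
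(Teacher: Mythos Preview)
Your proof is correct and follows essentially the same approach as the paper's: pick $\delta>\dht^G$, extract the $(G,\delta)$-map $\Psi$, and observe that it carries sublevel sets of $\diam_p^Y$ into the $\delta$-enlarged sublevel sets of $\diam_p^X$. Your treatment of the $\varepsilon=0$ case is a slight cosmetic variant (you embed $Y$ directly into $\Pfin(Y)$ and argue about the closed sublevel set $\{\diam_p^X\leq\delta\}$, whereas the paper factors through $\vrm_p(Y;\delta')$ for small $\delta'$), but the content is identical.
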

\begin{proof}
First, fix an arbitrary $\varepsilon>0$. Next, choose an arbitrary $\eta>0$ such that

$\dht^G\big((\Pfin(X),\diam_p^X),(\Pfin(Y),\diam_p^Y)\big)<\eta$. Then, there exists a $(G,\eta)$-map
$$\Psi:(\Pfin(Y),\diam_p^Y)\toG(\Pfin(X),\diam_p^X).$$
Hence, we have the $G$-map $\Psi\vert_{\vrm_p(Y;\varepsilon)}:\vrm_p(Y;\varepsilon)\toG\vrm_p(X;\varepsilon+\eta)$ implying that $c_p^G(X,Y;\varepsilon)\leq\eta$. Since the choice of $\eta$ is arbitrary, we have that
$$\dht^G\big((\Pfin(X),\diam_p^X),(\Pfin(Y),\diam_p^Y)\big)\geq c_p^G(X,Y;\varepsilon).$$

Now, consider the case when $\varepsilon=0$. Choose $\eta$ and $\Psi$ as before. Recall that, for an arbitrary $\delta>0$, there is the  canonical isometric embedding $j:Y\hooktoG\vrm_p(Y;\delta)$ s.t. $y\mapsto\delta_y$. Then, there is the following $G$-map $\Psi\vert_{\vrm_p(Y;\delta)}\circ j:Y\toG\vrm_p(X;\delta+\eta)$. Hence, $c_p^G(X,Y;0)\leq\delta+\eta$. Since the choice of $\eta$ and $\delta$ are arbitrary, one can conclude that
$$\dht^G\big((\Pfin(X),\diam_p^X),(\Pfin(Y),\diam_p^Y)\big)\geq c_p^G(X,Y;0).$$
This completes the proof.
\end{proof}

It is natural to ask if  there are cases when $\sup_{\varepsilon\geq 0}c_p^G(X,Y,\varepsilon)$ strictly larger than $c_p^G(X,Y;0)$ for which we need to understand how the function $\varepsilon\mapsto c_p^G(X,Y;\varepsilon)$ behaves.

\begin{example}
Assume that $G=\Z_2$, $X=\Sp^1$, and $Y=\Sp^2$. Then, it is easy to verify that,  by \Cref{thm:Z2hausmannsphere}, for any $\varepsilon\in[0,\zeta_2]$ we have $c_\infty^{\Z_2}(\Sp^1,\Sp^2;\varepsilon)=\frac{2\pi}{3}-\varepsilon$.
\end{example}

The example above suggests the possibility that the function $\varepsilon\mapsto c_p^G(X,Y;\varepsilon)$ is always nonincreasing. However, the following examples show that this is not the case, in general; see also \Cref{q:spheres-mono} below.

\begin{example}\label{ex:eps-good-1}
Assume that $G=\Z_2$ and $X=\Sp^1$. Also, let $Y\subset\Sp^N$ ($N\gg 1$) be a finite $\Z_2$-invariant subset of the  sphere $\Sp^N$ such that there exists positive numbers $\eta,\eta',\delta$ s.t. $\eta'\geq \eta$ with the following properties:\footnote{Latschev's theorem \cite{latschev2001vietoris} guarantees that it suffices to choose $Y$ as a sufficiently dense finite sample of $\Sp^N$.} 
\begin{itemize}
\item[(1)] $\vrm_\infty(Y;\varepsilon)=Y$ if $\varepsilon\in (0,\eta]$,  and 
\item[(2)] $\vrm_\infty(Y;\varepsilon)$ is  $\Z_2$-homotopy equivalent to $\Sp^N$ if $\varepsilon\in (\eta',\eta'+\delta)$. 
\end{itemize}
Then, it follows that $c_\infty^{\Z_2}(\Sp^1,Y;\varepsilon)=0$ for any $\varepsilon\in(0,\eta]$ and $c_\infty^{\Z_2}(\Sp^1,Y;\varepsilon)\gg 0$ for any $\varepsilon\in(\eta',\eta'+\delta)$.
\end{example}

\begin{example}\label{ex:eps-good-2}
We also have an example involving a connected space $Y$. Fix a tiny positive real number $r\ll 1$. Now, let $G=\Z_2$, $X=\Sp^2$, and $Y=r\cdot\Sp^1$. First of all, if $\varepsilon\in\left(0,\frac{2r\pi}{3}\right]$, then $\vrm_\infty(Y;\varepsilon)$ is $\Z_2$-homotopy equivalent to $\Sp^1$ by \Cref{thm:Z2hausmannspherevrm}. Therefore,

$$c_\infty^{\Z_2}(\Sp^2,Y;\varepsilon)=0\text{ if }\varepsilon\in\left(0,\frac{2r\pi}{3}\right].$$

Also, by \cite[Theorem 7.4]{adamaszek2017vietoris} and \cite[Proposition 5.3.2]{matouvsek2003using}, there is a $\Z_2$-map from $\Sp^{2n-1}$ to $\vr(Y;\varepsilon)$ if $\varepsilon\in\left(\frac{2(n-1)r\pi}{2n-1},\frac{2nr\pi}{2n+1}\right]$. Hence, with the aid of \Cref{thm:Z2hausmannsphere} and  item 1.(b) of \Cref{prop:strongercGpXYlbddalernatives}, one can verify that
$$c_\infty^{\Z_2}(\Sp^2,Y;\varepsilon)\geq c_\vr^{\Z_2}(\Sp^2,Y;\varepsilon) > \zeta_2-\varepsilon\text{ if }\varepsilon>\frac{2r\pi}{3}.$$

Furthermore, one may increase the dimension of $Y$ by letting $Y=(r\cdot\Sp^1)^N$ for any positive integer $N$. Then, with the help of  \cite[Corollary 5.2]{adams2021operations}, one can still verify the analogous result for this $Y$, too. 
\end{example}

\Cref{ex:eps-good-1,ex:eps-good-2} above imply that, as a lower bound for $\dgh^G$, in the sense provided by the inequality $2\dgh^G(X,Y)\geq c_\infty^G(X,Y;0)$ stated in the main theorem (on page \pageref{page:main-theorem}), $c_\infty^G(X,Y;0)$ can be strictly worse than $\sup_{\varepsilon>0}c_\infty^G(X,Y;\varepsilon).$

\begin{question}\label{q:spheres-mono}
    Is it true that $\varepsilon\mapsto c^{\Z_2}_\infty(\Sp^n,\Sp^m;\varepsilon)$ is non-increasing for all positive integers $n,m$ such that $n>m$?
\end{question}

The following example compares the values of the persistent homology and the persistent index based lower bounds stated in the Main Theorem on page \pageref{page:main-theorem}. Recall, from \Cref{prop:Gdnmatter}, that in the case of spheres, viewed as $\Z_2$-metric spaces, for $n>m$, we have
$$\di^{\Z_2}\big(\h_m(\vert\vr(\Sp^m;\bullet)\vert),\h_m(\vert\vr(\Sp^n;\bullet)\vert)\big)=\frac{\zeta_m}{2},$$
which, by \cite[Corollary 5.10]{adams2024persistent}, also implies that for the $\infty$-Vietoris-Rips metric thickenings,
$$\di^{\Z_2}\big(\h_m(\vrm_\infty(\Sp^m;\bullet)),\h_m(\vrm_\infty(\Sp^n;\bullet))\big)=\frac{\zeta_m}{2}.$$
In contrast, item (1) of \Cref{prop:cforspheres} states that $$c_{\infty}^{\Z_2}(\Sp^m,\Sp^n;0)\geq\zeta_m.$$ That is, in this case, the lower bound on $\dgh^{\Z_2}(\Sp^m,\Sp^n)$ derived from persistent indices is twice as strong as the one obtained from persistent homology. The example below illustrates a situation where the reverse holds—i.e., persistent homology provides a sharper lower bound than persistent indices.

\begin{figure}
\begin{center}
\includegraphics[width = 0.8\linewidth]{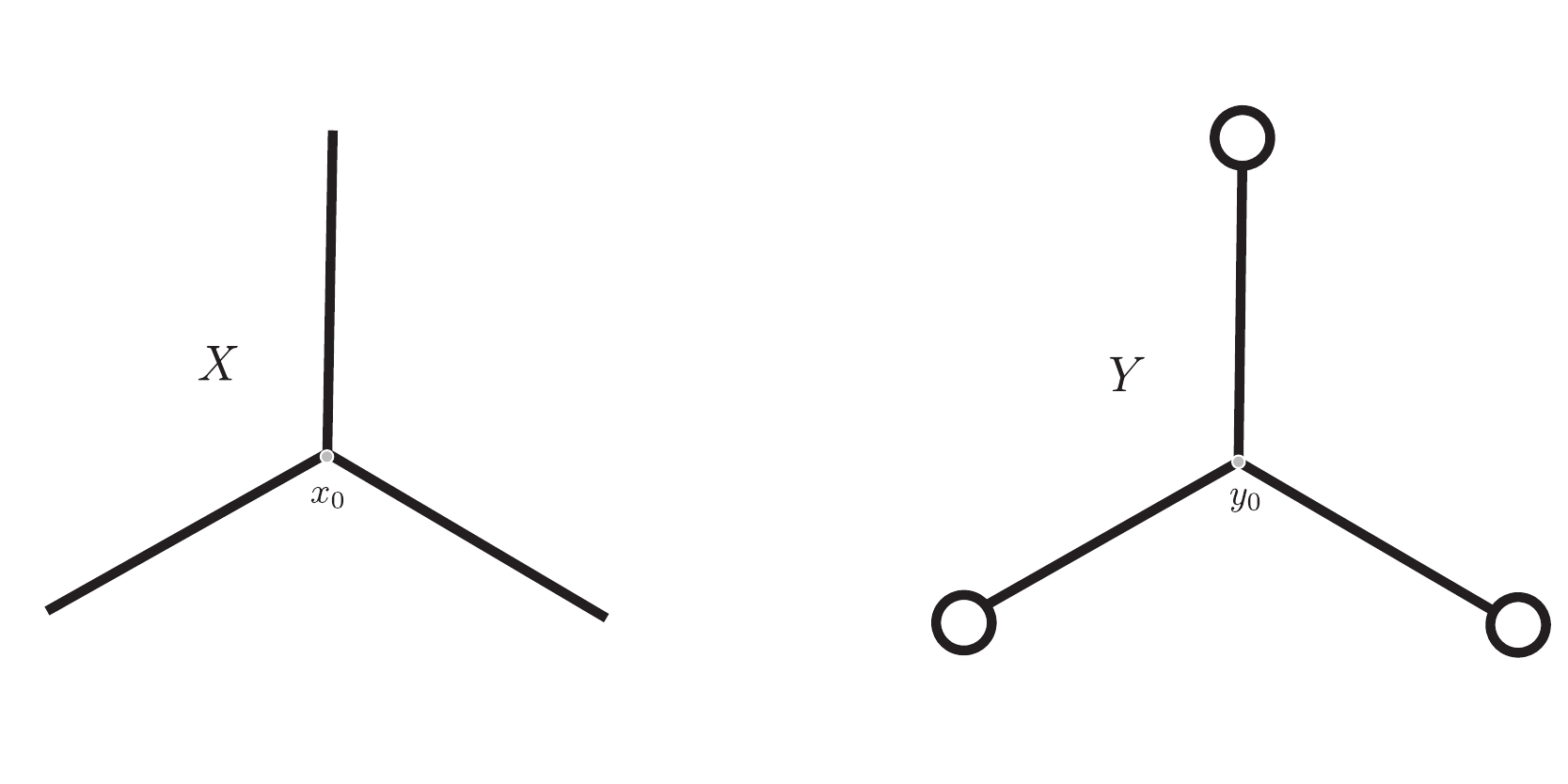}  
\end{center}
    \caption{The geodesic $\Z_3$-metric spaces $X$ and $Y$ from \Cref{ex:compatibility}. The $\Z_3$-action on each of these spaces exchanges different branches leaving the center points fixed.}
     \label{fig:compatibility}
\end{figure}
 
\begin{example}\label{ex:compatibility}
This  example shows that, for some choices of $G$, $X$, and $Y$, we can have
$$\di^G\big(\h_k(\vrm_p(X;\bullet)),\h_k(\vrm_p(Y;\bullet))\big)>\sup_{\varepsilon\geq 0}c_p^G(X,Y;\varepsilon).$$
Consider the 1-dimensional geodesic $\Z_3$-metric spaces $X$ and $Y$ with  $\Z_3$-actions shown in \Cref{fig:compatibility}. Note that $x_0$ and $y_0$ are fixed points w.r.t the respective $\Z_3$-actions. Then, by \Cref{rem:fixed-points},  one  concludes that $\sup_{\varepsilon\geq 0}c_\infty^{\Z_3}(X,Y;\varepsilon)=\sup_{\varepsilon\geq 0}c_\infty^{\Z_3}(Y,X;\varepsilon)=0$. However, note that
$$\di^{\Z_3}\big(\h_1(\vrm_\infty(X;\bullet)),\h_1(\vrm_\infty(Y;\bullet))\big)\geq\di\big(\h_1(\vrm_\infty(X;\bullet)),\h_1(\vrm_\infty(Y;\bullet))\big)>0.$$ 
Indeed, whereas $\h_1(\vrm_\infty(X;\bullet))=0$, $\h_1(\vrm_\infty(Y;\bullet))$ will not be trivial as it detects the presence and size of the 3 geodesic loops.\footnote{If $s$ is the common length of the 3 loops in $Y$, one can in fact prove that the interleaving distance above equals $\tfrac{s}{6}$.} This example can obviously be generalized to a case exhibiting $\Z_k$ symmetry, for every positive integer $k$, by considering versions of $X$ and $Y$ with $k$-branches.
\end{example}

\subsection{Relationship between different $G$-persistent indices.}\label{sec:rel-pis}

Now, we will compare the three $G$-persistent indices defined in \Cref{def:clowerbdd}. But, before that we need the following definition.

\begin{definition}[$G$-separation]
Let $G$ be a finite group and $X$ be a $G$-metric space. Then, we define the \emph{$G$-separation} of $X$ as follows:
$$\mathrm{Sep}^G(X):=\inf\{d_X(x,\alpha_X(g,x)):g\in G\text{ is nontrivial element and }x\in X\}.$$

If $G$ is a trivial group, then we define $\mathrm{Sep}^G(X):=+\infty$.
\nomenclature[19]{$\mathrm{Sep}^G(X)$}{$G$-separation}
\end{definition}

Note that if the $G$-action on $X$ is trivial, then its $G$-separation is zero. However, the following lemma shows that one can guarantee  the $G$-separation to be strictly positive when $X$ is compact and the $G$-action is free.

\begin{lemma}\label{lemma:posGsep}
Let $G$ be a finite group and $X$ be a compact $G$-metric space with a free action $\alpha_X$. Then, we always have $\mathrm{Sep}^G(X)>0$.
\end{lemma}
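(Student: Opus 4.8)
The plan is a standard compactness argument, exploiting that $G$ is finite so that the infimum defining $\mathrm{Sep}^G(X)$ is really a finite minimum of infima, each of which is attained.

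First, for each nontrivial element $g\in G$, I would introduce the function $f_g:X\to\R_{\geq 0}$ defined by $f_g(x):=d_X(x,\alpha_X(g,x))$. This function is continuous: $\alpha_X(g,\cdot):X\to X$ is an isometry (in particular continuous) since $X$ is a $G$-metric space, the map $x\mapsto(x,\alpha_X(g,x))$ is therefore continuous into $X\times X$, and $d_X:X\times X\to\R$ is continuous. Since the $G$-action is free, $\alpha_X(g,x)\neq x$ for every $x\in X$, and hence $f_g(x)>0$ for every $x\in X$.

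Next, since $X$ is compact and $f_g$ is continuous, $f_g$ attains its infimum on $X$; call the minimizer $x_g$ and set $m_g:=f_g(x_g)=\min_{x\in X}f_g(x)$. Because $f_g$ is strictly positive everywhere, $m_g>0$. Now observe that, by definition,
$$\mathrm{Sep}^G(X)=\inf_{g\neq e}\ \inf_{x\in X} f_g(x)=\inf_{g\neq e} m_g=\min_{g\neq e} m_g,$$
where the last equality holds because $G$ is finite, so the infimum over the nontrivial elements of $G$ ranges over a finite set and is thus a minimum of finitely many strictly positive numbers. Therefore $\mathrm{Sep}^G(X)=\min_{g\neq e}m_g>0$, as claimed. (If $G$ is trivial there is nothing to prove, since then $\mathrm{Sep}^G(X)=+\infty$ by convention.)

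The argument has no real obstacle; the only points that must be invoked are the continuity of each $f_g$ (which rests on $\alpha_X(g,\cdot)$ being an isometry), positivity from freeness, attainment of the minimum from compactness of $X$, and finiteness of $G$ to pass from an infimum over $g$ to a minimum. Each of these is immediate, so the proof is short.
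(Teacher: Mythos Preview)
Your proof is correct and follows essentially the same compactness argument as the paper: the paper packages the functions $f_g$ into the single map $x\mapsto\min_{g\neq e}f_g(x)$ and shows it is $2$-Lipschitz (hence continuous) before invoking compactness, whereas you minimize over $x$ first and over $g$ afterward, but this is only a cosmetic reordering. Both arguments rest on the same ingredients---continuity of each $f_g$, strict positivity from freeness, attainment from compactness, and finiteness of $G$---so there is no substantive difference.
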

\begin{proof}
Let $\mathrm{Sep}^G_X:X\longrightarrow\R_{\geq 0}$ be the map such that $\mathrm{Sep}^G_X(x):=\min\{d_X(x,\alpha_X(g,x)):g\in G\text{ is nontrivial}\}$ for all $x\in X$. Since the $G$-action is free, it follows that $\mathrm{Sep}^G_X(x)>0\,\forall x\in X$. Given that $X$ is compact, we can obtain the claim by showing that $\mathrm{Sep}^G_X$ is continuous. We will prove a  stronger claim: $\mathrm{Sep}^G_X$ is $2$-Lipschitz. Indeed, fix any two points $x,x'\in X$. Then, 
\begin{align*}
\Big\vert \mathrm{Sep}_X^G(x)-\mathrm{Sep}_G^X(x')\big\vert &=
\Big\vert\min_{g\in G} d_X(x,\alpha_X(g,x))-\min_{g\in G}d_X(x',\alpha_X(g,x')) \Big\vert\\
&\leq \max_{g\in G}\big\vert d_X(x,\alpha_X(g,x))-d_X(x',\alpha_X(g,x')) \big\vert\\
&\leq \max_{g\in G} \big(d_X(x,x')+d_X(\alpha_X(g,x),\alpha_X(g,x'))\big)\\
&=2d_X(x,x')
\end{align*}
\end{proof}

\begin{example}\label{ex:G-sep}
Consider the following $\Z_2$-metric spaces $X$:
\begin{enumerate}
    \item $X=\Sp^n$ (see the item (2) of \Cref{ex:spheres-Z2}). Then, $\mathrm{Sep}^{\Z_2}(X)=\pi$ since $d_X(x,\alpha_X(-1,x))=\pi$ for all $x\in X$.

    \item $X=\square^n_\infty$ (see the item (3) of \Cref{ex:spheres-Z2}). Then, $\mathrm{Sep}^{\Z_2}(X)=2$ since $d_X(x,\alpha_X(-1,x))=2$ for all $x\in X$.

    \item $X=\Sp^n_\infty$ (see the item (3) of \Cref{ex:spheres-Z2}). Then, for each $x=(x_0,x_1,\cdots,x_n)\in X$, we have that $d_X(x,\alpha_X(-1,x))=2\Vert (x_0,x_1,\cdots,x_n) \Vert_{\ell^\infty}$. Therefore, one can easily verify that $\mathrm{Sep}^{\Z_2}(X)=\frac{2}{\sqrt{n+1}}$.
\end{enumerate}
\end{example}

\begin{proposition}\label{prop:strongercGpXYlbddalernatives}
Suppose that $G$ is a finite group. Then, for any two compact $G$-metric spaces $X$ and $Y$ the followings hold:
\begin{enumerate}
    \item
    \begin{itemize}
        \item[(a)] For $\varepsilon=0$, we have that $$c_{\infty}^G(X,Y;0)=c_{\vr}^G(X,Y;0).$$

        \item[(b)] For any $\varepsilon>0$, we have that
    $$c_{\vr}^G(X,Y;\varepsilon)\leq c_{\infty}^G(X,Y;\varepsilon)\leq\inf_{\delta>0}\big\{c_{\vr}^G(X,Y;\varepsilon+\delta)+\delta\big\}.$$
    \end{itemize}

    \item 
    \begin{itemize}
        \item[(a)] For $\varepsilon=0$, we have that $$c_{\vr}^G(X,Y;0)\leq 2c_\TS^G(X,Y;0).$$ Furthermore, if $\mathrm{Sep}^G(X)>0$ and either $c_{\vr}^G(X,Y;0)<\mathrm{Sep}^G(X)$ or $2c_\TS^G(X,Y;0)<\mathrm{Sep}^G(X)$, then we have that $$c_{\vr}^G(X,Y;0)=2c_\TS^G(X,Y;0).$$
        
        \item[(b)] Assume that $\mathrm{Sep}^G(X),\mathrm{Sep}^G(Y)>0$. For any $\varepsilon\in \left(0,\frac{\mathrm{Sep}^G(Y)}{2}\right]$, if either $$\text{$c_{\vr}^G(X,Y;2\varepsilon)<\mathrm{Sep}^G(X)-2\varepsilon$ or $2c_\TS^G\left(X,Y;\varepsilon\right)<\mathrm{Sep}^G(X)-2\varepsilon$,}$$ then we have that $$c_{\vr}^G(X,Y;2\varepsilon)=2c_\TS^G\left(X,Y;\varepsilon\right).$$
    \end{itemize}
\end{enumerate}
\end{proposition}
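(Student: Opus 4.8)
The plan is to derive all of the stated (in)equalities by producing the required $G$-maps as explicit compositions, travelling between the filtrations $\vrm_\infty(X;\bullet)$, $|\vr(X;\bullet)|$ and $B_\bullet(X,\TS(X))$ with three devices: the canonical continuous $G$-maps $\mathrm{id}_r:|\vr(X;r)|\to\vrm_\infty(X;r)$ of \Cref{lemma:homeo}; $G$-invariant finite $\delta$-nets (\Cref{lemma:Ginvdeltanetexst}) together with the $(G,2\delta)$-maps $\Phi:\vrm_\infty(X;r)\toG\vrm_\infty(U;r+2\delta)$ of \Cref{lemma:G2deltamapexst} and the $G$-homeomorphism $\vrm_\infty(U;\bullet)\cong|\vr(U;\bullet)|$ for finite $U$ (as in the proof of \Cref{prop:vrvrmdIGsame}); and a $G$-equivariant strengthening of \Cref{thm:vr-neigh}, available when the $G$-action is free, explained below.

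For item (1a), the inequality $c_\infty^G(X,Y;0)\le c_\vr^G(X,Y;0)$ is immediate by post-composing a $G$-map $Y\toG|\vr(X;\eta)|$ with $\mathrm{id}_\eta$; conversely, given $Y\toG\vrm_\infty(X;\eta)$ and $\delta>0$, one composes with $\Phi:\vrm_\infty(X;\eta)\toG\vrm_\infty(U;\eta+2\delta)$ for a $G$-invariant finite $\delta$-net $U$, then with the $G$-homeomorphism onto $|\vr(U;\eta+2\delta)|$ and the subcomplex inclusion into $|\vr(X;\eta+2\delta)|$, and lets $\delta\downarrow 0$. Item (1b) is analogous: for the left inequality, precompose a $G$-map $\vrm_\infty(Y;\varepsilon)\toG\vrm_\infty(X;\varepsilon+\eta)$ with $\mathrm{id}_\varepsilon$ and discretize the target as above; for the right one, use a sufficiently fine $G$-invariant net of $Y$ to build a $G$-map $\vrm_\infty(Y;\varepsilon)\toG|\vr(Y;\varepsilon+\delta)|$, follow it with the given $G$-map, then post-compose with $\mathrm{id}$ into $\vrm_\infty(X;\varepsilon+\delta+\eta)$, and infimize over $\delta$.

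Turning to (2a), the unconditional bound $c_\vr^G(X,Y;0)\le 2c_\TS^G(X,Y;0)$ is obtained as follows: given a $G$-map $Y\toG B_\eta(X,\TS(X))$ and $\delta>0$, pick a $G$-invariant finite $\delta$-net $U\subseteq X$; the sets $\{f:\|f-d_X(u,\cdot)\|_\infty<\eta+\delta\}$, $u\in U$, form a finite $G$-invariant open cover of $B_\eta(X,\TS(X))$ (each $f$ has a near-vertex $x$ and a net point within $\delta$ of $x$), and two indices $u,u'$ occurring at a common $f$ satisfy $d_X(u,u')<2(\eta+\delta)$, so a $G$-partition of unity subordinate to this cover (cf.\ \Cref{lemma:G2deltamapexst}) produces $B_\eta(X,\TS(X))\toG|\vr(U;2(\eta+\delta))|\hooktoG|\vr(X;2\eta+2\delta)|$; composing and letting $\delta\downarrow 0$ gives the claim. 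For the remaining, sharp, assertions I first record that for a finite $G$ and $r>0$ with $2r\le\mathrm{Sep}^G(X)$ the $G$-actions on $|\vr(X;2r)|$ and on $B_r(X,\TS(X))$ are free: a nontrivial $g$ fixing $\sum u_ix_i$ must permute its support, forcing two support points at distance $<2r$ yet $\ge\mathrm{Sep}^G(X)$; and a nontrivial $g$ fixing $f\in B_r(X,\TS(X))$ would keep both $x$ and $gx$ within distance $<r$ of $f$, whence $d_X(x,gx)<2r$ — both impossible. When these actions are free, a $G$-equivariant nerve lemma applied to the $G$-invariant good cover of $B_r(X,\TS(X))$ by the open balls $B_r(d_X(x,\cdot))$ — whose finite intersections are hyperconvex, hence contractible, and whose nerve is $\vr(X;2r)$ by the Helly property of balls in injective spaces — upgrades \Cref{thm:vr-neigh} to $G$-homotopy equivalences $|\vr(X;2r)|\Gsimeq B_r(X,\TS(X))$ at the \emph{sharp} scale ratio $2$.

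Granting this, the reverse inequality in (2a) follows by composing a $G$-map $Y\toG|\vr(X;\eta)|$ with $\eta<\mathrm{Sep}^G(X)$ with the $G$-map $|\vr(X;\eta)|\toG B_{\eta/2}(X,\TS(X))$, so that $2c_\TS^G(X,Y;0)\le\eta$; since under either hypothesis one may assume $c_\vr^G(X,Y;0)<\mathrm{Sep}^G(X)$ (in the second case via the unconditional bound just proved), infimizing yields equality. Item (2b) is handled the same way with compositions on both ends: the condition $\varepsilon\le\tfrac{1}{2}\mathrm{Sep}^G(Y)$ makes $|\vr(Y;2\varepsilon)|\Gsimeq B_\varepsilon(Y,\TS(Y))$ available, and the $\mathrm{Sep}^G(X)$-condition makes $|\vr(X;2\varepsilon+\eta)|\Gsimeq B_{\varepsilon+\eta/2}(X,\TS(X))$ available for the relevant small $\eta$ (note $2(\varepsilon+\tfrac{\eta}{2})=2\varepsilon+\eta$); sandwiching a witnessing $G$-map for either index between the appropriate $G$-homotopy (inverse) equivalences produces a witness for the other at the matching scale, giving both inequalities and hence equality. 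The main obstacle throughout is this $G$-equivariant enhancement of \Cref{thm:vr-neigh}: its homotopy equivalences are built from non-canonical choices and are not equivariant in general, and it is precisely the freeness hypotheses encoded in the $\mathrm{Sep}^G$ bounds — exploited via a $G$-equivariant nerve lemma, or the equivariant Whitehead theorem for free $G$-CW complexes — that permit the upgrade while preserving the sharp scale ratio $2$; one must also keep track of the strict-inequality conventions in the definitions of $\vr(X;\bullet)$ and $B_\bullet(X,\TS(X))$ throughout.
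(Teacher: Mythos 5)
Your proposal is correct and follows essentially the same route as the paper's proof. Items (1a)--(1b) are identical in substance: you reconstruct the paper's Lemma~\ref{lemma:vrmtovr} (the $G$-map $\vrm_\infty(X;\varepsilon)\toG|\vr(X;\varepsilon+\delta)|$ via a finite $G$-invariant net, the partition-of-unity map of Lemma~\ref{lemma:G2deltamapexst}, and the $G$-homeomorphism $\vrm_\infty(U;\cdot)\cong|\vr(U;\cdot)|$ for finite $U$) and pair it with the canonical $\mathrm{id}_r$ of Lemma~\ref{lemma:homeo}, infimizing over $\delta$ exactly as the paper does. Items (2a)--(2b): your case analysis on which index is \emph{a priori} controlled, your freeness argument for the isotropy groups when $2r\le\mathrm{Sep}^G(X)$, and your appeal to the $G$-equivariant nerve lemma to get $|\vr(X;2r)|\Gsimeq B_r(X,\TS(X))$ at the sharp scale ratio $2$ are precisely the paper's Proposition~\ref{prop:GvrTS}, and the sandwiching of a witnessing $G$-map between the two $G$-homotopy (inverse) equivalences is the paper's argument verbatim.

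The only genuine difference is localized in the unconditional bound $c_\vr^G(X,Y;0)\le 2c_\TS^G(X,Y;0)$. The paper obtains a one-sided $G$-map $B_\eta(X,\TS(X))\toG|\vr(X;2\eta)|$ directly from a $G$-partition of unity subordinate to the \emph{full} cover $\{B_\eta(d_X(x,\cdot))\}_{x\in X}$ (Lemma~\ref{lemma:onesidedTStoVR}), with no additive loss; you instead discretize $X$ by a finite $G$-invariant $\delta$-net and land in $|\vr(X;2\eta+2\delta)|$, recovering the sharp constant only after $\delta\downarrow 0$. Your variant has the small advantage that the nerve map into the finite complex $|\vr(U;\cdot)|$ is manifestly continuous without any fuss about the weak topology on the infinite geometric realization, at the cost of an extra limiting argument; the paper's route is shorter because Lemma~\ref{lemma:onesidedTStoVR} was already in hand. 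Your identity $\|d_X(u,\cdot)-d_X(u',\cdot)\|_\infty=d_X(u,u')$ (Kuratowski isometry) is used correctly to bound the nerve simplex diameters, and the scale bookkeeping $2(\varepsilon+\tfrac{\eta}{2})=2\varepsilon+\eta$ in (2b) matches the hypothesis $\eta\le\mathrm{Sep}^G(X)-2\varepsilon$ as required. No gap.
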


\begin{question}
\begin{itemize}
    \item About item 1 (b): Are there $G, X, Y$, and $\varepsilon>0$ such that $c_\infty^G(X,Y;\varepsilon)$ is strictly larger than $c_{\vr}(X,Y;\varepsilon)$? If so, is the upper bound $\inf_{\delta>0}\{c_{\vr}^G(X,Y;\varepsilon+\delta)+\delta\}$ tight?
    
    \item About item 2 (a): Can we find an example of $G$, $X$, and $Y$ such that $c_{\vr}^G(X,Y;0)$ is strictly smaller than $2c_\TS^G(X,Y;0)$? Note that this would necessarily require that $$\min\big(c_{\vr}^G(X,Y;0),2c_\TS^G(X,Y;0)\big)\geq\mathrm{Sep}^G(X).$$
\end{itemize}
\end{question}

\begin{example}\label{ex:cvrce}
\begin{enumerate}
    \item As already mentioned in \Cref{rmk:referpolymath}, the quantity $c_\vr^{\Z_2}(\Sp^m,\Sp^n;0)$ was already considered in \cite{adams2022gromov}. Furthermore, by \cite[Remark 5.6]{adams2022gromov}, we have that $c_\vr^{\Z_2}(\Sp^m,\Sp^n;0)<\pi=\mathrm{Sep}^{\Z_2}(\Sp^m)$ (see the item (1) of \Cref{ex:G-sep}) for all $0< m < n <\infty$. Hence, by \Cref{prop:strongercGpXYlbddalernatives}, we have that
    $$c_{\infty}^{\Z_2}(\Sp^m,\Sp^n;0)=c_{\vr}^{\Z_2}(\Sp^m,\Sp^n;0)=2c_\TS^{\Z_2}(\Sp^m,\Sp^n;0)$$
    for all $0< m < n <\infty$.

    \item Let's consider the case when $G=\Z_2$, $X=\square^m_\infty$, and $Y=\square^n_\infty$ for some $0< m < n <\infty$. Later, we will see that both of $c_{\vr}^{\Z_2}(\square^m_\infty,\square^n_\infty;0)$ and $2c_{\TS}^{\Z_2}(\square^m_\infty,\square^n_\infty;0)$ are greater than or equal to $2=\mathrm{Sep}^{\Z_2}(\square^m_\infty)$ (see the item (2) of \Cref{ex:G-sep}, the item (1) of \Cref{prop:cforsphereslinfty} and its proof). In particular, we cannot guarantee the equality between $c_{\vr}^{\Z_2}(\square^m_\infty,\square^n_\infty;0)$ and $2c_{\TS}^{\Z_2}(\square^m_\infty,\square^n_\infty;0)$ via \Cref{prop:strongercGpXYlbddalernatives}.

    \item Let's consider the case when $G=\Z_2$, $X=\Sp^m_\infty$, and $Y=\Sp^n_\infty$ for some $0< m < n <\infty$. Later, we will see that both of $c_{\vr}^{\Z_2}(\Sp^m_\infty,\Sp^n_\infty;0)$ and $2c_{\TS}^{\Z_2}(\Sp^m_\infty,\Sp^n_\infty;0)$ are greater than or equal to $\frac{2}{\sqrt{m+1}}=\mathrm{Sep}^{\Z_2}(\Sp^m_\infty)$ (see the item (3) of \Cref{ex:G-sep}, the item (2) of \Cref{prop:cforsphereslinfty} and its proof). In particular, we cannot guarantee the equality between $c_{\vr}^{\Z_2}(\Sp^m_\infty,\Sp^n_\infty;0)$ and $2c_{\TS}^{\Z_2}(\Sp^m_\infty,\Sp^n_\infty;0)$ via \Cref{prop:strongercGpXYlbddalernatives}.
\end{enumerate}
\end{example}

\subsection{Proof of \Cref{prop:strongercGpXYlbddalernatives}.}

We need a few technical elements --including a $G$-equivariant nerve lemma-- in order to prove \Cref{prop:strongercGpXYlbddalernatives}.

\begin{definition}[$G$-invariant cover]\label{def:Ginvcov}
Assume that a $G$-space $(X,\alpha_X)$ is given. Then, a cover $\mathcal{U}=\{U_\lambda\}_{\lambda\in\Lambda}$ of $X$ is said to be \emph{$G$-invariant} if there is a $G$-action $\alpha_{\Lambda}:G\times\Lambda\rightarrow\Lambda$ on the index set $\Lambda$ such that $U_{\alpha_\Lambda(g,\lambda)}=\alpha_X(g,U_\lambda)$ for all $g\in G$ and $\lambda\in \Lambda$.
\end{definition}

\begin{proposition}[$G$-nerve lemma]\label{prop:Gnerve}
Assume that a finite group $G$, a paracompact $G$-space $(X,\alpha_X)$, and a $G$-invariant open cover $\mathcal{U}:=\{U_\lambda\}_{\lambda\in\Lambda}$ of $X$ are given. Also, assume that $U_\sigma:=\bigcap_{\lambda\in\sigma} U_\lambda$ is $G_\sigma$-contractible for every simplex $\sigma\in\mathrm{N}\,\mathcal{U}$ where $\mathrm{N}\,\mathcal{U}$ is the nerve complex associated to $\mathcal{U}$ and $G_\sigma:=\{g\in G:\alpha_\Lambda(g,\sigma)=\sigma\}$ is the isotropy subgroup of $G$. Then, $X$ and $\vert\mathrm{N}\,\mathcal{U}\vert$ are $G$-homotopy equivalent. 
\end{proposition}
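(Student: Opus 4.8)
The plan is to deduce the $G$-equivariant nerve lemma from the classical (non-equivariant) nerve lemma by working in the Borel construction / homotopy quotient, and more precisely by checking the hypotheses of an equivariant Whitehead-type criterion fixed-subspace by fixed-subspace. The key principle I would invoke is the following: a $G$-map $f\colon A\toG B$ between (reasonable, e.g.\ $G$-CW or paracompact with suitable local structure) $G$-spaces is a $G$-homotopy equivalence if and only if for every subgroup $H\leq G$ the induced map on $H$-fixed subspaces $f^H\colon A^H\to B^H$ is an (ordinary) homotopy equivalence. So the whole argument reduces to: (i) build a natural $G$-map relating $X$ and $|\mathrm{N}\,\mathcal{U}|$, and (ii) show it restricts to a homotopy equivalence on $H$-fixed points for each $H\leq G$.

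\textbf{Key steps.}
First I would recall the standard ``partition of unity'' map. Since $G$ is finite and $X$ is paracompact, the cover $\mathcal{U}=\{U_\lambda\}_{\lambda\in\Lambda}$ admits a $G$-invariant partition of unity $\{\varphi_\lambda\}_{\lambda\in\Lambda}$ subordinate to it (average an ordinary partition of unity over the finite group; this uses the $G$-action on $\Lambda$ with $U_{\alpha_\Lambda(g,\lambda)}=\alpha_X(g,U_\lambda)$). This yields a $G$-map $\pi\colon X\to|\mathrm{N}\,\mathcal{U}|$, $x\mapsto\sum_\lambda \varphi_\lambda(x)\,e_\lambda$, where $G$ acts on $|\mathrm{N}\,\mathcal{U}|$ via its action on $\Lambda$ (one checks the action on $\Lambda$ permutes simplices of the nerve, so $|\mathrm{N}\,\mathcal{U}|$ is a $G$-space, indeed a $G$-CW complex). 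Second, for each subgroup $H\leq G$ I would identify the $H$-fixed subspaces: $X^H$ is covered by the $H$-fixed pieces, and a careful bookkeeping shows $\pi$ restricts to a map $X^H\to |\mathrm{N}\,\mathcal{U}|^H$, where $|\mathrm{N}\,\mathcal{U}|^H$ is the subcomplex (after barycentric subdivision, to make the $G$-action simplicial in the strict sense) spanned by $H$-invariant simplices, and this subcomplex is the geometric realization of the nerve of the induced cover of $X^H$ by the sets $\big(\bigcap_{\lambda\in Hμ}U_\lambda\big)^H$ indexed by $H$-orbits. Third, I would invoke the classical nerve lemma applied on $X^H$: here the hypothesis that $U_\sigma$ is $G_\sigma$-contractible enters, because for an $H$-invariant simplex $\sigma$ one has $H\leq G_\sigma$, so $U_\sigma$ is $H$-contractible, hence $U_\sigma^H$ is contractible — and likewise all the finite intersections appearing are contractible, so the nerve lemma gives that $\pi^H\colon X^H\to|\mathrm{N}\,\mathcal{U}|^H$ is a homotopy equivalence. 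Finally, combining over all $H\leq G$ via the equivariant Whitehead theorem yields that $\pi$ is a $G$-homotopy equivalence.

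\textbf{Main obstacle.}
The delicate point is matching $|\mathrm{N}\,\mathcal{U}|^H$ with an honest nerve of a cover of $X^H$ to which the classical nerve lemma applies. Two subtleties must be handled: (a) to even speak of fixed-point subcomplexes one typically passes to the barycentric subdivision $\mathrm{sd}(\mathrm{N}\,\mathcal{U})$ so that the simplicial $G$-action is ``regular'' and $|\mathrm{N}\,\mathcal{U}|^H=|\mathrm{sd}(\mathrm{N}\,\mathcal{U})^H|$ is a genuine subcomplex, and one must check this doesn't disturb the partition-of-unity map up to $G$-homotopy; (b) one must verify that the relevant intersections $\bigcap_{\lambda\in\tau}U_\lambda$ for $\tau$ ranging over $H$-invariant simplices of $\mathrm{sd}$ (equivalently, over chains of $H$-invariant simplices of $\mathrm{N}\,\mathcal{U}$) have contractible $H$-fixed sets — this again follows from $G_\sigma$-contractibility of $U_\sigma$ for the relevant $\sigma$, since $H\le G_\sigma$ forces $U_\sigma$ to be $H$-contractible and a standard argument shows an $H$-contractible space (with enough structure) has contractible fixed subspace; one should be a little careful to spell out exactly what regularity on the cover guarantees this last implication (e.g.\ that the $H$-contraction can be taken through $H$-maps, which is automatic from the definition of $H$-contractibility used in the paper). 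I would also need to confirm that $X^H$ is paracompact so the classical nerve lemma applies on it; this is standard for closed subspaces of paracompact spaces. Modulo these standard but somewhat technical equivariant-topology facts, the proof is a routine ``check on fixed points'' argument.
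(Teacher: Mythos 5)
Your proposal takes a genuinely different route from the one in the paper. The paper never passes through fixed-point subspaces or an equivariant Whitehead theorem; instead it adapts Hatcher's complex-of-spaces machinery directly to the $G$-equivariant setting, building the realization $\Delta X_\mathcal{U}$ over $\mathrm{sd}(\mathrm{N}\,\mathcal{U})$, showing by an explicit skeleton-by-skeleton mapping-cylinder argument (using $G_\sigma$-contractibility of each $U_\sigma$) that $\Delta X_\mathcal{U}$ $G$-deformation retracts to $|\mathrm{N}\,\mathcal{U}|$, and separately showing via a $G$-partition of unity that $\Delta X_\mathcal{U}$ $G$-deformation retracts to $X$. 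All homotopy inverses are constructed by hand.

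\textbf{The gap.}
Your proof hinges on the equivariant Whitehead principle: ``a $G$-map is a $G$-homotopy equivalence iff it induces ordinary homotopy equivalences on all $H$-fixed subspaces.'' That principle is only a theorem when both source and target have the $G$-homotopy type of a $G$-CW complex (or are $G$-ANRs); in general a $G$-map inducing homotopy equivalences on fixed points is merely a \emph{weak} $G$-equivalence. Here the target $|\mathrm{N}\,\mathcal{U}|$ is a $G$-CW complex (after barycentric subdivision), but the source $X$ is just a paracompact $G$-space with a $G$-invariant cover with $G_\sigma$-contractible intersections. There is no hypothesis placing $X$ in the class where the equivariant Whitehead theorem applies --- indeed, the fact that $X$ has the $G$-homotopy type of a $G$-CW complex is essentially the \emph{conclusion} of the nerve lemma, so you would be assuming what you want to prove. (Compare the non-equivariant situation: one cannot prove the ordinary nerve lemma for paracompact $X$ by observing that $\pi: X\to|\mathrm{N}\,\mathcal{U}|$ is a weak equivalence and then invoking Whitehead, because Whitehead requires $X$ to already be CW.) You flag this yourself with the parenthetical ``reasonable, e.g.\ $G$-CW or paracompact with suitable local structure,'' but that hand-wave is precisely where the argument breaks.

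\textbf{Secondary issues (fixable, but worth flagging).}
The identification of $|\mathrm{N}\,\mathcal{U}|^H$ with a nerve of a cover of $X^H$ needs a bit more care than stated: after passing to $\mathrm{sd}(\mathrm{N}\,\mathcal{U})$, the fixed subcomplex $\mathrm{sd}(\mathrm{N}\,\mathcal{U})^H$ is the order complex of the poset of $H$-invariant simplices, not literally the nerve of the cover of $X^H$ by the sets $U_{H\mu}^H$; one must observe that this poset agrees with the face poset of that nerve, so that one is the barycentric subdivision of the other. (This works, and it also uses the observation you implicitly need --- that $H$-contractibility of $U_\sigma$ forces $U_\sigma^H\ne\emptyset$, so the vertex sets match.) But these fixed-point bookkeeping steps are all downstream of the Whitehead invocation, and it is the latter that would have to be repaired, e.g.\ by instead constructing an explicit $G$-homotopy inverse as the paper does.
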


\begin{remark}\label{rmk:trivialcontractibiliy} If $G$ is the trivial group, then $G$-contractibility is just boils down the standard notion of contractibility.
\end{remark}

\begin{remark}
Various versions of the $G$-nerve lemma have been already studied in the following articles. Each version of $G$-nerve lemma holds under  different assumptions all of which are more restrictive than those in \Cref{prop:Gnerve}:

\begin{itemize}
    \item \cite[Lemma 2.5]{hess2013topology} requires the space $X$ to be a $G$-simplicial complex.

    \item \cite[Theorem 2.19]{yang2014equivariant} requires the space $X$ to be $G$-CW complex $X$ and the cover $\mathcal{U}$ to be locally finite and ``regular" (see \cite[Definition 2.7]{yang2014equivariant}).

    \item \cite[Theorem 4.6]{gonzalez2024equivariant} requires the cover $\mathcal{U}$ to be locally finite.
\end{itemize}
\end{remark}

The proof of the above proposition is relegated to \Cref{sec:Gnerve}. As an application, we obtain the following.

\begin{proposition}\label{prop:GvrTS}
Let $G$ be a finite group and $X$ be a $G$-metric space with $\mathrm{Sep}^G(X)>0$. Then, $\vert\vr(X;2r)\vert$ and $B_r(X,\TS(X))$ are $G$-homotopy equivalent for any $r\in \left(0,\frac{\mathrm{Sep}^G(X)}{2}\right]$.
\end{proposition}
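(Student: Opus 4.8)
The plan is to exhibit both $\vert\vr(X;2r)\vert$ and $B_r(X,\TS(X))$ as the $G$-nerve of one and the same $G$-invariant open cover, and then to invoke the $G$-nerve lemma (\Cref{prop:Gnerve}). First I would record the $G$-structure on the tight span: since $G$ acts on $X$ by isometries, the formula $(g\cdot f)(y):=f(\alpha_X(g^{-1},y))$ defines an isometric left $G$-action on $L^\infty(X)$ which preserves $\Delta(X)$ and the pointwise partial order, hence preserves minimality and restricts to an isometric $G$-action on $\TS(X)$; moreover the Kuratowski map $\kappa:X\to\TS(X)$, $x\mapsto d_X(x,\cdot)$, is $G$-equivariant (one checks $g\cdot\kappa(x)=\kappa(\alpha_X(g,x))$), and $B_r(X,\TS(X))$ is a $G$-invariant subspace.

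Next I would consider the open cover $\mathcal{U}=\{\mathcal{B}_x\}_{x\in X}$ of $B_r(X,\TS(X))$ by the balls $\mathcal{B}_x:=\{f\in\TS(X):\Vert f-\kappa(x)\Vert_\infty<r\}$, which is a cover by the very definition of $B_r(X,\TS(X))$. Equivariance of $\kappa$ together with the fact that $G$ acts isometrically gives $\alpha_X(g,\cdot)\cdot\mathcal{B}_x=\mathcal{B}_{\alpha_X(g,x)}$, so $\mathcal{U}$ is $G$-invariant in the sense of \Cref{def:Ginvcov} with index set $X$ carrying the action $\alpha_X$. The nerve complex $\mathrm{N}\,\mathcal{U}$, together with this action on its vertex set, is then exactly $\vr(X;2r)$ with its canonical $G$-action: by hyperconvexity of $\TS(X)$ (equivalently, the argument behind \Cref{thm:vr-neigh} in \cite{lim2020vietoris}), a finite set $\sigma=\{x_0,\dots,x_n\}$ satisfies $\bigcap_i\mathcal{B}_{x_i}\neq\emptyset$ iff $\Vert\kappa(x_i)-\kappa(x_j)\Vert<2r$ for all $i,j$, i.e.\ iff $\diam(\sigma)<2r$, which is precisely the condition for $\sigma$ to be a simplex of $\vr(X;2r)$.

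The verification needed to apply \Cref{prop:Gnerve} then splits into two points. Contractibility of each nonempty $U_\sigma=\bigcap_{x\in\sigma}\mathcal{B}_x$ follows since it is a finite intersection of balls in the hyperconvex space $\TS(X)$ (equivalently, a subset convex with respect to a conical geodesic bicombing on $\TS(X)$), which is exactly what is shown in the proof of \cite[Theorem 4.1]{lim2020vietoris}. The step I expect to carry the real weight is showing that the isotropy group $G_\sigma=\{g\in G:\alpha_X(g,\sigma)=\sigma\}$ is \emph{trivial} for every simplex $\sigma\in\vr(X;2r)$, and this is where the hypotheses $\mathrm{Sep}^G(X)>0$ and $r\le\tfrac{1}{2}\,\mathrm{Sep}^G(X)$ enter: if some nontrivial $g$ stabilized $\sigma$ setwise, then for any $x\in\sigma$ we would have $\alpha_X(g,x)\in\sigma$, hence $d_X(x,\alpha_X(g,x))\le\diam(\sigma)<2r\le\mathrm{Sep}^G(X)$, contradicting the definition of $\mathrm{Sep}^G(X)$ (this also rules out the possibility $\alpha_X(g,x)=x$, since $\mathrm{Sep}^G(X)>0$). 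Once $G_\sigma$ is trivial, $G_\sigma$-contractibility of $U_\sigma$ reduces to ordinary contractibility by \Cref{rmk:trivialcontractibiliy}.

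Finally I would conclude: $B_r(X,\TS(X))$ is metrizable, hence paracompact, so \Cref{prop:Gnerve} yields a $G$-homotopy equivalence $B_r(X,\TS(X))\Gsimeq\vert\mathrm{N}\,\mathcal{U}\vert=\vert\vr(X;2r)\vert$, which is the assertion. The only genuinely delicate bookkeeping is keeping the strict inequalities straight in the identification ``nerve $=$ Vietoris–Rips complex'' and in the contractibility of the pieces $U_\sigma$; both are already handled in \cite{lim2020vietoris}, so the new ingredient here is precisely the isotropy computation above, which is what forces the bound $r\le\tfrac{1}{2}\,\mathrm{Sep}^G(X)$.
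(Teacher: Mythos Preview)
Your proposal is correct and follows essentially the same approach as the paper: apply the $G$-nerve lemma (\Cref{prop:Gnerve}) to the $G$-invariant cover of $B_r(X,\TS(X))$ by tight-span balls, identify the nerve with $\vr(X;2r)$, use hyperconvexity for contractibility of the pieces, and use the bound $r\le\tfrac12\mathrm{Sep}^G(X)$ to force all isotropy groups $G_\sigma$ to be trivial. Your write-up is in fact slightly more thorough than the paper's, since you explicitly record the $G$-action on $\TS(X)$ and check paracompactness, and your isotropy argument via $\diam(\sigma)<2r$ is marginally more direct than the paper's (which routes the same inequality through a common point of two balls).
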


\begin{remark}
That for a compact  metric space $(X,d_X)$, $|\vr(X;2r)|$ and $B_r(X,\TS(X))$ are homotopy equivalent \emph{for all $r>0$} is a  known fact; see \cite[Proposition 2.27]{lim2020vietoris}. 
\end{remark}

\begin{example}
    Note, for example, that when $G=\Z_2$ and  $X=\Sp^n$ with the canonical antipodal action,  \Cref{prop:GvrTS}  together with \Cref{ex:G-sep}, gives that $|\vr(\Sp^n;2r)|$ and $B_r(\Sp^n,\TS(\Sp^n))$ are $\Z_2$-homotopy equivalent for all $r\in(0,\tfrac{\pi}{2}]$, which is optimal.
\end{example}

\begin{remark}\label{remark:z3-contract}
Let $G=\Z_3:=\left\{0,\frac{2\pi}{3},\frac{4\pi}{3}\right\}$, $X=\Sp^1$, and the $\Z_3$-action $\alpha_{\Sp^1}$ is defined by $\alpha_{\Sp^1}(g,\theta):=\theta+g$ for any $g\in\Z_3$ and $\theta\in\Sp^1$. Then, it is easy to verify that $\mathrm{Sep}^{\Z_3}(\Sp^1)=\frac{2\pi}{3}$. Hence, by \Cref{prop:GvrTS}, $\vert\vr(\Sp^1;2r)\vert$ and $B_r(\Sp^1,\TS(\Sp^1))$ are $\Z_3$-homotopy equivalent for all $r\in \left(0,\frac{\pi}{3}\right]$.
\end{remark}

\begin{proof}[\textbf{Proof of \Cref{prop:GvrTS}}]

Note that for every $r>0$, $\mathcal{U}:=\{B_r(x,\TS(X))\}_{x\in X}$ is a $G$-invariant open cover of $B_r(X,\TS(X))$. Also, $\mathrm{N}\,\mathcal{U}=\vert\vr(X;2r)\vert$ by \cite[Proposition 2.25]{lim2024vietoris}. Assume  that $0<r\leq \tfrac{1}{2}\mathrm{Sep}^G(X)$. We claim that the isotropy group $G_\sigma$ is  trivial for every $\sigma\in\mathrm{N}\,\mathcal{U}$. Assuming this, since in this case $U_\sigma = \bigcap_{x\in \sigma} B_{r}(x,\TS(X))$ is contractible by \cite[Lemma 2.28]{lim2024vietoris}, it follows immediately from \Cref{prop:Gnerve}  and \Cref{rmk:trivialcontractibiliy} that $B_r(X,\TS(X))$ and $\vert\vr(X;2r)\vert$ are $G$-homotopy equivalent.

We now prove the claim made above. Suppose that there exists an $n$-dimensional simplex $\sigma:=\{x_0,\dots,x_n\}\in\mathrm{N}\,\mathcal{U}$ such that $\alpha_X(g,\sigma)=\sigma$ for some nontrivial $g\in G$. Then, this implies that there exists $f\in B_r(X,\TS(X))$ and $x_i$ such that $f\in B_r(d_X(x_i,\cdot),\TS(X))\cap B_r(d_X(\alpha_X(g,x_i),\cdot),\TS(X))$. By the triangle inequality, we have  $d_X(x_i,\alpha_X(g,x_i))<2r$, which  contradicts the assumption on $r$.
\end{proof}

\begin{question}
Does the $\Z_3$-homotopy equivalence between $\vert\vr(\Sp^1;2r)\vert$ and $B_r(\Sp^1,\TS(\Sp^1))$ alluded to in \Cref{remark:z3-contract} still hold for  scale value $r=\frac{\pi}{3}+\varepsilon$ for some small enough $\varepsilon>0$? In more generality, can the range of $r$ stipulated by \Cref{prop:GvrTS} be improved? A careful inspection of its proof (below) shows that this might be possible through establishing that $U_\sigma$ is $G_\sigma$ contractible even when $G_\sigma$ is not the trivial group.
\end{question}

One implication of the $G$-homotopy equivalence whose  existence is guaranteed by \Cref{prop:GvrTS} is that there are $G$-maps from $|\vr(X;2r)|$ to $B_r(X,\TS(X))$ and vice-versa, when $r\in \left(0,\frac{\mathrm{Sep}^G(X)}{2}\right]$. This fact will be useful in the proof of \Cref{prop:strongercGpXYlbddalernatives}. Moreover, by careful analysis on the proof of \Cref{prop:Gnerve} (see \Cref{sec:Gnerve}), one can conclude that, \emph{for every $r>0$},  there is a one-sided $G$-map from $B_r(X,\TS(X))$ to $\vert\vr(X;2r)\vert$, as \Cref{lemma:onesidedTStoVR} below shows.

\begin{lemma}\label{lemma:onesidedTStoVR}
Let $G$ be a finite group and $X$ be a $G$-metric space. Then, for every $r>0$ there is a $G$-map from $B_r(X,\TS(X))$ to $\vert\vr(X;2r)\vert$.
\end{lemma}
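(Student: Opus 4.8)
The plan is to extract from the proof of \Cref{prop:Gnerve} the ``easy half'' of the equivariant nerve lemma: the construction of the nerve map itself, which --- unlike the homotopy equivalence --- needs no contractibility (or isotropy) hypothesis on the intersections $U_\sigma$, and is therefore available for \emph{all} $r>0$. Concretely, I would first recall the setup already used in the proof of \Cref{prop:GvrTS}. For any $r>0$ the family $\mathcal{U}:=\{B_r(x,\TS(X))\}_{x\in X}$ is an open cover of $B_r(X,\TS(X))$, and it is $G$-invariant in the sense of \Cref{def:Ginvcov}, with the action on the index set $X$ being $\alpha_X$: indeed, since the tight span is functorial, the isometry $\alpha_X(g,\cdot)$ of $X$ induces an isometry of $\TS(X)$ carrying the Kuratowski point $d_X(x,\cdot)$ to $d_X(\alpha_X(g,x),\cdot)$, hence an induced isometric $G$-action $\alpha$ on $\TS(X)$ that maps $B_r(x,\TS(X))$ onto $B_r(\alpha_X(g,x),\TS(X))$ and thus preserves $B_r(X,\TS(X))$. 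Moreover $\mathrm{N}\,\mathcal{U}=\vr(X;2r)$ as $G$-simplicial complexes, by \cite[Proposition 2.25]{lim2024vietoris}. Finally, $B_r(X,\TS(X))$ is a metric space, hence paracompact.

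Next I would invoke \Cref{lemma:Gptofuty} to obtain a $G$-partition of unity $\{\varphi_x:B_r(X,\TS(X))\to[0,1]\}_{x\in X}$ subordinate to $\mathcal{U}$: the family $\{\supp\varphi_x\}_{x\in X}$ is locally finite, $\supp\varphi_x\subseteq B_r(x,\TS(X))$, $\sum_{x\in X}\varphi_x\equiv 1$, and $\varphi_{\alpha_X(g,x)}\big(\alpha(g,f)\big)=\varphi_x(f)$ for all $g\in G$, $x\in X$, $f\in B_r(X,\TS(X))$. (For finite $G$ such an equivariant partition of unity is produced from an ordinary one by averaging over the group.) I would then define
$$\Phi:B_r(X,\TS(X))\longrightarrow |\mathrm{N}\,\mathcal{U}|=|\vr(X;2r)|,\qquad \Phi(f):=\sum_{x\in X}\varphi_x(f)\cdot x.$$
This is well defined: for fixed $f$ the set $S_f:=\{x\in X:\varphi_x(f)>0\}$ is finite by local finiteness and satisfies $f\in\bigcap_{x\in S_f}B_r(x,\TS(X))$, so $S_f$ is a simplex of $\mathrm{N}\,\mathcal{U}=\vr(X;2r)$; the coefficients are nonnegative and sum to $1$, so $\Phi(f)$ is a genuine point of $|\vr(X;2r)|$. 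Continuity of $\Phi$ is the standard consequence of local finiteness.

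It remains to check that $\Phi$ is $G$-equivariant. Using the equivariance of the partition of unity in the form $\varphi_x(\alpha(g,f))=\varphi_{\alpha_X(g^{-1},x)}(f)$ and reindexing by $x\mapsto\alpha_X(g,x)$, one gets
$$\Phi(\alpha(g,f))=\sum_{x\in X}\varphi_{\alpha_X(g^{-1},x)}(f)\cdot x=\sum_{y\in X}\varphi_y(f)\cdot\alpha_X(g,y)=\alpha_{\vert\vr(X;\infty)\vert}\big(g,\Phi(f)\big),$$
where the last equality is the description of the $G$-action on $|\vr(X;\infty)|$ from \Cref{ex:GVRfilteredspace}. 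Hence $\Phi$ is the desired $G$-map. The only genuinely delicate ingredients are the existence of the $G$-equivariant partition of unity (\Cref{lemma:Gptofuty}) and the compatibility of the $G$-invariant cover $\mathcal{U}$ with the $G$-simplicial structure on $\mathrm{N}\,\mathcal{U}=\vr(X;2r)$; everything else is routine bookkeeping. I would emphasize that, in contrast to \Cref{prop:GvrTS}, no contractibility of the $U_\sigma$ and no bound $r\le\tfrac12\,\mathrm{Sep}^G(X)$ is needed here, precisely because only the one-sided nerve map is being built.
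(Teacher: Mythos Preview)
Your proof is correct and follows essentially the same approach as the paper: both take the $G$-invariant open cover $\mathcal{U}=\{B_r(x,\TS(X))\}_{x\in X}$ of $B_r(X,\TS(X))$, invoke \Cref{lemma:Gptofuty} to obtain a $G$-partition of unity, and send $f$ to $\sum_{x\in X}\varphi_x(f)\,x\in|\mathrm{N}\,\mathcal{U}|=|\vr(X;2r)|$. Your write-up is in fact more detailed than the paper's (you spell out the induced $G$-action on $\TS(X)$, paracompactness, well-definedness, continuity, and the equivariance computation), but the argument is the same.
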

\begin{proof}
Consider the $G$-invariant open cover $\mathcal{U}:=\{B_r(x,\TS(X))\}_{x\in X}$ of $B_r(X,\TS(X))$ as in the proof of \Cref{prop:GvrTS}. Next, let $\{\psi_x\}_{x\in X}$ be a $G$-partition of unity subordinate to $\mathcal{U}$, whose existence is guaranteed by \Cref{lemma:Gptofuty}. Then, one can verify that the following map
$$f\mapsto (\psi_x(f))_{x\in X}$$
is a well defined $G$-map from $B_r(X,\TS(X))$ to $\vert\vr(X;2r)\vert$.
\end{proof}

\begin{lemma}\label{lemma:vrmtovr}
Let $G$ be a finite group, $X$ be a totally bounded $G$-metric space, and $p\in [1,\infty]$. Then, for any $\varepsilon,\delta>0$, there exists a $G$-map $\Theta:\vrm_\infty(X;\varepsilon)\toG\vert\vr(X;\varepsilon+\delta)\vert$.
\end{lemma}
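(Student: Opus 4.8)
The plan is to reduce the problem to a finite metric space, namely a $G$-invariant net of $X$. On such a net the canonical identity map between the Vietoris--Rips complex and its metric thickening is a homeomorphism (\Cref{lemma:homeo}), and this is exactly what lets us produce a continuous map in the ``wrong'' direction (from a metric thickening to a simplicial realization); the price we pay is an increase in the scale parameter, which we can keep below $\delta$ by choosing the net fine enough.

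Concretely, I would proceed as follows. Since $G$ is finite and $X$ is totally bounded, \Cref{lemma:Ginvdeltanetexst} furnishes a finite $G$-invariant $(\delta/2)$-net $U\subseteq X$. Applying \Cref{lemma:G2deltamapexst} with $p=\infty$ to this net gives a $(G,\delta)$-map $\Phi:(\Pfin(X),\diam_\infty^X)\toG(\Pfin(U),\diam_\infty^U)$. Since $\diam_\infty^U(\Phi(\mu))\leq\diam_\infty^X(\mu)+\delta$, the map $\Phi$ carries $\vrm_\infty(X;\varepsilon)$ into $\vrm_\infty(U;\varepsilon+\delta)$ (the strict inequality $\diam_\infty^X(\mu)<\varepsilon$ is preserved), and restriction/corestriction of a continuous $G$-equivariant map to $G$-invariant subspaces remains continuous and $G$-equivariant, so we obtain a $G$-map $\Phi:\vrm_\infty(X;\varepsilon)\toG\vrm_\infty(U;\varepsilon+\delta)$. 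Because $U$ is finite, \Cref{lemma:homeo} tells us that the canonical identity $\mathrm{id}:|\vr(U;\varepsilon+\delta)|\to\vrm_\infty(U;\varepsilon+\delta)$ is a homeomorphism; it is visibly $G$-equivariant with respect to the induced actions (see \Cref{ex:GVRfilteredspace} and \Cref{prop:PfinGaction}), hence so is its inverse. Finally, since $U\subseteq X$ is $G$-invariant, $\vr(U;\varepsilon+\delta)$ is a $G$-invariant subcomplex of $\vr(X;\varepsilon+\delta)$, so the inclusion of geometric realizations $\jmath:|\vr(U;\varepsilon+\delta)|\hooktoG|\vr(X;\varepsilon+\delta)|$ is a $G$-map. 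Composing, $\Theta:=\jmath\circ\mathrm{id}^{-1}\circ\Phi$ is the desired $G$-map $\vrm_\infty(X;\varepsilon)\toG|\vr(X;\varepsilon+\delta)|$.

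The only delicate point is precisely the one that forces the detour through a finite net: the naive ``reverse identity'' $\vrm_\infty(X;r)\to|\vr(X;r)|$ is generally discontinuous when $X$ is not discrete, so one cannot simply invert the map of \Cref{lemma:homeo} on $X$ itself. Passing to a finite net $U$ (which exists with $G$-invariance precisely because $X$ is totally bounded and $G$ is finite) restores the homeomorphism while costing only the controllable scale shift $\delta$. All other steps are routine, so I do not expect any further obstacle.
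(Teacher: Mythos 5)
Your proof is correct and matches the paper's argument step for step: both pass to a finite $G$-invariant $(\delta/2)$-net via \Cref{lemma:Ginvdeltanetexst}, build the $(G,\delta)$-map $\Phi$ from \Cref{lemma:G2deltamapexst}, invert the identity map on the finite net using \Cref{lemma:homeo}, and compose with the inclusion of the subcomplex. Your remark about why the detour through a finite net is necessary (the naive reverse identity on $X$ itself being discontinuous) is a helpful clarification that the paper leaves implicit.
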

\begin{proof}
By \Cref{lemma:Ginvdeltanetexst}, there exists a $G$-invariant $\frac{\delta}{2}$-net $U$ of $X$. Hence, by \Cref{lemma:G2deltamapexst}, there exists a $(G,\delta)$-map $\Phi:(\Pfin(X),\diam_\infty^X)\toG(\Pfin(U),\diam_\infty^U)$. Furthermore, since $U$ is finite, by \Cref{lemma:homeo}, the map $\mathrm{id}^{-1}:\vrm_\infty(U;\varepsilon+\delta)\to\vert\vr(U;\varepsilon+\delta)\vert$ s.t. $\sum_{i=0}^n u_i\delta_{x_i}\mapsto\sum_{i=0}^n u_ix_i$ is continuous and therefore is a $G$-map. Finally, with the canonical inclusion $\iota:\vert\vr(U;\varepsilon+\delta)\vert\hooktoG\vert\vr(X;\varepsilon+\delta)\vert$, one can build the following $G$-map
$$\Theta:=\iota\circ\mathrm{id}^{-1}\circ\Phi\vert_{\vrm_\infty(X;r)}:\vrm_\infty(X;\varepsilon)\toG\vert\vr(X;\varepsilon+\delta)\vert.$$
\end{proof}

\begin{proof}[\textbf{Proof of \Cref{prop:strongercGpXYlbddalernatives}}]

\begin{enumerate}
    \item 
    
    \begin{enumerate}
    \item The following diagram depicts two maps that we will construct in the proof of this item: 
    \small{$$\begin{tikzcd}
                          &  & \vert\vr(X;\eta)\vert \arrow[r, "\mathrm{id}"] & \vrm_\infty(X;\eta) \\
Y \arrow[rru] \arrow[rrd] &  &                  &   \\
                          &  & \vrm_\infty(X;\eta) \arrow[r, "\Theta"] & \vert\vr(X;\eta+\delta)\vert
\end{tikzcd}$$}
 Fix  arbitrary $\eta>c_\vr^G(X,Y;0)$. Then, there exists a $G$-map $Y\toG\vert\vr(X;\eta)\vert$. Furthermore, the canonical map $\mathrm{id}:\vert\vr(X;\eta)\vert\toG\vrm_\infty(X;\eta)$ given by \Cref{lemma:homeo}  is also a $G$-map. Hence, by composing these two maps, one can produce a $G$-map $Y\toG\vrm_\infty(X;\eta)$. This implies that $\eta\geq c_{\infty}^G(X,Y;0)$. Since the choice of $\eta$ is arbitrary, we have that $c_{\infty}^G(X,Y;0)\leq c_\vr^G(X,Y;0)$. Now, conversely, fix an arbitrary $\eta>c_{\infty}^G(X,Y;0)$. Then, there exists a $G$-map $Y\toG\vrm_\infty(X;\eta)$. Next, fix an arbitrary $\delta>0$. Then, by \Cref{lemma:vrmtovr}, there is a $G$-map $\Theta:\vrm_\infty(X;\eta)\toG\vert\vr(X;\eta+\delta)\vert$. Hence, by composing these two maps one can produce a $G$-map $Y\toG\vert\vr(X;\eta+\delta)\vert$. This implies that $\eta+\delta\geq c_\vr^G(X,Y;0)$. Since the choice of $\eta$ and $\delta$ are arbitrary, we have that $c_{\infty}^G(X,Y;0)\geq c_\vr^G(X,Y;0)$. Therefore,
    $$c_{\infty}^G(X,Y;0)=c_\vr^G(X,Y;0).$$

    \item Fix arbitrary $\varepsilon>0$ and $\eta>c_{\infty}^G(X,Y;\varepsilon)$ and $\delta>0$. We will use the following diagrams in the proof of this item: 
    $$\small{\begin{tikzcd}
\vrm_\infty(Y;\varepsilon) \arrow[r]      & \vrm_\infty(X;\varepsilon+\eta) \arrow[d, "\Theta"] & \vert\vr(Y;\varepsilon+\delta)\vert \arrow[r]      & \vert\vr(X;\varepsilon+\delta+\eta)\vert \arrow[d, "\mathrm{id}"] \\
\vert\vr(Y;\varepsilon)\vert \arrow[u, "\mathrm{id}"] & \vert\vr(X;\varepsilon+\eta+\delta)\vert   & \vrm_\infty(Y;\varepsilon) \arrow[u, "\Theta"] & \vrm_\infty(X;\varepsilon+\delta+\eta)               
\end{tikzcd}}$$

      There exists a $G$-map $\vrm_\infty(Y;\varepsilon)\toG\vrm_\infty(X;\varepsilon+\eta)$. Also, again by \Cref{lemma:homeo} and \Cref{lemma:vrmtovr}, there are $G$-maps $\mathrm{id}:\vert\vr(Y;\varepsilon)\vert\toG\vrm_\infty(Y;\varepsilon)$ and $\Theta:\vrm_\infty(X;\varepsilon+\eta)\toG\vert\vr(X;\varepsilon+\eta+\delta)\vert$. Hence, by composing these three maps one can produce a $G$-map $\vert\vr(Y;\varepsilon)\vert\toG\vert\vr(X;\varepsilon+\eta+\delta)\vert$. This implies that $\eta+\delta\geq c_\vr^G(X,Y;\varepsilon)$. Since the choice of $\eta$ and $\delta$ are arbitrary, we have that $c_\vr^G(X,Y;\varepsilon)\leq c_{\infty}^G(X,Y;\varepsilon)$. Now, conversely, fix arbitrary $\delta>0$ and $\eta>c_\vr^G(X,Y;\varepsilon+\delta)$. Then, there exists a $G$-map $\vert\vr(Y;\varepsilon+\delta)\vert\toG\vert\vr(X;\varepsilon+\delta+\eta)\vert$. Also,  again by \Cref{lemma:homeo} and \Cref{lemma:vrmtovr}, there are $G$-maps $\mathrm{id}:\vert\vr(X;\varepsilon+\delta+\eta)\vert\toG\vrm_\infty(X;\varepsilon+\delta+\eta)$ and $\Theta:\vrm_\infty(Y;\varepsilon)\toG\vert\vr(Y;\varepsilon+\delta)\vert$. Hence, by composing these three maps one can produce a $G$-map $\vrm_\infty(Y;\varepsilon)\toG\vrm_\infty(X;\varepsilon+\delta+\eta)$. This implies that $\eta+\delta\geq c_{\infty}^G(X,Y;\varepsilon)$. Since the choice of $\eta$ is arbitrary, we have that $c_{\infty}^G(X,Y;\varepsilon)\leq c_\vr^G(X,Y;\varepsilon+\delta)+\delta$. Therefore,
    $$c_{\vr}^G(X,Y;\varepsilon)\leq c_{\infty}^G(X,Y;\varepsilon)\leq\inf_{\delta>0}\big\{c_{\vr}^G(X,Y;\varepsilon+\delta)+\delta\big\}.$$

    \end{enumerate}

    \item
    
    \begin{enumerate}
    \item We will use the following diagram in the proof of this item: 
    $$\begin{tikzcd}
                          &  & \vert\vr(X;2\eta)\vert \arrow[dd, shift left=2] \\
Y \arrow[rru] \arrow[rrd] &  &                            \\
                          &  & B_\eta(X,\TS(X)) \arrow[uu, shift left=2]
\end{tikzcd}$$ 
     Fix an arbitrary $\eta> c_\TS^G(X,Y;0)$. Then, there exists a $G$-map $Y\toG B_\eta(X,\TS(X))$. Also, by \Cref{lemma:onesidedTStoVR}, we know that there is a $G$-map from $B_\eta(X,\TS(X))$ to $\vert\vr(X;2\eta)\vert$. Hence, by composing these two maps one can produce a $G$-map $Y\toG \vert\vr(X;2\eta)\vert$. This implies that $2\eta\geq c_\vr^G(X,Y;0)$. Since the choice of $\eta$ is arbitrary, we have that $c_\vr^G(X,Y;0)\leq 2c_\TS^G(X,Y;0)$. Next, assume that $\mathrm{Sep}^G(X)>0$. Consider the case when $c_\vr^G(X,Y;0)<\mathrm{Sep}^G(X)$. Fix an arbitrary $\eta\in\left(\frac{c_\vr^G(X,Y;0)}{2},\frac{\mathrm{Sep}^G(X)}{2}\right]$. Then, there exists a $G$-map $Y\toG\vert\vr(X;2\eta)\vert$. Also, by \Cref{prop:GvrTS}, $\vert\vr(X;2\eta)\vert$ and $B_\eta(X,\TS(X))$ are $G$-homotopy equivalent. In particular, there is a $G$-map from $\vert\vr(X;2\eta)\vert$ to $B_\eta(X,\TS(X))$. Hence, by composing these two maps one can produce a $G$-map $Y\toG B_\eta(X,\TS(X))$. This implies that $\eta\geq c_\TS^G(X,Y;0)$. Since the choice of $\eta$ is arbitrary, we have that $c_\vr^G(X,Y;0)\geq 2c_\TS^G(X,Y;0)$. Therefore,
    $$c_\vr^G(X,Y;0)= 2c_\TS^G(X,Y;0).$$
    Since the proof of the case when $2c_\TS^G(X,Y;0)<\mathrm{Sep}^G(X)$ is similar, we omit it.

    \item The following diagram will be used in the proof of this item:
    $$\begin{tikzcd}
\vert\vr(Y;2\varepsilon)\vert \arrow[r] \arrow[d, shift left=2] & \vert\vr(X;2\varepsilon+2\eta)\vert \arrow[d, shift left=2] \\
B_\varepsilon(Y,\TS(X)) \arrow[r] \arrow[u, shift left=2] & B_{\varepsilon+\eta}(X,\TS(X)) \arrow[u, shift left=2]
\end{tikzcd}$$
    
    Fix an arbitrary $\varepsilon\in\left(0,\frac{\mathrm{Sep}^G(Y)}{2}\right]$. Assume that $c_\vr^G(X,Y;2\varepsilon)<\mathrm{Sep}^G(X)-2\varepsilon$. Fix   $\eta\in\left(\frac{c_\vr^G(X,Y;2\varepsilon)}{2},\frac{\mathrm{Sep}^G(X)}{2}-\varepsilon\right]$. Then, there exists a $G$-map $\vert\vr(Y;2\varepsilon)\vert\toG\vert\vr(X;2\varepsilon+2\eta)\vert$.  Also, again by \Cref{prop:GvrTS}, there are $G$-maps from $B_{\varepsilon}(Y,\TS(Y))$ to $\vert\vr(Y;2\varepsilon)\vert$ and from $\vert\vr(X;2\varepsilon+2\eta)\vert$ to $B_{\varepsilon+\eta}(X,\TS(X))$. Hence, by composing these three maps one can produce a $G$-map $B_{\varepsilon}(Y,\TS(Y))\toG B_{\varepsilon+\eta}(X,\TS(X))$. This implies that $\eta\geq c_\TS^G\left(X,Y;\varepsilon\right)$. Since the choice of $\eta$ is arbitrary, we have that $c_\vr^G(X,Y;2\varepsilon)\geq 2c_\TS^G\left(X,Y;\varepsilon\right)$. 
    
    Conversely, fix   $\eta\in\left(c_\TS^G\left(X,Y;\varepsilon\right),\frac{\mathrm{Sep}^G(X)}{2}-\varepsilon\right]$. Then, there exists a $G$-map $$B_{\varepsilon}(Y,\TS(Y))\toG B_{\varepsilon+\eta}(X,\TS(X)).$$ Also, again by \Cref{prop:GvrTS}, there are $G$-maps from $\vert\vr(Y;2\varepsilon)\vert$ to $B_\varepsilon(Y,\TS(Y))$ and from $B_{\varepsilon+\eta}(X,\TS(X))$ to $\vert\vr(X;2\varepsilon+2\eta)\vert$ . Hence, by composing these three maps one can produce a $G$-map $\vert\vr(Y;2\varepsilon)\vert\toG \vert\vr(X;2\varepsilon+2\eta)\vert$. This implies that $2\eta\geq c_\vr^G(X,Y;2\varepsilon)$. Since the choice of $\eta$ is arbitrary, we have that $c_\vr^G(X,Y;2\varepsilon)\leq 2c_\TS^G\left(X,Y;\varepsilon\right)$. Therefore,
    $$c_\vr^G(X,Y;2\varepsilon)= 2c_\TS^G\left(X,Y;\varepsilon\right).$$
    Since the proof of the case  $2c_\TS^G\left(X,Y;\varepsilon\right)<\mathrm{Sep}^G(X)-2\varepsilon$ is similar, we omit it.
    \end{enumerate}
\end{enumerate}
\end{proof}

\subsection{Bounding  modulus of discontinuity via  $G$-persistent indices.}\label{sec:qbu}

Let $X$ be a metric space, $Y$ be a topological space, and $\psi:Y\longrightarrow X$ be a function. Importantly, $\psi$ is not necessarily continuous. Then, the \emph{modulus of discontinuity} $\delta(\psi)$ of a function $\psi$ is the infimum of all numbers $\delta>0$ such that every point in $Y$ has a neighborhood whose image has a diameter of at most $\delta$. This quantity $\delta(\psi)$ measures how much discontinuous $\psi$ is. In particular, $\delta(\psi)=0$ if and only if $\psi$ is continuous.

In \cite{dubins1981equidiscontinuity}, Dubins and Schwarz proved the following ``quantitative" version of the Borsuk-Ulam theorem:

\begin{quote}
\emph{If $0 < m < n <\infty$, then  any $\Z_2$-function $\psi$ from $\Sp^n$ to $\Sp^m$ (that is necessarily discontinuous by the classical Borsuk-Ulam theorem) must have  $\delta(\psi)\geq \zeta_m$.}
\end{quote}

This result was later employed in \cite{lim2023gromov} to prove that $2\,\dgh(\Sp^m,\Sp^n) \geq \zeta_m$ for all $n > m$.  Motivated by these ideas, a recent polymath project (cf. \cite[Theorem 1.3]{adams2022gromov}) established the following strengthened version of the quantitative Borsuk–Ulam theorem:

\begin{quote}
\emph{If $0 < m < n <\infty$, then any $\Z_2$-function $\psi$ from $\Sp^{n}$ to $\Sp^m$ must have  $$\delta(\psi)\geq c_{m,n}:=\inf\{\eta>0:\text{there is a }\Z_2\text{-map }\Sp^n\stackrel{\Z_2}{\longrightarrow} \vert\vr(\Sp^m;\eta)\vert\}.$$}
\end{quote}

Note that $c_{m,n}$ is equal to $c_{\vr}^{\Z^2}(\Sp^m,\Sp^n;0)$; see \Cref{def:clowerbdd}. Recalling that, by item 1(a) of Proposition \ref{prop:strongercGpXYlbddalernatives}, for an arbitrary finite group $G$ and  arbitrary compact $G$-metric spaces $X$ and $Y$, $c_{\infty}^G(X,Y;0)=c_{\vr}^G(X,Y;0)$, we  obtain  the following generalization to the $G$-equivariant version of the  quantitative Borsuk-Ulam  theorem recalled above.

\begin{proposition}[$G$-equivariant quantitative Borsuk-Ulam theorem]\label{prop:GquantBU}
Assume that a finite group $G$, a $G$-metric space $X$, a compact $G$-metric space $Y$, and a $G$-function $\psi:Y\toG X$ are given. Then, we have that $\delta(\psi)\geq c_\infty^G(X,Y;0)$.
\end{proposition}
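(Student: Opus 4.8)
The plan is to show that for every $\eta>\delta(\psi)$ there is a $G$-map $Y\toG\vrm_\infty(X;\eta)$; by the definition of $c_\infty^G(X,Y;0)=\mathrm{ind}_G\big(Y,\vrm_\infty(X;\bullet)\big)$ in \Cref{def:clowerbdd}, this gives $c_\infty^G(X,Y;0)\le\eta$, and letting $\eta\downarrow\delta(\psi)$ finishes the proof. So fix $\eta>\delta(\psi)$ and pick $\delta$ with $\delta(\psi)<\delta<\eta$. By the definition of the modulus of discontinuity there is an open cover $\{W_\alpha\}$ of $Y$ with $\diam(\psi(W_\alpha))\le\delta$ for all $\alpha$; after replacing it by $\{\alpha_Y(g,W_\alpha)\}_{g\in G,\,\alpha}$ --- and using that $\psi$ is a $G$-function and that $G$ acts on $X$ by isometries --- we may assume this cover is $G$-invariant. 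I would then pass to a finite $G$-invariant open cover $\mathcal{V}=\{V_\beta\}_{\beta\in B}$ (so $B$ carries a $G$-action with $V_{\alpha_B(g,\beta)}=\alpha_Y(g,V_\beta)$) that \emph{star-refines} $\{W_\alpha\}$: take a finite subcover of $\{B_\rho(y)\}_{y\in Y}$ with $\rho$ small compared to a Lebesgue number of $\{W_\alpha\}$ and close it under the $G$-action. Writing $\mathrm{st}_\mathcal{V}(V_\beta):=\bigcup\{V_{\beta'}:V_{\beta'}\cap V_\beta\neq\emptyset\}$, by construction $\mathrm{st}_\mathcal{V}(V_\beta)$ is contained in some $W_\alpha$, hence $\diam\big(\psi(\mathrm{st}_\mathcal{V}(V_\beta))\big)\le\delta<\eta$ for every $\beta$.

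Next, invoke \Cref{lemma:Gptofuty} for a $G$-partition of unity $\{\varphi_\beta\}_{\beta\in B}$ subordinate to $\mathcal{V}$, so that $\supp\varphi_\beta\subseteq V_\beta$, $\sum_\beta\varphi_\beta\equiv1$, and $\varphi_{\alpha_B(g,\beta)}(\alpha_Y(g,y))=\varphi_\beta(y)$. The crucial --- and most delicate --- step is to attach to each $\beta$, $G$-equivariantly, a finitely supported probability measure $\mu_\beta$ on $X$ with $\supp\mu_\beta\subseteq\psi(V_\beta)$. Since one cannot in general find a single point of $\psi(V_\beta)$ fixed by the stabilizer $G_\beta$, I would symmetrize: for a choice of orbit representatives $\beta_i$ of $B$, pick any $z_{\beta_i}\in V_{\beta_i}$ and set $\mu_{\beta_i}:=\tfrac{1}{|G_{\beta_i}|}\sum_{h\in G_{\beta_i}}\delta_{\psi(\alpha_Y(h,z_{\beta_i}))}$; because $V_{\beta_i}$ is $G_{\beta_i}$-invariant and $\psi$ is equivariant, $\mu_{\beta_i}$ is $G_{\beta_i}$-invariant and supported in $\psi(V_{\beta_i})$. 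Then for $\beta=\alpha_B(g,\beta_i)$ define $\mu_\beta:=\alpha_X(g,\cdot)_\sharp\mu_{\beta_i}$, which is well defined precisely because $\mu_{\beta_i}$ is $G_{\beta_i}$-invariant and which satisfies $\mu_{\alpha_B(g,\beta)}=\alpha_X(g,\cdot)_\sharp\mu_\beta$ together with $\supp\mu_\beta\subseteq\psi(V_\beta)$. Finally put $\Psi(y):=\sum_{\beta\in B}\varphi_\beta(y)\,\mu_\beta$.

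It then remains to check three routine facts. (i) $\Psi$ is continuous into $\vrm_\infty(X;\eta)$: it factors through the barycentric coordinates $y\mapsto(\varphi_\beta(y))_\beta$ and the affine map $(t_\beta)\mapsto\sum_\beta t_\beta\mu_\beta$, whose image sits inside the fixed finite set $\bigcup_\beta\supp\mu_\beta$, so it is continuous for the weak topology. (ii) $\Psi$ is $G$-equivariant: combining $\varphi_{\alpha_B(g,\beta)}(\alpha_Y(g,y))=\varphi_\beta(y)$ with $\mu_{\alpha_B(g,\beta)}=\alpha_X(g,\cdot)_\sharp\mu_\beta$ yields $\Psi(\alpha_Y(g,y))=\alpha_X(g,\cdot)_\sharp\Psi(y)=\alpha_{\Pfin(X)}(g,\Psi(y))$. (iii) $\Psi(y)\in\vrm_\infty(X;\eta)$ for all $y$: if $y$ lies in the supports of $\varphi_{\beta^{(1)}},\dots,\varphi_{\beta^{(s)}}$, then each $V_{\beta^{(j)}}$ meets $V_{\beta^{(1)}}$, so $\supp\Psi(y)\subseteq\bigcup_j\psi(V_{\beta^{(j)}})\subseteq\psi(\mathrm{st}_\mathcal{V}(V_{\beta^{(1)}}))$, which has $d_X$-diameter $\le\delta<\eta$. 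Thus $\Psi:Y\toG\vrm_\infty(X;\eta)$ is the desired $G$-map. The only real content lies in the $G$-invariant star-refinement and, above all, in the stabilizer-averaging that produces the $\mu_\beta$; the rest mirrors the non-equivariant template.
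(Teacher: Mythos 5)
Your proof is correct, and it takes a genuinely different route from the one in the paper. The paper's proof proceeds by \emph{discretizing} $Y$: it invokes \Cref{lemma:Ginvdeltanetexst} to get a finite $G$-invariant $\alpha_\eta/2$-net $B\subseteq Y$, then \Cref{lemma:G2deltamapexst} (partition of unity on the cover by balls around points of $B$) to get a $G$-map $Y\toG\vrm_\infty(B;\alpha_\eta)$, and finally \Cref{lemma:vrmmodcti}, which passes through the finite simplicial complexes $\vert\vr(B;\alpha_\eta)\vert$ and $\vert\vr(\psi(B);\delta(\psi)+\eta)\vert$ via the simplicial map induced by $\psi$ on the finite set $B$, using \Cref{lemma:homeo} to switch between the simplicial and metric-thickening topologies. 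Your version instead builds the $G$-map $Y\toG\vrm_\infty(X;\eta)$ in one shot and never discretizes the domain: you take a $G$-invariant open cover adapted to the modulus of discontinuity, pass to a finite $G$-invariant star-refinement (using compactness of $Y$ and the Lebesgue number), apply \Cref{lemma:Gptofuty} directly on $Y$, and --- this is the step the paper sidesteps entirely by working with $B$ --- resolve the fixed-point obstruction by averaging $\psi$-images over the stabilizer $G_\beta$ to produce $G_\beta$-invariant measures $\mu_\beta$ supported in $\psi(V_\beta)$. The stabilizer-averaging is exactly the right device here (the center of a $G_\beta$-invariant ball need not be fixed, so a single point will not do), and your three verifications (continuity through the affine map into a fixed finite set, equivariance via re-indexing, and the diameter bound from the star-refinement) are all sound. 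The trade-off: the paper's argument recycles the lemmas already built for \Cref{Gdhtstability} and keeps the equivariant subtleties confined to the nets $B$ and $\psi(B)$, whereas yours is more self-contained and closer in spirit to the nerve-lemma/partition-of-unity machinery of \Cref{sec:Gnerve}, at the cost of having to invent the stabilizer-averaged $\mu_\beta$.
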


\begin{remark}
A Vietoris–Rips-based version of \Cref{lemma:vrmmodcti} was obtained by Matt Superdock as early as August 2022, as part of the polymath project \cite{adams2022gromov}. Together with another technical ingredient established by Superdock, this lemma can be invoked to obtain a Vietoris–Rips-based version of \Cref{prop:GquantBU}, i.e., one that uses the persistent index $c_\vr^G(X,Y;0)$ in place of $c_\infty^G(X,Y;0)$. By item 1 (a) of \Cref{prop:strongercGpXYlbddalernatives}, this Vietoris–Rips-based formulation is equivalent to \Cref{prop:GquantBU}. See the complementary paper \cite{adams2025quantifying} for a detailed study of geometric applications of quantitative versions of the Borsuk–Ulam theorem, including a quantitative version of the topological Tverberg theorem.
\end{remark}

The following lemma is a generalization of \cite[Lemma 7.4]{adams2022gromov} to the $G$-equivariant setting.

\begin{lemma}\label{lemma:vrmmodcti}
Assume that a finite group $G$, a $G$-metric space $X$, a compact $G$-metric space $Y$, and a $G$-function $\psi:Y\toG X$ are given. Then, for any $\eta>0$ there exists $\alpha_\eta>0$ such that there is a $G$-map
$$\widetilde{\psi}_B:\vrm_\infty(B;\alpha_\eta)\toG\vrm_\infty(X;\delta(\psi)+\eta)$$
for any finite $G$-invariant subset $B \subseteq Y$.
\end{lemma}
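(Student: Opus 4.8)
The plan is to follow the non-equivariant argument of \cite[Lemma 7.4]{adams2022gromov}, adding only the bookkeeping needed to carry the $G$-action along. First I would fix $\eta>0$ and choose any $\delta'\in(\delta(\psi),\delta(\psi)+\eta)$. Since $\delta'>\delta(\psi)$, the definition of the modulus of discontinuity gives, for every $y\in Y$, an open neighborhood $U_y$ with $\diam(\psi(U_y))\le\delta'$, so $\mathcal U:=\{U_y\}_{y\in Y}$ is an open cover of the compact metric space $Y$. The crucial step is to invoke the Lebesgue number lemma: let $\alpha_\eta>0$ be a Lebesgue number for $\mathcal U$, so that any subset $S\subseteq Y$ with $\diam(S)<\alpha_\eta$ is contained in some $U_y$ and hence satisfies $\diam(\psi(S))\le\delta'<\delta(\psi)+\eta$. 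Note $\alpha_\eta$ depends only on $Y$, $\psi$, and $\eta$, which is exactly what lets a single $\alpha_\eta$ serve all $B$ simultaneously.

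Next, for any finite $G$-invariant $B\subseteq Y$, I would define $\widetilde{\psi}_B$ to be the pushforward along $\psi|_B$, i.e.
$$\widetilde{\psi}_B:\vrm_\infty(B;\alpha_\eta)\longrightarrow\Pfin(X),\qquad \textstyle\sum_i u_i\delta_{b_i}\longmapsto\sum_i u_i\delta_{\psi(b_i)}.$$
If $\mu\in\vrm_\infty(B;\alpha_\eta)$, then $\diam(\supp[\mu])<\alpha_\eta$, so by the first paragraph $\diam_\infty^X\big(\widetilde{\psi}_B(\mu)\big)=\diam\big(\psi(\supp[\mu])\big)<\delta(\psi)+\eta$, and therefore $\widetilde{\psi}_B$ indeed lands in $\vrm_\infty(X;\delta(\psi)+\eta)$. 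It then remains to check that $\widetilde{\psi}_B$ is a $G$-map. Continuity is immediate because $B$ is finite: $\vrm_\infty(B;\alpha_\eta)$ is then a subspace of the finite-dimensional simplex $\Pfin(B)$ (whose weak topology coincides with the Euclidean one, since $B$ is discrete) and $\widetilde{\psi}_B$ is the restriction of an affine map into $\Pfin(\psi(B))\subseteq\Pfin(X)$ — the same observation used in the proof of \Cref{lemma:G2deltamapexst} and in \Cref{lemma:homeo}. Equivariance follows from $\widetilde{\psi}_B\big(\alpha_{\Pfin(B)}(g,\mu)\big)=\big(\psi\circ\alpha_B(g,\cdot)\big)_\sharp\mu=\big(\alpha_X(g,\cdot)\circ\psi\big)_\sharp\mu=\alpha_{\Pfin(X)}\big(g,\widetilde{\psi}_B(\mu)\big)$, where the middle equality uses that $B$ is $G$-invariant and that $\psi$ is a $G$-function.

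I do not anticipate a genuine obstacle here. The proof reduces to the Lebesgue number lemma together with the elementary fact that pushforward along a (possibly discontinuous) map out of a finite set is continuous, and equivariant when that map is. The one point worth stressing — and the place where this argument is lighter than several others in the paper — is that we do \emph{not} need a $G$-invariant cover or a $G$-partition of unity: the cover $\mathcal U$ may be completely arbitrary, and equivariance enters only through $\widetilde\psi_B$ itself. (If one wanted $\alpha_\eta$ to be more explicit, one could alternatively route the argument through a finite $G$-invariant $\tfrac{\alpha_\eta}{2}$-net of $Y$ as in \Cref{lemma:Ginvdeltanetexst}, but this is not needed for the statement as given.)
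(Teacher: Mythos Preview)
Your proof is correct and follows essentially the same strategy as the paper: the paper also chooses $\alpha_\eta$ via a compactness/finite-subcover argument (in effect reproving the Lebesgue number lemma inline) and then defines $\widetilde\psi_B$ as the composition $\vrm_\infty(B;\alpha_\eta)\xrightarrow{\mathrm{id}^{-1}}|\vr(B;\alpha_\eta)|\to|\vr(\psi(B);\delta(\psi)+\eta)|\xrightarrow{\mathrm{id}}\vrm_\infty(\psi(B);\delta(\psi)+\eta)\hookrightarrow\vrm_\infty(X;\delta(\psi)+\eta)$, which unwinds to exactly your pushforward map $\sum u_i\delta_{b_i}\mapsto\sum u_i\delta_{\psi(b_i)}$. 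Your direct description of $\widetilde\psi_B$ and your explicit verification of $G$-equivariance are, if anything, cleaner than the paper's factorization through Vietoris--Rips complexes.
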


Now we are ready to prove \Cref{prop:GquantBU}.

\begin{proof}[Proof of \Cref{prop:GquantBU}]
Fix an arbitrary $\eta>0$ and choose $\alpha_\eta$ as in \Cref{lemma:vrmmodcti}. By \Cref{lemma:Ginvdeltanetexst}, one can choose a finite $G$-invariant $\frac{\alpha_\eta}{2}$-net $B$ of $Y$. Then, by \Cref{lemma:G2deltamapexst}, we have a $G$-map $\iota:Y\toG\vrm_\infty(B;\alpha_\eta)$. Hence, the composition $\widetilde{\psi}_B\circ\iota$ is a $G$-map from $Y$ to $\vrm_\infty(X;\delta(\psi)+\eta)$. Therefore, we have that $c_\infty^G(X,Y;0)\leq\delta(\psi)+\eta$. Since the choice of $\eta$ is arbitrary, one can conclude that $c_\infty^G(X,Y;0)\leq\delta(\psi)$.
\end{proof}

\begin{proof}[Proof of \Cref{lemma:vrmmodcti}]
Fix an arbitrary $\eta>0$. Then, following the proof of \cite[Lemma 7.4]{adams2022gromov}, we know that there exists $\alpha_\eta>0$ such that $d_X(\psi(y),\psi(y'))<\delta(\psi)+\eta$ for any $y,y'\in Y$ with $d_Y(y,y')<\alpha_\eta$. We write the argument for the completeness. By the definition of the modulus of discontinuity, for each $y\in Y$ we have $r_y>0$ such that the open ball $B_{r_y}(y)$ satisfies $\diam\big(\psi(B_{r_y}(y))\big)<\delta(\psi)+\eta$. Then, $\{B_{r_y/2}(y)\}_{y\in Y}$ is an open covering of $Y$. Since $Y$ is compact, one can choose finitely many points $y_1,\dots,y_k$ such that $\{B_{r_{y_i}/2}(y_i)\}_{i=1}^k$ covers whole $Y$. Finally, we choose $\alpha_\eta:=\min\left\{\frac{r_{y_1}}{2},\dots,\frac{r_{y_k}}{2}\right\}$. Now, choose arbitrary two points $y,y'\in Y$ with $d_Y(y,y')<\alpha_\eta$. Then, there exists $y_i\in Y$ such that $y\in B_{r_{y_i}/2}(y_i)$. Hence, by the triangle inequality we have that $d_Y(y_i,y')\leq d_Y(y_i,y)+d_Y(y,y')\leq\frac{r_{y_i}}{2}+\alpha_\eta\leq r_{y_i}$. This implies that $y,y'\in B_{r_{y_i}}(y_i)$
so that $d_X(\psi(y),\psi(y'))<\delta(\psi)+\eta$ as we wanted.

Finally, for any finite $G$-invariant subset  $B\subseteq Y$, one can define
$$\widetilde{\psi}_B:\vrm_\infty(B;\alpha_\eta)\toG\vrm_\infty(X;\delta(\psi)+\eta)$$
as the composition of the following maps:
{\small$$\vrm_\infty(B;\alpha_\eta)\stackrel{\mathrm{id}^{-1}}{\longrightarrow}\vert\vr(B;\alpha_\eta)\vert\longrightarrow\vert\vr(\psi(B);\delta(\psi)+\eta)\vert\stackrel{\mathrm{id}}{\longrightarrow}\vrm_\infty(\psi(B);\delta(\psi)+\eta)\longhookrightarrow\vrm_\infty(X;\delta(\psi)+\eta)$$}
where the first and the third maps are the canonical identity maps in \Cref{lemma:homeo}, the fourth map is the canonical inclusion, and the second map is the canonical map induced by $\psi$. Note that $\widetilde{\psi}_B$ is well-defined and continuous because of the above choice of $\alpha_\eta$ and the finiteness of $B$.
\end{proof}

%%%%%%%%%%%%%%%%%%%%%%%%%%%%%%%%%%%%%%%%%%%%%%

\section{Applications to determining $\dgh^{\Z_2}$ 
 between spheres.}\label{sec:GdGHresults-sph}

In this section we apply the material developed in previous sections to estimate and sometimes fully determining the value of the $\Z_2$-Gromov-Hausdorff distance between spheres.

\subsection{Results for $\Z_2$-persistence indices of spheres.}\label{sec:Spnresults}

Recall that $\Sp^n,\SpE^n,\Sp^n_\infty$, and $\square^n_\infty$ are $\Z_2$-metric spaces with  canonical $\Z_2$-actions; see items (2) and (3) of \Cref{ex:spheres-Z2}. Then, one can obtain the lower bounds below for the following $\Z_2$-persistence indices  (see \Cref{def:clowerbdd}):

\begin{proposition}\label{prop:cforspheres}
For all $0<m<n<\infty$, we have the following
\begin{enumerate}
    \item $c_{\infty}^{\Z_2}(\Sp^m,\Sp^n;0)\geq\zeta_m$. 

    \item $c_{\infty}^{\Z_2}(\SpE^m,\SpE^n;0)\geq\sqrt{\frac{2m+4}{m+1}}$.
\end{enumerate}
\end{proposition}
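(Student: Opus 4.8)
The plan is to deduce both lower bounds from the classical Borsuk--Ulam theorem, using the optimal $\Z_2$-equivariant Hausmann theorem to identify the relevant Vietoris--Rips complexes with spheres up to $\Z_2$-homotopy. As a first step I would invoke item 1(a) of \Cref{prop:strongercGpXYlbddalernatives} to replace $c_\infty^{\Z_2}$ by $c_\vr^{\Z_2}$ in both statements; this is legitimate because $\Sp^m,\Sp^n,\SpE^m,\SpE^n$ are all compact $\Z_2$-metric spaces, and it lets me work with honest simplicial complexes $\vert\vr(\Sp^m;\eta)\vert$ rather than with metric thickenings.

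For the geodesic case, fix any $\eta<\zeta_m$. By the optimal $\Z_2$-equivariant Hausmann theorem for spheres (\Cref{thm:Z2hausmannsphere}), $\vert\vr(\Sp^m;\eta)\vert$ is $\Z_2$-homotopy equivalent to $\Sp^m$ equipped with its antipodal involution. If a $\Z_2$-map $\Sp^n\toZtwo\vert\vr(\Sp^m;\eta)\vert$ existed, composing it with a $\Z_2$-homotopy equivalence would yield a $\Z_2$-map $\Sp^n\toZtwo\Sp^m$; since $n>m$, this contradicts Borsuk--Ulam. Hence every $\eta$ admitting such a map satisfies $\eta\geq\zeta_m$, and by the definition of $c_\vr^{\Z_2}$ in \Cref{def:clowerbdd} this gives $c_\vr^{\Z_2}(\Sp^m,\Sp^n;0)\geq\zeta_m$.

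For the Euclidean case I would exploit the fact that passing from the geodesic to the chordal metric on the sphere merely reparametrizes the scale: since $\theta\mapsto 2\sin(\theta/2)$ is a strictly increasing bijection of $[0,\pi]$ onto $[0,2]$, a finite subset has chordal diameter $<\eta$ exactly when it has geodesic diameter $<2\arcsin(\eta/2)$, so $\vr(\SpE^m;\eta)$ and $\vr(\Sp^m;2\arcsin(\eta/2))$ are literally the same $\Z_2$-simplicial complex (the antipodal actions on vertices agree). A half-angle computation then gives $2\sin(\zeta_m/2)=\sqrt{(2m+4)/(m+1)}$, since $2\sin^2(\zeta_m/2)=1-\cos\zeta_m=1+\tfrac{1}{m+1}=\tfrac{m+2}{m+1}$. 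Therefore, for $\eta<\sqrt{(2m+4)/(m+1)}$ one has $2\arcsin(\eta/2)<\zeta_m$, so \Cref{thm:Z2hausmannsphere} identifies $\vert\vr(\SpE^m;\eta)\vert=\vert\vr(\Sp^m;2\arcsin(\eta/2))\vert$ with a space that is $\Z_2$-homotopy equivalent to $\Sp^m$, and the same Borsuk--Ulam obstruction---noting that $\SpE^n$ has the same underlying $\Z_2$-space as $\Sp^n$---yields $c_\vr^{\Z_2}(\SpE^m,\SpE^n;0)\geq\sqrt{(2m+4)/(m+1)}$.

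I expect the only delicate point to be bookkeeping near the endpoint of the Hausmann range and making sure the scale conversion respects the strict-inequality (``open'') convention in the Vietoris--Rips complex; since we only need the inequality, it suffices to run the argument for $\eta$ strictly below the threshold and then pass to the infimum, which sidesteps any borderline behaviour exactly at $\eta=\zeta_m$. All the substantive input---the optimal $\Z_2$-Hausmann theorem for spheres---is already available as \Cref{thm:Z2hausmannsphere}, so the remaining work is essentially the reduction via \Cref{prop:strongercGpXYlbddalernatives} and an elementary trigonometric identity.
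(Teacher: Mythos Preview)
Your proposal is correct and follows essentially the same route as the paper: reduce $c_\infty^{\Z_2}$ to $c_{\vr}^{\Z_2}$ via item 1(a) of \Cref{prop:strongercGpXYlbddalernatives}, then use \Cref{thm:Z2hausmannsphere} to identify $\vert\vr(\Sp^m;\eta)\vert$ (respectively $\vert\vr(\SpE^m;\eta)\vert$) with $\Sp^m$ for small scales and invoke Borsuk--Ulam. The paper's proof of item (2) states the Euclidean Hausmann range directly as $(0,2\sin(\zeta_m/2)]$ rather than spelling out the scale conversion, but your explicit identification $\vr(\SpE^m;\eta)=\vr(\Sp^m;2\arcsin(\eta/2))$ is exactly what underlies that statement, and your half-angle computation of $2\sin(\zeta_m/2)=\sqrt{(2m+4)/(m+1)}$ matches.
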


\begin{remark}\label{rmk:cinftyZ2SmSm+1}
When $n=m+1$, the inequality in item (1) of \Cref{prop:cforspheres} is actually tight. Indeed, by \Cref{prop:strongercGpXYlbddalernatives} and \cite[Theorem 5.2]{adams2022gromov}, we have that $c_{\infty}^{\Z_2}(\Sp^m,\Sp^{m+1};0)=\zeta_m$.
\end{remark}

\begin{remark} Consider odd dimensional spheres $\Sp^m$ and $\Sp^n$ equipped with their canonical $\Sp^1$-action (see the item (2b) of Example \ref{ex:spheres-Z2}). Then, by Remark \ref{rmk:GGtildepindex}, Proposition \ref{prop:strongercGpXYlbdd}, and Proposition \ref{prop:cforspheres}, one can conclude that
$$\dht^{\Sp^1}\big((\Pfin(\Sp^m),\diam_\infty^{\Sp^m}),(\Pfin(\Sp^n),\diam_\infty^{\Sp^n})\big)\geq c_{\infty}^{\Sp^1}(\Sp^m,\Sp^n;0) \geq c_{\infty}^{\Z_2}(\Sp^m,\Sp^n;0)\geq\zeta_m.$$

Note that, since $\Sp^1$ is not finite, Theorem \ref{Gdhtstability} cannot be applied to establish $$2\,\dgh^{\Sp^1}(\Sp^m,\Sp^n)\geq\dht^{\Sp^1}\big((\Pfin(\Sp^m),\diam_\infty^{\Sp^m}),(\Pfin(\Sp^n),\diam_\infty^{\Sp^n})\big)\geq\zeta_m.$$
However, for the special case when $m=1$ and $n=3$, we have that
$\dgh^{\Sp^1}(\Sp^1,\Sp^3)\geq\dgh(\Sp^1,\Sp^3)=\frac{\pi}{3}$ by  item (3) of \Cref{rmk:G-dGH} and \cite[Proposition 1.18]{lim2023gromov}. Actually, one can verify that $\dgh^{\Sp^1}(\Sp^1,\Sp^3)=\frac{\pi}{3}$ since the map $\phi_{3,1}:\Sp^3\twoheadrightarrow\Sp^1$ employed in the proof of \cite[Proposition 1.18]{lim2023gromov} is indeed a surjective $\Sp^1$-function with $\dis(\phi_{3,1})=\frac{2\pi}{3}$.
\end{remark}

\begin{proposition}\label{prop:cforsphereslinfty}
For all $0<m<n<\infty$, we have the following
\begin{enumerate}
    \item $c_{\infty}^{\Z_2}(\square^m_\infty,\square^n_\infty;0)\geq 2$. 

    \item $c_{\infty}^{\Z_2}(\Sp^m_\infty,\Sp^n_\infty;0)\geq\frac{2}{\sqrt{m+1}}$.
\end{enumerate}
\end{proposition}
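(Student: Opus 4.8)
The plan is to show that for $X\in\{\square^m_\infty,\Sp^m_\infty\}$ the complex $\vert\vr(X;\eta)\vert$ admits a $\Z_2$-map to $\Sp^m$ for every $\eta\le\mathrm{Sep}^{\Z_2}(X)$, so that $\mathrm{ind}_{\Z_2}(\vert\vr(X;\eta)\vert)\le m$. Since $Y\in\{\square^n_\infty,\Sp^n_\infty\}$ is $\Z_2$-homeomorphic to $\Sp^n$ with the antipodal action (tautologically for $\Sp^n_\infty$, and via $y\mapsto y/\Vert y\Vert_2$ for $\square^n_\infty$) and hence has $\Z_2$-index $n>m$, monotonicity of the $\Z_2$-index under $\Z_2$-maps then forbids any $\Z_2$-map $Y\toZtwo\vert\vr(X;\eta)\vert$ for $\eta\le\mathrm{Sep}^{\Z_2}(X)$. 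By item~1(a) of \Cref{prop:strongercGpXYlbddalernatives} and \Cref{def:clowerbdd} we have $c_\infty^{\Z_2}(X,Y;0)=c_\vr^{\Z_2}(X,Y;0)=\inf\{\eta>0:\text{there is a }\Z_2\text{-map }Y\toZtwo\vert\vr(X;\eta)\vert\}$, so this yields $c_\infty^{\Z_2}(X,Y;0)\ge\mathrm{Sep}^{\Z_2}(X)$; plugging in $\mathrm{Sep}^{\Z_2}(\square^m_\infty)=2$ and $\mathrm{Sep}^{\Z_2}(\Sp^m_\infty)=\tfrac{2}{\sqrt{m+1}}$ from \Cref{ex:G-sep} gives the two claimed inequalities.

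The key step is to identify $\vert\vr(X;\eta)\vert$, $\Z_2$-equivariantly and up to $\Z_2$-homotopy, with a tubular neighborhood of $X$ inside the injective $\ell^\infty$-space $\R^{m+1}_\infty$. In both cases $X$ is isometrically embedded in $\R^{m+1}_\infty$ — $\Sp^m_\infty$ as the round unit sphere, $\square^m_\infty$ as the $\ell^\infty$-unit sphere $\partial\blacksquare^{m+1}$ — and the embedding intertwines the antipodal $\Z_2$-actions. Write $N_r:=\{y\in\R^{m+1}:\Vert y-x\Vert_{\ell^\infty}<r\text{ for some }x\in X\}$, a $\Z_2$-invariant open (hence paracompact) subset of $\R^{m+1}$, and cover it by the $\Z_2$-invariant family $\{B_r^{\ell^\infty}(x)\}_{x\in X}$ of open $\ell^\infty$-balls. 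An elementary computation with $\ell^\infty$-balls (their Helly property) shows that for a finite $\sigma\subseteq X$ one has $\bigcap_{x\in\sigma}B_r^{\ell^\infty}(x)\ne\varnothing$ iff $\diam_\infty(\sigma)<2r$, in which case the intersection is a nonempty open box, hence contractible; thus for $r=\eta/2$ the nerve of the cover is exactly $\vr(X;\eta)$. When $\eta\le\mathrm{Sep}^{\Z_2}(X)$ no simplex $\sigma$ is antipodally invariant — otherwise $\sigma$ contains an antipodal pair $\{x,-x\}$ and $\diam_\infty(\sigma)\ge\Vert x-(-x)\Vert_{\ell^\infty}\ge\mathrm{Sep}^{\Z_2}(X)\ge\eta$, contradicting $\diam_\infty(\sigma)<\eta$ — so every isotropy group $G_\sigma$ is trivial and the contractibility hypothesis of the $\Z_2$-nerve lemma (\Cref{prop:Gnerve}, together with \Cref{rmk:trivialcontractibiliy}) is satisfied. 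Hence $\vert\vr(X;\eta)\vert$ is $\Z_2$-homotopy equivalent to $N_{\eta/2}$, and in particular there is a $\Z_2$-map $\vert\vr(X;\eta)\vert\toZtwo N_{\eta/2}$.

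To finish I would note that $\min_{x\in X}\Vert x\Vert_{\ell^\infty}=\tfrac12\mathrm{Sep}^{\Z_2}(X)$ in both cases: for $\Sp^m_\infty$ this equals $\tfrac{1}{\sqrt{m+1}}$ (from $1=\Vert x\Vert_2^2\le(m+1)\Vert x\Vert_{\ell^\infty}^2$, with equality at $\pm(1,\dots,1)/\sqrt{m+1}$), and for $\square^m_\infty$ every point has $\ell^\infty$-norm $1$. Therefore $0\notin N_{\eta/2}$ whenever $\eta\le\mathrm{Sep}^{\Z_2}(X)$, so $y\mapsto y/\Vert y\Vert_2$ is a well-defined continuous $\Z_2$-map $N_{\eta/2}\toZtwo\Sp^m$; composing it with the $\Z_2$-map from the previous paragraph produces a $\Z_2$-map $\vert\vr(X;\eta)\vert\toZtwo\Sp^m$, so $\mathrm{ind}_{\Z_2}(\vert\vr(X;\eta)\vert)\le\mathrm{ind}_{\Z_2}(\Sp^m)=m$, completing the argument.

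I expect the second step to carry most of the weight: one must check carefully that the $\Z_2$-nerve lemma applies (paracompactness is automatic and the Helly property of $\ell^\infty$-balls makes the nerve literally equal $\vr(X;\eta)$) and, crucially, that the resulting $\Z_2$-homotopy equivalence persists for all $\eta$ up to $\mathrm{Sep}^{\Z_2}(X)$ rather than only for small $\eta$ — this is precisely what makes these lower bounds equal the $\Z_2$-separation. One could instead route this through \Cref{prop:GvrTS} and the tight span $\TS(X)$, but working directly inside the concrete injective space $\R^{m+1}_\infty$ avoids having to determine $\TS(X)$ and its $\Z_2$-action.
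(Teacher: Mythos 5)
Your proof is correct, but it routes through a genuinely different argument than the paper's. The paper's proof invokes item~2(a) of \Cref{prop:strongercGpXYlbddalernatives} (under the hypothesis $c_\vr<\mathrm{Sep}^{\Z_2}(X)$) to convert the assumed $\Z_2$-map $Y\toZtwo\vert\vr(X;\eta)\vert$ into a $\Z_2$-map $Y\toZtwo B_r(X,\TS(X))$, and then pushes forward into $\R^{m+1}_\infty$: for $\square^m_\infty$ it uses the explicit identification $\TS(\square^m_\infty)=\blacksquare^{m+1}_\infty$ (citing an external lemma), while for $\Sp^m_\infty$ it uses a $1$-Lipschitz retraction $\TS(\Sp^m_\infty)\to\R^{m+1}_\infty$ furnished by injectivity of $\R^{m+1}_\infty$, plus a cited homotopy equivalence $B_r(\Sp^m_\infty,\R^{m+1}_\infty)\simeq\Sp^m$. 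You bypass the tight span entirely: you apply the $G$-nerve lemma (\Cref{prop:Gnerve}) directly to the cover of the open tubular neighborhood $N_{\eta/2}\subset\R^{m+1}_\infty$ by $\ell^\infty$-balls centered on $X$, observe (via the Helly property of boxes) that the nerve is literally $\vr(X;\eta)$ with trivial isotropy when $\eta\le\mathrm{Sep}^{\Z_2}(X)$, and then use $\eta/2\le\tfrac12\mathrm{Sep}^{\Z_2}(X)=\min_{x\in X}\Vert x\Vert_\infty$ to conclude $0\notin N_{\eta/2}$, so that radial projection supplies an explicit $\Z_2$-map to $\Sp^m$. Both approaches ultimately rely on the $\Z_2$-nerve lemma (the paper does so indirectly through \Cref{prop:GvrTS}) and on the Borsuk--Ulam theorem via monotonicity of the $\Z_2$-index, and both use item~1(a) of \Cref{prop:strongercGpXYlbddalernatives} to identify $c_\infty$ with $c_\vr$. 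What your route buys is uniformity and self-containedness: the two cases are treated identically, no external tight-span computations are cited, and the contractibility of the cover intersections (open boxes) and the final map to $\Sp^m$ are both completely explicit; the paper's route, by contrast, records the equality $c_\vr=2c_\TS$ as a byproduct. One minor remark: you only need one direction of the nerve equivalence (a $\Z_2$-map $\vert\vr(X;\eta)\vert\toZtwo N_{\eta/2}$), but since the naive convex-combination map into $\R^{m+1}$ lands only in $N_\eta$ rather than $N_{\eta/2}$, invoking the full homotopy equivalence from \Cref{prop:Gnerve} is the clean way to get it, exactly as you do.
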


\begin{remark}
    The inequality in item (1) of \Cref{prop:cforsphereslinfty} is an equality for all $n>m$ as it will follow from \Cref{thm:Z2dGHsqmsqn}.
\end{remark}

%%%%%%%%%%%%%%%%%%%%%%%%
\subsubsection{Proofs of \Cref{prop:cforspheres,prop:cforsphereslinfty}.}

As a special case of \cite[Theorem 3.5]{hausmann1994vietoris}, it is known that there is a homotopy equivalence $T_r:\vert\vr(\Sp^n;r)\vert\rightarrow \Sp^n$ for all $r\in (0,\frac{\pi}{2}]$. Also, in \cite[Theorem 7.1]{lim2024vietoris}, we were able to improve  Hausmann's result by proving the existence of a homotopy equivalence $T_r:\vert\vr(\Sp^n;r)\vert\rightarrow \Sp^n$ for all $r\in (0,\zeta_n]$ (observe that $\zeta_n>\frac{\pi}{2}$).\footnote{The range $(0,\zeta_n]$ is tight for $n=1,2$ and it is believed to be tight in general; see \cite[Conjecture 7.8]{lim2024vietoris}.} Furthermore, one can make this result even stronger in the sense that $T_r$ can be chosen to be a \emph{$\Z_2$-homotopy equivalence}.

\begin{theorem}[$\Z_2$-equivariant Hausmann theorem for Vietoris-Rips complexes of spheres]\label{thm:Z2hausmannsphere}
For any $n\in\mathbb{Z}_{>0}$ and $r\in (0,\zeta_n]$, there is a $\Z_2$-homotopy equivalence
$T_r:\vert\vr(\Sp^n;r)\vert\toZtwo\Sp^n.$
\end{theorem}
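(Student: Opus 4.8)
The plan is to carefully revisit the proof of the non-equivariant Hausmann-type theorem from \cite[Theorem 7.1]{lim2024vietoris} and check that every construction in it can be made to commute with the antipodal involution. Recall that in the non-equivariant setting one builds a homotopy equivalence $T_r:\vert\vr(\Sp^n;r)\vert\to\Sp^n$ for $r\in(0,\zeta_n]$ out of two ingredients: (i) a geometric map $\Sp^n\to\vert\vr(\Sp^n;r)\vert$ (often just a suitably subdivided inclusion, or the map $x\mapsto\delta_x$ read through $\mathrm{id}_r^{-1}$ on finite nets), and (ii) a contraction/crushing argument showing that the mapping cone of the inclusion, or the relevant fibers, are contractible in the required range. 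The first observation is that the antipodal involution $a(x)=-x$ on $\Sp^n$ induces a canonical $\Z_2$-action on $\vert\vr(\Sp^n;r)\vert$ via $\sum u_i x_i\mapsto\sum u_i(-x_i)$ (this is the restriction of $\alpha_{\vert\vr(\Sp^n;\infty)\vert}$ from \Cref{ex:GVRfilteredspace}), and the diameter function is $\Z_2$-invariant, so all the filtered spaces in sight are $\Z_2$-spaces in the sense of the paper.

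Concretely, I would proceed as follows. First I would reduce to finite antipodally-symmetric nets: by \Cref{lemma:Ginvdeltanetexst} every $\delta>0$ admits a finite $\Z_2$-invariant $\delta$-net $U\subseteq\Sp^n$, and for such finite $U$ the map $\mathrm{id}$ of \Cref{lemma:homeo} is a $\Z_2$-homeomorphism $\vert\vr(U;r)\vert\cong\vrm_\infty(U;r)$, which is visibly equivariant since it just relabels $u_ix_i\leftrightarrow u_i\delta_{x_i}$. Second, I would recall that the key technical step in \cite{lim2024vietoris} is the construction of a deformation retraction of $\vert\vr(\Sp^n;r)\vert$ (for $r\le\zeta_n$) onto the image of a ``spherical'' section, realized through an explicit choice of center-of-mass-type map or through the nerve-lemma machinery. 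The center-of-mass map on a metric ball of radius $<\zeta_n/2$ (equivalently, on supports of $p$-diameter $<\zeta_n$, suitably interpreted) is equivariant for the isometry $a$ because Karcher means commute with isometries---exactly the fact exploited in the proof of \Cref{thm:vrmHausmann-G} (equality $(*)$ there). Third, I would check that the straight-line homotopies used to connect the relevant maps are equivariant: for any isometry $a$, $a_\sharp\big((1-t)\mu+t\nu\big)=(1-t)a_\sharp\mu+t\,a_\sharp\nu$, which is the same algebraic identity as in the proof of \Cref{thm:vrmHausmann-G}, so each time-$t$ slice is a $\Z_2$-map. Feeding these observations into the proof of \cite[Theorem 7.1]{lim2024vietoris} line by line upgrades every map and every homotopy appearing there to a $\Z_2$-map and a $\Z_2$-homotopy, yielding the desired $\Z_2$-homotopy equivalence $T_r$.

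The main obstacle, and the place requiring genuine care rather than bookkeeping, is that the proof in \cite{lim2024vietoris} of the sharp range $(0,\zeta_n]$ (as opposed to Hausmann's softer $(0,\tfrac\pi2]$) is \emph{not} a pure center-of-mass argument: near the endpoint $r=\zeta_n$ one must use a more delicate combinatorial/geometric contraction of Vietoris--Rips complexes of spheres (exploiting the metric structure of $\Sp^n$ and results on $\vr(\Sp^n;r)$ such as those behind \cite[Remark 9.3 / Proposition 9.7]{lim2024vietoris} cited in the proof of \Cref{prop:Gdnmatter}). I would have to verify that the auxiliary maps and homotopies built in that endpoint analysis---which may involve choices of ``poles'', geodesic pushes, or conical contractions---can be made $\Z_2$-equivariant, e.g. by averaging over the two antipodal poles or by choosing all auxiliary data $\Z_2$-invariantly. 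An alternative, cleaner route that avoids re-examining the endpoint combinatorics would be to invoke a $\Z_2$-equivariant version of the nerve lemma. Indeed \Cref{prop:Gnerve} is available, and \Cref{prop:GvrTS} already gives a $\Z_2$-homotopy equivalence $\vert\vr(\Sp^n;2r)\vert\simeq_{\Z_2}B_r(\Sp^n,\TS(\Sp^n))$ for $r\le\tfrac\pi2$ (so for scales up to $\pi$, not $\zeta_n$); pushing this past $\pi$ up to $\zeta_n$ is precisely where the isotropy subgroups $G_\sigma$ cease to be trivial and one needs $G_\sigma$-contractibility of the cover intersections rather than plain contractibility. So whichever route one takes---direct equivariantization of \cite{lim2024vietoris} or the nerve-lemma route---the crux is the behavior in the scale window $(\tfrac\pi2,\zeta_n]$ where the antipodal action interacts nontrivially with the combinatorics; everything below $\tfrac\pi2$ follows routinely from the equivariant center-of-mass and straight-line-homotopy arguments already present in the paper.
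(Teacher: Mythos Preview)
Your proposal conflates two distinct constructions. The map $T_r$ in \cite[Theorem 7.1]{lim2024vietoris} is \emph{not} a center-of-mass map; it is Hausmann's geodesic-join map, which depends on a choice of total order on the vertex set $\Sp^n$. Given an ordered simplex $\sigma=\{x_0<\cdots<x_q\}$ one defines $T_\sigma:\Delta_q\to\Sp^n$ inductively as the geodesic cone of $T_\sigma|_{\Delta_{q-1}}$ on $x_q$ (well-defined because Jung's theorem puts $\sigma$ inside an open hemisphere whenever $\diam(\sigma)<\zeta_n$). The obstruction to $\Z_2$-equivariance is precisely that no total order on $\Sp^n$ is compatible with the antipodal involution. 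The paper's fix is purely combinatorial and has nothing to do with endpoint analysis or averaging: one orders $\R\mathbb{P}^n$ instead of $\Sp^n$. Since a simplex of diameter $<\zeta_n<\pi$ contains no antipodal pair, the order on $\R\mathbb{P}^n$ pulls back to a well-defined total order on the vertices of each such simplex, and this induced order is manifestly preserved by $x\mapsto -x$; hence $T_r(-\,\cdot\,)=-T_r(\,\cdot\,)$. That is the entire argument for $n\ge 2$.

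The Karcher-mean and straight-line-homotopy arguments you sketch are the proof of \Cref{thm:Z2hausmannspherevrm} (the metric-thickening statement), and they do not transfer directly to $|\vr(\Sp^n;r)|$: the map $x\mapsto\delta_x$ is not continuous into the simplicial-complex topology when the vertex set is non-discrete, so you have no candidate $\Z_2$-homotopy inverse to work with. You also have the ranges reversed: $\zeta_n=\arccos\bigl(-1/(n+1)\bigr)\in(\pi/2,\,2\pi/3]$, so $\zeta_n<\pi$ for every $n$, and \Cref{prop:GvrTS} (valid for VR scales up to $\mathrm{Sep}^{\Z_2}(\Sp^n)=\pi$) already covers the entire target range---there is nothing to push ``past $\pi$.'' The paper does use the nerve-lemma/tight-span route, but only for $n=1$, where the remaining step $B_{r/2}(\Sp^1,\TS(\Sp^1))\simeq_{\Z_2}\Sp^1$ is supplied by an explicit center-of-mass map on the tight span from \cite{lim2021some}, which is easily seen to be odd.
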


The proof of this theorem is given in \Cref{app:proof-Z2-H}. Also, from \Cref{thm:vrmHausmann-G}, for the case when $M=\Sp^n$ and $G=\Z_2$, one can obtain an analogous result for the Vietoris-Rips metric thickening $\vrm_\infty(\Sp^n;r)$. However, since $r(\Sp^n)=\tfrac{\pi}{2}$, the range of values of $r$ such that $\vrm_\infty(\Sp^n;r)\stackrel{\Z_2}{\simeq}\Sp^n$ is smaller than the one guaranteed by \Cref{thm:Z2hausmannsphere} for the standard Vietoris-Rips filtration $\vr(\Sp^n;r).$ The following statement, giving a $\Z_2$-equivariant version of \cite[Proposition 5.3]{adamaszek2018metric}, provides an improved range of values of $r$.

\begin{theorem}[$\Z_2$-equivariant Hausmann theorem for Vietoris-Rips thickenings of spheres]\label{thm:Z2hausmannspherevrm}
For any $n\in\mathbb{Z}_{>0}$ and $r\in (0,\zeta_n]$, there is a $\Z_2$-homotopy equivalence
$p_r^n:\vrm_\infty(\Sp^n;r)\toZtwo\Sp^n.$
\end{theorem}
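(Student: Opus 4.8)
The plan is to bootstrap the $\Z_2$-equivariant Hausmann theorem for the Vietoris–Rips \emph{complex} of spheres (\Cref{thm:Z2hausmannsphere}) up to the metric thickening, using the comparison map $\mathrm{id}_r\colon|\vr(\Sp^n;r)|\to\vrm_\infty(\Sp^n;r)$ from \Cref{lemma:homeo}. First I would record that $\mathrm{id}_r$ is a $\Z_2$-map for each $r$: this is immediate from the explicit formula $\sum u_i x_i\mapsto\sum u_i\delta_{x_i}$ and the fact that the antipodal action is the diagonal action on both sides, i.e. $\alpha_{|\vr|}(-1,\cdot)$ corresponds to $\alpha_{\Pfin}(-1,\cdot)$ under $\mathrm{id}_r$. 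Combined with \Cref{thm:Z2hausmannsphere} this gives, for each $r\in(0,\zeta_n]$, a composite $\Z_2$-map
$$
q_r\colon |\vr(\Sp^n;r)|\;\xrightarrow{\ \mathrm{id}_r\ }\;\vrm_\infty(\Sp^n;r)\;\xrightarrow{\ p_r^{n,\mathrm{candidate}}\ }\;\Sp^n,
$$
but one needs to go the other way: produce a $\Z_2$-homotopy equivalence \emph{out of} the thickening.

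The cleanest route, I believe, is to not build $p_r^n$ by hand but to invoke \Cref{thm:vrmHausmann-G} (the $G$-equivariant Karcher-mean Hausmann theorem) as the starting point and then \emph{extend the range of $r$}. \Cref{thm:vrmHausmann-G} already gives a $\Z_2$-homotopy equivalence $p_\varepsilon^{\Sp^n}\colon\vrm_\infty(\Sp^n;\varepsilon)\toZtwo\Sp^n$ for all $\varepsilon\in(0,r(\Sp^n)]=(0,\tfrac{\pi}{2}]$, with the Dirac embedding $j_\varepsilon$ as a $\Z_2$-homotopy inverse and with compatibility across inclusions $\iota_{\varepsilon,\varepsilon'}$. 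So the task reduces to pushing the range from $\tfrac\pi2$ up to $\zeta_n$. For this I would exploit the following square, which commutes up to $\Z_2$-homotopy: for $0<r\le\zeta_n$, the Gillespie/Adams–type weak equivalence $\mathrm{id}_r$ together with \Cref{thm:vr-neigh} or, more directly, \Cref{thm:Z2hausmannsphere} gives a $\Z_2$-homotopy equivalence $T_r\colon|\vr(\Sp^n;r)|\toZtwo\Sp^n$; the key point is that $T_r$ and $\mathrm{id}_r$ are \emph{compatible} — I would show $T_r$ factors, up to $\Z_2$-homotopy, through $\mathrm{id}_r$, i.e. there is a $\Z_2$-map $\tilde p_r\colon\vrm_\infty(\Sp^n;r)\toZtwo\Sp^n$ with $\tilde p_r\circ\mathrm{id}_r\Gsimeq T_r$. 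Granting this, $\tilde p_r$ is the desired $p_r^n$: it is a $\Z_2$-map; on the small range $r\le\tfrac\pi2$ it agrees up to $\Z_2$-homotopy with the Karcher map $p_r^{\Sp^n}$ (both are two-sided $\Z_2$-inverses to $j_r$, and $\Z_2$-homotopy inverses are unique up to $\Z_2$-homotopy); and since $\mathrm{id}_r$ is a weak homotopy equivalence and $T_r$ is a homotopy equivalence, $\tilde p_r$ is a weak equivalence, hence — because $\Sp^n$ and the thickening both have the homotopy type of CW complexes — an honest $\Z_2$-homotopy equivalence with $j_r$ as $\Z_2$-inverse.

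The main obstacle is exactly the factorization $\tilde p_r\circ\mathrm{id}_r\Gsimeq T_r$, i.e. producing the dotted arrow $\tilde p_r$ in a genuinely $\Z_2$-equivariant way rather than merely non-equivariantly. Non-equivariantly such a factorization is automatic once one knows $\mathrm{id}_r$ is a weak equivalence (\cite{gillespie2024vietoris}) and the target is a CW complex; but equivariantly one must either (i) verify $\mathrm{id}_r$ is a $\Z_2$-\emph{cofibration} or a $\Z_2$-weak equivalence that admits a $\Z_2$-section up to $\Z_2$-homotopy — which, since the $\Z_2$-action is free away from no fixed points here (the antipodal action on $\Sp^n$ is free), can be handled by checking the induced map on fixed-point sets and on the free parts separately, where for free $\Z_2$-spaces a weak equivalence \emph{is} a $\Z_2$-weak equivalence and one can transfer along it — or (ii) build $\tilde p_r$ directly by a $\Z_2$-equivariant center-of-mass/geodesic-retraction argument valid on the enlarged range $(0,\zeta_n]$, mimicking the proof of \Cref{thm:Z2hausmannsphere} but on probability measures instead of formal convex combinations; the latter is the likely content of the actual proof, carried out in the referenced appendix. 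I would flag that the straight-line homotopy used in \Cref{thm:vrmHausmann-G} no longer keeps one inside $\vrm_\infty(\Sp^n;r)$ once $r>\tfrac\pi2$, which is precisely why the complex-level statement (\Cref{thm:Z2hausmannsphere}), where the analogous issue is handled by the combinatorial/geometric argument of \cite{lim2024vietoris}, must be imported rather than reproved measure-theoretically.
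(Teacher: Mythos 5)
The paper's proof is much more direct than your proposal and sidesteps all the machinery you invoke: it simply takes the explicit map $p_r^n(\sum u_i\delta_{x_i}):=\frac{\sum u_i x_i}{\|\sum u_i x_i\|}$ from \cite[Proposition 5.3]{adamaszek2018metric} (a \emph{projected Euclidean} center of mass, not a Karcher mean), which by Jung's theorem (\Cref{thm:Jung}) is already known to be a well-defined homotopy equivalence with inverse $j_r$ on the full range $r\in(0,\zeta_n]$, and observes that $p_r^n$, $j_r$, and the linear-interpolation homotopy $H(\mu,t)=(1-t)\mu+t\,j_r\circ p_r^n(\mu)$ are all manifestly $\Z_2$-equivariant with respect to the antipodal action. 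The upgrade from a homotopy equivalence to a $\Z_2$-homotopy equivalence is a two-line equivariance check, not a lifting problem.

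There is a concrete error in your final paragraph that causes the rest of the argument to go down a much harder road. You assert that ``the straight-line homotopy used in \Cref{thm:vrmHausmann-G} no longer keeps one inside $\vrm_\infty(\Sp^n;r)$ once $r>\tfrac\pi2$'' and conclude that the complex-level statement must be imported rather than reproved on the thickening. That is not what fails. The linear interpolation $H(\mu,t)=(1-t)\mu+t\,\delta_{c(\mu)}$ stays inside $\vrm_\infty(\Sp^n;r)$ exactly when $d_{\Sp^n}(x,c(\mu))<r$ for every $x\in\supp(\mu)$, and this holds on all of $(0,\zeta_n]$ by Jung's theorem for the \emph{projected Euclidean} center $c(\mu)=p_r^n(\mu)$; it is the \emph{Karcher mean} that requires $r\le\mathrm{Conv}(\Sp^n)=\tfrac\pi2$ (for existence and uniqueness of the minimizer), which is why \Cref{thm:vrmHausmann-G} has the more restrictive range. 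Because you did not consider replacing the Karcher mean by the projected Euclidean center, you end up trying to transfer the complex-level \Cref{thm:Z2hausmannsphere} across $\mathrm{id}_r$, but the needed equivariant factorization $\tilde p_r\circ\mathrm{id}_r\Gsimeq T_r$ is left unjustified: you note yourself that equivariantly it is not automatic and offer two routes (a $\Z_2$-cofibration/weak-equivalence argument, or an explicit construction), neither of which you actually carry out. So as written the proposal has a genuine gap. Replacing the Karcher mean by the projected Euclidean center closes it and recovers the paper's proof immediately.
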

\begin{proof}
In \cite[Proposition 5.3]{adamaszek2018metric}, by invoking Jung's Theorem (\Cref{thm:Jung}), the authors proved that the following map is well-defined and a homotopy equivalence in the given  range of $r$:
\begin{align*}
    p_r^n:\vrm_\infty(\Sp^n;r)&\longrightarrow\Sp^n\\
    \sum_{i=0}^m u_i\delta_{x_i}&\longmapsto\frac{\sum_{i=0}^m u_i x_i}{\Vert \sum_{i=0}^m u_i x_i \Vert}.
\end{align*}
Also, the canonical inclusion $j_r:\Sp^n\longrightarrow\vrm_\infty(\Sp^n;r)$ s.t. $x\longmapsto\delta_x$ is a homotopy inverse of $p_r^n$ that satisfies (1) $p_r^n\circ j_r=\mathrm{id}_{\Sp^n}$ and (2) $j_r\circ p_r^n$ is homotopic to $\mathrm{id}_{\vrm_\infty(\Sp^n;r)}$. Furthermore, the homotopy between $j_r\circ p_r^n$ and $\mathrm{id}_{\vrm_\infty(\Sp^n;r)}$ is given by the following linear interpolation.
\begin{align*}
    H:\vrm_\infty(\Sp^n;r)\times [0,1]&\longrightarrow\vrm_\infty(\Sp^n;r)\\
    (\mu,t)&\longmapsto (1-t)\mu+tj_r\circ p_r^n(\mu).
\end{align*}

Finally, it is easy to verify that all of $p_r^n$, $j_r$, and $H$ are indeed a $\Z_2$-maps.
\end{proof}

\begin{remark}
Note that, in principle, \Cref{thm:Z2hausmannspherevrm} is a special case of \Cref{thm:vrmHausmann-G}. However, $r(\Sp^n)=\frac{\pi}{2}$ so that \Cref{thm:vrmHausmann-G} does not give the desired range of values of $r$. The construction of the map $p_r^n$ in the proof of \Cref{thm:Z2hausmannspherevrm} differs from that of the one used in the proof of \Cref{thm:vrmHausmann-G}. Indeed, $p_r^n$ is defined as a ``projected" euclidean center of mass, as opposed to the Riemannian center of mass construction invoked in the proof of  \Cref{thm:vrmHausmann-G}.
\end{remark}

\begin{proof}[\textbf{Proof of \Cref{prop:cforspheres}}]
\begin{enumerate}
    \item Apply item (1) of \Cref{prop:strongercGpXYlbddalernatives}, \Cref{thm:Z2hausmannsphere}, and the classical Borsuk-Ulam theorem.

    \item By item (1) of \Cref{prop:strongercGpXYlbddalernatives}, it is enough to show that $c_{\vr}^{\Z_2}(\SpE^m,\SpE^n;0)\geq\sqrt{\frac{2m+4}{m+1}}$. By \Cref{thm:Z2hausmannsphere}, there is a $\Z_2$-homotopy equivalence from $\vert\vr_r(\Sp_{\mathrm{E}}^m)\vert$ to $\Sp_{\mathrm{E}}^m$ for any $r\in \left(0,2\,\sin\left(\frac{\zeta_m}{2}\right)\right]$. Hence, if there is a $\Z_2$-map from $\Sp_\mathrm{E}^{n}$ to $\vert\vr_r(\Sp_\mathrm{E}^{m})\vert$ for some $r>0$, then $r$ must satisfy $r\geq 2\,\sin\left(\frac{\zeta_m}{2}\right)=\sqrt{\frac{2m+4}{m+1}}$ since otherwise there exists a $\Z_2$-map from $\Sp_\mathrm{E}^{n}$ to $\Sp_\mathrm{E}^m$ which contradicts to the Borsuk-Ulam Theorem. Hence, the proof is complete.    
\end{enumerate}
\end{proof}

\begin{proof}[\textbf{Proof of \Cref{prop:cforsphereslinfty}}]
\begin{enumerate}
\item By  item (1) of \Cref{prop:strongercGpXYlbddalernatives}, it is enough to show that $$c_\vr^{\Z_2}(\square^m_\infty,\square^n_\infty;0)\geq 2.$$ Suppose not. Then, since $2=\mathrm{Sep}^{\Z_2}(\square^m_\infty)$ (see  item (2) of \Cref{ex:G-sep}), by item (2) of \Cref{prop:strongercGpXYlbddalernatives} we have that $c_\vr^{\Z_2}(\square^m_\infty,\square^n_\infty;0)=2c_\TS^{\Z_2}(\square^m_\infty,\square^n_\infty;0)<2$. Then, there exists a $\Z_2$-map from $\square^n_\infty$ to $B_r(\square^m_\infty,\TS(\square^m_\infty))$ for some $r<1$. Note that, by \cite[Lemma 7.16]{lim2024vietoris}, $\TS(\square^m_\infty)=\blacksquare^{m+1}_\infty$ and
    $$B_r(\square^m_\infty,\blacksquare^{m+1}_\infty)=[-1,1]^{m+1}\backslash [-(1-r),1-r]^{m+1}.$$
    Hence, $B_r(\square^m_\infty,\blacksquare^{m+1}_\infty)$ is $\Z_2$-homotopy equivalent to $\Sp^m$.\footnote{First, one easily sees that $B_r(\square^m_\infty,\blacksquare^{m+1}_\infty)$ is $\Z_2$-homotopy equivalent to $\square^m_\infty$ via the canonical projection map. Then, $\square^m_\infty$ is itself $Z_2$-homotopy equivalent to $\Sp^m$ via the radial projection.} Therefore, there is a $\Z_2$-map from $\Sp^n$ to $\Sp^m$ which contradicts the classical Borsuk-Ulam theorem. This proves the claim.

\item The following diagram describes maps that will be constructed in the proof of this item:
\begin{center}
    \begin{tikzcd}\label{fig:Spinftypf}
 \Sp^n_\infty \arrow[r] & B_r(\Sp^m_\infty,\TS(\Sp^m_\infty)) \arrow[r] & B_r(\Sp^m_\infty,\R^{m+1}_\infty) \arrow[r] & \Sp^m_\infty.
\end{tikzcd}
\end{center}
By item (1) of \Cref{prop:strongercGpXYlbddalernatives}, it is enough to show that $c_\vr^{\Z_2}(\Sp^m_\infty,\Sp^n_\infty;0)\geq \frac{2}{\sqrt{m+1}}$. Suppose not. Then, since $\frac{2}{\sqrt{m+1}}=\mathrm{Sep}^{\Z_2}(\Sp^m_\infty)$ (see item (3) of \Cref{ex:G-sep}), by item (2) of \Cref{prop:strongercGpXYlbddalernatives} we have that $c_\vr^{\Z_2}(\Sp^m_\infty,\Sp^n_\infty;0)=2c_\TS^{\Z_2}(\Sp^m_\infty,\Sp^n_\infty;0)<\frac{2}{\sqrt{m+1}}$. Then, there exists a $\Z_2$-map from $\Sp^n_\infty$ to $B_r(\Sp^m_\infty,\TS(\Sp^m_\infty))$ for some $0<r<\frac{1}{\sqrt{m+1}}$. Now, we claim that there exists a $\Z_2$-map from $B_r(\Sp^m_\infty,\TS(\Sp^m_\infty))$ to $B_r(\Sp^m_\infty,\R^{m+1}_\infty)$. Since $\R^{m+1}_\infty$ is injective in the category of metric spaces (see \cite[pg. 303]{lang2013injective}), there exists a $1$-Lipschitz map from $\TS(\Sp^m_\infty)$ to $\R^{m+1}_\infty$. More precisely, it is well-known (see \cite[pg.301]{lang2013injective}) that this $1$-Lipschitz map can be given explicitly as
\small{$${f\longmapsto\left(\sup\limits_{(x_0,x_1,\dots,x_m)\in\Sp^m_\infty}(x_0-f(x_0,x_1,\dots,x_m)),,\dots,\sup\limits_{(x_0,x_1,\dots,x_m)\in\Sp^m_\infty}(x_m-f(x_0,x_1,\dots,x_m))\right).}$$}
It is easy to verify that this map is indeed $\Z_2$-map. Hence, there exists a $\Z_2$-map from $B_r(\Sp^m_\infty,\TS(\Sp^m_\infty))$ to $B_r(\Sp^m_\infty,\R^{m+1}_\infty)$ as we claimed. Furthermore, $B_r(\Sp^m_\infty,\R^{m+1}_\infty)$ is $\Z_2$-homotopy equivalent to $\Sp^m$ by the map given in \cite[Theorem 7.19]{lim2024vietoris}. Finally, by doing the composition of these three maps (see Figure \ref{fig:Spinftypf}) one can establish a $\Z_2$-map from $\Sp^n$ to $\Sp^m$ which contradicts the classical Borsuk-Ulam theorem. This completes the proof.
\end{enumerate}
\end{proof}

%%%%%%%%%%%%%%%%%%%%%%%%%%%%%%%%%%%%%%%%%%%%%%

\subsection{Determining $\dgh^{\Z_2}(\Sp^m,\Sp^n)$,  $\dgh^{\Z_2}(\SpE^m,\SpE^n)$ and $\dgh^{\Z_2}(\SpE^m,\SpE^n)$.}\label{sec:GdGHresults}
In this section we apply the techniques developed in previous sections to the case of $\dgh^{\Z_2}(\Sp^m,\Sp^n)$ (\Cref{sec:precise-dgh-sph}), the case of $\dgh^{\Z_2}(\SpE^m,\SpE^n)$ (\Cref{sec:Z2dGHEsphere}), and the case of $\dgh^{\Z_2}(\square^m_\infty,\square^n_\infty)$ (\Cref{sec:Z2dGHinftysphere}).

\subsubsection{The case of $\dgh^{\Z_2}(\Sp^m,\Sp^n)$.}\label{sec:precise-dgh-sph}

We first prove that the $\Z^2$-Gromov-Hausdorff distance between spheres endowed with their geodesic distance and canonical involution is \emph{equal} to the standard  Gromov-Hausdorff.

\begin{proposition}\label{prop:Z2dGHSmSn}
For any $0\leq m<n<\infty$, we have that $$\dgh^{\Z_2}(\Sp^m,\Sp^n)=\dgh(\Sp^m,\Sp^n).$$
\end{proposition}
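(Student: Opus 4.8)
The plan is to establish the nontrivial inequality $\dgh^{\Z_2}(\Sp^m,\Sp^n)\leq \dgh(\Sp^m,\Sp^n)$; the reverse inequality is immediate from item (3) of \Cref{rmk:G-dGH} (with $\widetilde{G}=G_0$ and $h$ the unique homomorphism). So fix $0\leq m<n<\infty$ and let $c:=\dgh(\Sp^m,\Sp^n)$. I would first recall the lower bound $2\,\dgh(\Sp^m,\Sp^n)\geq \zeta_m$ (from \cite{lim2023gromov}, or its strengthening via the persistent index), and, perhaps more importantly, exploit known \emph{optimal correspondences} realizing (or nearly realizing) $c$. The idea is to take any correspondence $R\in\mathcal{R}(\Sp^m,\Sp^n)$ with $\dis(R)$ close to $2c$ and \emph{symmetrize} it: replace $R$ by $$R^{\mathrm{sym}}:=\{(x,y):(x,y)\in R\}\cup\{(-x,-y):(x,y)\in R\},$$ which is manifestly a $\Z_2$-correspondence with respect to the antipodal actions on both spheres. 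The crux is then to show $\dis(R^{\mathrm{sym}})$ is not much larger than $\dis(R)$.

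The key step — and the main obstacle — is controlling the "cross terms" in $\dis(R^{\mathrm{sym}})$, namely quantities of the form $\big|d_{\Sp^m}(x,-x') - d_{\Sp^n}(y,-y')\big|$ for $(x,y),(x',y')\in R$. Using $d_{\Sp^k}(u,-v)=\pi - d_{\Sp^k}(u,v)$ for the round metric on any sphere, such a term equals $\big|(\pi - d_{\Sp^m}(x,x')) - (\pi - d_{\Sp^n}(y,y'))\big| = \big|d_{\Sp^m}(x,x') - d_{\Sp^n}(y,y')\big|$, which is already bounded by $\dis(R)$. This antipodal identity is precisely what makes the round geodesic spheres special here (it is what fails, e.g., for $\square^n_\infty$, which is why those cases are genuinely harder and handled separately in \Cref{sec:Z2dGHinftysphere}). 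Thus $\dis(R^{\mathrm{sym}})=\dis(R)$, giving a $\Z_2$-correspondence of the same distortion, hence $\dgh^{\Z_2}(\Sp^m,\Sp^n)\leq \dgh(\Sp^m,\Sp^n)$.

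To make this airtight I would organize it as follows. First state and verify the antipodal identity $d_{\Sp^k}(u,-v)=\pi-d_{\Sp^k}(u,v)$ for all $u,v\in\Sp^k$ and all $k\geq 0$. Second, given $\varepsilon>0$, pick $R\in\mathcal{R}(\Sp^m,\Sp^n)$ with $\dis(R)<2\,\dgh(\Sp^m,\Sp^n)+\varepsilon$ (possible since the infimum defining $\dgh$ need not be attained, though in fact for compact spaces it is). Third, form $R^{\mathrm{sym}}$ as above; check it is surjective onto $\Sp^n$ (clear, since $R$ is and the antipodal map is a bijection) and $\Z_2$-invariant. Fourth, compute $\dis(R^{\mathrm{sym}})$ by checking the four types of pairs of elements — $(x,y),(x',y')$ both from $R$; both from $-R$; one from each — using the antipodal identity in the mixed case to reduce everything to $\dis(R)$. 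This yields $\dis(R^{\mathrm{sym}})=\dis(R)<2\,\dgh(\Sp^m,\Sp^n)+\varepsilon$, so $\dgh^{\Z_2}(\Sp^m,\Sp^n)\leq \dgh(\Sp^m,\Sp^n)+\varepsilon/2$; letting $\varepsilon\to 0$ finishes the proof. I expect the only subtlety to be bookkeeping in the case analysis of step four, and making sure the $m=0$ case (where $\Sp^0=\{-1,1\}$) is not degenerate — but it is fine, since the argument is purely metric.
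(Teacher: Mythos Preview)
Your proposal is correct and matches the paper's approach: both reduce to the statement that any correspondence $R\in\mathcal{R}(\Sp^m,\Sp^n)$ can be upgraded to a $\Z_2$-correspondence $\widetilde{R}$ with $\dis(\widetilde{R})=\dis(R)$, which the paper records as \Cref{lemma:antipodalcorrespondence} (citing the ``helmet trick'' and \cite{martin2024some}) and then applies exactly as you do. Your symmetrization $R^{\mathrm{sym}}=R\cup(-R)$ together with the identity $d_{\Sp^k}(u,-v)=\pi-d_{\Sp^k}(u,v)$ is a clean, self-contained proof of that lemma, so you have in effect reproved the cited result rather than invoking it.
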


\begin{remark}\label{rmk:recovpolymath}
Note that one can recover \cite[Main Theorem]{adams2022gromov} by applying \Cref{Gdhtstability},  \Cref{prop:strongercGpXYlbddalernatives}, \Cref{prop:strongercGpXYlbdd}, and \Cref{prop:Z2dGHSmSn}.
\end{remark}

The following is an immediate generalization of the ``helmet trick" from \cite{lim2023gromov}; see \cite[Proposition 2.2]{martin2024some} for a proof.

\begin{lemma}\label{lemma:antipodalcorrespondence}
Suppose $0\leq m<n<\infty$. Then, for any correspondence $R\in\mathcal{R}(\Sp^m,\Sp^n)$, there is a $\Z_2$-correspondence $\widetilde{R}\in\mathcal{R}_{\Z_2}(\Sp^m,\Sp^n)$ such that $\dis(\widetilde{R})=\dis(R)$.
\end{lemma}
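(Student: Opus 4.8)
\textbf{Proof proposal for \Cref{lemma:antipodalcorrespondence}.}

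The plan is to take an arbitrary correspondence $R\in\mathcal{R}(\Sp^m,\Sp^n)$ and symmetrize it with respect to the antipodal involutions on both spheres, so that the resulting relation is $\Z_2$-invariant, while controlling the distortion carefully so that it does not increase. The key point is that the geodesic metric on each sphere is itself antipodally symmetric in the sense that $d_{\Sp^k}(-u,-v)=d_{\Sp^k}(u,v)$ and $d_{\Sp^k}(u,-v)=\pi-d_{\Sp^k}(u,v)$; these two identities are what make the symmetrization work without loss.

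First I would define $\widetilde{R}:=R\cup(-R)$, where $-R:=\{(-x,-y):(x,y)\in R\}$. This is clearly a $\Z_2$-correspondence: it is still a correspondence (surjective onto each factor, since $R$ already is and the antipodal maps are bijections), and by construction $(x,y)\in\widetilde R$ implies $(-x,-y)\in\widetilde R$. It is also immediate that $\dis(\widetilde R)\geq\dis(R)$ since $R\subseteq\widetilde R$. So the whole content is the reverse inequality $\dis(\widetilde R)\leq\dis(R)$. To see this, pick $(x,y),(x',y')\in\widetilde R$; I need to bound $\big|d_{\Sp^m}(x,x')-d_{\Sp^n}(y,y')\big|$ by $\dis(R)$. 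There are three cases up to the symmetry of the pair: (i) both pairs come from $R$ — then the bound is immediate; (ii) both come from $-R$, say $(x,y)=(-a,-b)$ and $(x',y')=(-a',-b')$ with $(a,b),(a',b')\in R$ — then $d_{\Sp^m}(x,x')=d_{\Sp^m}(a,a')$ and $d_{\Sp^n}(y,y')=d_{\Sp^n}(b,b')$ by antipodal invariance, so again the bound reduces to the $R$-case; (iii) the mixed case $(x,y)=(a,b)\in R$ and $(x',y')=(-a',-b')$ with $(a',b')\in R$ — here $d_{\Sp^m}(x,x')=d_{\Sp^m}(a,-a')=\pi-d_{\Sp^m}(a,a')$ and likewise $d_{\Sp^n}(y,y')=\pi-d_{\Sp^n}(b,b')$, so the difference $d_{\Sp^m}(x,x')-d_{\Sp^n}(y,y')=d_{\Sp^n}(b,b')-d_{\Sp^m}(a,a')$ again has absolute value at most $\dis(R)$.

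The main (really only) obstacle is case (iii): it is the place where one genuinely uses that the two antipodal maps are applied \emph{simultaneously} — applying the involution on only one sphere would turn a distance $d$ into $\pi-d$ on that side and leave the other side unchanged, which would blow up the distortion. Since here the two "$\pi$" contributions cancel, everything goes through; I should make sure to spell this cancellation out explicitly. Once all three cases are checked, we conclude $\dis(\widetilde R)=\dis(R)$, which is the claim. (I would also remark that this is exactly the "helmet trick" of \cite{lim2023gromov}; the proof above is the direct elementary verification, matching \cite[Proposition 2.2]{martin2024some}.)
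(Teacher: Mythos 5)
Your proof is correct. The paper itself does not supply an argument for this lemma---it simply points the reader to \cite{lim2023gromov} and \cite[Proposition 2.2]{martin2024some}---so there is no in-text proof to compare against, but your symmetrization $\widetilde R := R\cup(-R)$ together with the three-case check of $\dis(\widetilde R)\leq\dis(R)$ is a complete, self-contained verification. You correctly isolate the two identities that drive the argument, $d_{\Sp^k}(-u,-v)=d_{\Sp^k}(u,v)$ and $d_{\Sp^k}(u,-v)=\pi-d_{\Sp^k}(u,v)$ (both immediate from $d_{\Sp^k}=\arccos\langle\cdot,\cdot\rangle$), and the cancellation of the two $\pi$'s in the mixed case is exactly the reason the reflection must be applied to both factors simultaneously. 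One point worth noting: a ``fundamental-domain'' version of the helmet trick (restrict $R$ to $H\times\Sp^n$ for a hemisphere $H$ and then extend by $(x,y)\mapsto(-x,-y)$) does \emph{not} automatically yield a correspondence when $R$ is arbitrary, since a given $y\in\Sp^n$ may have all its $R$-partners in $-H$ while $-y$ also has all its partners in $-H$, leaving $y$ uncovered; your construction $R\cup(-R)$ sidesteps this issue entirely because it contains $R$. So in addition to being correct, your route is arguably the cleanest way to phrase the symmetrization.
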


\begin{proof}[\textbf{Proof of \cref{prop:Z2dGHSmSn}}]
By  item (3) of \Cref{rmk:G-dGH}, it is enough to verify that $\dgh^{\Z_2}(\Sp^m,\Sp^n)\leq\dgh(\Sp^m,\Sp^n)$. Choose an arbitrary $R\in\mathcal{R}(\Sp^m,\Sp^n)$. Then, by Lemma \ref{lemma:antipodalcorrespondence}, there is a $\Z_2$-correspondence $\widetilde{R}\in\mathcal{R}_{\Z_2}(\Sp^m,\Sp^n)$ such that $\dis(\widetilde{R})=\dis(R)$. Since, the choice of $R$ is arbitrary, this implies that $\dgh^{\Z_2}(\Sp^m,\Sp^n)\leq\dgh(\Sp^m,\Sp^n)$ as we required.
\end{proof}

Therefore, by the Main Theorem (see pg.\pageref{page:main-theorem}) and \Cref{prop:Z2dGHSmSn}, the following inequalities hold:
\begin{align*}
    2\dgh(\Sp^m,\Sp^n)=2\dgh^{\Z^2}(\Sp^m,\Sp^n)&\geq\dht^{\Z^2}\big((\Pfin(\Sp^m),\diam_p^{\Sp^m}),(\Pfin(\Sp^n),\diam_p^{\Sp^n})\big)\\
    &\geq \di^{\Z^2}\big(\h_k(\vrm_p(\Sp^m;\bullet)),\h_k(\vrm_p(\Sp^n;\bullet))\big).
\end{align*}

However, the following corollary and remark show that the topological lower bound

$\dht^{\Z^2}\big((\Pfin(\Sp^m),\diam_p^{\Sp^m}),(\Pfin(\Sp^n),\diam_p^{\Sp^n})\big)$ is \emph{strictly better} than the algebraic lower bound $\di^{\Z^2}\big(\h_k(\vrm_p(\Sp^m;\bullet)),\h_k(\vrm_p(\Sp^n;\bullet))\big)$.

\begin{corollary}\label{cor:SmSnlbddvr}
For all $0<m<n<\infty$, we have the following
$$\dht^{\Z_2}\big((\Pfin(\Sp^m),\diam_\infty^{\Sp^m}),(\Pfin(\Sp^n),\diam_\infty^{\Sp^n})\big)\geq c_\infty^{\Z_2}(\Sp^m,\Sp^n;0)\geq\zeta_m.$$
\end{corollary}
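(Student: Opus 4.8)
The plan is to obtain this corollary by simply stringing together two results already established in the paper, specialized to $X=\Sp^m$, $Y=\Sp^n$, $p=\infty$. The first inequality is the content of \Cref{prop:strongercGpXYlbdd}, which states that for any two $G$-metric spaces $X,Y$ and any $p\in[1,\infty]$ we have
$$\dht^G\big((\Pfin(X),\diam_p^X),(\Pfin(Y),\diam_p^Y)\big)\geq \sup_{\varepsilon\geq 0}c_p^G(X,Y;\varepsilon)\geq c_p^G(X,Y;0),$$
where the last step just uses that the supremum over $\varepsilon\geq 0$ dominates the value at $\varepsilon=0$. Taking $G=\Z_2$, $X=\Sp^m$, $Y=\Sp^n$, and $p=\infty$ yields
$$\dht^{\Z_2}\big((\Pfin(\Sp^m),\diam_\infty^{\Sp^m}),(\Pfin(\Sp^n),\diam_\infty^{\Sp^n})\big)\geq c_\infty^{\Z_2}(\Sp^m,\Sp^n;0).$$

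For the second inequality I would invoke item (1) of \Cref{prop:cforspheres}, which asserts precisely that $c_\infty^{\Z_2}(\Sp^m,\Sp^n;0)\geq\zeta_m$ for all $0<m<n<\infty$. Concatenating the two displayed inequalities gives the claimed chain, completing the proof. One should double-check that the hypotheses of both propositions are met: \Cref{prop:strongercGpXYlbdd} requires only that $X$ and $Y$ be $G$-metric spaces (no finiteness of $G$ needed there), and $\Z_2$ is finite in any case, so \Cref{prop:cforspheres} applies without restriction; the spheres are equipped with their canonical antipodal $\Z_2$-actions as fixed in items (2) of \Cref{ex:spheres-Z2}.

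Since the corollary is essentially a formal consequence of earlier machinery, there is no genuine obstacle at this stage; the substantive work has already been discharged in the proofs of \Cref{prop:strongercGpXYlbdd} (which uses that any $(G,\eta)$-map of the metric-thickening filtrations restricts to a $G$-map of sublevel sets, hence witnesses a bound on $c_p^G$) and of \Cref{prop:cforspheres} (which uses the optimal $\Z_2$-equivariant Hausmann theorem \Cref{thm:Z2hausmannsphere} together with the classical Borsuk–Ulam theorem to rule out $\Z_2$-maps $\Sp^n\toZtwo\vert\vr(\Sp^m;r)\vert$ for $r\leq\zeta_m$). The only thing worth remarking in the write-up is that this lower bound is strictly stronger than the persistent-homology lower bound $\di^{\Z_2}$, which by \Cref{prop:Gdnmatter} and \Cref{cor:vrvrmdIGsame} equals only $\zeta_m/2$; this is the point of the surrounding discussion and should be stated immediately after the proof.
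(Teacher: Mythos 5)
Your proposal is correct and matches the paper's proof exactly: it concatenates \Cref{prop:strongercGpXYlbdd} (with $p=\infty$, $G=\Z_2$) with item (1) of \Cref{prop:cforspheres}, which is precisely the paper's one-line argument. The additional comparison with the persistent-homology bound $\zeta_m/2$ from \Cref{prop:Gdnmatter} and \Cref{cor:vrvrmdIGsame} is accurate and is indeed the content of the paper's \Cref{rmk:vrminftysmsn} immediately following.
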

\begin{proof}
Apply \Cref{prop:strongercGpXYlbdd} for $p=\infty$ and  item (1) of \Cref{prop:cforspheres}.
\end{proof}

\begin{remark}\label{rmk:vrminftysmsn}
By \Cref{cor:vrvrmdIGsame} and \Cref{prop:Gdnmatter}, we have that
$$\di^{\Z^2}\big(\h_k(\vrm_\infty(\Sp^m;\bullet)),\h_k(\vrm_\infty(\Sp^n;\bullet))\big)=\frac{\zeta_m}{2}.$$
\end{remark}

\subsubsection{The case of $\dgh^{\Z_2}(\Sp_\mathrm{E}^m,\Sp_\mathrm{E}^n)$.}\label{sec:Z2dGHEsphere}

Recall that $\Sp_{\mathrm{E}}^m$ denotes the $m$-sphere equipped with the Euclidean metric inherited from $\mathbb{R}^{m+1}$ (see page~\pageref{notations}). In \cite[Section 9]{lim2023gromov}, we computed several nontrivial bounds for the Gromov–Hausdorff distance $\dgh(\Sp_{\mathrm{E}}^m, \Sp_{\mathrm{E}}^n)$ for arbitrary dimensions $0 \leq m < n \leq \infty$. However, unlike the case of spheres equipped with the geodesic metric, we were unable to determine the exact values of $\dgh(\Sp_{\mathrm{E}}^1, \Sp_{\mathrm{E}}^2)$, $\dgh(\Sp_{\mathrm{E}}^1, \Sp_{\mathrm{E}}^3)$, and $\dgh(\Sp_{\mathrm{E}}^2, \Sp_{\mathrm{E}}^3)$. This was primarily due to the difficulty of establishing tight lower bounds for $\dgh(\Sp_{\mathrm{E}}^m, \Sp_{\mathrm{E}}^n)$—in contrast to the geodesic case $\dgh(\Sp^m, \Sp^n)$, where exact values could be computed (see~\cite[Theorem B]{lim2023gromov}) using the helmet trick described above. In other words, the helmet trick fundamentally relies on the structure of the geodesic distance on spheres and is therefore inapplicable when the spheres are equipped with their Euclidean metrics.

\medskip
In this section, however, we successfully compute the exact value of the $\Z_2$-Gromov–Hausdorff distances $\dgh^{\Z_2}(\Sp_{\mathrm{E}}^m,\Sp_{\mathrm{E}}^n)$ for the specific cases $(m,n) = (1,2), (1,3),$ and $(2,3)$, leveraging the lower bound established in~\Cref{sec:Gindices}.

\begin{theorem}\label{thm:SEmSEm+1lb}
For any positive integers $0 < m < n < \infty$, we have
$$\dgh^{\Z_2}(\Sp_\mathrm{E}^m,\Sp_\mathrm{E}^n)\geq\frac{1}{2}\,\sqrt{\frac{2m+4}{m+1}}.$$
\end{theorem}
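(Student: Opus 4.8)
The plan is to read the bound off from the chain of inequalities in the Main Theorem (page~\pageref{page:main-theorem}), combined with the $\Z_2$-persistent index estimate of \Cref{prop:cforspheres}. First I would apply \Cref{Gdhtstability} with $G=\Z_2$, $p=\infty$, $X=\SpE^m$, and $Y=\SpE^n$, which gives
$$2\,\dgh^{\Z_2}(\SpE^m,\SpE^n)\geq\dht^{\Z_2}\big((\Pfin(\SpE^m),\diam_\infty^{\SpE^m}),(\Pfin(\SpE^n),\diam_\infty^{\SpE^n})\big).$$
Then \Cref{prop:strongercGpXYlbdd}, again with $p=\infty$, bounds the right-hand side below by $\sup_{\varepsilon\geq 0}c_\infty^{\Z_2}(\SpE^m,\SpE^n;\varepsilon)$, hence in particular by $c_\infty^{\Z_2}(\SpE^m,\SpE^n;0)$. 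Finally, item~(2) of \Cref{prop:cforspheres} supplies $c_\infty^{\Z_2}(\SpE^m,\SpE^n;0)\geq\sqrt{\tfrac{2m+4}{m+1}}$, and dividing by $2$ gives the claim. In this form the statement is essentially a bookkeeping corollary; the substance lies in the ingredients it invokes, chiefly \Cref{prop:cforspheres}(2).

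For \Cref{prop:cforspheres}(2) itself I would argue as follows. By item~(1)(a) of \Cref{prop:strongercGpXYlbddalernatives} one has $c_\infty^{\Z_2}(\SpE^m,\SpE^n;0)=c_\vr^{\Z_2}(\SpE^m,\SpE^n;0)$, so it suffices to rule out a $\Z_2$-map $\SpE^n\toZtwo|\vr(\SpE^m;r)|$ whenever $r<\sqrt{\tfrac{2m+4}{m+1}}$. The crucial input is the $\Z_2$-equivariant Hausmann theorem (\Cref{thm:Z2hausmannsphere}): converting the geodesic threshold $\zeta_m=\arccos(-\tfrac1{m+1})$ into chordal scale through the half-angle identity $2\sin(\zeta_m/2)=\sqrt{\tfrac{2m+4}{m+1}}$, it produces a $\Z_2$-homotopy equivalence $|\vr(\SpE^m;r)|\toZtwo\SpE^m$ for every $r\in\big(0,\sqrt{\tfrac{2m+4}{m+1}}\big]$. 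A hypothetical $\Z_2$-map $\SpE^n\toZtwo|\vr(\SpE^m;r)|$ with $r$ strictly below this threshold would then compose with the equivalence to yield a $\Z_2$-map $\SpE^n\toZtwo\SpE^m$ with $n>m$, contradicting the Borsuk--Ulam theorem.

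The main obstacle is not this chain of deductions but the $\Z_2$-equivariant Hausmann theorem on which it rests: one must promote the known non-equivariant homotopy equivalence $|\vr(\Sp^n;r)|\simeq\Sp^n$, valid on the optimal range $(0,\zeta_n]$, to one compatible with the antipodal involution, which is carried out in \Cref{app:proof-Z2-H}. Two minor points require attention along the way: first, the reparametrization $d_{\SpE^m}=2\sin(d_{\Sp^m}/2)$ is strictly increasing, so $\vr(\SpE^m;2\sin(t/2))$ and $\vr(\Sp^m;t)$ carry the same simplices and the threshold $\zeta_m$ transports exactly to $2\sin(\zeta_m/2)=\sqrt{\tfrac{2m+4}{m+1}}$; second, this quantity lies in $(\sqrt2,\sqrt3\,]$, so the resulting lower bound $\tfrac12\sqrt{\tfrac{2m+4}{m+1}}$ sits strictly between $0$ and the trivial upper bound $\tfrac12\diam(\SpE^n)=1$, and is therefore a genuinely nontrivial estimate that, for the low-dimensional cases $(m,n)\in\{(1,2),(1,3),(2,3)\}$, will be shown to be sharp.
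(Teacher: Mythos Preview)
Your proposal is correct and follows exactly the paper's approach: the paper's proof of \Cref{thm:SEmSEm+1lb} is the one-line invocation of \Cref{Gdhtstability}, \Cref{prop:strongercGpXYlbdd} with $p=\infty$, and item~(2) of \Cref{prop:cforspheres}, and your unpacking of the latter via \Cref{prop:strongercGpXYlbddalernatives}(1)(a), the chordal reparametrization of \Cref{thm:Z2hausmannsphere}, and Borsuk--Ulam also mirrors the paper's own argument for that ingredient.
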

\begin{proof}
Apply \Cref{Gdhtstability}, \Cref{prop:strongercGpXYlbdd} for $p=\infty$, and  item (2) of \Cref{prop:cforspheres}.
\end{proof}

It turns out that the lower bound given in \Cref{thm:SEmSEm+1lb} is tight in the following cases.

\begin{corollary}\label{cor:SEmnprecise}
We have the following equalities:
\begin{enumerate}
    \item $\dgh^{\Z_2}(\Sp_\mathrm{E}^1,\Sp_\mathrm{E}^{2})=\frac{\sqrt{3}}{2}$,
    
    \item $\dgh^{\Z_2}(\Sp_\mathrm{E}^1,\Sp_\mathrm{E}^{3})=\frac{\sqrt{3}}{2}$, and
    
    \item $\dgh^{\Z_2}(\Sp_\mathrm{E}^2,\Sp_\mathrm{E}^{3})=\sqrt{\frac{2}{3}}.$
\end{enumerate}
\end{corollary}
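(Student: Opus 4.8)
The plan is to combine the lower bound of \Cref{thm:SEmSEm+1lb} with matching upper bounds coming from explicit $\Z_2$-equivariant correspondences, so that all three equalities fall out simultaneously.

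For the lower bounds I would just specialize \Cref{thm:SEmSEm+1lb}. Taking $m=1$ gives $\dgh^{\Z_2}(\SpE^1,\SpE^n)\ge\tfrac12\sqrt{\tfrac{6}{2}}=\tfrac{\sqrt3}{2}$ for every $n>1$, in particular for $n=2$ and $n=3$; taking $m=2,n=3$ gives $\dgh^{\Z_2}(\SpE^2,\SpE^3)\ge\tfrac12\sqrt{\tfrac{8}{3}}=\sqrt{\tfrac23}$. It is worth recording, as in the proof of \Cref{prop:cforspheres}, that $\sqrt{\tfrac{2m+4}{m+1}}=2\sin(\zeta_m/2)$ is precisely the common Euclidean distance between two vertices of a regular $(m+1)$-simplex inscribed in $\SpE^m$; this is exactly the distortion that the upper-bound correspondences must attain.

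For the upper bounds the task is, for each pair $(m,n)\in\{(1,2),(1,3),(2,3)\}$, to produce a correspondence $R\in\mathcal R_{\Z_2}(\SpE^m,\SpE^n)$ with $\dis(R)=\sqrt{\tfrac{2m+4}{m+1}}$; since $\dgh^{\Z_2}(\SpE^m,\SpE^n)=\tfrac12\inf_{R\in\mathcal R_{\Z_2}(\SpE^m,\SpE^n)}\dis(R)$, such an $R$ yields $\dgh^{\Z_2}(\SpE^m,\SpE^n)\le\tfrac12\sqrt{\tfrac{2m+4}{m+1}}$, and together with the lower bound the three claimed equalities follow. The natural starting point is the family of correspondences built in \cite[Section~9]{lim2023gromov}, which already realizes $\tfrac12\sqrt{\tfrac{2m+4}{m+1}}$ as an upper bound for the \emph{non-equivariant} distances $\dgh(\SpE^m,\SpE^n)$ in exactly these low-dimensional cases; what has to be checked is that, possibly after replacing the auxiliary point configurations used there by antipodally symmetric ones, these relations commute with the antipodal involutions on both factors, i.e. $(-x,-y)\in R$ whenever $(x,y)\in R$, so that they are genuine $\Z_2$-correspondences.

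I expect this last verification to be the main obstacle. Unlike the geodesic case, where the helmet trick (\Cref{lemma:antipodalcorrespondence}) converts \emph{any} correspondence between spheres into a $\Z_2$-correspondence of the same distortion, there is no analogous device for the Euclidean metric: a naive antipodal symmetrization $R\mapsto R\cup\{(-x,-y):(x,y)\in R\}$ in general increases the distortion, because on the unit sphere $d(x,-x')=\sqrt{4-d(x,x')^2}$ and the map $t\mapsto\sqrt{4-t^2}$ is very far from $1$-Lipschitz near $t=2$, while the distortions relevant here exceed $\sqrt2$. Hence the $\Z_2$-symmetry has to be built into the correspondence from the outset, and for each of the three dimension pairs one must verify directly both that the resulting relation is surjective onto each sphere and that its distortion does not exceed $\sqrt{\tfrac{2m+4}{m+1}}$; the restriction to the pairs $(1,2)$, $(1,3)$, $(2,3)$ reflects precisely the range of dimensions in which such antipodally symmetric optimal correspondences are presently available.
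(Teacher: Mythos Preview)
Your approach is essentially the same as the paper's: lower bounds from \Cref{thm:SEmSEm+1lb}, upper bounds from the explicit correspondences of \cite{lim2023gromov} once one checks they are $\Z_2$-equivariant. The paper resolves your ``main obstacle'' simply by pointing out that the correspondences of \cite[Propositions~1.16,~1.18,~1.19]{lim2023gromov} are already $\Z_2$-correspondences as written (no modification or symmetrization is needed), and invokes \cite[Lemma~9.7]{lim2023gromov} to read off the Euclidean distortions; so the verification you anticipate is immediate rather than delicate.
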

\begin{proof}
Apply \Cref{thm:SEmSEm+1lb}, \cite[Lemma 9.7]{lim2023gromov}, and the fact that the correspondences of \cite[Propositions 1.16,1.18,1.19]{lim2023gromov} are actually all $\Z_2$-correspondences.
\end{proof}

\Cref{thm:SEmSEm+1lb} and \Cref{cor:SEmnprecise} lead to formulating the following conjecture (cf. \cite[Conjecture 1]{lim2023gromov}):

\begin{conjecture}
For all positive integers $m$,  we have that $\dgh^{\Z_2}(\Sp_\mathrm{E}^m,\Sp_\mathrm{E}^{m+1})=\frac{1}{2}\,\sqrt{\frac{2m+4}{m+1}}$.
\end{conjecture}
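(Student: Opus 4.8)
The plan is to derive the conjecture by pairing the lower bound already secured in \Cref{thm:SEmSEm+1lb} with a matching upper bound obtained by reading a (near-)optimal \emph{geodesic} $\Z_2$-correspondence in the chordal metric. The starting point is the elementary identity $\sqrt{\tfrac{2m+4}{m+1}}=2\sin(\zeta_m/2)=\phi(\zeta_m)$, where $\phi(t):=2\sin(t/2)$ is the reparametrization converting geodesic to chordal distances on every round sphere, i.e. $d_{\SpE^N}(u,v)=\phi\big(d_{\Sp^N}(u,v)\big)$ for all $u,v\in\Sp^N$. On $[0,\pi]$ the function $\phi$ is nondecreasing, concave, and satisfies $\phi(0)=0$; hence it is subadditive there, and consequently $|\phi(a)-\phi(b)|\le\phi(|a-b|)$ whenever $a,b\in[0,\pi]$.

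First I would fix a small $\varepsilon>0$ (so that $\zeta_m+\varepsilon<\pi$) and invoke \Cref{prop:Z2dGHSmSn} together with the known value $2\dgh(\Sp^m,\Sp^{m+1})=\zeta_m$ (\cite[Main Theorem]{adams2022gromov}; cf. \Cref{rmk:recovpolymath}) --- or, concretely, the helmet-trick correspondence of \cite{lim2023gromov} symmetrized via \Cref{lemma:antipodalcorrespondence}, which preserves distortion --- to produce a $\Z_2$-correspondence $R$ between the geodesic spheres $\Sp^m$ and $\Sp^{m+1}$ with $\dis(R)<\zeta_m+\varepsilon$. Since the antipodal $\Z_2$-actions are the same set maps irrespective of the metric, this very relation $R$ is also a $\Z_2$-correspondence between $\SpE^m$ and $\SpE^{m+1}$. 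For any $(u,v),(u',v')\in R$, put $a:=d_{\Sp^m}(u,u')\in[0,\pi]$ and $b:=d_{\Sp^{m+1}}(v,v')\in[0,\pi]$; then $|a-b|\le\dis(R)<\zeta_m+\varepsilon<\pi$, so the displayed property of $\phi$ gives $\big|d_{\SpE^m}(u,u')-d_{\SpE^{m+1}}(v,v')\big|=|\phi(a)-\phi(b)|\le\phi(|a-b|)\le\phi(\zeta_m+\varepsilon)$. Thus the chordal distortion of $R$ is at most $\phi(\zeta_m+\varepsilon)$, whence $2\,\dgh^{\Z_2}(\SpE^m,\SpE^{m+1})\le\phi(\zeta_m+\varepsilon)$; letting $\varepsilon\to0$ and using continuity of $\phi$ yields $\dgh^{\Z_2}(\SpE^m,\SpE^{m+1})\le\tfrac12\phi(\zeta_m)=\tfrac12\sqrt{\tfrac{2m+4}{m+1}}$, which combined with \Cref{thm:SEmSEm+1lb} closes the argument.

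I do not foresee a deep obstacle: the mechanism is that the chordal metric is a concave reparametrization of the geodesic one, so both the optimal correspondence's distortion and the $\Z_2$-persistent-index lower bound of \Cref{sec:Gindices} transform under the same $\phi$ --- the geodesic pair $(\zeta_m,\zeta_m)$ of matching bounds becoming the chordal pair $\big(\phi(\zeta_m),\phi(\zeta_m)\big)$. The point that needs care is that the symmetrized correspondence retains geodesic distortion $\le\zeta_m$, which is exactly what \Cref{lemma:antipodalcorrespondence} provides; and the only ingredient not proved from scratch in the paper is the geodesic upper bound $2\dgh(\Sp^m,\Sp^{m+1})\le\zeta_m$, whose proof (the helmet construction of \cite{lim2023gromov}) is the single genuinely technical step one would have to reproduce for a self-contained treatment. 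I would also note that the same argument applies verbatim to every $n$ with $m<n<\infty$, since both \Cref{thm:SEmSEm+1lb} and the equality $2\dgh^{\Z_2}(\Sp^m,\Sp^n)=\zeta_m$ (via \Cref{prop:Z2dGHSmSn}, \Cref{cor:SmSnlbddvr}, and the helmet trick) hold in that range; so I expect the stronger statement $\dgh^{\Z_2}(\SpE^m,\SpE^n)=\tfrac12\sqrt{\tfrac{2m+4}{m+1}}$ for all $1\le m<n<\infty$, uniformly recovering \Cref{cor:SEmnprecise}.
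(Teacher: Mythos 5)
The step that does not hold up is the invocation of ``the known value $2\dgh(\Sp^m,\Sp^{m+1})=\zeta_m$''. What \cite[Main Theorem]{adams2022gromov} gives (and what \Cref{rmk:recovpolymath} in this paper reconstructs) is only the \emph{lower} bound $2\dgh(\Sp^m,\Sp^n)\geq c_{\vr}^{\Z_2}(\Sp^m,\Sp^n;0)\geq\zeta_m$; the matching upper bound $2\dgh(\Sp^m,\Sp^{m+1})\leq\zeta_m$ is known only for $m\leq 2$ (together with $\dgh(\Sp^1,\Sp^3)=\zeta_1/2$), via the explicit correspondences from \cite[Propositions 1.16, 1.18, 1.19]{lim2023gromov} used in \Cref{cor:SEmnprecise}. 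For $m\geq 3$ the geodesic equality is itself the open \cite[Conjecture 1]{lim2023gromov} --- which is precisely why the present statement is a conjecture rather than a theorem. Concretely: no correspondence between $\Sp^m$ and $\Sp^{m+1}$ with geodesic distortion approaching $\zeta_m$ is known for $m\geq 3$; the best explicit construction (\cite[Proposition 1.20]{lim2023gromov}) has strictly larger distortion, which is why \Cref{prop:SEmSEm+1ub} yields only $\tfrac12\eta_m^{\mathrm E}$, a value exceeding $\tfrac12\sqrt{\tfrac{2m+4}{m+1}}$ already at $m=2$ and $m=3$.

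Your transfer lemma --- that chordal distortion is bounded by $\phi$ applied to geodesic distortion, with $\phi(t)=2\sin(t/2)$, concavity of $\phi$ on $[0,\pi]$ and $\phi(0)=0$ giving subadditivity and hence $|\phi(a)-\phi(b)|\leq\phi(|a-b|)$ --- is correct, and combined with \Cref{prop:Z2dGHSmSn}, \Cref{lemma:antipodalcorrespondence}, \Cref{thm:SEmSEm+1lb}, and the identity $\phi(\zeta_m)=\sqrt{\tfrac{2m+4}{m+1}}$ it establishes a genuine conditional result: \emph{if} $\dgh(\Sp^m,\Sp^{m+1})=\zeta_m/2$, \emph{then} $\dgh^{\Z_2}(\SpE^m,\SpE^{m+1})=\tfrac12\sqrt{\tfrac{2m+4}{m+1}}$. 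So you have reduced the chordal conjecture to the geodesic one, which is worth recording, but it is not a proof. The closing paragraph overreaches further: $2\dgh^{\Z_2}(\Sp^m,\Sp^n)=\zeta_m$ for all $n>m$ is likewise not established, and \Cref{cor:SmSnlbddvr} only furnishes a lower bound on $\dht^{\Z_2}$ via $c_\infty^{\Z_2}(\Sp^m,\Sp^n;0)$, not the upper bound on $\dgh$ your argument needs.
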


By adapting a construction from \cite{lim2021gromov}, we can obtain the following upper bound (which is tight for $m=1$).

\begin{proposition}\label{prop:SEmSEm+1ub}
For any positive integers $m\geq 1$, we have
$$\dgh^{\Z_2}(\Sp_\mathrm{E}^m,\Sp_\mathrm{E}^{m+1})\leq\frac{1}{2}\eta_m^{\mathrm{E}}$$
where
$$\eta_m^{\mathrm{E}}:=\begin{cases}2\sqrt{\frac{m+2}{m+3}}&\text{for odd }m\\ \sqrt{2+2\sqrt{\frac{m}{m+4}}}&\text{for even }m.\end{cases}$$
\end{proposition}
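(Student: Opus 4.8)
The plan is to produce a single explicit $\Z_2$-correspondence of small distortion. By \Cref{prop:GGHeqmaps} it suffices to exhibit $\Z_2$-functions $\varphi\colon\SpE^m\to\SpE^{m+1}$ and $\psi\colon\SpE^{m+1}\to\SpE^m$ with
$$\dis\big(R(\varphi,\psi)\big)=\max\{\dis(\varphi),\dis(\psi),\codis(\varphi,\psi)\}\le\eta_m^{\mathrm{E}},$$
because then $\dgh^{\Z_2}(\SpE^m,\SpE^{m+1})\le\tfrac12\dis(R(\varphi,\psi))\le\tfrac12\eta_m^{\mathrm{E}}$. For $\varphi$ I would take the equatorial inclusion $v\mapsto(v,0)$: it is a $\Z_2$-equivariant isometric embedding, so $\dis(\varphi)=0$ and $\codis(\varphi,\psi)=\sup_{v,u}\big|\,\|v-\psi(u)\|-\|(v,0)-u\|\,\big|$.

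For $\psi$ I would $\Z_2$-symmetrize the map underlying the upper-bound construction of \cite{lim2021gromov}. Write a point $u\in\SpE^{m+1}\subset\R^{m+2}$ in spherical-suspension coordinates $u=\cos\phi\cdot y+\sin\phi\cdot e$ with $y\in\SpE^m$, $\phi\in[-\tfrac\pi2,\tfrac\pi2]$, $e$ the extra coordinate vector; the antipodal involution is $(\phi,y)\mapsto(-\phi,-y)$. Fix a threshold $\phi_0\in(0,\tfrac\pi2)$ and a basepoint $x_0\in\SpE^m$. On the equatorial band $\{|\phi|\le\phi_0\}$ set $\psi(u):=y$; on the cap $\{\phi>\phi_0\}$ define $\psi$ by an explicit $\Z_2$-equivariant ``fold'' that coincides with $u\mapsto y$ along the interface $\phi=\phi_0$ and sweeps the $\phi$-level sphere onto $x_0$ as $\phi\uparrow\tfrac\pi2$, and extend to $\{\phi<-\phi_0\}$ by equivariance (so it sweeps toward $-x_0$). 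Borsuk--Ulam forces some discontinuity, but it can be confined to a $\Z_2$-invariant subset across which the jump of $\psi$ is still small enough not to affect the supremum defining $\dis(R(\varphi,\psi))$. The precise fold, hence the extremal configurations of $u,u'$, is what splits into two cases: a $\Z_2$-equivariant collapse of a ball onto an $m$-sphere with fibers of minimal diameter is carried out differently for odd and even $m$, and this is the source of the two formulas for $\eta_m^{\mathrm{E}}$.

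What remains is elementary Euclidean/spherical trigonometry. I would bound $\big|\,\|v-\psi(u)\|-\|(v,0)-u\|\,\big|$ and $\big|\,\|\psi(u)-\psi(u')\|-\|u-u'\|\,\big|$ by case analysis on whether $u,u'$ lie in the band, the same cap, or opposite caps; each case becomes the maximization of an explicit function of $\phi_0$ and a few angular variables via $\|a-b\|^2=2-2\langle a,b\rangle$. One then minimizes the worst of these over $\phi_0$; the optimal $\phi_0$ and the resulting value $\eta_m^{\mathrm{E}}$ depend on the parity of $m$. Finally, comparing with \Cref{cor:SEmnprecise} confirms that the bound is attained when $m=1$.

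The main obstacle I anticipate is the polar caps: designing the fold so that it is simultaneously $\Z_2$-equivariant, continuous off a controlled invariant set, and has fibers small enough that the contribution $\big|\,\|\psi(u)-\psi(u')\|-\|u-u'\|\,\big|$ for $u,u'$ inside one cap stays $\le\eta_m^{\mathrm{E}}$. This is exactly where the odd/even dichotomy appears (for odd $m$ one can sweep $\SpE^m$ along a Hopf-type family of great circles, while for even $m$ one is forced to sweep through the antipodal pair $\pm x_0$, which is less uniform near those points) and where the constants $2\sqrt{(m+2)/(m+3)}$ and $\sqrt{2+2\sqrt{m/(m+4)}}$ are pinned down. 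The band estimates, and the band--cap interface, are routine and mirror the corresponding geodesic computation.
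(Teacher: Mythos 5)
Your high-level plan is the same as the paper's: the paper's proof simply observes that the correspondence used in the proof of [lim2023gromov, Proposition~1.20] (a band-and-cap construction of exactly the kind you describe) is already a $\Z_2$-correspondence, and then computes its distortion with respect to $d_{\SpE^m}$ and $d_{\SpE^{m+1}}$ instead of the geodesic metrics. Two remarks are in order. First, your step of ``$\Z_2$-symmetrizing'' the map from \cite{lim2021gromov} is unnecessary: the paper points out that the existing correspondence is $\Z_2$-equivariant as constructed, so there is nothing to symmetrize. Relatedly, your aside about Borsuk--Ulam forcing a discontinuity is a red herring here; $\dgh^{\Z_2}$ is defined via $G$-functions that need not be continuous (\Cref{prop:GGHeqmaps}), so continuity of $\psi$ plays no role in the distortion bound.

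The more serious issue is that, as written, this is a plan rather than a proof. You never give the fold map $\psi$ explicitly, you never carry out the case analysis (band--band, band--cap, cap--cap, opposite caps), and the values $2\sqrt{(m+2)/(m+3)}$ and $\sqrt{2+2\sqrt{m/(m+4)}}$ are asserted to emerge from an optimization over $\phi_0$ without any derivation. For this proposition the entire mathematical content \emph{is} that distortion computation; the $\Z_2$-equivariance and the choice $\varphi(v)=(v,0)$ are the easy parts. A verifier reading your text cannot check that your fold produces the stated $\eta_m^{\mathrm{E}}$, nor even that your construction coincides with the one from \cite{lim2021gromov,lim2023gromov} whose distortion is known. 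To make this a proof you would need either to actually write down $\psi$ and the trigonometric bounds, or (as the paper does) to cite the specific correspondence from [lim2023gromov, Proposition~1.20], note its $\Z_2$-equivariance, and reduce the Euclidean distortion computation to the identities there.
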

\begin{proof}
Note that the correspondence between $\Sp^m$ and $\Sp^{m+1}$ employed in the proof of \cite[Proposition 1.20]{lim2023gromov} is actually a $\Z_2$-correspondence. Hence, if one uses the same correspondence and computes its distortion relative to $\Sp^n_\mathrm{E}$ and $\Sp^m_\mathrm{E}$, then one can establish the required inequality.
\end{proof}

\subsubsection{The case of $\dgh^{\Z_2}(\square^m_\infty,\square^n_\infty)$}\label{sec:Z2dGHinftysphere}

Recall that $\square^n_\infty$ and $\Sp^n_\infty$ have canonical $\Z_2$-actions; see  item (3) of \Cref{ex:spheres-Z2}. Surprisingly, we have the following.

\begin{theorem}\label{thm:Z2dGHsqmsqn}
For every $m<n$, we have
$$\dgh^{\Z_2}(\square^m_\infty,\square^n_\infty)=1.$$
\end{theorem}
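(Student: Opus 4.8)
The plan is to establish the upper and lower bounds $\dgh^{\Z_2}(\square^m_\infty,\square^n_\infty)\le 1$ and $\dgh^{\Z_2}(\square^m_\infty,\square^n_\infty)\ge 1$ separately. For the lower bound, I would leverage the machinery already in place: by the Main Theorem (page \pageref{page:main-theorem}), \Cref{prop:strongercGpXYlbdd} with $p=\infty$, and item (1) of \Cref{prop:cforsphereslinfty}, we have
\begin{align*}
2\,\dgh^{\Z_2}(\square^m_\infty,\square^n_\infty)
&\ge \dht^{\Z_2}\big((\Pfin(\square^m_\infty),\diam_\infty^{\square^m_\infty}),(\Pfin(\square^n_\infty),\diam_\infty^{\square^n_\infty})\big)\\
&\ge c_\infty^{\Z_2}(\square^m_\infty,\square^n_\infty;0)\ge 2,
\end{align*}
so $\dgh^{\Z_2}(\square^m_\infty,\square^n_\infty)\ge 1$. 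This is the ``free'' half — it is essentially a bookkeeping assembly of the earlier results, and the real content is hidden in the proof of \Cref{prop:cforsphereslinfty}, which in turn rests on the $\Z_2$-nerve lemma, the identification $\TS(\square^m_\infty)=\blacksquare^{m+1}_\infty$ with $B_r(\square^m_\infty,\blacksquare^{m+1}_\infty)=[-1,1]^{m+1}\setminus[-(1-r),1-r]^{m+1}$, and the Borsuk–Ulam theorem.

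For the upper bound, I would construct an explicit $\Z_2$-correspondence $R\in\mathcal{R}_{\Z_2}(\square^m_\infty,\square^n_\infty)$ with $\dis(R)\le 2$. Since both cubes sit inside the corresponding Euclidean spaces with the $\ell^\infty$ metric, the natural candidate is to relate a point $x\in\square^m_\infty\subset\R^{m+1}_\infty$ to those $y\in\square^n_\infty\subset\R^{n+1}_\infty$ whose first $m+1$ coordinates agree with $x$ (or lie within some tolerance of $x$), with the last $n-m$ coordinates free, subject to $y$ actually lying on the boundary cube $\square^n_\infty$. One must check: (i) surjectivity onto both factors — every $x$ has a partner (pad with zeros, but ensure the result is on $\square^n$: since $x\in\square^m$ already has a coordinate equal to $\pm1$, padding with zeros lands in $\square^n$), and every $y\in\square^n$ has a partner (project the first $m+1$ coordinates, then radially push to $\square^m$ if necessary); (ii) $\Z_2$-equivariance, which holds because the antipodal map acts coordinatewise and commutes with projection/padding; (iii) the distortion bound. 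For (iii), for two pairs $(x,y),(x',y')\in R$ one estimates $|\,\|x-x'\|_\infty-\|y-y'\|_\infty\,|$; since the first block of coordinates roughly matches and each coordinate lives in $[-1,1]$, the discrepancy coming from the extra coordinates of $y,y'$ is at most $2$ (diameter of $[-1,1]$), and with care the ``radial correction'' near the boundary contributes nothing worse. I expect that a clean correspondence gives $\dis(R)\le 2$ exactly.

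The main obstacle, and the step I would spend the most effort on, is the upper bound construction — specifically verifying that the naive ``forget/pad coordinates'' relation can be upgraded to a genuine \emph{correspondence} (surjective both ways) that simultaneously stays on the boundary spheres $\square^m,\square^n$, remains $\Z_2$-equivariant, and has distortion exactly $2$ rather than something larger. The tension is that projecting a boundary point of the big cube need not land on the boundary of the small cube, so a radial correction is needed, and one must confirm this correction does not inflate the distortion. I would look for an analogue of the construction used for $\Sp_\mathrm{E}^m$ versus $\Sp_\mathrm{E}^{m+1}$ in \cite{lim2023gromov} (cf.\ \Cref{prop:SEmSEm+1ub}), adapted to the $\ell^\infty$ cube setting, where the combinatorics of ``which coordinate achieves $\pm1$'' replaces the normalization. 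Once $\dis(R)\le 2$ is confirmed, combining with the lower bound yields $\dgh^{\Z_2}(\square^m_\infty,\square^n_\infty)=\frac12\cdot 2=1$, completing the proof.
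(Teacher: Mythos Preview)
Your lower bound argument is exactly the paper's: chain \Cref{Gdhtstability}, \Cref{prop:strongercGpXYlbdd} with $p=\infty$, and item (1) of \Cref{prop:cforsphereslinfty} to get $2\,\dgh^{\Z_2}(\square^m_\infty,\square^n_\infty)\ge c_\infty^{\Z_2}(\square^m_\infty,\square^n_\infty;0)\ge 2$.

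For the upper bound, however, you are working far too hard. You identify the construction of an explicit $\Z_2$-correspondence with $\dis(R)\le 2$ as ``the main obstacle,'' and propose a pad/project/radially-correct scheme whose details you worry about. But the paper dispatches the upper bound in one line using item (1) of \Cref{rmk:G-dGH}: the \emph{full} relation $R=\square^m_\infty\times\square^n_\infty$ is always a $G$-correspondence, and its distortion is $\max\{\diam(\square^m_\infty),\diam(\square^n_\infty)\}=2$, so $\dgh^{\Z_2}(\square^m_\infty,\square^n_\infty)\le 1$ immediately. No projection, no boundary correction, no verification of equivariance for a nontrivial relation is needed. Your proposed construction would presumably work, but it buys nothing: the target distortion $2$ is already achieved by the crudest possible correspondence, and the entire content of the theorem lies in the lower bound (which is why \Cref{prop:cforsphereslinfty} exists).
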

\begin{proof}
Note that $\dgh^{\Z_2}(\square^m_\infty,\square^n_\infty)\leq\frac{1}{2}\max\{\diam(\square^m_\infty),\diam(\square^n_\infty)\}=1$ by item (1) of \Cref{rmk:G-dGH}. Furthermore, $\dgh^{\Z_2}(\square^m_\infty,\square^n_\infty)\geq 1$ by \Cref{Gdhtstability}, \Cref{prop:strongercGpXYlbdd} for $p=\infty$, and  item (3) of \Cref{prop:cforspheres}. This completes the proof. 
\end{proof}

\begin{corollary}
For any $m<n$, we have
$$\dgh^{\Z_2}(\Sp^m_\infty,\Sp^n_\infty)\geq\frac{1}{\sqrt{m+1}}.$$
\end{corollary}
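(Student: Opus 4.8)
The plan is to run the very same argument that proves \Cref{thm:Z2dGHsqmsqn}, only replacing the lower bound for the $\ell^\infty$-cube by the corresponding lower bound for the $\ell^\infty$-sphere. Concretely, I would instantiate the Main Theorem (page~\pageref{page:main-theorem}) with $X=\Sp^m_\infty$, $Y=\Sp^n_\infty$ equipped with their canonical antipodal $\Z_2$-actions (item (3) of \Cref{ex:spheres-Z2}), $p=\infty$, and the persistent-index branch of the lower bound, so that the whole estimate collapses onto a single inequality that has already been established, namely item (2) of \Cref{prop:cforsphereslinfty}.

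Spelling this out, the chain of inequalities I would write down is
\begin{align*}
2\,\dgh^{\Z_2}(\Sp^m_\infty,\Sp^n_\infty)
&\geq \dht^{\Z_2}\big((\Pfin(\Sp^m_\infty),\diam_\infty^{\Sp^m_\infty}),(\Pfin(\Sp^n_\infty),\diam_\infty^{\Sp^n_\infty})\big)\\
&\geq \sup_{\varepsilon\geq 0} c_\infty^{\Z_2}(\Sp^m_\infty,\Sp^n_\infty;\varepsilon)
\geq c_\infty^{\Z_2}(\Sp^m_\infty,\Sp^n_\infty;0)
\geq \frac{2}{\sqrt{m+1}},
\end{align*}
where the first step is \Cref{Gdhtstability} (valid since $\Z_2$ is finite), the second is \Cref{prop:strongercGpXYlbdd} with $p=\infty$, the third is immediate from the definition of the supremum, and the last is item (2) of \Cref{prop:cforsphereslinfty}. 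Dividing through by $2$ yields $\dgh^{\Z_2}(\Sp^m_\infty,\Sp^n_\infty)\geq \tfrac{1}{\sqrt{m+1}}$, which is the assertion.

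There is no genuine obstacle at the level of this corollary: all of the real work — the $G$-nerve lemma, the identification $\mathrm{Sep}^{\Z_2}(\Sp^m_\infty)=\tfrac{2}{\sqrt{m+1}}$ from item (3) of \Cref{ex:G-sep}, the injectivity-based $\Z_2$-retraction from $\TS(\Sp^m_\infty)$ onto $\R^{m+1}_\infty$, and the concluding appeal to the Borsuk--Ulam theorem — is already packaged inside \Cref{prop:cforsphereslinfty} and \Cref{prop:strongercGpXYlbddalernatives}. The only points deserving a sentence of care are: quoting the $p=\infty$ instance of \Cref{prop:strongercGpXYlbdd} so that $c_\infty^{\Z_2}$ (and not some other $c_p^{\Z_2}$) appears, and recording that the $\Z_2$-actions on both spheres are the canonical antipodal ones so that \Cref{prop:cforsphereslinfty} applies verbatim. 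One may also note that, in contrast to the cube case of \Cref{thm:Z2dGHsqmsqn}, no matching upper bound is provided here, so the inequality is not claimed to be tight.
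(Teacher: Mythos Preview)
Your proposal is correct and matches the paper's own proof essentially verbatim: apply \Cref{Gdhtstability}, then \Cref{prop:strongercGpXYlbdd} with $p=\infty$, and finish with item~(2) of \Cref{prop:cforsphereslinfty}. (The paper's printed reference to ``item (4) of \Cref{prop:cforspheres}'' is a typo; the statement you cite is the intended one.)
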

\begin{proof}
Apply \Cref{Gdhtstability}, \Cref{prop:strongercGpXYlbdd} for $p=\infty$, and the item (4) of \Cref{prop:cforspheres}.
\end{proof}

%%%%%%%%%%%%%%%%%%%%%%%%%%%%%%%%%%%%%%%%%%%%%%

\bibliography{main-arxiv-v3.bib}

%%%%%%%%%%%%%%%%%%%%%%%%%

\appendix

%%%%%%%%%%%%%%%%%%%%%%%%%%%%%%%%%%%%%%%%%%%%%%%%%%%%%%%%%%%%%%%%%%
\section{Proofs from \Cref{sec:Gdistances}.}\label{app:proofs-GHG}

\begin{proof}[\textbf{Proof of \Cref{prop:GGHeqmaps}}]
First, let's prove that LHS$\geq$RHS. For that, fix an arbitrary $R\in\mathcal{R}_G$. For each orbit $[x]\in X\slash G$, choose a representative point $x\in X$ of $[x]$ and define $\varphi(x)$ as a point in $Y$ such that $(x,\varphi(x))\in R$. We can extend the domain of $\varphi$ to whole $X$ in a $G$-equivariant way. Then, we have $\varphi:X\toG Y$ such that $(x,\varphi(x))\in R$ for all $x\in X$, henceforth $\dis(\varphi)\leq\dis(R)$. In a similar way, one can also construct $\psi:Y\toG X$ such that $(\psi(y),y)\in R$ for all $y\in Y$, henceforth $\dis(\psi)\leq\dis(R)$. Also, it is easy to check that $\codis(\varphi,\psi)\leq\dis(R)$. Hence, $\max\{\dis(\varphi),\dis(\psi),\codis(\varphi,\psi)\}\leq\dis(R)$ and this implies that
$$\dgh^G(X,Y)\geq\frac{1}{2}\inf\Big\{\dis(R(\varphi,\psi)):\varphi:X\toG Y \text{and } \psi:Y\toG X\text{ are }G\text{-functions}\Big\}$$
since $R$ is arbitrary.

Now, in order to prove the reverse direction LHS $\leq$ RHS, fix arbitrary $G$-functions $\varphi:X\toG Y$ and $\psi:Y\toG X$. Then, consider the correspondence $R:=\{(x,\varphi(x))\}_{x\in X}\cup\{(\psi(y),y)\}_{y\in Y}$. It is easy to check that $G$ is indeed a $G$-correspondence since both of $\varphi$ and $\psi$ are $G$-functions. Moreover, it is easy to check that $\dis(R)=\max\{\dis(\varphi),\dis(\psi),\codis(\varphi,\psi)\}$. Hence,
$$\dgh^G(X,Y)\leq\frac{1}{2}\inf\Big\{\dis(R(\varphi,\psi)):\varphi:X\toG Y \text{and } \psi:Y\toG X\text{ are }G\text{-functions}\Big\}$$
since $\varphi,\psi$ are arbitrary. This completes the proof.
\end{proof}

\begin{proof}[\textbf{Proof of \Cref{prop:GGHeqembeds}}]
First, let's prove that LHS$\geq$RHS. Fix an arbitrary $G$-correspondence $R\in\mathcal{R}_G$. Let $\varepsilon:=\frac{1}{2}\dis(R)$. Then, one can define a metric $d_Z$ on $Z:=X\sqcup Y$ in the following way: For all $x\in X$ and $y\in Y$,
$$d_Z(x,y):=\inf_{(x',y')\in R}\big(d_X(x,x')+\varepsilon+d_Y(y,y')\big).$$
Moreover, since it is easy to check that $d_Z(\alpha_X(g,x),\alpha_Y(g,y))=d_Z(x,y)$ for all $x\in X$, $y\in Y$, and $g\in G$, indeed $(Z,d_Z,\alpha_X\sqcup \alpha_Y)$ is a $G$-metric space. Hence, if we consider the canonical embeddings $\iota_X:X\hooktoG Z$ and $\iota_Y:Y\hooktoG Z$, then $d_{\mathrm{H}}^Z(\iota_X(X),\iota_Y(Y))=\varepsilon=\frac{1}{2}\dis(R)$. Since the choice of $R$ is arbitrary, we have LHS$\geq$RHS as we wanted.

Now, in order to prove the reverse direction LHS$\leq$RHS, fix an arbitrary $G$-metric space $Z$ and $G$-isometric embeddings $\iota_X:X\hooktoG Z$ and $\iota_Y:Y\hooktoG Z$. Now, choose an arbitrary $\varepsilon>d_{\mathrm{H}}^Z(\iota_X(X),\iota_Y(Y))$. For each orbit $[x]\in X\slash G$, choose a representative point $x\in X$ of $[x]$ and define $\varphi(x)$ as a point in $Y$ such that $d_Z(\iota_X(x),\iota_Y(\varphi(x)))<\varepsilon$. We can extend the domain of $\varphi$ to whole $X$ in a $G$-equivariant way. Then, we have a $G$-function $\varphi:X\toG Y$ such that $d_Z(\iota_X(x),\iota_Y(\varphi(x)))<\varepsilon$ for all $x\in X$. In a similar way, one can also construct a $G$-function $\psi:Y\toG X$ such that $d_Z(\iota_X(\psi(y)),\iota_Y(y))<\varepsilon$ for all $y\in Y$.  Then, let's consider the correspondence $R:=\{(x,\varphi(x))\}_{x\in X}\cup\{(\psi(y),y)\}_{y\in Y}$. It is easy to verify that $R$ is a $G$-correspondence between $X$ and $Y$ and $\dis(R)\leq 2\varepsilon$. This implies that $\dgh^G(X,Y)\leq\varepsilon$. Since the choice of $\varepsilon$ is arbitrary, we have $\dgh^G(X,Y)\leq d_{\mathrm{H}}^Z(\iota_X(X),\iota_Y(Y))$. Finally, since the choice of $Z$,$\iota_X$, and $\iota_Y$ are arbitrary, we have LHS$\leq$RHS as we required. This completes the proof.
\end{proof}

\begin{proof}[\textbf{Proof of \Cref{thm:dghGmetric}}]
While our proof will be just a slight modification of \cite[Proof of Theorem 7.3.30]{burago2022course}, we write the proof here for the completeness.

Since all the other properties are trivial, it is enough to prove the triangle inequality and that $\dgh^G(X,Y)=0$ implies $X$ and $Y$ are $G$-isometric.

Let us prove the triangle inequality, first. Fix arbitrary $G$-spaces $X$, $Y$, and $Z$. We will prove
$$\dgh^G(X,Z)\leq\dgh^G(X,Y)+\dgh^G(Y,Z).$$
Now, fix an arbitrary $\varepsilon>0$ and choose $R_{XY}\in\mathcal{R}_G(X,Y)$ and $R_{YZ}\in\mathcal{R}_G(Y,Z)$ such that $\dis(R_{XY})<2\dgh^G(X,Y)+\varepsilon$ and $\dis(R_{YZ})<2\dgh^G(Y,Z)+\varepsilon$. Next, we define
$$R_{XZ}:=\{(x,z)\in X\times X:\exists \,y\in Y\text{ s.t. }(x,y)\in R_{XY}\text{ and }(y,z)\in R_{YZ}\}.$$
Note that it is easy to verify that $R_{XZ}\in\mathcal{R}_G(X,Z)$. Furthermore, since
\begin{align*}
    \vert d_X(x,x')-d_Z(z,z')\vert&\leq\vert d_X(x,x')-d_Y(y,y')\vert+\vert d_X(y,y')-d_Z(z,z')\vert\\
    &\leq \dis(R_{XY})+\dis(R_{YZ})
\end{align*}
for all $(x,z),(x',z')\in R_{XZ}$, we have that
$$2\dgh^G(X,Z)\leq\dis(R_{XZ})\leq\dis(R_{XY})+\dis(R_{YZ})<2\dgh^G(X,Y)+2\dgh^G(Y,Z)+2\varepsilon.$$
Since $\varepsilon$ is arbitrary, the proof of the triangle inequality is complete.

Finally, we need to show that if $\dgh^G(X,Y)=0$ then $X$ and $Y$ are $G$-isometric. Let $X$ and $Y$ be two compact $G$-metric spaces such that $\dgh^G(X,Y)=0$. By \Cref{prop:GGHeqmaps}, there exists a sequence of $G$-functions $\varphi_n:X\toG Y$ such that $\dis(\varphi_n)\stackrel{n\rightarrow\infty}{\longrightarrow}0$. Fix a countable dense set $S\subseteq X$. Furthermore, note that (under the assumption that $G$ is finite or countable) without loss of generality one can assume that $S$ is $G$-invariant. i.e., $\alpha_X(g,x)\in S$ for all $g\in G$ and $x\in S$. Using the Cantor diagonal procedure, one can choose a subsequence $\{\varphi_{n_k}\}$ of $\{\varphi_n\}$ such that for every $x\in S$ the sequence $\{\varphi_{n_k}(x)\}$ converges in $Y$. Without loss of generality we may assume that this holds for $\{\varphi_n\}$ itself. Then one can define a map $\varphi:S\toG Y$ as the limit of $\varphi_n$, namely, set $\varphi(x):=\lim\limits_{n\rightarrow\infty}\varphi_n(x)$ for every $x\in S$. Since $\vert d_X(x,x')-d_Y(\varphi_n(x),\varphi_n(x'))\vert\leq\dis(\varphi_n)\stackrel{n\rightarrow\infty}{\longrightarrow}0$, we have $d_Y(\varphi(x),\varphi(x'))=\lim\limits_{n\rightarrow\infty}d_Y(\varphi_n(x),\varphi_n(x'))=d_X(x,x')$ for all $x,x'\in S$. In other words, $\varphi$ is a $G$-equivariant distance preserving map from $S$ to $Y$. Then $\varphi$ can be extended to a distance preserving map from the entire $X$ to $Y$ by \cite[Proposition 1.5.9]{burago2022course}. Furthermore it is easy to verify that this extended $\varphi$ is also $G$-equivariant. In a similar way, one can also construct a $G$-equivariant distance preserving map $\psi:Y\toG X$. Then, $\psi\circ\varphi$ is a distance preserving map from $X$ to $X$. Since $X$ is compact, $\psi\circ\varphi$ is surjective by \cite[Proposition 1.6.14]{burago2022course}. Similarly $\varphi\circ\psi$ is also surjective and it follows that $X$ and $Y$ are $G$-isometric.
\end{proof}

\begin{proof}[\textbf{Proof of \Cref{prop:diGpsdmtr}}]
It is easy to verify that $$\mbox{$\di^G\big((V_\bullet,\nu_\bullet),(V_\bullet,\nu_\bullet)\big)=0$ and $\di^G\big((V_\bullet,\nu_\bullet),(W_\bullet,\omega_\bullet)\big)=\di^G\big((W_\bullet,\omega_\bullet),(V_\bullet,\nu_\bullet)\big)$}$$ from the definition. Hence, we only need to show the triangle inequality. Consider three arbitrary $G$-persistence modules $(U_\bullet,\upsilon_\bullet)$, $(V_\bullet,\nu_\bullet)$, and $(W_\bullet,\omega_\bullet)$. We need to show the following:
$$\di^G\big((U_\bullet,\upsilon_\bullet),(W_\bullet,\omega_\bullet)\big)\leq\di^G\big((U_\bullet,\upsilon_\bullet),(V_\bullet,\nu_\bullet)\big)+\di^G\big((V_\bullet,\nu_\bullet),(W_\bullet,\omega_\bullet)\big).$$
Fix an arbitrary $\varepsilon>\di^G\big((U_\bullet,\upsilon_\bullet),(V_\bullet,\nu_\bullet)\big)$ and $\varepsilon'>\di^G\big((V_\bullet,\nu_\bullet),(W_\bullet,\omega_\bullet)\big)$. Then, $(U_\bullet,\upsilon_\bullet)$ and $(V_\bullet,\nu_\bullet)$ are $(G,\varepsilon)$-interleaved and $(V_\bullet,\nu_\bullet)$ and $(W_\bullet,\omega_\bullet)$ are $(G,\varepsilon')$-interleaved. Hence, there are $G$-persistence morphisms $T_\bullet:U_\bullet \to V_{\bullet+\varepsilon}$, $T'_\bullet: V_\bullet \to U_{\bullet+\varepsilon}$, $S_\bullet:V_\bullet \to W_{\bullet+\varepsilon'}$, and $S'_\bullet: W_\bullet \to V_{\bullet+\varepsilon'}$ satisfying the conditions given in \Cref{def:gint-dist}. Finally, consider the following $G$-persistence morphisms $S_{\bullet+\varepsilon}\circ T_\bullet:U_\bullet \to W_{\bullet+\varepsilon+\varepsilon'}$ and $T'_{\bullet+\varepsilon'}\circ S'_\bullet: W_\bullet \to U_{\bullet+\varepsilon+\varepsilon'}$. We will verify that these families also satisfy the conditions given in \Cref{def:gint-dist}. Note that
\begin{align*}
    (T'_{r+\varepsilon+2\varepsilon'}\circ S'_{r+\varepsilon+\varepsilon'})\circ(S_{r+\varepsilon}\circ T_r)&=T'_{r+\varepsilon+2\varepsilon'}\circ (S'_{r+\varepsilon+\varepsilon'}\circ S_{r+\varepsilon})\circ T_r\\
    &=T'_{r+\varepsilon+2\varepsilon'}\circ v_{r+\varepsilon,r+\varepsilon+2\varepsilon'}\circ T_r\\
    &=u_{r+2\varepsilon,r+2\varepsilon+2\varepsilon'}\circ T'_{r+\varepsilon}\circ T_r\\
    &=u_{r+2\varepsilon,r+2\varepsilon+2\varepsilon'}\circ u_{r,r+2\varepsilon}=u_{r,r+2\varepsilon+2\varepsilon'}.
\end{align*}
for every $r\in\R$. In a similar way, one can also check that $(S_{r+2\varepsilon+\varepsilon'}\circ T_{r+\varepsilon+\varepsilon'})\circ(T'_{r+\varepsilon'}\circ S'_{r})=w_{r,r+2\varepsilon+2\varepsilon'}$ for every $r\in\R$.

Hence, $(U_\bullet,\upsilon_\bullet)$ and $(W_\bullet,\omega_\bullet)$ are $(G,\varepsilon+\varepsilon')$-interleaved so that $\di^G\big((U_\bullet,\upsilon_\bullet),(W_\bullet,\omega_\bullet)\big)\leq\varepsilon+\varepsilon'$. Since the choice of $\varepsilon$ and $\varepsilon'$ are arbitrary, one can conclude that
$$\di^G\big((U_\bullet,\upsilon_\bullet),(W_\bullet,\omega_\bullet)\big)\leq\di^G\big((U_\bullet,\upsilon_\bullet),(V_\bullet,\nu_\bullet)\big)+\di^G\big((V_\bullet,\nu_\bullet),(W_\bullet,\omega_\bullet)\big).$$
\end{proof}

%%%%%%%%%%%%%%%%%%%%%%%%%%%%%%%%%%%%%%%%%%%%%%%%%%%%%%%%%%%%%%%%%%
\section{Proof of \Cref{prop:Gnerve}.}\label{sec:Gnerve}

Our proof of \Cref{prop:Gnerve} invokes many elements of \cite[Section 4.G]{h01} which provides a proof of the classical nerve lemma. We attempted to keep our arguments as elementary and explicit as possible in order to increase readability.

Let $\Gamma=(V(\Gamma),E(\Gamma))$ be a directed graph such that if $e=(u,v), e'=(v,w)\in E(\Gamma)$ then $e''=(u,w)\in E(\Gamma)$. i.e., directed edges of $\Gamma$ are commutative. A sequence $(v_0,v_1,\dots,v_n)$ is called as a directed path of $\Gamma$ if $(v_0,v_1),(v_1,v_2),\dots,(v_{n-1},v_n)$ are directed edges of $\Gamma$. For technical reasons, we will view a singleton sequence $(v_0)$ also as a path.

For a given directed graph $\Gamma$ with commutative edges, a \emph{complex of spaces}\footnote{Actually, this is just a special case of a more general definition of complex of spaces. For the general version of definition, see \cite[Section 4.G]{h01}.} associated to $\Gamma$ consists of a set of topological spaces $\{X_v\}_{v\in V(\Gamma)}$ and a set of embeddings $\{\iota_e\}_{e\in E(\Gamma)}$ such that $\iota_e:X_u\longhookrightarrow X_v$ if $e=(u,v)$. By an abuse of notation, both of a complex of spaces and its underlying directed graph will be denoted by $\Gamma$.

Next, for a complex of spaces $\Gamma$, one can consider the following topological space $\Delta\Gamma$ called the \emph{realization} of $\Gamma$: For any path $(v_0,v_1,\dots,v_n)$ in $\Gamma$, one can consider $X_{v_0}\times\Delta_n$ where $\Delta_n$ is the $n$-simplex with the vertices $\{v_0,v_1,\dots,v_n\}$. Furthermore, if $(i_0,i_1,\dots,i_{n'})$ is a subsequence $(0,1,\dots,n)$, then $(v_{i_0},v_{i_1},\dots,v_{i_{n'}})$ is also a path in $\Gamma$ by the commutativity. Hence, one can consider the identification between $(x,\iota_{n',n}(p))\in X_{v_0}\times\Delta_n$ and $(\iota_{(v_0,v_{i_0})}(x),p)\in X_{v_{i_0}}\times\Delta_{n'}$ for all $x\in X_{v_0},p\in\Delta_{n'}$ induced by the canonical inclusions $\iota_{(v_0,v_{i_0})}:X_{v_0}\longhookrightarrow X_{v_{i_0}}$ and $\iota_{n',n}:\Delta_{n'}\longhookrightarrow \Delta_n$. Finally, the realization $\Delta\Gamma$ is the quotient space of the disjoint union of all $X_{v_0}\times\Delta_n$ with the aforementioned type of identifications.
$$\Delta\Gamma:=\left(\bigsqcup_{(v_0,v_1,\dots,v_n)\text{ is a path}} (X_{v_0}\times\Delta_n)\right)\slash\sim.$$

One typical example of a complex of spaces is as follows: Let $X$ be a topological space and $\mathcal{U}=\{U_\lambda\}_{\lambda\in\Lambda}$ be an open cover of $X$ ($\Lambda$ is an arbitrary index set). For each $\sigma:=\{i_0,\dots,i_n\}\in\mathrm{N}\,\mathcal{U}$, the nonempty intersection $U_{i_0}\cap\dots \cap\,U_{i_n}$ is denoted by $U_\sigma$. Note that, if $\sigma'$ is a face of $\sigma$, then there is a canonical inclusion
$$\iota_{\sigma,\sigma'}:U_\sigma\longhookrightarrow U_{\sigma'}.$$
Hence, $\{U_\sigma\}_{\sigma\in\mathrm{N}\,\mathcal{U}}$ and $\{\iota_{\sigma,\sigma'}\}_{\sigma'\prec\sigma}$ form a complex of spaces over the $1$-skeleton of $\mathrm{sd}(\mathrm{N}\,\mathcal{U})$, the barycentric subdivision of $\mathrm{N}\,\mathcal{U}$. Recall that each $n$-simplex of $\mathrm{sd}(\mathrm{N}\,\mathcal{U})$ is $\{\sigma_0,\sigma_1,...,\sigma_n\}$ where $\sigma_i$'s are simplexes of $\mathrm{N}\,\mathcal{U}$ such that $\sigma_i\prec\sigma_{i+1}$ for all $i=0,...,n-1$. In this case, the associated realization is denoted by $\Delta X_\mathcal{U}$. More precisely,

$$\Delta X_\mathcal{U}:=\left(\bigsqcup_{\sigma_0\prec\sigma_1\cdots\prec\sigma_n} (U_{\sigma_n}\times\Delta_n)\right)\slash\sim$$

where $(x,\iota_{n',n}(p))\in U_{\sigma_n}\times\Delta_n$ and $(\iota_{\sigma_n,\sigma_{i_{n'}}}(x),p)\in U_{\sigma_{i_{n'}}}\times\Delta_{n'}$ are identified for all $x\in U_{\sigma_n},p\in\Delta_{n'}$ whenever $\{\sigma_{i_0},\sigma_{i_1},...,\sigma_{i_{n'}}\}$ is an $n'$-face of $\{\sigma_0,\sigma_1,...,\sigma_n\}$ in $\mathrm{sd}(\mathrm{N}\,\mathcal{U})$. Observe that one can view $\Delta X_\mathcal{U}$ as a certain thickening of $\vert\mathrm{sd}(\mathrm{N}\,\mathcal{U})\vert$ since one can express $\vert\mathrm{sd}(\mathrm{N}\,\mathcal{U})\vert$ as follows:
$$\vert\mathrm{sd}(\mathrm{N}\,\mathcal{U})\vert=\left(\bigsqcup_{\sigma_0\prec\sigma_1\cdots\prec\sigma_n} (\{*\}\times\Delta_n)\right)\slash\sim.$$

Also, if $(X,\alpha_X)$ is a $G$-space and $\mathcal{U}$ is a $G$-invariant open cover for a given group $G$ (see \Cref{def:Ginvcov}), then all of $\vert\mathrm{N}\,\mathcal{U}\vert$, $\vert\mathrm{sd}(\mathrm{N}\,\mathcal{U})\vert$, and $\Delta X_\mathcal{U}$ are $G$-spaces in the obvious way. We need the following \Cref{prop:4g1} and \Cref{prop:4g2}. Note that these propositions are improvements of \cite[Propositions 4G.1 and 4G.2]{h01}, respectively.

\begin{proposition}\label{prop:4g1}
Assume that a group $G$, a $G$-space $X$, and a $G$-invariant open cover $\mathcal{U}:=\{U_\lambda\}_{\lambda\in\Lambda}$ of $X$ are given. Also, assume that $U_\sigma:=\bigcap_{\lambda\in\sigma} U_\lambda$ is $G_\sigma$-contractible for every simplex $\sigma\in\mathrm{N}\,\mathcal{U}$ where $G_\sigma:=\{g\in G:\alpha_\Lambda(g,\sigma)=\sigma\}$ is the isotropy subgroup of $G$.  Then, $\Delta X_\mathcal{U}$ and $\vert\mathrm{N}\,\mathcal{U}\vert$ are $G$-homotopy equivalent.
\end{proposition}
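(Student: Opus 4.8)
The plan is to follow Hatcher's proof of the non-equivariant statement \cite[Proposition 4G.1]{h01}, building $\Delta X_\mathcal{U}$ up cell-by-cell over the barycentric subdivision $\mathrm{sd}(\mathrm{N}\,\mathcal{U})$ and upgrading every step to the $G$-equivariant setting. Since $\vert\mathrm{sd}(\mathrm{N}\,\mathcal{U})\vert$ is canonically $G$-homeomorphic to $\vert\mathrm{N}\,\mathcal{U}\vert$, it suffices to prove that the collapse map $\rho\colon\Delta X_\mathcal{U}\to\vert\mathrm{sd}(\mathrm{N}\,\mathcal{U})\vert$, induced by sending each factor $U_{\sigma_n}$ to a point, is a $G$-homotopy equivalence.

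The structural fact that makes this work is that the simplicial $G$-action on $\mathrm{sd}(\mathrm{N}\,\mathcal{U})$ is \emph{regular}: if $g\in G$ fixes a simplex $\tau=(\sigma_0\prec\cdots\prec\sigma_n)$ of $\mathrm{sd}(\mathrm{N}\,\mathcal{U})$, then it must preserve the dimension-ordering, so $g\sigma_i=\sigma_i$ for every $i$ and $g$ fixes $\tau$ pointwise. Hence $\vert\mathrm{sd}(\mathrm{N}\,\mathcal{U})\vert$ is a genuine $G$-CW complex whose cell $G$-orbits are indexed by $G$-orbits of chains $\tau$, with stabilizer $G_\tau=\bigcap_iG_{\sigma_i}\subseteq G_{\sigma_n}$; and $\Delta X_\mathcal{U}$ acquires a parallel filtration $\Delta X_\mathcal{U}^{(0)}\subseteq\Delta X_\mathcal{U}^{(1)}\subseteq\cdots$ by $G$-subspaces, where $\Delta X_\mathcal{U}^{(n)}$ is the pushout in $G$-spaces of
\[
\coprod_{\tau}\, G\times_{G_\tau}\!\bigl(U_{\sigma_n^\tau}\times\partial\Delta_n\bigr)\;\longrightarrow\;\Delta X_\mathcal{U}^{(n-1)}
\qquad\text{and}\qquad
\coprod_{\tau}\, G\times_{G_\tau}\!\bigl(U_{\sigma_n^\tau}\times\partial\Delta_n\bigr)\;\hookrightarrow\;\coprod_{\tau}\, G\times_{G_\tau}\!\bigl(U_{\sigma_n^\tau}\times\Delta_n\bigr),
\]
the coproduct running over representatives of $G$-orbits of $n$-chains, and the attaching map incorporating the canonical inclusions $U_{\sigma_n}\hookrightarrow U_{\sigma_{n-1}}$ used when the top vertex of a chain is deleted. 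The map $\rho$ preserves this filtration and, on the $n$-th layer, is induced by the $G_\tau$-map $U_{\sigma_n^\tau}\times\Delta_n\to\Delta_n$.

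I would then prove by induction on $n$ that $\rho^{(n)}\colon\Delta X_\mathcal{U}^{(n)}\to\vert\mathrm{sd}(\mathrm{N}\,\mathcal{U})\vert^{(n)}$ is a $G$-homotopy equivalence. The key equivariant input is that, although the hypothesis only asserts $G_{\sigma_n}$-contractibility of $U_{\sigma_n}$, the inclusion $G_\tau\subseteq G_{\sigma_n}$ makes $U_{\sigma_n^\tau}$ also $G_\tau$-contractible (a $G_{\sigma_n}$-contraction is in particular a $G_\tau$-contraction); therefore $U_{\sigma_n^\tau}\times\Delta_n\to\Delta_n$ and $U_{\sigma_n^\tau}\times\partial\Delta_n\to\partial\Delta_n$ are $G_\tau$-homotopy equivalences, and applying the induction functor $G\times_{G_\tau}(-)$ — which preserves both homotopy equivalences and cofibrations — promotes the collapse maps on the attached cells and their boundaries to $G$-homotopy equivalences onto the corresponding pieces of $\vert\mathrm{sd}(\mathrm{N}\,\mathcal{U})\vert^{(n)}$, i.e.\ the analogous pushout with each $U_{\sigma_n^\tau}$ replaced by a point. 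Since $U_{\sigma_n^\tau}\times\partial\Delta_n\hookrightarrow U_{\sigma_n^\tau}\times\Delta_n$ is a $G_\tau$-cofibration (hence its induction is a $G$-cofibration), the inductive step follows from a $G$-equivariant gluing lemma for homotopy equivalences along cofibrations — the equivariant analogue of the gluing lemma in \cite[Section 4.G]{h01}, provable via the equivariant homotopy extension property. Finally, since $\Delta X_\mathcal{U}=\operatorname{colim}_n\Delta X_\mathcal{U}^{(n)}$ and $\vert\mathrm{sd}(\mathrm{N}\,\mathcal{U})\vert=\operatorname{colim}_n\vert\mathrm{sd}(\mathrm{N}\,\mathcal{U})\vert^{(n)}$ are colimits along $G$-cofibrations on which $\rho$ restricts to $G$-homotopy equivalences, I would assemble a $G$-homotopy inverse and the connecting homotopies by an inductive extension over the stages (again using the equivariant homotopy extension property), concluding that $\rho$ is a $G$-homotopy equivalence.

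The genuinely new point, though small, is the passage from $G_{\sigma}$-contractibility to $G_\tau$-contractibility for subgroups $G_\tau\subseteq G_\sigma$, which is exactly what lets the hypothesis — stated only in terms of the isotropy of each simplex of $\mathrm{N}\,\mathcal{U}$ — drive the cell-by-cell argument over $\mathrm{sd}(\mathrm{N}\,\mathcal{U})$. I expect the main obstacle to be bookkeeping rather than conceptual: carefully verifying that $\Delta X_\mathcal{U}$ really carries the claimed $G$-equivariant filtration with cells induced up from $U_{\sigma_n}\times\Delta_n$ (this needs the identifications defining $\Delta X_\mathcal{U}$ to be compatible with the $G$-action, together with regularity of the barycentric subdivision), and writing down clean $G$-equivariant versions of the two homotopy-theoretic lemmas — the gluing lemma and the colimit-invariance of $G$-homotopy equivalences along $G$-cofibrations — which are standard in equivariant homotopy theory but are used here as black boxes.
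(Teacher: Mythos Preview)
Your proposal is correct and shares the paper's high-level strategy---reduce to $|\mathrm{sd}(\mathrm{N}\,\mathcal{U})|$, filter by skeleta, and exploit the regularity of the $G$-action on the barycentric subdivision together with the inclusion $G_\tau\subseteq G_{\sigma_n}$---but the execution is genuinely different. The paper does not work directly with the collapse map $\rho$; instead it introduces an intermediary space $\Delta MX_\mathcal{U}$, built from the mapping cylinders $M\varphi_\sigma$ of the $G_\sigma$-contractions $\varphi_\sigma:U_\sigma\to\{*\}$, which it identifies with the mapping cylinder of $\rho$. It then shows $\Delta MX_\mathcal{U}$ $G$-deformation retracts onto both $|\mathrm{sd}(\mathrm{N}\,\mathcal{U})|$ (trivially) and onto $\Delta X_\mathcal{U}$ (by a hands-on induction showing $\Delta M^{(k)}X_\mathcal{U}\cup\Delta X_\mathcal{U}$ retracts onto $\Delta M^{(k-1)}X_\mathcal{U}\cup\Delta X_\mathcal{U}$), proving from scratch the required $G$-HEP and $G$-deformation-retract lemmas along the way. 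Your route bypasses the mapping cylinder entirely, describing each skeletal stage as an explicit pushout of induced $G$-spaces $G\times_{G_\tau}(U_{\sigma_n^\tau}\times\Delta_n)$ and invoking the equivariant gluing lemma and colimit invariance as black boxes. Your approach is more structural and avoids the paper's intermediate lemmas, at the cost of importing standard equivariant homotopy machinery; the paper's approach is more elementary and self-contained but requires more explicit constructions. Both hinge on the same small observation you flag: $G_{\sigma_n}$-contractibility of $U_{\sigma_n}$ restricts to $G_\tau$-contractibility for the chain stabilizer $G_\tau$.
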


In order to prove \Cref{prop:4g1}, we need the following technical preliminaries.\\

\begin{itemize}
    \item Let $X$, $Y$ be $G$-spaces and $\psi:X\toG Y$ be a $G$-map. Then, the \emph{mapping cylinder} $M\psi$ associated to $\psi$ (see \cite[pg.2]{h01}) is also a $G$-space with the obvious $G$-action.

    \item We say a pair $(X,A)$ as a \emph{$G$-pair} if $X$ is a $G$-space and $\emptyset\neq A\subseteq X$ is a $G$-invariant subset of $X$. 
    
    \item We say that a $G$-pair $(X,A)$ has the \emph{$G$-homotopy extension property} if the following holds: For any $G$-space $Y$ and $G$-map $\varphi:(X\cup\{0\})\bigcup (A\times [0,1])\toG Y$, we always have its extension $G$-map $\widetilde{\varphi}:X\times [0,1]\toG Y$.

    \item Assume that a $G$-pair $(X,A)$ is given. Then, a $G$-map $r:X\toG A$ is called as a \emph{$G$-retraction} if $r$ is a retraction and a $G$-map.
\end{itemize}

The following is a generalization of \cite[Example 0.15]{h01}

\begin{lemma}\label{lemma:Ghepexample}
Let $(X,A)$ be a $G$-pair. Suppose that there are $G$-invariant subsets $B,N\subseteq X$ such that:
\begin{enumerate}
    \item $A,B\subset N$ and $A\cap B=\emptyset$; and

    \item there are $G$-map $\psi:B\toG A$ and $G$-homeomorphism $\Psi:M\psi\toG N$ with $\Psi\vert_{A\cup B}=\mathrm{id}_{A\cup B}$.
\end{enumerate}
Then, $(X,A)$ has the $G$-homotopy extension property.
\end{lemma}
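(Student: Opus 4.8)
\textbf{Proof plan for Lemma \ref{lemma:Ghepexample}.}

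The plan is to reduce the claim to the classical (non-equivariant) fact that a mapping cylinder retracts onto its base, and then to observe that every map involved can be taken $G$-equivariant because all the data ($\psi$, $\Psi$, the subsets $A$, $B$, $N$) are $G$-invariant. First I would recall the standard deformation retraction of a mapping cylinder $M\psi$ onto its base $A$: there is a retraction $\rho:M\psi\to A$ together with a homotopy from $\mathrm{id}_{M\psi}$ to $j\circ\rho$ (where $j:A\hookrightarrow M\psi$), given on the cylinder coordinate by sliding $(b,t)\mapsto (b,t(1-s))$ for $s\in[0,1]$ and eventually landing on $\psi(b)\in A$. The key point is that this canonical homotopy is \emph{defined by formulas in the cylinder coordinate that do not move points of $A$ and commute with the projection}, hence it is automatically $G$-equivariant since the $G$-action on $M\psi$ is the ``obvious'' one (acting on $B$ and $A$ through the given actions and fixing the $[0,1]$-coordinate), and $\psi$ is a $G$-map by hypothesis. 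Transporting this through the $G$-homeomorphism $\Psi:M\psi\toG N$ with $\Psi|_{A\cup B}=\mathrm{id}_{A\cup B}$, I obtain a $G$-retraction $r:N\toG A$ together with a $G$-homotopy $F:N\times[0,1]\toG N$ from $\mathrm{id}_N$ to $\iota_{A}\circ r$ that is stationary on $A$.

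Next I would upgrade this ``retraction with homotopy stationary on $A$'' to the $G$-homotopy extension property for the pair $(X,A)$. The standard argument (cf. \cite[Ch.~0]{h01}): given a $G$-space $Y$ and a $G$-map $\varphi$ defined on $(X\times\{0\})\cup(A\times[0,1])$, one wants a $G$-extension to $X\times[0,1]$. Using $N$ as a ``collar-like'' neighborhood of $A$ in $X$, one defines the extension to be $\varphi(\cdot,0)$ on $X\setminus N$, and on $N\times[0,1]$ one uses the homotopy $F$ (reparametrized) to push $(n,t)$ either down to $N\times\{0\}$ where $\varphi$ is already defined, or along $A\times[0,1]$ where $\varphi$ is already defined, interpolating via a function of the cylinder coordinate $t(n)\in[0,1]$ (with $t(n)=0$ on $B$ and $t(n)=1$ on $A$). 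Concretely, on $N\times[0,1]$ set
$$\widetilde\varphi(n,s):=\begin{cases}\varphi\big(F(n, s/t(n)),0\big)& s\le t(n)\\ \varphi\big(r(n), s-t(n)\big)& s\ge t(n),\end{cases}$$
which is well-defined, continuous, agrees with $\varphi$ on $(X\times\{0\})\cup(A\times[0,1])$, and is $G$-equivariant because $r$, $F$, $t(\cdot)$, and $\varphi$ are all $G$-equivariant and $t$ is $G$-invariant (as $N$ and its cylinder structure are $G$-invariant). Matching this up with $\varphi(\cdot,0)$ on $(X\setminus N)\times[0,1]$ gives the required $G$-map $\widetilde\varphi:X\times[0,1]\toG Y$.

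The main obstacle, and the place to be careful, is verifying that the $G$-action on the mapping cylinder $M\psi$ is precisely the one compatible with $\Psi$ and with the $G$-actions on $A$ and $B$, and that the cylinder coordinate function $t:N\to[0,1]$ — transported from $M\psi$ via $\Psi$ — is genuinely $G$-invariant and continuous (including at the ``ends'' $t=0$ and $t=1$, i.e. on $B$ and $A$). Once this bookkeeping is in place, equivariance of every subsequent construction is automatic because it is built only out of the equivariant maps $r$, $F$, $\psi$, $\varphi$ and the invariant function $t$; no averaging or choice is needed, which is why no finiteness hypothesis on $G$ is required here. I would therefore spend most of the write-up making the $G$-structure on $M\psi$ explicit and checking the gluing of $\widetilde\varphi$ along $\partial N$, and treat the homotopy-extension formula itself as the routine part.
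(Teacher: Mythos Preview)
Your overall strategy---transport the explicit cylinder structure through $\Psi$ and write down a Str{\o}m-type extension formula---is in the same spirit as the paper's, but the concrete formula you give does not work, and the error is not merely cosmetic.

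First, a bookkeeping slip: with your convention $t(n)=1$ on $A$ and $F$ stationary on $A$, your formula gives $\widetilde\varphi(a,s)=\varphi(F(a,s/1),0)=\varphi(a,0)$ for every $a\in A$ and $s\in[0,1]$, whereas you need $\widetilde\varphi(a,s)=\varphi(a,s)$. So the ``extension'' does not even agree with the given data on $A\times[0,1]$. Swapping to $t=0$ on $A$ repairs this particular point, but not the next one.

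The real gap is the gluing along $B$. On $B$ your formula (with either orientation) yields a value that varies with $s$: with $t(b)=0$ you get $\widetilde\varphi(b,s)=\varphi(\psi(b),s)$, and with $t(b)=1$ you get $\widetilde\varphi(b,s)=\varphi(F(b,s),0)$. Neither equals the constant $\varphi(b,0)$ you need in order to match $\varphi(\cdot,0)$ on $(X\setminus N)\times[0,1]$. The underlying reason is that the deformation retraction $F$ of $N$ onto $A$ necessarily moves points of $B$ (toward $\psi(B)\subset A$), so any extension built from $F$ alone will be nonconstant on $B\times[0,1]$. The paper's proof avoids exactly this trap by establishing the $G$-HEP for the pair $(N,A\cup B)$ rather than working only with a retraction onto $A$: it uses the retraction of $[0,1]^2$ onto \emph{three} sides $([0,1]\times\{0\})\cup(\{0,1\}\times[0,1])$ to obtain a $G$-retraction of $M\psi\times[0,1]$ onto $(M\psi\times\{0\})\cup((A\cup B)\times[0,1])$. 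One then first extends $\varphi$ to $B\times[0,1]$ by the constant $\varphi(b,0)$, applies the HEP of $(N,A\cup B)$ on $N\times[0,1]$, and sets $\widetilde\varphi=\varphi(\cdot,0)$ on $\big(X\setminus(N\setminus B)\big)\times[0,1]$; the two pieces then agree on $B\times[0,1]$ by construction. Including $B$ in the base of the HEP pair is precisely the missing ingredient in your argument.
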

\begin{proof}
We modify \cite[Example 0.15]{h01}. First, observe that $[0,1]\times [0,1]$ retracts onto $([0,1]\times \{0\})\bigcup(\{0,1\}\times [0,1])$, hence $B\times [0,1]\times [0,1]$ $G$-retracts onto $(B\times[0,1]\times \{0\})\bigcup(B\times\{0,1\}\times [0,1])$, and this $G$-retraction induces a $G$-retraction of $M\psi\times [0,1]$ onto $(M\psi\times\{0\})\bigcup ((A\cup B)\times [0,1])$. Thus, $(M\psi,A\cup B)$ has the $G$-homotopy extension property. Hence so does the $G$-homeomorphic pair $(N,A\cup B)$. Now given a $G$-map $\varphi:(X\cup\{0\})\bigcup (A\times [0,1])\toG Y$, one can define its extension $\widetilde{\varphi}:X\times [0,1]\toG Y$ in the following manner: $\widetilde{\varphi}(x,t):=\varphi(x,0)$ if $(x,t)\in X\backslash(N\backslash B)\times[0,1]$ and use the $G$-homotopy extension property of $(N,A\cup B)$ for the other region.
\end{proof}

The next lemma is a $G$-equivariant version of \cite[Proposition 0.19 and Corollary 0.20]{h01}.

\begin{lemma}\label{lemma:Gdefretact}
Suppose that $(X,A)$ is a $G$-pair and has the $G$-homotopy extension property. If the canonical inclusion $\iota:A\hooktoG X$ and a $G$-map $\psi:X\toG A$ are $G$-homotopy equivalences, then there exists a $G$-deformation retraction $\widetilde{\psi}:X\toG A$.
\end{lemma}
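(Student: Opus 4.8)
The plan is to carry out, $G$-equivariantly, the classical argument that a subspace which is a deformation retract ``up to homotopy'' of a space with the homotopy extension property is an honest deformation retract (\cite[Proposition~0.19 and Corollary~0.20]{h01}), using the $G$-homotopy extension property in the hypothesis together with the $G$-mapping-cylinder and $G$-pair formalism set up above; \Cref{lemma:Ghepexample} is the typical device by which one verifies that hypothesis in practice. Since $\iota$ is a $G$-homotopy equivalence, we may fix a $G$-homotopy inverse of $\iota$ and, replacing $\psi$ by it if necessary---which is harmless, since the conclusion only asserts \emph{existence} of a $G$-deformation retraction---assume $\psi\circ\iota\Gsimeq\mathrm{id}_A$ and $\iota\circ\psi\Gsimeq\mathrm{id}_X$.

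First I would promote $\psi$ to a $G$-retraction. Fix a $G$-homotopy $h\colon A\times[0,1]\to A$ with $h(\cdot,0)=\psi|_A$ and $h(\cdot,1)=\mathrm{id}_A$. The $G$-maps $\psi$ on $X\times\{0\}$ and $h$ on $A\times[0,1]$ agree on $A\times\{0\}$, hence assemble into a $G$-map $(X\times\{0\})\cup(A\times[0,1])\to A$; by the $G$-homotopy extension property of $(X,A)$ it extends to a $G$-homotopy $H\colon X\times[0,1]\toG A$ with $H(\cdot,0)=\psi$. Put $\widetilde\psi:=H(\cdot,1)\colon X\toG A$. Then $\widetilde\psi|_A=h(\cdot,1)=\mathrm{id}_A$, so $\widetilde\psi$ is a $G$-retraction, and $\widetilde\psi\Gsimeq\psi$ via $H$; in particular $\widetilde\psi$ is still a $G$-homotopy inverse of $\iota$, so there is a $G$-homotopy $F\colon X\times[0,1]\to X$ with $F(\cdot,0)=\mathrm{id}_X$ and $F(\cdot,1)=\iota\circ\widetilde\psi$.

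The remaining---and main---step is to replace $F$ by a $G$-homotopy that is \emph{stationary on $A$}; such a $G$-homotopy is precisely a $G$-deformation retraction with underlying $G$-retraction $\widetilde\psi$. Here I would follow the classical template. The restriction $F|_{A\times[0,1]}$ is a $G$-equivariant loop of maps $A\to X$ based at $\iota$ (indeed $F(\cdot,0)|_A=\mathrm{id}_X|_A$ and $F(\cdot,1)|_A=\iota\circ\widetilde\psi|_A$ both coincide with $\iota$). Playing the two-sided $G$-homotopies $\widetilde\psi\circ\iota\Gsimeq\mathrm{id}_A$ and $\iota\circ\widetilde\psi\Gsimeq\mathrm{id}_X$ against one another (an Eckmann--Hilton-type cancellation) one trivializes this boundary loop: concretely, using the $G$-homotopy extension property to glue on a correction $G$-homotopy running the loop in reverse yields a $G$-homotopy $\iota\circ\widetilde\psi\Gsimeq\mathrm{id}_X$ whose restriction to $A$ is null-homotopic, and a final homotopy-of-homotopies argument---another invocation of the $G$-homotopy extension property, now for the $G$-pair $\big(X\times[0,1],\,(X\times\{0,1\})\cup(A\times[0,1])\big)$---straightens it to a $G$-homotopy genuinely stationary on $A$. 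Since every homotopy involved is a $G$-homotopy and every extension is produced $G$-equivariantly, the construction never leaves the category of $G$-spaces and $G$-maps.

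I expect the last step to be the main obstacle: as in the non-equivariant setting, ``$G$-homotopic and agreeing on $A$'' does not by itself give ``$G$-homotopic rel $A$'', and the cancellation of the boundary loop genuinely requires the full two-sided homotopy-equivalence structure together with an iterated use of the $G$-homotopy extension property; the care lies in the bookkeeping that keeps every intermediate homotopy $G$-equivariant, though no idea beyond \Cref{lemma:Ghepexample} is needed. A secondary point to record is the opening reduction---that the stated hypotheses do permit taking $\psi$ to be a $G$-homotopy inverse of $\iota$.
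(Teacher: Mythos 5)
Your proposal is correct and matches the paper's approach: the paper likewise first uses the $G$-homotopy extension property to modify $\psi$ to a $G$-retraction $\widetilde\psi=\widetilde H(\cdot,1)$ with $\widetilde\psi|_A=\mathrm{id}_A$, and then simply invokes ``the same steps as in the proof of \cite[Proposition 0.19]{h01}'' for the rel-$A$ straightening, which is exactly the part you sketch via the loop-cancellation and the iterated use of the $G$-HEP for $\big(X\times[0,1],\,(X\times\{0,1\})\cup(A\times[0,1])\big)$.
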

\begin{proof}
Let $H:A\times [0,1]\longrightarrow A$ be a $G$-homotopy with $H(\cdot,0)=\psi\circ\iota$ and $H(\cdot,1)=\mathrm{id}_A$. Since $\psi\circ\iota=\psi\vert_A$, $H$ can be seen as a $G$-homotopy between $\psi\vert_A$ and $\mathrm{id}_A$. Then, since we assume $(X,A)$ has the $G$-homotopy extension property, we can extend $H$ to a $G$-homotopy $\widetilde{H}:X\times [0,1]\longrightarrow A$. Then, we definite the $G$-map $\widetilde{\psi}:=\widetilde{H}(\cdot,1)$. Note that $\widetilde{\psi}\circ\iota=\mathrm{id}_A$. One can show that $\widetilde{\psi}$ is indeed a $G$-deformation retraction from $X$ to $A$ following the same steps in the proof of \cite[Proposition 0.19]{h01}.
\end{proof}

\begin{lemma}\label{lemma:mcdr}
Let $G$ be a group and $(X,\alpha_X)$ be a $G$-contractible space with a $G$-homotopy equivalence $\varphi:X\toG\{*\}$. Then, there is a $G$-deformation retraction $\widetilde{\psi}:M\varphi\toG X$. In particular, the canonical inclusion $\iota:X\hooktoG M\varphi$ is a $G$-homotopy equivalence.
\end{lemma}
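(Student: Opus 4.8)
The plan is to exhibit the pair $(M\varphi,X)$ — with $X$ identified with its canonical copy $X\times\{0\}$ inside the mapping cylinder — as a $G$-pair meeting the three hypotheses of \Cref{lemma:Gdefretact}: that it has the $G$-homotopy extension property, that the inclusion $\iota:X\hooktoG M\varphi$ is a $G$-homotopy equivalence, and that there is a $G$-homotopy equivalence $M\varphi\toG X$. \Cref{lemma:Gdefretact} will then immediately deliver the desired $G$-deformation retraction $\widetilde\psi:M\varphi\toG X$, and the ``in particular'' clause follows since a $G$-deformation retraction is a $G$-homotopy inverse of $\iota$. Concretely, $M\varphi$ is the quotient of $X\times[0,1]\sqcup\{\ast\}$ by the relation $(x,1)\sim\ast$, with $G$ acting only through the $X$-coordinate (so the interval coordinate and the cone point $\ast$ are fixed); thus $X\cong X\times\{0\}$ is a nonempty $G$-invariant subspace and $(M\varphi,X)$ is a $G$-pair.

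First I would establish the $G$-homotopy extension property of $(M\varphi,X)$ by invoking \Cref{lemma:Ghepexample}. Take $A:=X\times\{0\}$, $B:=X\times\{\tfrac12\}$, and $N:=X\times[0,\tfrac12]$; these are $G$-invariant, $A,B\subseteq N$, and $A\cap B=\emptyset$. Let $\psi_0:B\toG A$ be the $G$-map $(x,\tfrac12)\mapsto(x,0)$, and let $\Psi:M\psi_0\toG N$ send the class of $(x,\tfrac12,s)$ to $(x,\tfrac12(1-s))$. One checks that $\Psi$ is a $G$-homeomorphism restricting to the identity on $A\cup B$: at $s=0$ it is the identity on $B$, and at $s=1$, using $(x,\tfrac12,1)\sim\psi_0(x,\tfrac12)=(x,0)$, it is the identity on $A$. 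This is exactly the data required by \Cref{lemma:Ghepexample}, so $(M\varphi,X)$ has the $G$-homotopy extension property.

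Next I would verify the two remaining hypotheses of \Cref{lemma:Gdefretact}. The map $H:M\varphi\times[0,1]\to M\varphi$, $H([x,t],s)=[x,(1-s)t+s]$, is well defined and continuous, and $H(\cdot,s)$ is a $G$-map for each $s$, exhibiting a $G$-homotopy from $\mathrm{id}_{M\varphi}$ to the constant $G$-map at $\ast$; hence $M\varphi\Gsimeq\{\ast\}$, so the constant $G$-map $c_{M\varphi}:M\varphi\toG\{\ast\}$ is a $G$-homotopy equivalence. Since $\varphi:X\toG\{\ast\}$ is the unique $G$-map to the point and is assumed a $G$-homotopy equivalence, it has a $G$-homotopy inverse $j:\{\ast\}\toG X$, whose image is necessarily a fixed point of the $G$-action. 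Because $c_{M\varphi}\circ\iota=\varphi$ and both $\varphi$ and $c_{M\varphi}$ are $G$-homotopy equivalences, the two-out-of-three property of $G$-homotopy equivalences gives that $\iota:X\hooktoG M\varphi$ is a $G$-homotopy equivalence; likewise $\psi:=j\circ c_{M\varphi}:M\varphi\toG X$ is a composite of $G$-homotopy equivalences, hence one as well. Now \Cref{lemma:Gdefretact} applies to the $G$-pair $(M\varphi,X)$ and yields the required $G$-deformation retraction $\widetilde\psi$.

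I expect the only delicate point to be the explicit verification of the $G$-homotopy extension property, i.e. confirming that $B$, $N$, $\psi_0$ and $\Psi$ above literally match the template of \Cref{lemma:Ghepexample} — that $\Psi$ is a $G$-homeomorphism onto $N$, that it is the identity on $A\cup B$, and that every map involved is $G$-equivariant. Everything else is formal: $M\varphi$ is manifestly $G$-contractible via the straight-line homotopy $H$, and the homotopy-equivalence claims for $\iota$ and $\psi$ reduce to the two-out-of-three property in the homotopy category of $G$-spaces.
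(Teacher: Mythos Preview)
Your proof is correct and takes a genuinely different route from the paper's. The paper proceeds by direct construction: with $\psi:\{*\}\toG X$ a $G$-homotopy inverse of $\varphi$ and $H:X\times[0,1]\to X$ a $G$-homotopy from $\mathrm{id}_X$ to $\psi\circ\varphi$, it sets $\widetilde\psi(x,t):=H(x,t)$ (well-defined on $M\varphi$ since $H(\cdot,1)$ is constant), observes $\widetilde\psi\circ\iota=\mathrm{id}_X$, and writes down the explicit homotopy $\widetilde H\big((x,t),s\big):=\big(H(x,st),(1-s)t\big)$ between $\mathrm{id}_{M\varphi}$ and $\iota\circ\widetilde\psi$. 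You instead invoke the abstract machinery: a mapping-cylinder neighborhood of $X\times\{0\}$ inside $M\varphi$ yields the $G$-homotopy extension property via \Cref{lemma:Ghepexample}, and $G$-contractibility of $M\varphi$ together with two-out-of-three supply the hypotheses of \Cref{lemma:Gdefretact}. The paper's approach is shorter and produces an explicit retraction; yours is more structural and shows the two preceding lemmas doing real work. It is also worth noting that your route sidesteps a delicate well-definedness issue in the paper's explicit formula: at the cone point one computes $\widetilde H\big((x,1),s\big)=\big(H(x,s),1-s\big)$, which for $0<s<1$ depends on the representative $x$, so the displayed $\widetilde H$ does not obviously descend to the quotient $M\varphi$ --- your argument never needs such a formula.
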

\begin{proof}
Let $\psi:\{*\}\toG X$ be a $G$-homotopy equivalence such that $\psi\circ\varphi\Gsimeq\mathrm{id}_X$. Let $H:X\times [0,1]\longrightarrow X$ be a $G$-homotopy between $\psi\circ\varphi$ and $\mathrm{id}_X$ such that $H(\cdot,0)=\mathrm{id}_X$ and $H(\cdot,1)=\psi\circ\varphi$.

Next, let $\widetilde{\psi}:M\varphi\toG X$ s.t. $(x,t)\mapsto H(x,t)$. Then, it is easy to verify that $\widetilde{\psi}\circ\iota=\mathrm{id}_X$. Finally, $\iota\circ\widetilde{\psi}\Gsimeq\mathrm{id}_{M\varphi}$ via the following $G$-homotopy
\begin{align*}
    \widetilde{H}:M\varphi\times [0,1]&\longmapsto M\varphi\\
    (x,t,s)&\longmapsto (H(x,st),(1-s)t).
\end{align*}
\end{proof}

\begin{lemma}\label{lemma:simplexmc}
Let $k\geq 0$ and $\Gamma=(V(\Gamma),E(\Gamma))$ be a directed graph such that $V(\Gamma):=\{v_0,v_1,\cdots,v_k\}$ and $E(\Gamma):=\{(v_i,v_j)\}_{0\leq i < j \leq k}$. Note that $\Gamma$ can be seen as the $1$-skeleton of the $k$-simplex. Furthermore, let $G$ be a group and consider a complex of spaces over $\Gamma$ such that $X_v$ is $G$-contractible for all $v\in V(\Gamma)$. Let $\varphi_v:X_v\rightarrow\{*\}$ be the $G$-homotopy equivalence and $M\varphi_v$ be its mapping cylinder for each $v\in V(\Gamma)$. Then, $\Delta M\Gamma$ $G$-deformation retracts onto $\Delta M^{(k-1)}\Gamma\cup\Delta\Gamma$ where
$$\Delta M\Gamma:=\left(\bigsqcup_{(v_{i_0},v_{i_1},\dots,v_{i_n})\text{ is a path}} (M\varphi_{v_{i_0}}\times\Delta_n)\right)\slash\sim$$
and
$$\Delta M^{(k-1)}\Gamma:=\left(\bigsqcup_{n\leq k-1, (v_{i_0},v_{i_1},\dots,v_{i_n})\text{ is a path}} (M\varphi_{v_{i_0}}\times\Delta_n)\right)\slash\sim.$$
\end{lemma}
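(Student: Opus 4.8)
The plan is to carry out an explicit deformation retraction one ``fibre dimension'' at a time, exactly mirroring the classical argument in \cite[Section 4.G]{h01} for the nerve lemma but keeping track of $G$-equivariance throughout. First I would observe that $\Delta M\Gamma$ is built from the pieces $M\varphi_{v_{i_0}}\times\Delta_n$ indexed by directed paths $(v_{i_0},\dots,v_{i_n})$ in $\Gamma$, and that the only pieces not already contained in $\Delta M^{(k-1)}\Gamma\cup\Delta\Gamma$ are the top ones: those coming from the unique length-$k$ path $(v_0,v_1,\dots,v_k)$, contributing $M\varphi_{v_0}\times\Delta_k$. So the task reduces to deformation retracting $M\varphi_{v_0}\times\Delta_k$ onto the subspace $\big(X_{v_0}\times\Delta_k\big)\cup\big(M\varphi_{v_0}\times\partial'\Delta_k\big)$, where $\partial'\Delta_k$ is the appropriate part of the boundary of the simplex already identified into $\Delta M^{(k-1)}\Gamma$, and then checking compatibility with the gluings $\sim$.

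Next I would produce the retraction concretely. Write $M\varphi_{v_0}=\big(X_{v_0}\times[0,1]\big)/\big((x,1)\sim\ast\big)$; by \Cref{lemma:mcdr} the inclusion $\iota:X_{v_0}\hooktoG M\varphi_{v_0}$ is a $G$-homotopy equivalence with a $G$-deformation retraction $\widetilde\psi:M\varphi_{v_0}\toG X_{v_0}$, and all of this is $G_\sigma$-equivariant because $X_{v_0}$, $\varphi_{v_0}$ and the homotopy $H$ supplied there are ($\varphi_{v_0}$ being $G$-equivariant by hypothesis, and the relevant isotropy group fixing $v_0$ acts on everything). The product $M\varphi_{v_0}\times\Delta_k$ then admits the product deformation retraction of the ``cylinder coordinate'' towards $X_{v_0}$, but one must interpolate it so that on the sub-simplices where the gluing $\sim$ already forces $\widetilde\psi$ to have been applied, the homotopy restricts correctly. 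Concretely I would use a Urysohn-type function $\lambda:\Delta_k\to[0,1]$ that vanishes on $\partial'\Delta_k$ and equals $1$ on the barycenter, and define $H'\big((x,t,p),s\big):=\big(H(x,s\lambda(p)\,t'),\,(1-s\lambda(p))\,t',\,p\big)$ (schematically) so that on the boundary faces nothing moves while in the interior the cylinder coordinate is retracted — this is the same bookkeeping as in \cite[pg.~460]{h01} but every map in sight is $G_\sigma$-equivariant because $\Delta_k$ carries the trivial (or appropriately permuted, but in this single-simplex setting the isotropy fixes every vertex $v_i$, hence acts trivially on $\Delta_k$) action and $H$ is equivariant. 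Since the gluings $\sim$ are themselves $G$-equivariant, the retraction descends to the quotient $\Delta M\Gamma$, giving a $G$-deformation retraction onto $\Delta M^{(k-1)}\Gamma\cup\Delta\Gamma$.

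The main obstacle I anticipate is not the existence of the homotopy but verifying that it is \emph{well defined on the quotient}, i.e.\ that the piecewise formula respects every identification of the form $(x,\iota_{n',n}(p))\sim(\iota_{(v_0,v_{i_0})}(x),p)$ for faces of the top simplex; this forces the cutoff function $\lambda$ to be chosen compatibly on all faces simultaneously (it must be the standard barycentric-coordinate-type bump that restricts correctly under face inclusions), and it forces us to use on each face $\sigma'$ the homotopy $H$ for $X_{v_0}$ restricted appropriately, all of which must agree on overlaps. Once that compatibility is arranged, $G$-equivariance is essentially automatic because the only ``new'' content beyond \cite[Section 4.G]{h01} is that $H$, $\widetilde\psi$, and the gluings are $G_\sigma$-equivariant — which \Cref{lemma:mcdr} and \Cref{def:Ginvcov} hand us. I would close by remarking that iterating this lemma downward through all simplices (largest first) of $\mathrm{sd}(\mathrm N\,\mathcal U)$ is exactly what drives the proof of \Cref{prop:4g1}, and hence of the $G$-nerve lemma \Cref{prop:Gnerve}.
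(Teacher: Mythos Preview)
Your explicit formula has a genuine gap. With $\lambda$ equal to $0$ on $\partial\Delta_k$ and $1$ only at the barycentre, at time $s=1$ the image of $(x,t,p)$ is $\big(H(x,\lambda(p)t),\,(1-\lambda(p))t,\,p\big)$; for any interior $p$ with $0<\lambda(p)<1$ and any $t>0$ the cylinder coordinate $(1-\lambda(p))t$ is strictly positive while $p\notin\partial\Delta_k$, so this point lies in neither $X_{v_0}\times\Delta_k$ nor $M\varphi_{v_0}\times\partial\Delta_k$. Interpolating the mapping-cylinder retraction by a cutoff in $p$ can never land you in a union of this shape: what is actually required is a retraction of the pair $(t,p)$ inside $[0,1]\times\Delta_k$ onto $(\{0\}\times\Delta_k)\cup([0,1]\times\partial\Delta_k)$ (a radial push of the prism, as in \Cref{fig:mcn}), and that in turn must be reconciled with the collapse at $t=1$, where the $x$-coordinate has been forgotten and must be re-supplied using the $G$-contraction of $X_{v_0}$. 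The ``main obstacle'' you flag (compatibility with~$\sim$) is real, but it is downstream of this more basic failure.

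The paper avoids any explicit retraction formula. It argues by induction on $k$ and reduces, via \Cref{lemma:Gdefretact}, to checking that (i) the pair $\big(\Delta M\Gamma,\,\Delta M^{(k-1)}\Gamma\cup\Delta\Gamma\big)$ has the $G$-homotopy extension property, and (ii) the inclusion $\iota_{k-1,k}$ is a $G$-homotopy equivalence. Item (i) is obtained from the mapping-cylinder-neighbourhood criterion \Cref{lemma:Ghepexample}. For (ii) the key move is the observation that forgetting the simplex coordinate gives a $G$-homotopy equivalence $P_{M\varphi_{v_k}}:\Delta M\Gamma\to M\varphi_{v_k}$ (with section $Q_{M\varphi_{v_k}}$), which collapses the question to the base case $X_{v_k}\hookrightarrow M\varphi_{v_k}$ of \Cref{lemma:mcdr}; combined with the inductive hypothesis that $\Delta M^{(k-1)}\Gamma\cup\Delta\Gamma$ already deformation retracts onto $\Delta\Gamma$, this yields (ii). Your direct strategy is not unworkable, but making it rigorous requires first producing a correct prism retraction and then carefully using the $G$-contractibility of $X_{v_0}$ to resolve the cone-point ambiguity --- at which point the argument is no shorter than the paper's indirect one.
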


\begin{figure}
\begin{center}
    \includegraphics[width=8cm]{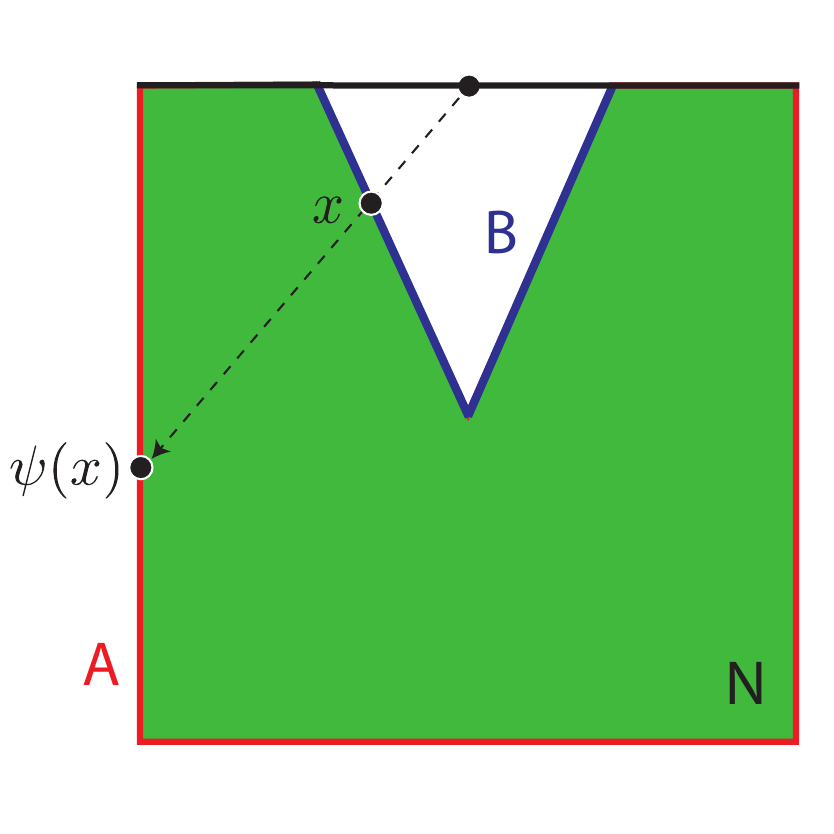}
\end{center}
    \caption{An example of how to build a mapping cylinder neighborhood for the pair $(\Delta_1\times [0,1],(\Delta_1\times{0})\bigcup(\partial\Delta_1\times [0,1]))$. The \textcolor{red}{red} region is $A=(\Delta_1\times{0})\bigcup(\partial\Delta_1\times [0,1])$, the \textcolor{green}{green} region is the closed neighborhood $N$, and the \textcolor{blue}{blue} region is $B$.}\label{fig:mcn}
\end{figure}

\begin{proof}
We will prove the lemma by induction on $k$. The claim holds for $k=0$ by \Cref{lemma:mcdr}. Assume that the claim holds up to $k-1$. Now, we need to verify the claim for dimension $k$.

By \Cref{lemma:Gdefretact}, it suffices to show that the canonical inclusion $\iota_{k-1,k}:\Delta M^{(k-1)}\Gamma\cup\Delta\Gamma\hooktoG\Delta M\Gamma$ is a $G$-homotopy equivalence and the pair $(\Delta M\Gamma,\Delta M^{(k-1)}\Gamma\cup\Delta\Gamma)$ satisfies the $G$-homotopy extension property. One can verify that the latter assertion is true by \Cref{lemma:Ghepexample} since the pair $\big(\Delta_n\times [0,1],(\Delta_n\times\{0\})\bigcup(\partial\Delta_n\times [0,1])\big)$ satisfies the conditions in \Cref{lemma:Ghepexample} (see \Cref{fig:mcn}). For the other condition, recall that we know $\Delta M^{(k-1)}\Gamma\cup\Delta\Gamma$ $G$-deformation retracts onto $\Delta\Gamma$ by the induction assumption. Hence, there is a $G$-map $\psi_{k-1}:\Delta M^{(k-1)}\Gamma\cup\Delta\Gamma\longrightarrow\Delta\Gamma$ s.t. $\iota_{k-1}\circ\psi_{k-1}\Gsimeq\mathrm{id}_{\Delta M^{(k-1)}\Gamma\cup\Delta\Gamma}$ and $\psi_{k-1}\circ\iota_{k-1}=\mathrm{id}_{\Delta\Gamma}$ where $\iota_{k-1}$ is the canonical inclusion from $\Delta\Gamma$ to $\Delta M^{(k-1)}\Gamma\cup\Delta\Gamma$. Furthermore, the canonical inclusion $\iota_k:\Delta\Gamma\hooktoG\Delta M\Gamma$ is a $G$-homotopy equivalence since it is equivalent to $\iota_{v_k}:X_{v_k}\hooktoG M\varphi_{v_k}$ as the following commutative diagram shows:
$$\begin{tikzcd}
\Delta\Gamma \arrow[r, hook, "\iota_k"] \arrow[d, shift left, "P_{X_{v_k}}"]
& \Delta M\Gamma \arrow[d, shift left, "P_{M\varphi_{v_k}}"]\\
X_{v_k}  \arrow[r, hook, "\iota_{v_k}"] \arrow[u, hook, shift left, "Q_{X_{v_k}}"] & M\varphi_{v_k} \arrow[u, hook, shift left, "Q_{M\varphi_{v_k}}"]
\end{tikzcd}$$
where $P_{X_{v_k}}:\Delta\Gamma\toG X_{v_k}$, $P_{M\varphi_{v_k}}:\Delta M\Gamma\toG M\varphi_{v_k}$ are the canonical projections such that $P_{X_{v_k}}(x,p):=x$ $\forall\,(x,p)\in\Delta\Gamma$, $P_{M\varphi_{v_k}}(x,t,p):=(x,t)$ $\forall\,(x,t,p)\in\Delta M\Gamma$ and $Q_{X_{v_k}}$, $Q_{M\varphi_{v_k}}$ are the canonical inclusions. It is easy to verify that $P_{X_{v_k}}$, $P_{M\varphi_{v_k}}$, $Q_{X_{v_k}}$, and $Q_{M\varphi_{v_k}}$ are $G$-homotopy equivalences (actually they are $G$-deformation retracts and their inverses) with the $G$-homotopies constructed via the linear interpolation. Since $\iota_{v_k}$ is a $G$-homotopy equivalence by \Cref{lemma:mcdr}, $\iota_k$ is also a $G$-homotopy equivalence. Now, let $\widetilde{\psi}_k:\Delta M\Gamma\toG\Delta\Gamma$ be a $G$-homotopy equivalence such that $\widetilde{\psi}_k\circ\iota_k\Gsimeq\mathrm{id}_{\Delta\Gamma}$ and $\iota_k\circ\widetilde{\psi}_k\Gsimeq\mathrm{id}_{\Delta M\Gamma}$.
$$\begin{tikzcd}
\Delta\Gamma \arrow[r, hook, shift left, "\iota_{k-1}"] \arrow[rr, bend left, "\iota_k"] & \Delta M^{(k-1)}\Gamma\cup\Delta\Gamma \arrow[r, hook, shift left, "\iota_{k-1,k}"] \arrow[l, shift left, "\psi_{k-1}"] & \Delta M\Gamma \arrow[l, shift left, "\psi_{k-1,k}"] \arrow[ll, bend left, "\widetilde{\psi}_k"]
\end{tikzcd}$$
Finally, let $\widetilde{\psi}_{k-1,k}:=\iota_{k-1}\circ\widetilde{\psi}_k$. Then, $\widetilde{\psi}_{k-1,k}\circ\iota_{k-1,k}=\iota_{k-1}\circ\widetilde{\psi}_k\circ\iota_{k-1,k}\Gsimeq\iota_{k-1}\circ\widetilde{\psi}_k\circ\iota_{k-1,k}\circ\iota_{k-1}\circ\psi_{k-1}=\iota_{k-1}\circ\widetilde{\psi}_k\circ\iota_k\circ\psi_{k-1}\Gsimeq\iota_{k-1}\circ\psi_{k-1}\Gsimeq\mathrm{id}_{\Delta M^{(k-1)}\Gamma\cup\Delta\Gamma}$. Also, $\iota_{k-1,k}\circ\widetilde{\psi}_{k-1,k}=\iota_{k-1,k}\circ\iota_{k-1}\circ\widetilde{\psi}_k=\iota_k\circ\widetilde{\psi}_k\Gsimeq\mathrm{id}_{\Delta M\Gamma}$. Hence, $\iota_{k-1,k}$ is a $G$-homotopy equivalence as we required. This completes the proof.
\end{proof}

Now we are ready to prove \Cref{prop:4g1}.

\begin{proof}[\textbf{Proof of \Cref{prop:4g1}}]
First of all, since $\vert\mathrm{N}\,\mathcal{U}\vert$ and $\vert\mathrm{sd}(\mathrm{N}\,\mathcal{U})\vert$ are $G$-homotopy equivalent in the obvious way, it is enough to show the $G$-homotopy equivalence between $\Delta X_\mathcal{U}$ and $\vert\mathrm{sd}(\mathrm{N}\,\mathcal{U})\vert$.

Since $U_\sigma$ is $G_\sigma$-contractible whenever $\sigma\in\mathrm{N}\,\mathcal{U}$, note that there is a $G_\sigma$-homotopy equivalence $\varphi_\sigma:U_\sigma\stackrel{G_\sigma}{\longrightarrow}\{*\}$ for all $\sigma\in\mathrm{N}\,\mathcal{U}$. Now, let $M\varphi_\sigma$ denotes the mapping cylinder associated to $\varphi_\sigma$. It is easy to verify that if $\sigma'$ is a face of $\sigma$ then $M\varphi_\sigma\subseteq M\varphi_{\sigma'}$. Hence, the set of all $M\varphi_\sigma$ and their inclusions also form a complex of diagram over the $1$-skeleton of $\mathrm{sd}(\mathrm{N}\,\mathcal{U})$, and its associated realization is as follows:
$$\Delta MX_\mathcal{U}:=\left(\bigsqcup_{\sigma_0\prec\sigma_1\cdots\prec\sigma_n} (M\varphi_{\sigma_n}\times\Delta_n)\right)\slash\sim$$

Observe that $\Delta MX_\mathcal{U}$ is also a $G$-space in a natural way and is actually the mapping cylinder associated to the $G$-map $\varphi$ s.t.
\begin{align*}
    \varphi:\Delta X_\mathcal{U}&\toG\vert\mathrm{sd}(\mathrm{N}\,\mathcal{U})\vert\\
    (x,p)&\longmapsto(*,p)
\end{align*}
Therefore, $\Delta MX_\mathcal{U}$ $G$-deformation retracts onto $\vert\mathrm{sd}(\mathrm{N}\,\mathcal{U})\vert$. Hence, we only need to show the $G$-homotopy equivalence between $\Delta X_\mathcal{U}$ and $\Delta MX_\mathcal{U}$. Now, for each nonnegative integer $k$, let $\Delta M^{(k)}X_\mathcal{U}$ be the $k$-skeleton part of $\Delta MX_\mathcal{U}$. More precisely,
$$\Delta M^{(k)}X_\mathcal{U}:=\left(\bigsqcup_{n\leq k, \sigma_0\prec\sigma_1\cdots\prec\sigma_n} (M\varphi_{\sigma_n}\times\Delta_n)\right)\slash\sim$$
We claim that $\Delta M^{(k)}X_\mathcal{U}\cup\Delta X_\mathcal{U}$ $G$-deformation retracts onto $\Delta M^{(k-1)}X_\mathcal{U}\cup\Delta X_\mathcal{U}$ for all $k\geq 0$. This assertion holds since a $G_\sigma$-deformation retract exists for each part of $\Delta M^{(k)}X_\mathcal{U}\cup\Delta X_\mathcal{U}$ over a $k$-simplex by \Cref{lemma:simplexmc}. More precisely, for each $k$-simplex $\hat{\tau}:=\{\tau_0,\tau_1,...,\tau_k\}$ of $\mathrm{sd}(\mathrm{N}\,\mathcal{U})$, there exist a $G_{\tau_k}$-deformation retraction $\psi_{\hat{\tau}}$ from $\left(\bigsqcup_{\tau_{i_0}\prec\tau_{i_1}\cdots\prec\tau_{i_n}} (M\varphi_{\tau_{i_n}}\times\Delta_n)\right)\slash\sim$ onto $\left(\left(\bigsqcup_{n\leq k-1, \tau_{i_0}\prec\tau_{i_1}\cdots\prec\tau_{i_n}} (M\varphi_{\tau_{i_n}}\times\Delta_n)\right)\slash\sim\right)\bigcup\left(\left(\bigsqcup_{\tau_{i_0}\prec\tau_{i_1}\cdots\prec\tau_{i_n}} (U_{\tau_{i_n}}\times\Delta_n)\right)\slash\sim\right)$ by \Cref{lemma:simplexmc}. Then, for each $k\geq 0$, one can define a map
$$\psi_{k-1,k}:\Delta M^{(k)}X_\mathcal{U}\cup\Delta X_\mathcal{U}\longrightarrow\Delta M^{(k-1)}X_\mathcal{U}\cup\Delta X_\mathcal{U}$$
as follows:
$$\psi_{k-1,k}(x,t,p):=\psi_{\hat{\tau}}(x,t,p)\text{ if }(x,t,p)\in\left(\bigsqcup_{\tau_{i_0}\prec\tau_{i_1}\cdots\prec\tau_{i_n}} (M\varphi_{\tau_{i_n}}\times\Delta_n)\right)\slash\sim$$
Now, let us verify that each $\psi_{k-1,k}:\Delta M^{(k)}X_\mathcal{U}\cup\Delta X_\mathcal{U}\longrightarrow\Delta M^{(k-1)}X_\mathcal{U}\cup\Delta X_\mathcal{U}$ is a $G$-map in order to finish the proof. However, one can easily make $\psi_{k-1,k}$ as $G$-equivariant as follows: fix an arbitrary $g\in G$ and $k$-simplex $\hat{\tau}:=\{\tau_0,\tau_1,...,\tau_k\}$ of $\mathrm{sd}(\mathrm{N}\,\mathcal{U})$. Now, if $\alpha_\Lambda(g,\hat{\tau})\neq\hat{\tau}$, then we define $\psi_{k-1,k}(g.(x,t,p)):=g.\psi_{\hat{\tau}}(x,t,p)$ for all $(x,t,p)\in\left(\bigsqcup_{\tau_{i_0}\prec\tau_{i_1}\cdots\prec\tau_{i_n}} (M\varphi_{\tau_{i_n}}\times\Delta_n)\right)\slash\sim$. Secondly, if $\alpha_\Lambda(g,\hat{\tau})=\hat{\tau}$, in particular this implies that $g\in G_{\tau_k}$. Hence, since $\psi_{\hat{\tau}}$ is $G_{\tau_k}$-contractible, we have that $\psi_{\hat{\tau}}(g.(x,t,p))=g.\psi_{\hat{\tau}}(x,t,p)$. This shows that $\psi_{k-1,k}$ is $G$-equivariant as we wanted.

Therefore, for each $k\geq 0$, we have $\psi_{k-1,k}\circ\iota_{k-1,k}=\mathrm{id}_{\Delta M^{(k-1)}X_\mathcal{U}\cup\Delta X_\mathcal{U}}$ and $\iota_{k-1,k}\circ\psi_{k-1,k}\Gsimeq\mathrm{id}_{\Delta M^{(k)}X_\mathcal{U}\cup\Delta X_\mathcal{U}}$ where $\iota_{k-1,k}:\Delta M^{(k-1)}X_\mathcal{U}\cup\Delta X_\mathcal{U}\hooktoG\Delta M^{(k)}X_\mathcal{U}\cup\Delta X_\mathcal{U}$ is the canonical inclusion. Letting $k$ vary, the infinite concatenation of these $G$-deformation retractions in the $t$-intervals $[1/2^{k+1},1/2^k]$ gives a $G$-deformation retraction of $\Delta MX_\mathcal{U}$ onto $\Delta X_\mathcal{U}$.
\end{proof}

Now we recall the notion of \emph{partition of unity}. Suppose a topological space $X$ and its open cover $\mathcal{U}:=\{U_\lambda\}_{\lambda\in \Lambda}$ are given. Then, a set of maps $\{\psi_\lambda:X\rightarrow [0,1]\}_{\lambda\in\Lambda}$ is called \emph{a partition of unity subordinate to $\mathcal{U}$} if these maps  satisfy the following conditions:
\begin{enumerate}[label=(\roman*)]
    \item $0\leq\psi_\lambda(x)\leq 1$ for all $x\in X$.
    
    \item $\supp[\psi_\lambda]\subseteq U_\lambda$ for all $\lambda\in \Lambda$.
    
    \item The family of supports $\{\supp[\psi_\lambda]\}_{\lambda\in\Lambda}$ is locally finite.
    
    \item $\sum_{i\in I}\psi_\lambda(x)=1$ for all $x\in X$.
\end{enumerate}

Next, assume that $(X,\alpha_X)$ is a $G$-space and $\mathcal{U}$ is a $G$-invariant open cover. Then, finally, a partition of unity $\{\psi_\lambda\}_{\lambda\in\Lambda}$ subordinate to $\mathcal{U}$ is said to be \emph{$G$-partition of unity} if they further satisfy the following fifth condition:

\begin{itemize}
\item[(v)] $\psi_{\alpha_\Lambda(g,\lambda)}(\alpha_X(g,x))=\psi_\lambda(x)$ for all $\lambda\in\Lambda$, $g\in G$, and $x\in X$. 
\end{itemize}

Then, we have the following lemma.

\begin{lemma}\label{lemma:Gptofuty}
Suppose a finite group $G$, a $G$-space $(X,\alpha_X)$ and a $G$-invariant open cover $\mathcal{U}:=\{U_\lambda\}_{\lambda\in \Lambda}$ are given. If $\{\psi_\lambda\}_{\lambda\in\Lambda}$ is a partition of unity subordinate to $\mathcal{U}$, then $\{\widetilde{\psi}_\lambda\}_{\lambda\in\Lambda}$ becomes a $G$-partition of unity subordinate to $\mathcal{U}$ where, for all $\lambda\in \Lambda$, $\widetilde{\psi}_\lambda$ is defined by
\begin{align*}
    \widetilde{\psi}_\lambda:X&\longrightarrow [0,1]\\
    x&\longmapsto\frac{1}{\vert G \vert}\sum_{g\in G}\psi_{\alpha_\Lambda(g,\lambda)}(\alpha_X(g,x))
\end{align*}
\end{lemma}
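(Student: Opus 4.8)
The plan is to verify directly that the averaged functions $\widetilde{\psi}_\lambda$ satisfy all five conditions in the definition of a $G$-partition of unity subordinate to $\mathcal{U}$. First I would check continuity: since $G$ is finite, each $\widetilde{\psi}_\lambda$ is a finite sum of functions of the form $x\mapsto \psi_{\alpha_\Lambda(g,\lambda)}(\alpha_X(g,x))$, each of which is continuous as a composition of the homeomorphism $\alpha_X(g,\cdot)$ with the continuous map $\psi_{\alpha_\Lambda(g,\lambda)}$; hence $\widetilde{\psi}_\lambda$ is continuous. Condition (i), $0\le \widetilde{\psi}_\lambda\le 1$, is immediate: it is an average (convex combination with weights $1/|G|$) of values in $[0,1]$.

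Next I would handle the support condition (ii). For a fixed $g$, the function $x\mapsto \psi_{\alpha_\Lambda(g,\lambda)}(\alpha_X(g,x))$ is supported on $\alpha_X(g^{-1}, \supp[\psi_{\alpha_\Lambda(g,\lambda)}]) \subseteq \alpha_X(g^{-1}, U_{\alpha_\Lambda(g,\lambda)}) = U_{\alpha_\Lambda(g^{-1}g,\lambda)} = U_\lambda$, using the $G$-invariance of the cover (Definition of $G$-invariant cover: $U_{\alpha_\Lambda(g,\lambda)} = \alpha_X(g,U_\lambda)$) and the fact that $\alpha_\Lambda$ is a group action. Therefore $\supp[\widetilde{\psi}_\lambda]$, being contained in the union of finitely many sets each inside $U_\lambda$, lies in $U_\lambda$. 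For local finiteness (iii): each point $x$ has a neighborhood $W$ meeting only finitely many of the original supports $\supp[\psi_\mu]$; then $\bigcap_{g\in G}\alpha_X(g^{-1},W_{gx})$ — a finite intersection of neighborhoods, hence a neighborhood of $x$ — meets only finitely many of the translated supports $\alpha_X(g^{-1},\supp[\psi_\mu])$, which is enough since $\supp[\widetilde\psi_\lambda]$ is covered by such translates; alternatively, one simply notes $\supp[\widetilde\psi_\lambda]\subseteq U_\lambda$ and that each $U_\lambda$ contains only finitely many… — more carefully, I would argue that $\supp[\widetilde{\psi}_\lambda] \subseteq \bigcup_{g\in G}\alpha_X(g^{-1},\supp[\psi_{\alpha_\Lambda(g,\lambda)}])$ and use that the family $\{\alpha_X(h,\supp[\psi_\mu])\}_{h\in G,\mu\in\Lambda}$ is locally finite because it is a finite union ($h\in G$) of locally finite families (each $\alpha_X(h,\cdot)$ is a homeomorphism, so it carries the locally finite family $\{\supp[\psi_\mu]\}_\mu$ to a locally finite family).

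For the normalization (iv): fix $x\in X$ and compute
\[
\sum_{\lambda\in\Lambda}\widetilde{\psi}_\lambda(x) = \frac{1}{|G|}\sum_{g\in G}\sum_{\lambda\in\Lambda}\psi_{\alpha_\Lambda(g,\lambda)}(\alpha_X(g,x)) = \frac{1}{|G|}\sum_{g\in G}\sum_{\mu\in\Lambda}\psi_{\mu}(\alpha_X(g,x)) = \frac{1}{|G|}\sum_{g\in G} 1 = 1,
\]
where the interchange of sums is justified by local finiteness and the reindexing $\mu = \alpha_\Lambda(g,\lambda)$ uses that $\lambda\mapsto\alpha_\Lambda(g,\lambda)$ is a bijection of $\Lambda$. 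Finally, condition (v), $G$-equivariance: for $h\in G$,
\[
\widetilde{\psi}_{\alpha_\Lambda(h,\lambda)}(\alpha_X(h,x)) = \frac{1}{|G|}\sum_{g\in G}\psi_{\alpha_\Lambda(g,\alpha_\Lambda(h,\lambda))}(\alpha_X(g,\alpha_X(h,x))) = \frac{1}{|G|}\sum_{g\in G}\psi_{\alpha_\Lambda(gh,\lambda)}(\alpha_X(gh,x)) = \widetilde{\psi}_\lambda(x),
\]
where the last equality is the substitution $g' = gh$, again a bijection of $G$. The only genuinely delicate point is the local finiteness argument in (iii) — everything else is a routine manipulation of finite group averages — so I would take care there, spelling out that homeomorphisms preserve local finiteness of a family and that a finite union of locally finite families is locally finite.
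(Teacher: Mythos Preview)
Your proposal is correct and follows essentially the same approach as the paper: both verify conditions (i)--(v) directly, use the identity $U_{\alpha_\Lambda(g,\lambda)}=\alpha_X(g,U_\lambda)$ for (ii), and handle (iii) via finiteness of $G$ combined with local finiteness of the original supports (the paper constructs an explicit neighborhood $\widetilde{V}_x:=\bigcap_{g\in G}\alpha_X(g^{-1},V_{\alpha_X(g,x)})$, while you phrase it as ``a finite union of locally finite families is locally finite,'' which amounts to the same thing). Your write-up is actually more detailed than the paper's on (iv), (v), and continuity, which the paper dismisses as ``easy to verify by construction.''
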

\begin{proof}
It is easy to verify that $\{\widetilde{\psi}_\lambda\}_{\lambda\in\Lambda}$ satisfy the conditions (i), (iv), and (v) for $G$-partition of unity subordinate to $\mathcal{U}$ by the construction. In order to verify conditions (ii) and (iii), first observe the following:
{\small $$\supp[\widetilde{\psi}_\lambda]=\overline{\{x\in X:\widetilde{\psi}_\lambda(x)>0\}}=\bigcup_{g\in G}\overline{\{x\in X:\psi_{\alpha_\Lambda(g,\lambda)}(\alpha_X(g,x))>0\}}=\bigcup_{g\in G}\alpha_\Lambda(g^{-1},\supp[\psi_{\alpha_X(g,\lambda)}]).$$}
Hence, since $\supp[\psi_{\alpha_\Lambda(g,\lambda)}]\subseteq U_{\alpha_\Lambda(g,\lambda)}=\alpha_X(g,U_\lambda)$, one can conclude that $\supp[\widetilde{\psi}_\lambda]\subseteq U_\lambda$. This proves condition (ii).

Next, for each $x\in X$ choose an open neighborhood $V_x\subseteq X$ of $x$ such that $V_x\bigcap\supp[\psi_\lambda]\neq\emptyset$ for only finitely many $\lambda$'s. Now, let $\widetilde{V}_x:=\bigcap_{g\in G}\alpha_X(g^{-1},V_{\alpha_X(g,x)})$. Then, if $\widetilde{V}_x\bigcap\supp[\widetilde{\psi}_\lambda]\neq\emptyset$ for some $\lambda\in\Lambda$, then there exist $g\in G$ such that $\alpha_X(g^{-1},\supp[\psi_{\alpha_\Lambda(g,\lambda)}])\bigcap \alpha_X(g^{-1},V_{\alpha_X(g,x)})\neq\emptyset$ which implies that $\supp[\psi_{\alpha_\Lambda(g,\lambda)}]\bigcap V_{\alpha_X(g,x)}\neq\emptyset$. By the finiteness of $G$ and the construction of each $V_x$, one can conclude that $\widetilde{V}_x\bigcap\supp[\widetilde{\psi}_\lambda]\neq\emptyset$ only for finitely many $\lambda$'s. This proves condition (iii) and completes the proof.
\end{proof}

\begin{proposition}\label{prop:4g2}
Let $G$ be a finite group, $X$ be a $G$-paracompact space, and $\mathcal{U}:=\{U_\lambda\}_{\lambda\in \Lambda}$ be a $G$-invariant open cover of $X$. Then, $X$ and $\Delta X_\mathcal{U}$ are $G$-homotopy equivalent.
\end{proposition}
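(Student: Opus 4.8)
\textbf{Proof proposal for \Cref{prop:4g2}.} The plan is to produce mutually $G$-homotopy-inverse $G$-maps between $X$ and $\Delta X_\mathcal{U}$, namely the tautological projection and a section built from an averaged partition of unity; this is the $G$-equivariant analogue of \cite[Proposition 4G.2]{h01}, and the geometric content of the classical proof carries over once equivariance is tracked. Since $X$ is $G$-paracompact, its underlying space is paracompact, hence admits a locally finite partition of unity $\{\psi_\lambda\}_{\lambda\in\Lambda}$ subordinate to $\mathcal{U}$; applying the averaging construction of \Cref{lemma:Gptofuty} (this is where finiteness of $G$ is used) we may assume that $\{\psi_\lambda\}_{\lambda\in\Lambda}$ is a $G$-partition of unity, so that in addition $\psi_{\alpha_\Lambda(g,\lambda)}(\alpha_X(g,x))=\psi_\lambda(x)$ for all $g\in G$. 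First I would define the projection $p:\Delta X_\mathcal{U}\toG X$ by sending a point of $U_{\sigma_n}\times\Delta_n$, for a chain $\sigma_0\prec\cdots\prec\sigma_n$ in $\mathrm{N}\,\mathcal{U}$, to its first coordinate in $X$. Because every identification $(x,\iota_{n',n}(p))\sim(\iota_{\sigma_n,\sigma_{i_{n'}}}(x),p)$ defining $\Delta X_\mathcal{U}$ is built from the honest set inclusions $U_{\sigma_n}\subseteq U_{\sigma_{i_{n'}}}$, the map $p$ is well defined and continuous, and it is $G$-equivariant since $G$ acts on $\Delta X_\mathcal{U}$ through $\alpha_X$ on first coordinates and through $\alpha_\Lambda$ on the indexing simplices.

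Next I would build the section. For $x\in X$ put $S(x):=\{\lambda:\psi_\lambda(x)>0\}$, a finite set by local finiteness, with $x\in U_{S(x)}$ since $\mathrm{supp}[\psi_\lambda]\subseteq U_\lambda$. The assignment $x\mapsto\sum_\lambda\psi_\lambda(x)\,v_\lambda$ is a continuous map $X\to\vert\mathrm{N}\,\mathcal{U}\vert$ landing in the closed simplex on the vertices $S(x)$; passing through the canonical homeomorphism $\vert\mathrm{N}\,\mathcal{U}\vert\cong\vert\mathrm{sd}(\mathrm{N}\,\mathcal{U})\vert$, this point lies in a sub-simplex $[\hat\tau_0,\dots,\hat\tau_n]$ with $\tau_0\prec\cdots\prec\tau_n\subseteq S(x)$, and since $\tau_n\subseteq S(x)$ we have $x\in U_{S(x)}\subseteq U_{\tau_n}$. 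Recording this point together with $x$ gives a well-defined element $q(x)\in U_{\tau_n}\times\Delta_n\subseteq\Delta X_\mathcal{U}$ with $p(q(x))=x$. Continuity of $q:X\toG\Delta X_\mathcal{U}$ is proved by the same local-finiteness bookkeeping as in \cite[Proposition 4G.2]{h01} (work locally with the finite subcomplex determined by the finitely many $\lambda$ whose supports meet a small neighborhood), and property (v) of the $G$-partition of unity gives $S(\alpha_X(g,x))=\alpha_\Lambda(g,S(x))$ together with the matching of barycentric coordinates, so $q$ is a $G$-map and $p\circ q=\mathrm{id}_X$.

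It then remains to show $q\circ p\Gsimeq\mathrm{id}_{\Delta X_\mathcal{U}}$. Fix $z=(x,t)$ in the piece of $\Delta X_\mathcal{U}$ indexed by a chain $\sigma_0\prec\cdots\prec\sigma_n$, so $x\in U_{\sigma_n}$. Both $z$ and $q(x)$ lie in the fiber $p^{-1}(x)$, which is canonically homeomorphic to the realization of the full simplicial complex on the vertex set $\{v_\lambda:x\in U_\lambda\}$ and hence to a convex space of finitely supported convex combinations of those vertices; moreover $z$ and $q(x)$ are supported on $\sigma_n\cup S(x)$, which spans a simplex of $\mathrm{N}\,\mathcal{U}$ because $x\in U_{\sigma_n}\cap U_{S(x)}$. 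Define $H:\Delta X_\mathcal{U}\times[0,1]\to\Delta X_\mathcal{U}$ by letting $H(z,u)$ move $z$ along the straight segment toward $q(x)$ inside $\vert\Delta^{\sigma_n\cup S(x)}\vert$; then $H(\cdot,0)=\mathrm{id}$, $H(\cdot,1)=q\circ p$, each $H(\cdot,u)$ covers $\mathrm{id}_X$, and each $H(\cdot,u)$ is a $G$-map because $p$, $q$, and the straight-line operation are all compatible with the $G$-action. Continuity of $H$ is again the standard finite-subcomplex argument. Concluding, $p$ is a $G$-homotopy equivalence with $G$-homotopy inverse $q$, so $X\Gsimeq\Delta X_\mathcal{U}$. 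The main obstacle is not conceptual but technical: verifying the continuity of $q$ and of $H$ uniformly across the infinitely many gluing pieces (handled exactly as in the non-equivariant statement via local finiteness of $\{\mathrm{supp}[\psi_\lambda]\}_{\lambda\in\Lambda}$), while simultaneously keeping every construction strictly equivariant, which is where property (v) of a $G$-partition of unity is invoked at each step.
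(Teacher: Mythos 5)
Your proposal is correct and follows essentially the same route as the paper: project $\Delta X_\mathcal{U}\to X$, use \Cref{lemma:Gptofuty} to average an ordinary partition of unity into a $G$-partition of unity, use it to build the section $x\mapsto(x,(\widetilde{\psi}_\lambda(x))_\lambda)$, and conclude via the straight-line homotopy in the fiber. The paper states the last homotopy tersely as a ``linear interpolation homotopy''; you supply the same argument with the bookkeeping (identifying the fiber as the realization of a full simplex on $\{v_\lambda:x\in U_\lambda\}$ and noting that $\sigma_n\cup S(x)$ spans a simplex) spelled out, but this is elaboration rather than a different method.
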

\begin{proof}
Recall that
$$\Delta X_\mathcal{U}:=\left(\bigsqcup_{\sigma_0\prec\sigma_1\cdots\prec\sigma_n} (U_{\sigma_n}\times\Delta_n)\right)\slash\sim.$$
Also, since $X$ is paracompact, there is a partition of unity $\{\psi_\lambda:U_\lambda\rightarrow [0,1]\}_{\lambda\in\Lambda}$ subordinate to $\mathcal{U}$. Hence, by \Cref{lemma:Gptofuty}, there exists a $G$-partition of unity $\{\widetilde{\psi}_\lambda\}_{\lambda\in\Lambda}$ subordinate to $\mathcal{U}$.

Then, we consider the following $G$-maps.
\begin{align*}
    \pi:\Delta X_\mathcal{U}&\toG X\\
    (x,p)&\longmapsto x
\end{align*}

and

\begin{align*}
    \eta:X&\toG\Delta X_\mathcal{U}\\
    x&\longmapsto \big(x,(\widetilde{\psi}_\lambda(x))_{\lambda\in\Lambda}\big).
\end{align*}

Then, obviously we have $\pi\circ\eta=\mathrm{id}_X$ and it is easy to verify that $\eta\circ\pi\Gsimeq\mathrm{id}_{\Delta X_\mathcal{U}}$ by considering the linear interpolation homotopy.
\end{proof}

\begin{proof}[\textbf{Proof of \Cref{prop:Gnerve}}]
Combine \Cref{prop:4g1} and \Cref{prop:4g2}.
\end{proof}

%%%%%%%%%%%%%%%%%%%%%%%%%%%%%%%%%%%%%%%%%%%%%%%%%%%%%%%%%%%%%%%%%%
%%% proof of Z2 Haussman
\section{Proof of \Cref{thm:Z2hausmannsphere}.}

First we need to recall:
\begin{theorem}[{Jung's theorem \cite[Lemma 2]{katz1983filling}}]\label{thm:Jung}
Assume that a subset $A$ of $\Sp^m$ satisfies $\diam(A)<\zeta_m:=\arccos\left(\frac{-1}{m+1}\right)$. Then, $A$ is contained in an open hemisphere.
\end{theorem}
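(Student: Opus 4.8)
The plan is to deduce Jung's spherical theorem from the classical Euclidean form of Jung's theorem. Throughout, I would regard $A$ as a subset of $\Sp^m\subseteq\R^{m+1}$, write $\Vert\cdot\Vert$ and $\langle\cdot,\cdot\rangle$ for the Euclidean norm and inner product on $\R^{m+1}$, and set $\delta:=\diam(A)<\zeta_m$, where $\diam$ refers to the geodesic metric $d_{\Sp^m}$.

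The first step is to pass from the geodesic diameter to the Euclidean diameter. Since $d_{\Sp^m}(a,a')=\arccos\langle a,a'\rangle$ yields $\Vert a-a'\Vert=2\sin\!\big(\tfrac12 d_{\Sp^m}(a,a')\big)$, and $\theta\mapsto 2\sin(\theta/2)$ is continuous and strictly increasing on $[0,\pi]$, we obtain
$$\sup_{a,a'\in A}\Vert a-a'\Vert \;=\; 2\sin\!\Big(\tfrac{\delta}{2}\Big)\;<\;2\sin\!\Big(\tfrac{\zeta_m}{2}\Big)\;=\;\sqrt{\tfrac{2(m+2)}{m+1}},$$
where the final equality uses $\cos\zeta_m=1-2\sin^2(\zeta_m/2)=-\tfrac{1}{m+1}$.

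Next I would invoke the classical Euclidean Jung theorem in $\R^{m+1}$: any bounded subset of $\R^{m+1}$ of Euclidean diameter $D$ is contained in a closed ball of radius at most $D\sqrt{\tfrac{m+1}{2(m+2)}}$. Applying this to (the closure of) $A$ produces a point $c\in\R^{m+1}$ and a radius $r$ with $A\subseteq\overline{B}(c,r)$ and
$$r\;\leq\;\Big(\sup_{a,a'\in A}\Vert a-a'\Vert\Big)\sqrt{\tfrac{m+1}{2(m+2)}}\;<\;\sqrt{\tfrac{2(m+2)}{m+1}}\cdot\sqrt{\tfrac{m+1}{2(m+2)}}\;=\;1,$$
so $A$ lies in a Euclidean ball of radius $r<1$. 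It remains to exhibit the open hemisphere: for each $a\in A$, expanding $\Vert a-c\Vert^2\leq r^2$ and using $\Vert a\Vert=1$ gives $\langle a,c\rangle\geq\tfrac12\big(1+\Vert c\Vert^2-r^2\big)$, and since $r<1$ we have $1+\Vert c\Vert^2-r^2>\Vert c\Vert^2\geq 0$, whence $\langle a,c\rangle>0$ for every $a\in A$; in particular $c\neq 0$. Setting $v:=c/\Vert c\Vert\in\Sp^m$, we conclude $\langle a,v\rangle>0$ for all $a\in A$, i.e.\ $A$ is contained in the open hemisphere centered at $v$.

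The only substantive input is the classical Euclidean Jung theorem, which I would quote from the literature rather than reprove, so I anticipate no real obstacle. The point worth emphasizing is that the constant $\zeta_m$ is calibrated precisely so that the two radius factors multiply to exactly $1$ in the displayed estimate above; this is what makes the threshold $\zeta_m$ the sharp one, and it is also the only place where the specific value of $\zeta_m$ is used.
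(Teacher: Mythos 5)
Your argument is correct. Note that the paper does not prove this statement at all: it is quoted verbatim as Lemma 2 of Katz's 1983 paper on filling radii, so there is no in-paper proof to compare against. Your reduction is the standard self-contained route: the chordal--geodesic conversion $\Vert a-a'\Vert=2\sin\bigl(\tfrac12 d_{\Sp^m}(a,a')\bigr)$ gives Euclidean diameter strictly below $2\sin(\zeta_m/2)=\sqrt{\tfrac{2(m+2)}{m+1}}$, Euclidean Jung in $\R^{m+1}$ (with constant $\sqrt{\tfrac{m+1}{2(m+2)}}$) then yields an enclosing ball of radius $r<1$, and the inner-product computation $\langle a,c\rangle\geq\tfrac12\bigl(1+\Vert c\Vert^2-r^2\bigr)>0$ converts this into containment in the open hemisphere centered at $c/\Vert c\Vert$. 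All constants check out, and your closing observation that $\zeta_m$ is calibrated so the two radius factors multiply to exactly $1$ is precisely why the threshold is sharp (it is attained by the vertices of a regular inscribed simplex). The only cosmetic caveats are the trivial degenerate cases ($A$ empty or a single point) and the harmless passage to the closure of $A$ before invoking Jung, both of which you implicitly handle.
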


\begin{proof}[\textbf{Proof of \Cref{thm:Z2hausmannsphere}}]\label{app:proof-Z2-H}
Let's consider the case for $n>1$, first. In \cite[Section A.4]{lim2024vietoris}, motivated by Hausmann's original proof \cite[Theorem 3.5]{hausmann1994vietoris}, the authors constructed a certain map $T_r:\vert\vr_r(\Sp^n)\vert\rightarrow\Sp^n$ and verified that $T_r$ is indeed a homotopy equivalence. Here is a reminder of how the map $T_r$ is defined:

Choose a total order on the points of $\Sp^n$. From now on, whenever we describe a finite subset of $\Sp^n$ by $\{x_0,\dots,x_q\}$, we suppose that $x_0<x_1<\cdots<x_q$. Let $r\in (0,\zeta_n]$. We shall associate to each $q$-simplex $\sigma:=\{x_0,\dots,x_q\}\in \vr_r(\Sp^n)$ a singular $q$-simplex $T_\sigma:\Delta_q\longrightarrow\Sp^n$. Recall that the standard Euclidean $q$-simplex $\Delta_q$ is defined in the following way:
$$\Delta_q:=\left\{\sum_{i=0}^qt_ie_i:t_i\in [0,1]\text{ and }\sum_{i=0}^q t_i=1\right\}.$$
This map $T_\sigma$ is defined inductively as follows: set $T_\sigma(e_0)=x_0$. Suppose that $T_\sigma(z)$ is defined for $y=\sum_{i=0}^{p-1}s_ie_i$. Let $z:=\sum_{i=0}^p t_ie_i$. If $t_p=1$, we pose $T_\sigma(z)=x_p$. Otherwise, let
$$x:=T_\sigma\left(\frac{1}{1-t_p}\sum_{i=0}^{p-1}t_i e_i\right).$$
We define $T_\sigma(z)$ as the point on the unique shortest geodesic joining $x$ to $x_p$ with $d_{\Sp^n}(x,T_\sigma(z))=t_p\cdot d_{\Sp^n}(x,x_p)$ (the unique shortest geodesic exists since the geodesic convex hull of $\{x_0,\dots,x_q\}$ must be contained in some open ball of radius smaller than $\frac{\pi}{2}$ by \Cref{thm:Jung}). To sum up, $T_\sigma$ is defined inductively on $\Delta_p$ for $p\leq q$ as the \emph{geodesic join} of $T_\sigma(\Delta_{p-1})$ with $x_p$.

If $\sigma'$ is a a face of $\sigma$ of dimension $p$, we form the euclidean sub $p$-simplex $\Delta'$ of $\Delta_q$ formed by the points $\sum_{i=0}^q t_i e_i\in\Delta_q$ with $t_i=0$ if $x_i\notin\sigma'$. One can check by induction on $\dim\sigma'$ that
\begin{equation}\label{eq:Hausmannmap}
    T_{\sigma'}=T_\sigma\vert_{\Delta'}.
\end{equation}

By (\ref{eq:Hausmannmap}), the correspondence $\sigma\mapsto T_\sigma$ gives rise to a map
$$T_r:\vert\vr_r(\Sp^n)\vert\longrightarrow\Sp^n.$$

Hence, it is enough to modify the map $T_r$ to a $\Z_2$-map, which can be done as follows. Instead of choosing a total order on the points of $\Sp^n$, one can choose a total order on the set of all antipodal pairs $\{\{x,-x\}:x\in\Sp^n\}$ i.e. we choose a total order on $\R\mathbb{P}^n$. Note that if a subset $\sigma:=\{x_0,\dots,x_q\}$ satisfies $\diam(\sigma)<\zeta_n$, then $\sigma$ cannot contain an antipodal pair. Hence, one can construct the map $T_r$ as above and it is easy to verify that $T_r$ is a $\Z_2$-map.

Finally, consider the case for $n=1$. By \Cref{prop:Gnerve}, we know that $\vert\vr_r(\Sp^1)\vert$ and $B_{r/2}(\Sp^1,E(\Sp^1))$ are $\Z_2$-homotopy equivalent. Hence, it is enough to show that the $\Z_2$-homotopy equivalence between $B_{r/2}(\Sp^1,E(\Sp^1))$ and $\Sp^1$. However, in \cite{lim2021some}, the authors established a homotopy equivalence $m_r:B_{r/2}(\Sp^1,E(\Sp^1))\longrightarrow\Sp^1$ (see \cite[Theorem 3.25]{lim2021some}). Finally, since $m_r$ is a certain ``center of mass" type map, it is simple to check that this map $m_r$ is indeed a $\Z_2$-map. This completes the proof.
\end{proof}

%%%%%%%%%%%%%%%%%%%%%%%%%%%%%%%%%%%%%%%%%%%%%%%%%%%%%%%%%%%%%%%%%%
\section{Proofs from \Cref{sec:finiteness}.}\label{app:proofs-finiteness}

\begin{proof}[\textbf{Proof of \Cref{thm:Gequivprcpt}}]
Fix an arbitrary sequence $\{X_n\}_{n\geq 1}\subseteq\mathcal{F}$. Since $\mathcal{F}$ is uniformly $G$-totally bounded, for each $n,k\geq 1$ one can choose a $G$-invariant $\frac{1}{k}$-net $U_{n,k}=\{x_{n,k,1},\dots,x_{n,k,i_{n,k}}\}$ of $X_n$ such that $\vert U_{n,k} \vert=i_{n,k}\leq N_{\mathcal{F}}^G\left(\frac{1}{k}\right)$. Furthermore, one can induce a natural group action $\omega_{n,k}$ on $[i_{n,k}]=\{1,\dots,i_{n,k}\}$ such that $\alpha_X(g,x_{n,k,j})=x_{n,k,\omega_{n,k}(g,j)}$ for any $g\in G$ and $j\in[i_{n,k}]$.

For $k=1$, by the pigeon hole principle, one may choose a strictly increasing function $f^{(1)}:\Z_{>0}\longrightarrow\Z_{>0}$ such that the subsequence $\left\{X_{f^{(1)}(n)}\right\}_{n\geq 1}\subseteq\{X_n\}_{n\geq 1}$ satisfies the following: for any $n,n'\geq 1$, we have that $\vert U_{f^{(1)}(n),1} \vert=i_{f^{(1)}(n),1}=i_{f^{(1)}(n'),1}=\vert U_{f^{(1)}(n'),1} \vert=:N_1\leq N_{\mathcal{F}}^G(1)$ and $\omega_{f^{(1)}(n),1}=\omega_{f^{(1)}(n'),1}=:\omega_1$.

Inductively, for each $k\geq 1$ one can verify that there is a a strictly increasing function $f^{(k)}:\Z_{>0}\longrightarrow f^{(k-1)}(\Z_{>0})$ such that the sequence $\left\{X_{f^{(k)}(n)}\right\}_{n\geq 1}$ satisfies the following: for any $n,n'\geq 1$ and $1\leq l \leq k$, we have that $\vert U_{f^{(k)}(n),l} \vert=i_{f^{(k)}(n),l}=i_{f^{(k)}(n'),l}=\vert U_{f^{(k)}(n'),l} \vert=:N_l\leq N_{\mathcal{F}}^G\left(\frac{1}{l}\right)$ and $\omega_{f^{(k)}(n),l}=\omega_{f^{(k)}(n'),l}=:\omega_l$.

Hence, by the Cantor's diagonal argument, there exists a strictly increasing function $f:\Z_{>0}\longrightarrow\Z_{>0}$ such that for each fixed $k\geq 1$ and any $n\geq k$, we have that $\vert U_{f(n),k} \vert=i_{f(n),k}=i_{f(n'),k}=\vert U_{f(n'),k} \vert=:N_k\leq N_{\mathcal{F}}^G\left(\frac{1}{k}\right)$ and $\omega_{f(n),k}=\omega_{f(n'),k}=:\omega_k$. Without loss of generality, one may assume that $f=\mathrm{id_{\Z_{>0}}}$. So, from now on, for any $n\geq k$ one can assume that $U_{n,k}=\{x_{n,k,1},x_{n,k,2},\dots,x_{n,k,N_k}\}$.

We will apply the Cantor's diagonal argument once more. Recall that, since $\mathcal{F}$ is uniformly $G$-totally bounded, there exists $D>0$ such that $\diam(X_n)\leq D$ for all $n\geq 1$. Hence, for each fixed $n\geq k,k'\geq 1$, $i\in\{1,\dots,N_k\}$, and $i'\in\{1,\dots,N_{k'}\}$, we have that $\{d_{X_n}(x_{n,k,i},x_{n,k',i'})\}$ is a a subset of a compact space $[0,D]$. Therefore, again by the Cantor's diagonal argument, there exists a strictly increasing function $h:\Z_{>0}\longrightarrow\Z_{>0}$ such that
$$\lim_{n\rightarrow\infty}d_{X_{h(n)}}(x_{{h(n)},k,i},x_{{h(n)},k',i'})$$
exists for any $k,k'\geq 1$, $i\in\{1,\dots,N_k\}$, and $i'\in\{1,\dots,N_{k'}\}$. Again, without loss of generality, one may assume that $h=\mathrm{id_{\Z_{>0}}}$.

Finally, we define $X:=\bigcup_{k\geq 1}\{x_{k,i}\}_{i=1}^{N_k}$ and $d_X(x_{k,i},x_{k',i'}):=\lim_{n\rightarrow\infty} d_{X_n}(x_{n,k,i},x_{x,k',i'})$
for any $k,k'\geq 1$, $i\in\{1,\dots,N_k\}$, and $i'\in\{1,\dots,N_{k'}\}$. Furthermore, we define the $G$-action on $X$ as follows. $\alpha_X:G\times X\longrightarrow X$ such that $\alpha_X(g,x_{k,i}):=x_{k,\omega_k(g,i)}$. Then, we have that

\begin{align*}
    d_X\big(\alpha_X(g,x_{k,i}),\alpha_X(g,x_{k',i'})\big)&=d_X(x_{k,\omega_k(g,i)},x_{k',\omega_{k'}(g,i')})=\lim_{n\rightarrow\infty} d_{X_n}(x_{n,k,\omega_k(g,i)},x_{n,k',\omega_{k'}(g,i')})\\
    &=\lim_{n\rightarrow\infty} d_{X_n}\big(\alpha_X(g,x_{n,k,i}),\alpha_X(g,x_{n,k',i'})\big)=\lim_{n\rightarrow\infty} d_{X_n}(x_{n,k,i},x_{n,k',i'})\\
    &=d_X(x_{k,i},x_{k',i'}).
\end{align*}

for any $g\in G$, $k,k'\geq 1$, $i\in\{1,\dots,N_k\}$, and $i'\in\{1,\dots,N_{k'}\}$. Hence, $(X,d_X,\alpha_X)$ is indeed a $G$-metric space.

Next, we claim that $\{x_{k,i}\}_{i=1}^{N_k}$ is a $G$-invariant $\frac{1}{k}$-net of $X$ for each $k\geq 1$. Fix an arbitrary point $x_{k',i'}\in X$. Then, for each $n\geq 1$, there exists $k\geq 1$ and $i\in\{1,\dots,N_k\}$ such that $d_{X_n}(x_{n,k,i},x_{n,k',i'})<\frac{1}{k}$. Since $N_k$ is finite and does not depend on $n$, there exists $k\geq 1$ and $i\in\{1,\dots,N_k\}$ such that  $d_{X_n}(x_{n,k,i},x_{n,k',i'})<\frac{1}{k}$ for infinitely many $n$. Therefore, from the definition we have that $d_X(x_{k,i},x_{k',i'})<\frac{1}{k}$. Hence, $\{x_{k,i}\}_{i=1}^{N_k}$ is a $G$-invariant $\frac{1}{k}$-net of $X$ as we required.

Finally, observe that
$$\dgh^G(X,X_n)\leq\dgh^G(X,\{x_{k,i}\}_{i=1}^{N_k})+\dgh^G(\{x_{k,i}\}_{i=1}^{N_k},\{x_{n,k,i}\}_{i=1}^{N_k})+\dgh^G(X_n,\{x_{n,k,i}\}_{i=1}^{N_k})$$
by the triangle inequality. Since $\dgh^G(X,\{x_{k,i}\}_{i=1}^{N_k}),\dgh^G(X_n,\{x_{n,k,i}\}_{i=1}^{N_k})\leq\frac{1}{k}$ and

$\lim_{n\rightarrow\infty}\dgh^G(\{x_{k,i}\}_{i=1}^{N_k},\{x_{n,k,i}\}_{i=1}^{N_k})=0$, one can conclude that $\lim_{n\rightarrow\infty}\dgh^G(X,X_n)=0$ as we required. This completes the proof.
\end{proof}

\begin{proof}[\textbf{Proof of \Cref{lemma:prcptcoverbdd}}]
Our proof adapts the strategy of \cite[Lemma 6.4]{memoli2024persistent}, incorporating the necessary modifications to accommodate the $G$-equivariant setting.

Let $\varepsilon':=\varepsilon/3$, $N:=N_{\mathcal{F}}^G(\varepsilon')$, $j\in\{1,\dots,N\}$, and $\mathcal{F}_j:=\{X\in\mathcal{F}:N_X^G(\varepsilon')=j\}$. We will show that $\mathcal{F}_j$ can be covered by $j^{\frac{j^2-j}{2}}\cdot\vert\hom(G,\mathcal{S}_j)\vert$ many $\varepsilon$-balls.

For each $X\in\mathcal{F}_j$, one can choose a $G$-invariant $\varepsilon'$-net $U_X=\{x_1,\dots,x_j\}$ of $X$ with the cardinality $j$. We finish the proof via the following four steps.

\noindent
\textbf{Step 1.} For each $X\in\mathcal{F}_j$, there is a vector $v_X\in[0,2\varepsilon'j]^{\frac{j^2-j}{2}}$ associated to $X$.

Since $X$ is connected, following the same idea in the proof of \cite[Lemma 6.4]{memoli2019persistent}, one can show that $d_X(x_i,x_{i'})\leq 2\varepsilon'j$ for any $x_i,x_{i'}\in U_X$. Hence, one can choose the vector $v_X:=\big(d_X(x_i,x_{i'})\big)_{i< i'}\in [0,2\varepsilon'j]^{\frac{j^2-j}{2}}$.

\smallskip
\noindent
\textbf{Step 2.} For each $X\in\mathcal{F}_j$, there is a $G$-action $\omega_X\in\hom(G,\mathcal{S}_j)$ on $[j]$ associated to $X$.

One can define $\omega_X(g,i):=i'$ if $x_{i'}=\alpha_X(g,x_i)$ for all $g\in G$ and $x_i,x_{i'}\in U_X$.

Hence, by steps 1 and 2, we know that for each $X\in\mathcal{F}_j$ one can choose an element $(v_X,\omega_X)\in [0,2\varepsilon'j]^{\frac{j^2-j}{2}}\times\hom(G,\mathcal{S}_j)$.

\smallskip
\noindent
\textbf{Step 3.} Let $X,Y\in\mathcal{F}_j$. If $\Vert v_X-v_Y \Vert_\infty<2\varepsilon'$ and $\omega_X=\omega_Y$, then $\dgh^G(X,Y)<\varepsilon$.

Since $\Vert v_X-v_Y \Vert_\infty<\varepsilon'$ and $\omega_X=\omega_Y$, there is a bijective $G$-map $\psi:U_X\toG U_Y$ such that $\vert d_X(x_i,x_{i'})-d_Y(\psi(x_i),\psi(x_{i'}))\vert<2\varepsilon'$ for all $x_i,x_{i'}\in U_X$. Therefore, $\dgh^G(U_X,U_Y)\leq\frac{1}{2}\dis(\psi)<\varepsilon'$. Hence, by the triangle inequality,
$$\dgh^G(X,Y)\leq \dgh^G(X,U_X)+\dgh^G(U_X,U_Y)+\dgh^G(Y,U_Y)<3\varepsilon'=\varepsilon.$$

\noindent
\textbf{Step 4.} $\mathcal{F}_j$ can be covered by $j^{\frac{j^2-j}{2}}\cdot\vert\hom(G,\mathcal{S}_j)\vert$ many $\varepsilon$-balls.

Obviously, $[0,2\varepsilon'j]^{\frac{j^2-j}{2}}$ can be partitioned into $j^{\frac{j^2-j}{2}}$ many cells of all equal size. Hence, $[0,2\varepsilon'j]^{\frac{j^2-j}{2}}\times\hom(G,\mathcal{S}_j)$ can be partitioned into $j^{\frac{j^2-j}{2}}\cdot\vert\hom(G,\mathcal{S}_j)\vert$ many cells. For each cell, choose a $X\in\mathcal{F}_j$ such that $(v_X,\omega_X)$ belongs to the cell. Then, if $Y\in\mathcal{F}_j$ is in the same cell as $X$, then $\dgh^G(X,Y)<\varepsilon$ by the step 3. This completes the proof.
\end{proof}
\end{document}